\documentclass[10pt,a4paper]{amsart}
\usepackage{verbatim}

\usepackage[toc]{appendix}
\usepackage[T1]{fontenc}

\usepackage{graphicx}
\usepackage{enumerate}
\usepackage{amsmath,amsfonts,amssymb}
\usepackage{color}
\usepackage{tabto}
\def\loc{\operatorname{loc}}
\usepackage{cite}
\usepackage{ latexsym }
\definecolor{citation}{rgb}{0.11,0.67,0.84}
\definecolor{formula}{rgb}{0.1,0.2,0.6}
\definecolor{url}{rgb}{0.11,0.67,0.84}
\usepackage{pgf,tikz}
\usepackage{mathrsfs}
\usepackage{fancyhdr}
\usepackage{dutchcal}

\usepackage{dsfont}

\newcommand{\reqnomode}{\tagsleft@false}

\vfuzz3pt 
\hfuzz2pt 


\textwidth = 16.7 cm
\textheight = 24.3 cm 
\oddsidemargin = -0.4cm
\evensidemargin = -0.4cm 
\topmargin = 1mm
\headheight = 0.2 cm
\headsep = 0.7 cm

%
%
\usepackage{hyperref}

\newcommand\ccc{\mathrm{c}}

\def\dx{\,{\rm d}x}

\def\dtt{\,{\rm d}t}

\def\dt{\,{\rm d}t}
\def\dy{\,{\rm d}y}

\def \d{\,{\rm d}}
\def \diver{\,{\rm div}}
\def\dist{\,{\rm dist}}

\def\supp{\,{\rm supp}}
\def\diam{\,{\rm diam}}

\allowdisplaybreaks
\makeatletter
\DeclareRobustCommand*{\bfseries}{%
  \not@math@alphabet\bfseries\mathbf
  \fontseries\bfdefault\selectfont
  \boldmath
}

\DeclareMathOperator*{\osc}{osc}

\makeatother

\newlength{\defbaselineskip}
\setlength{\defbaselineskip}{\baselineskip}
\newcommand{\setlinespacing}[1]
           {\setlength{\baselineskip}{#1 \defbaselineskip}}
\newcommand{\mint}{\mathop{\int\hskip -1,05em -\, \!\!\!}\nolimits}


\newtheorem{theorem}{Theorem}
\newtheorem{corollary}{Corollary}[section]
\newtheorem{definition}{Definition}
\newtheorem{remark}{Remark}[section]
\newtheorem{lemma}{Lemma}[section]
\newtheorem{proposition}{Proposition}[section]
\numberwithin{equation}{section}
\setlength{\delimitershortfall}{-0.1pt}
\allowdisplaybreaks[4]

\newcommand{\kk}{\kappa}

\def\en{\mathbb N}
\def\rrr{\textnormal{r}}
\def\er{\mathbb R}

\newcommand{\ti}[1]{\tilde{#1}}

\newcommand\eps\varepsilon

\def\eqn#1$$#2$${\begin{equation}\label#1#2\end{equation}}

\newcommand{\data}{\textnormal{\texttt{data}}}
\newcommand{\be}{\begin{equation}}
\newcommand{\ee}{\end{equation}}

\newcommand{\rr}{\varrho}

\newcommand{\snr}[1]{\lvert #1\rvert}

\newcommand{\nr}[1]{\lVert #1 \rVert}

\newcommand{\tx}[1]{\textnormal{\texttt{#1}}}

\newcommand{\N}{\mathbb{N}}

\def\name[#1, #2]{#1 #2}

\newcommand{\medint}{-\kern -,375cm\int}

\newcommand{\medintinrigo}{-\kern -,315cm\int}
\makeatletter
\newcommand{\linethrough}{\mathpalette\@thickbar}
\newcommand{\@thickbar}[2]{{#1\mkern0mu\vbox{
    \sbox\z@{$#1#2\mkern-0.5mu$}%
    \dimen@=\dimexpr\ht\tw@-\ht\z@+2\p@\relax 
    \hrule\@height0.5\p@ 
    \vskip\dimen@
    \box\z@}}
}
\makeatother
\newcommand{\mathstrike}[1]{\ensuremath{\linethrough{#1}}}

\newcommand{\nra}[1]{\mathstrike{\lVert} #1 \rVert}
\newcommand{\snra}[1]{\mathstrike{[} #1 ]}
\delimitershortfall=-0.1pt

\title[Bounded minimizers of double phase problems]{Bounded minimizers of double phase problems\\ at nearly linear growth}
\author[De Filippis]{Cristiana De Filippis}  \address{Cristiana De Filippis\\Dipartimento SMFI, Universit\`a di Parma, Viale delle Scienze 53/a, Campus, 43124 Parma, Italy} \email{\url{cristiana.defilippis@unipr.it}}
\author[De Filippis]{Filomena De Filippis}  \address{Filomena De Filippis\\Fachbereich Mathematik, Universität Salzburg, Hellbrunner Str. 34, 5020 Salzburg, Austria} \email{\url{filomena.defilippis@plus.ac.at}}
\author[Piccinini]{Mirco Piccinini}  \address{Mirco Piccinini\\Dipartimento di Matematica, Politecnico di Milano, via Edoardo Bonardi 9, 20133 Milano, Italy} \email{\url{m.piccinini@polimi.it}}
\begin{document}
\subjclass[2020]{35B65, 31C45 \vspace{1mm}} 

\keywords{Regularity, $(p,q)$-growth, Nonuniform ellipticity, $\mu$-ellipticity\vspace{1mm}}

\thanks{{\it Acknowledgements.} We are grateful to Professor Mikhail Surnachev for pointing out several useful references on fractal constructions. We also thank the anonymous referees for their insightful feedback on our manuscript. This work is supported by the University of Parma through the action "Bando di Ateneo 2024 per la ricerca". C. De Filippis is supported by the European Research Council, through the ERC StG project NEW, nr.~101220121. F. De Filippis is supported by the Austrian Science Fund (FWF) [10.55776/PAT1850524]. M. Piccinini is supported by PRIN 2022 PNRR Project SKYRDM (CUP~D53D23018980001). For open access purposes, the author has applied a CC BY public copyright license to any author accepted manuscript version arising from this submission.
\vspace{1mm}}
\texttt{J. European Math. Soc.}
\maketitle
\vspace{-10mm}
\begin{abstract}
Bounded minimizers of double phase problems at nearly linear growth have locally H\"older continuous gradient within the sharp maximal nonuniformity range $q<1+\alpha$. 
 \end{abstract}
\begin{center}
   \emph{To Jan Kristensen, on his 60th birthday.}
 \end{center}
%
\setcounter{tocdepth}{1}
{\small \tableofcontents}
\vspace{-8mm}
\setlinespacing{1.00}
\section{Introduction}
\noindent In this paper we achieve sharp maximal regularity for minima of nonuniformly elliptic variational integrals at nearly linear growth. More precisely, we obtain Schauder estimates for a class of nondifferentiable problems featuring optimal degree of nonuniformity. The prototypical model we have in mind is given by the "log" double phase energy 
\eqn{ll}
$$
W^{1,1}_{\loc}(\Omega)\ni w\mapsto \mathcal{L}(w;\Omega):=\int_{\Omega}\snr{Dw}\log(1+\snr{Dw})+a(x)\snr{Dw}^{q}\dx,
$$
first treated within the realm of classical Schauder theory in \cite{dm}. The peculiarity of $\mathcal{L}$ lies in the structure of the governing integrand, that is the sum of a slightly superlinear component that grows at infinity slower than any power larger than one, and a $q$-power term weighted via a bounded, nonnegative coefficient $a$. The combined effect of the pointwise nonuniform ellipticity affecting the nearly linear growing term, with the uncontrollable vanishing of the coefficient makes these problems very delicate to handle under the regularity theory viewpoint. Specifically, we focus on achieving gradient continuity of minima under the fastest possible blow up rate of the ellipticity ratio related to these functionals. Counterexamples are built to exhibit severe irregularity phenomena that may arise once our threshold is violated. The first contribution in this respect is the following theorem. 
\begin{theorem}\label{mt1}
    
    Let $u\in W^{1,1}_{\loc}(\Omega)\cap L^{\infty}_{\loc}(\Omega)$ be a local minimizer of functional $\mathcal{L}$ in \eqref{ll}, with
\eqn{1qa}
$$
0\le a(\cdot)\in C^{0,\alpha}(\Omega), \ \ \alpha\in (0,1]\qquad \mbox{and}\qquad q<1+\alpha.
$$
Then $Du$ is locally H\"older continuous in $\Omega$, and whenever $\ti{B}\Subset B\subset  2B\Subset \Omega$ are balls with radius less than one, Lipschitz estimate
\eqn{lip.0}
$$
\nr{Du}_{L^{\infty}(\ti{B})}\le c\left(\int_{B}\snr{Du}\log(1+\snr{Du})+a(x)\snr{Du}^{q}\dx\right)^{\tx{b}}+c
$$
holds with $c\equiv c(\data(2B),\tx{d}(\ti{B},B))$ and $\tx{b}\equiv \tx{b}(n,q,\alpha)$.
\end{theorem}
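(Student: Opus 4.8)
The plan is to prove the a priori Lipschitz bound \eqref{lip.0} for minimizers of suitably regularized, uniformly elliptic functionals, to transfer it to $u$ by approximation, and finally, once $Du\in L^{\infty}_{\loc}$ is available, to upgrade to gradient Hölder continuity via a freezing/comparison argument in which the constraint $q<1+\alpha$ is precisely what makes the coefficient's oscillation subcritical. \emph{Step 1 (Regularization).} Fix $2B\Subset\Omega$. Replace $\Phi(t)=t\log(1+t)$ by smooth, strictly convex, $\mu_{\varepsilon}$-elliptic integrands $\Phi_{\varepsilon}$ (mollification of $\Phi$ near the origin plus a vanishing correction $\varepsilon(1+|z|^{2})^{(1+\sigma)/2}$ with $\sigma$ small enough that $1+\sigma<1+\alpha$ and no Lavrentiev gap is produced), mollify $a\mapsto a_{\varepsilon}\ge 0$ keeping $[a_{\varepsilon}]_{0,\alpha}$ bounded, and let $u_{\varepsilon}$ minimize $w\mapsto\int_{B}\Phi_{\varepsilon}(Dw)+a_{\varepsilon}(x)|Dw|^{q}\dx$ among maps agreeing with $u$ on $\partial B$, truncated so that $\nr{u_{\varepsilon}}_{L^{\infty}(B)}\le\nr{u}_{L^{\infty}(B)}+\mathrm{o}(1)$. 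By classical theory $u_{\varepsilon}\in C^{1,\beta}_{\loc}(B)\cap W^{2,2}_{\loc}(B)$, and $u_{\varepsilon}\to u$ strongly in $W^{1,\Phi}(B)$ with $\mathcal{L}(u_{\varepsilon};B)\to\mathcal{L}(u;B)$; the absence of the Lavrentiev phenomenon at nearly linear growth under $q<1+\alpha$ guarantees that $u$, not a relaxed minimizer, is reached in the limit.

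\textbf{Step 2 (Caccioppoli and the $L^{\infty}$-assisted iteration).} Differentiating the Euler--Lagrange system for $u_{\varepsilon}$ and testing with $\eta^{2}D_{s}u_{\varepsilon}$ produces a Caccioppoli inequality for the nonlinear vector fields $V_{\Phi_{\varepsilon}}(Du_{\varepsilon})$ and $\sqrt{a_{\varepsilon}}\,V_{q}(Du_{\varepsilon})$. The oscillation of $a_{\varepsilon}$ enters through commutator terms controlled by $[a]_{0,\alpha}\,r^{\alpha}\int_{B_{r}}|Du_{\varepsilon}|^{q-1}|Du_{\varepsilon}|\,|D^{2}u_{\varepsilon}|\dx$, reabsorbed by Young's inequality precisely because the surviving power of the radius is $r^{\alpha-(q-1)}$ with $\alpha-(q-1)>0\iff q<1+\alpha$. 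One then runs a Moser iteration for $|Du_{\varepsilon}|$: coupling the Caccioppoli inequality with a Gagliardo--Nirenberg--Sobolev interpolation of the type $\nr{Dv}_{L^{\chi p}(B_{r})}\lesssim\nr{v}_{L^{\infty}(B_{2r})}^{1-\theta}(\text{energy-type quantity})^{\theta}$, the hypothesis $u\in L^{\infty}$ supplies exactly the extra integrability that nearly linear growth---where a priori $Du_{\varepsilon}$ lies only in $L\log L$---fails to provide on its own. Summing the geometric iteration over dyadic radii between $\ti{B}$ and $B$ yields
\[
\nr{Du_{\varepsilon}}_{L^{\infty}(\ti{B})}\le c\,\mathcal{L}(u_{\varepsilon};B)^{\tx{b}}+c,
\]
with $c\equiv c(\data(2B),\tx{d}(\ti{B},B))$, $\tx{b}\equiv\tx{b}(n,q,\alpha)$, and, crucially, independent of $\varepsilon$.

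\textbf{Step 3 (Passage to the limit; main obstacle).} Letting $\varepsilon\to 0$, Step 1 together with weak-$*$ lower semicontinuity of the $L^{\infty}$ norm gives $Du\in L^{\infty}(\ti{B})$ and estimate \eqref{lip.0}. The heart of the matter---and the main obstacle---is Step 2: carrying out a Moser-type iteration at nearly linear growth, where $W^{1,\Phi}$ sits barely above $W^{1,1}$ and every relevant Sobolev embedding is borderline, so that the interpolation closes only by virtue of the $L^{\infty}$-bound on $u$; and doing so while simultaneously taming the double-phase term through the oscillation bound on $a_{\varepsilon}$ and keeping all constants uniform in $\varepsilon$. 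A related delicate point is the need to treat separately, and then glue, the degenerate phase (where $a_{\varepsilon}\approx 0$ and the problem is genuinely nearly linear) and the non-degenerate one (where $a_{\varepsilon}$ is bounded below and the problem is $q$-Laplacian-like).

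\textbf{Step 4 (Gradient Hölder continuity).} Once $Du\in L^{\infty}_{\loc}$, on any small ball the relevant gradient range is bounded, hence $\Phi''_{\varepsilon}$ is bounded below there and the approximating problem is uniformly elliptic with bounded gradient. Comparing $u_{\varepsilon}$ on $B_{r}(x_{0})$ with the minimizer of the frozen functional $w\mapsto\int\Phi_{\varepsilon}(Dw)+a_{\varepsilon}(x_{0})|Dw|^{q}$---a uniformly elliptic, bounded-gradient problem enjoying scale-invariant excess decay---the perturbation error is bounded by $[a]_{0,\alpha}\,r^{\alpha}$ times a power of $\nr{Du}_{L^{\infty}}$, which is subcritical exactly when $q<1+\alpha$. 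A standard Campanato iteration then gives $Du_{\varepsilon}\in C^{0,\gamma}_{\loc}$ with a modulus uniform in $\varepsilon$, and passing to the limit yields the asserted local Hölder continuity of $Du$.
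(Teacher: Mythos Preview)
Your overall architecture (regularize, prove uniform a priori Lipschitz bounds, pass to the limit, then get gradient H\"older continuity by freezing) matches the paper's. The genuine gap is in Step~2, and it is exactly the obstacle the paper isolates as its main technical novelty.

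The issue is the $\mu$-ellipticity of the nearly linear term. For $\tx{L}(z)=|z|\log(1+|z|)$ one has $\partial^{2}\tx{L}(z)\gtrsim (1+|z|^{2})^{-\mu/2}$ with $\mu=1$, so the Caccioppoli inequality you obtain controls not $|D^{2}u_{\varepsilon}|$ but the degenerate quantity $(1+|Du_{\varepsilon}|^{2})^{-\mu/4}|D^{2}u_{\varepsilon}|$ (equivalently, finite differences of $V_{1,2-\mu}(Du_{\varepsilon})$). When you then try to reabsorb the commutator term coming from $[a]_{0,\alpha}$ by Young's inequality, the mismatch between this weighted second-order quantity and the unweighted $|Du_{\varepsilon}|^{q-1}$ appearing in the commutator costs you: one does not get the clean surviving power $r^{\alpha-(q-1)}$ you claim, but rather half of it. This is spelled out in Remark~\ref{r32} of the paper (and traced back to \cite[Section 3.6, Step 5]{CM2}, \cite[(1.19), Theorem 3]{dmon}, \cite[(7.9)]{jw}): the straightforward fractional/Moser approach you sketch yields only $q<1+\alpha/2$, not $q<1+\alpha$. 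Similarly, the Gagliardo--Nirenberg interpolation against $\nr{u}_{L^{\infty}}$ that you invoke is the mechanism behind \cite{dm,dmon}, and those papers obtain only the dimensional bound $q<1+\alpha/n$; the paper explicitly states that perturbative approaches relying on pointwise uniform ellipticity (as in \cite{CM2,BCM}) break down here because the pointwise ellipticity ratio of $\tx{H}$ blows up like $\log(1+|z|)$.

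What the paper does instead is a \emph{hybrid} iteration: in the linear Moser step (Section~\ref{fm}, Proposition~\ref{p31}) one deliberately allows a small power of $\nr{Du}_{L^{\infty}}$---not $\nr{u}_{L^{\infty}}$---to appear on the right-hand side (see \eqref{mmmm} and \eqref{15}), which preserves the full $\alpha$-gain in the difference-quotient estimate; this small $L^{\infty}$-power is then reabsorbed in a second, De Giorgi--type iteration on level sets of the energy density $\tx{E}_{*}(\cdot,|Du|)$ (Section~\ref{sce}, Proposition~\ref{lipb}), via a comparison with the frozen nonhomogeneous problem and the nonlinear potential Lemma~\ref{revlem}. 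Your proposal conflates these two stages into one classical Moser pass and thereby misses the mechanism that reaches the sharp threshold $q<1+\alpha$. A secondary imprecision: your commutator term mixes $[a]_{0,\alpha}r^{\alpha}$ with $|D^{2}u_{\varepsilon}|$, but if $a_{\varepsilon}$ is only uniformly $C^{0,\alpha}$ you must work with finite differences of the Euler--Lagrange system rather than differentiate it, which is what Section~\ref{fm} does.
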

\noindent Actually, Theorem \ref{mt1} comes as a direct consequence of an abstract result, covering free or constrained minima of more general integrals than \eqref{ll}, i.e.:
\eqn{ll1}
$$
W^{1,1}_{\loc}(\Omega)\ni w\mapsto \mathcal{H}(w;\Omega):=\int_{\Omega}\tx{L}(Dw)+a(x)(s^{2}+\snr{Dw}^{2})^{\frac{q}{2}}\dx,
$$
embracing for instance the iterated logarithmic case 
$$
\begin{cases}
\ \tx{L}(z):=\snr{z}\bar{\tx{L}}_{i+1}(\snr{z})\quad & \mbox{for} \ \ i\ge 0\\
\ \bar{\tx{L}}_{i+1}(\snr{z}):=\log(1+\bar{\tx{L}}_{i}(\snr{z})) \quad & \mbox{for} \ \ i\ge 0\\
\ \bar{\tx{L}}_{0}(z):=\snr{z},
\end{cases}
$$
cf. \cite{FM,dm}, meaning that the growth of integrand $\tx{L}$ may get arbitrarily close to linear, as that of the nonparametric area functional. Notably, nonautonomous area-type integrals mark the limiting case in which full gradient regularity still holds \cite{gms79}.\footnote{Observe that via the maximum principle \cite{ls05} the examples in \cite{gms79} always involve bounded minimizers.} More precisely, our main result reads as follows.
\begin{theorem}\label{mt2}
Under assumptions \eqref{1qa}, \eqref{ass.0}-\eqref{ass.2}, let $\psi\colon \Omega\to \mathbb{R}$ be a function satisfying \eqref{p2}, and $u\in \tx{K}^{\psi}_{\loc}(\Omega)\cap L^{\infty}_{\loc}(\Omega)$ be a constrained local minimizer of functional $\mathcal{H}$ in \eqref{ll1}. There exists $\mu_{\textnormal{max}}\equiv \mu_{\textnormal{max}}(n,q,\alpha)>1$ such that if $1\le \mu<\mu_{\textnormal{max}}$, then $Du$ is locally H\"older continuous in $\Omega$. In particular, whenever $\ti{B}\Subset B\subset 2B\Subset \Omega$ are balls with radius less than one, Lipschitz estimate
\eqn{fin.1}
    $$
    \nr{Du}_{L^{\infty}(\ti{B})}\le c\left(\int_{B}\tx{L}(Du)+a(x)(s^{2}+\snr{Du}^{2})^{\frac{q}{2}}\dx\right)^{\tx{b}}+c,
    $$
holds with $c\equiv c(\data(2B), \tx{d}(\ti{B},B))$ and $\tx{b}\equiv \tx{b}(n,\mu,q,\alpha)$.
\end{theorem}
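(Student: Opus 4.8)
The plan is to prove Theorem \ref{mt2} through a careful approximation scheme combined with a De Giorgi–Nash–Moser iteration adapted to the nonuniformly elliptic, nearly linear setting. First I would regularize the problem: replace the possibly nondifferentiable integrand $\tx{L}$ and the degenerate coefficient $a(\cdot)$ by smooth approximants $\tx{L}_{\eps}$ and $a_{\eps}$ (e.g. mollifications, plus a small quadratic perturbation $\tfrac{\sigma}{2}\snr{z}^2$ to enforce uniform ellipticity at scale $\sigma$), and penalize the obstacle constraint $\tx{K}^\psi$ by adding a term forcing $w\ge\psi$ in the limit; this produces smooth minimizers $u_{\eps,\sigma}$ solving a uniformly elliptic Euler–Lagrange system to which standard Schauder/$W^{2,2}_{\loc}$ theory applies. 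The real content is then to derive a priori estimates on $\nr{Du_{\eps,\sigma}}_{L^\infty}$ that are \emph{independent} of the regularization parameters, depending only on $\data$, which will pass to the limit and recover the original minimizer by uniqueness/convergence arguments.

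The core of the a priori estimate is a Caccioppoli inequality for convex functions of the gradient and a subsequent iteration. I would test the differentiated Euler–Lagrange equation with $\partial_s(\eta^2 \Phi(\snr{Du}) \partial_s u)$-type quantities to obtain, for suitable increasing $\Phi$, a Caccioppoli-type bound on $\int \eta^2 V\text{-type energy of }D(\Phi(Du))$. The key structural point dictated by the hypothesis $q<1+\alpha$ (or the quantitative $\mu<\mu_{\textnormal{max}}$) is that the ellipticity ratio of $\mathcal H$, namely $\mathcal R(z)\sim \frac{\text{highest eigenvalue}}{\text{lowest eigenvalue}}$ of the Hessian, blows up at most polynomially in $\snr z$ with an exponent controlled by $q-1$ and $\alpha$, and crucially the "bad" set where $a(x)$ is non-negligible can be localized using $a\in C^{0,\alpha}$: on a ball $\brx$ one has $\sup_{\brx} a \le \inf_{\brx} a + [a]_{0,\alpha} r^\alpha$, so either $a$ is comparable to a constant on $\brx$ (uniformly elliptic $q$-Laplacian-type regime) or $\sup_{\brx}a \lesssim r^\alpha$ (perturbative regime where the $a$-term is a lower-order correction to the nearly linear part). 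This dichotomy, iterated across dyadic scales, is what converts the nonuniform problem into a controllable one; the boundedness of $u$ enters to absorb the obstacle and the $\snr{u-\psi}$ penalization, and to give an $L^\infty$–$L^1$ interpolation that is needed because the energy only controls $Du$ in the Orlicz-type space $L\log L$ rather than any $L^p$ with $p>1$.

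After the Caccioppoli inequality, I would run a Moser-type iteration on the level sets of $\snr{Du}$, or equivalently a De Giorgi iteration, using Sobolev embedding at each step. Because the growth is only nearly linear, the gain in integrability per step degrades, and one must be careful that the iteration still converges: this is handled by exploiting the iterated-logarithmic structure (the functions $\bar{\tx L}_{i}$) which provides just enough superlinearity, together with the fact that the nonuniformity exponent is strictly below the threshold so the "defect" in the iteration is summable. The outcome is a reverse-Hölder / sup-bound $\nr{Du_{\eps,\sigma}}_{L^\infty(\ti B)}\le c(\int_B \tx L(Du_{\eps,\sigma})+a(x)(s^2+\snr{Du_{\eps,\sigma}}^2)^{q/2}\dx)^{\tx b}+c$ with $\tx b,c$ independent of $\eps,\sigma$, which upon passing to the limit gives \eqref{fin.1}. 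Finally, once the gradient is locally bounded, the equation becomes uniformly elliptic on the set where $\snr{Du}$ is bounded, and standard nonlinear Schauder theory (for $C^{0,\alpha}$ coefficients) or a Campanato-type perturbation argument upgrades $Du$ to local Hölder continuity.

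The main obstacle I anticipate is making the Caccioppoli inequality and the ensuing iteration \emph{uniform} in the approximation parameters while the ellipticity ratio genuinely blows up: one cannot simply invoke standard $(p,q)$-growth machinery since here $p\to 1$ and the Lavrentiev-type gap phenomenon lurks. The decisive technical step is the scale-by-scale dichotomy on $a(\cdot)$ described above — quantifying precisely how $[a]_{0,\alpha}$, the exponent $q$, and the modulus $\mu$ interact so that the excess/energy decay estimate closes — and verifying that the resulting constant $\mu_{\textnormal{max}}(n,q,\alpha)$ is strictly above $1$. Getting the bookkeeping of these nested iterations right, especially the interplay between the Orlicz energy space and the $L^\infty$ bound on $u$, is where most of the work lies.
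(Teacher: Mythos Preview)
Your regularization-and-pass-to-the-limit framework is broadly sound and matches what the paper does. The genuine gap is in the a priori estimate itself: the scale-by-scale dichotomy on $a(\cdot)$ that you propose (either $a$ comparable to a constant on $B_r$, or $\sup_{B_r} a\lesssim r^\alpha$) is precisely the perturbative mechanism behind \cite{CM,CM2,BCM,HO1,HO2,HO3} for the standard double phase $\snr{z}^p+a(x)\snr{z}^q$, and the paper explains in the introduction why it fails here. In that classical model the pointwise ellipticity ratio $\mathcal R_{\partial\tx P}(x,z)$ is bounded independently of $z$, so freezing $x$ gives a uniformly elliptic reference problem and the dichotomy reduces everything to $p$-harmonic or $q$-harmonic comparison. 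For the integrand $\tx H(x,z)=\tx L(z)+a(x)\ell_s(z)^q$ with $\tx L$ satisfying \eqref{ass.1}-\eqref{ass.2}, the pointwise ratio already satisfies $\mathcal R_{\partial\tx H}(x,z)\gtrsim \ell_1(z)^{\mu-1}(\tx g(\snr z)+1)\to\infty$, so no freezing of $x$ yields a uniformly elliptic problem, and there is no $p$-harmonic regime to perturb around. Your suggestion to test the differentiated equation with $\partial_s(\eta^2\Phi(\snr{Du})\partial_s u)$ and run a classical Moser iteration runs into exactly this: the lowest eigenvalue is $\ell_1(z)^{-\mu}$, the highest is $\sim\tx g(\snr z)\ell_1(z)^{-1}$, and the resulting Caccioppoli inequality carries a factor of $\ell_1(Du)^{\mu-1}\tx g(\snr{Du})$ that cannot be iterated away without already knowing $Du\in L^\infty$.

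What the paper does instead is a \emph{fractional} Moser iteration (Section~\ref{ncz}): rather than differentiating the Euler--Lagrange equation, one tests the variational inequality with finite-difference quotients of the form $\tau_{-h}(\tx b_\sigma\tau_h(\ti u-\ti\psi))$, gaining $\alpha$ fractional derivatives from the $C^{0,\alpha}$-regularity of $a$ and producing control on Besov norms $\|\tau_h^2\ti u/\snr h^{1+\epsilon}\|_{L^{t}}$ with $t$ arbitrarily large. The crucial twist (Proposition~\ref{p31}) is that this iteration is \emph{hybrid}: the estimate \eqref{15} still contains a small power $\tx M_*^{\gamma_{\mu;\omega}/(2-\mu)}$ of $\nr{Du}_{L^\infty}$ on the right-hand side. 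This power is only reabsorbed in a second stage (Proposition~\ref{lipb}), where a comparison with the frozen nonhomogeneous problem \eqref{pdnh} produces a reverse H\"older inequality \eqref{frca} on level sets of the auxiliary quantity $\tx E_*(\cdot,\snr{Du})$ (not $\snr{Du}$ itself), and a potential-theoretic De Giorgi iteration (Lemma~\ref{revlem}) closes. The threshold $\mu_{\textnormal{max}}$ emerges from the bookkeeping in \eqref{ddd}-\eqref{posi}, balancing the loss from $\mu>1$ against the gain $\alpha$. Neither the fractional differentiation nor the hybrid $\tx M_*$-dependence appears in your outline, and without them the sharp range $q<1+\alpha$ cannot be reached; classical methods give at best $q<1+\alpha/2$ (see Remark~\ref{r32}).
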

\noindent We immediately refer to Sections \ref{not} and \ref{sa} for a complete description of the various quantities mentioned in the previous statement. In particular, the notion of constrained local minimizer adopted in this paper is classical and prescribes that a function $u\in \tx{K}^{\psi}_{\loc}(\Omega)\cap L^{\infty}_{\loc}(\Omega)$ is a local minimizer of $\mathcal{H}$ if for every ball $B\Subset \Omega$ it is $\mathcal{H}(u;B)<\infty$ and $\mathcal{H}(u;B)\le \mathcal{H}(w;B)$ whenever $w\in (u+W^{1,1}_{0}(B))\cap \tx{K}^{\psi}(B)$, thus implying that $\mathcal{H}(u;\ti{\Omega})<\infty$ for all open subsets $\ti{\Omega}\Subset \Omega$. Within the nonsingular regime $s>0$ in \eqref{ll1}, we are able to quantify the amount of gradient H\"older continuity gained by minima, and improve \cite[Corollary 1.2]{dm} in the plain logarithmic case.
\begin{corollary}\label{cor1}
In the same setting as Theorem \ref{mt2}, assume further that $\partial^{2}\tx{L}$ is continuous and that $s>0$ in \eqref{ll1}. Then,
\begin{itemize}
    \item if $\mu>1$, then $Du\in C^{0,\alpha/2}_{\loc}(\Omega,\mathbb{R}^{n})$;
    \item if $\mu=1$ and $\alpha\in (0,1)$, then $Du\in C^{0,\alpha}_{\loc}(\Omega,\mathbb{R}^{n})$;
    \item if $\mu=\alpha=1$, then $Du\in C^{0,\beta}_{\loc}(\Omega,\mathbb{R}^{n})$ for all $\beta\in (0,1)$.
\end{itemize}
\end{corollary}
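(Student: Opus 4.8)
The plan is to promote the conclusion of Theorem \ref{mt2} to the sharp H\"older exponent by a classical Schauder-type perturbation argument, carried out keeping explicit track of the exponents. The point of the extra hypotheses is the following: since $q<1+\alpha\le 2$, the requirement $s>0$ is exactly what prevents the Hessian of $z\mapsto a(x)(s^{2}+\snr{z}^{2})^{q/2}$ from blowing up near the origin, so that, together with the continuity of $\partial^{2}\tx{L}$, on any bounded gradient range the integrand $z\mapsto \tx{L}(z)+a(x)(s^{2}+\snr{z}^{2})^{q/2}$ becomes a genuinely uniformly elliptic $C^{2}$ Lagrangian. Fix balls $B\Subset B'\Subset\Omega$; by Theorem \ref{mt2}, $M:=\nr{Du}_{L^{\infty}(B')}<\infty$ with \eqref{fin.1}, and on $\{\snr{z}\le M\}$ the above integrand is uniformly elliptic with continuous second derivatives, ellipticity constants depending only on $\data$ and $M$. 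Throughout, balls are centred at points $x_{0}\in B$, with radius $r\le \dist(B,\partial B')/8$, and $(Du)_{\mathcal{B}}$ denotes the integral average of $Du$ over $\mathcal{B}$.

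The core is a comparison with the frozen problem. On $B_{r}(x_{0})$ let $h$ solve the autonomous problem obtained by freezing the coefficient, i.e. $h$ minimizes $w\mapsto\int_{B_{r}(x_{0})}\tx{L}(Dw)+a(x_{0})(s^{2}+\snr{Dw}^{2})^{q/2}\dx$ among competitors with $w-u\in W^{1,1}_{0}(B_{r}(x_{0}))$ (and $w\ge\psi$ in the constrained case, the obstacle handled as in the proof of Theorem \ref{mt2} through \eqref{p2}). Two ingredients are then needed. First, interior regularity of $h$: each $D_{k}h$ solves a linear equation with continuous, uniformly elliptic coefficients on $\{\snr{z}\le M\}$, whence De Giorgi--Nash--Moser theory yields $Dh\in C^{0,\gamma}_{\loc}(B_{r}(x_{0}))$ for \emph{every} $\gamma\in(0,1)$, together with the excess-decay estimate
\[
\int_{B_{\rho}(x_{0})}\snr{Dh-(Dh)_{B_{\rho}(x_{0})}}^{2}\dx\le c(\gamma)\Big(\tfrac{\rho}{r}\Big)^{n+2\gamma}\int_{B_{r/2}(x_{0})}\snr{Dh-(Dh)_{B_{r/2}(x_{0})}}^{2}\dx,\qquad 0<\rho\le\tfrac r2,
\]
with $c(\gamma)\equiv c(n,\gamma,\data,M)$; uniformity over the family of frozen problems comes from the uniform continuity of $\partial^{2}\tx{L}$ on the compact set $\{\snr{z}\le M\}$. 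Second, the comparison estimate: testing the Euler--Lagrange (in)equalities for $u$ and $h$ with $u-h$, using monotonicity and uniform ellipticity on $\{\snr{z}\le M\}$, the bound $\snr{a(x)-a(x_{0})}\le [a]_{0,\alpha}r^{\alpha}$ on $B_{r}(x_{0})$, the boundedness on $\{\snr{z}\le M\}$ of the lower-order vector field, and \eqref{p2} for the obstacle contribution (which enters at an order no worse than the coefficient term), one obtains
\[
\int_{B_{r}(x_{0})}\snr{Du-Dh}^{2}\dx\le c\,r^{n+2\alpha},\qquad c\equiv c(\data,M).
\]
Setting $E(x_{0},r):=\int_{B_{r}(x_{0})}\snr{Du-(Du)_{B_{r}(x_{0})}}^{2}\dx$ and combining the two displays through the triangle inequality gives, for all $\gamma\in(0,1)$ and $0<\rho\le r/2$,
\[
E(x_{0},\rho)\le c(\gamma)\Big(\tfrac{\rho}{r}\Big)^{n+2\gamma}E(x_{0},r)+c\,r^{n+2\alpha},
\]
and a standard iteration lemma yields $E(x_{0},r)\le c\,r^{n+2\min\{\gamma,\alpha\}}$ locally uniformly. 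Since the logarithmic and iterated-logarithmic models have $\mu=1$: when $\alpha<1$ we pick $\gamma\in(\alpha,1)$ and get $E(x_{0},r)\le c\,r^{n+2\alpha}$, hence $Du\in C^{0,\alpha}_{\loc}(\Omega,\mathbb{R}^{n})$ by Campanato's integral characterization; when $\alpha=1$, for every $\beta\in(0,1)$ we obtain $E(x_{0},r)\le c(\beta)\,r^{n+2\beta}$, hence $Du\in C^{0,\beta}_{\loc}(\Omega,\mathbb{R}^{n})$. This settles the second and third bullets.

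For the case $\mu>1$ one runs the same scheme, but the quantity carrying the decay is no longer $Du$ itself: as in the proof of Theorem \ref{mt2}, the Caccioppoli and excess inequalities for the $\mu$-elliptic part $\tx{L}$ are naturally written in terms of a nonlinear function $V_{\mu}(Du)$ of the gradient (so that $\snr{DV_{\mu}(Du)}^{2}\simeq \partial^{2}\tx{L}(Du)\,D^{2}u\cdot D^{2}u$), and the frozen problem, being genuinely $\mu$-elliptic, is also controlled at this level; the iteration then produces decay of order $r^{n+2\alpha}$ for $\int_{B_{r}(x_{0})}\snr{V_{\mu}(Du)-(V_{\mu}(Du))_{B_{r}(x_{0})}}^{2}\dx$, and converting this back to $Du$ costs a factor $1/2$ in the exponent, giving $E(x_{0},r)\le c\,r^{n+\alpha}$ and thus $Du\in C^{0,\alpha/2}_{\loc}(\Omega,\mathbb{R}^{n})$, i.e. the first bullet. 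The main obstacle is exactly this last step: one must show that, still with $s>0$ and $\partial^{2}\tx{L}$ merely continuous, the resulting gain is precisely $\alpha/2$ --- neither more nor less --- which requires pairing the continuous-coefficient excess decay for the frozen $\mu$-elliptic problem with the sharp quantitative relation between the $V_{\mu}$-excess and the $Du$-excess on the bounded-but-large gradient range at play. The subsidiary technical points are the uniformity (over the frozen family) of the excess-decay exponent $\gamma\uparrow 1$ under mere continuity of $\partial^{2}\tx{L}$, and checking via \eqref{p2} that the obstacle perturbs the comparison estimate only at order $r^{n+2\alpha}$.
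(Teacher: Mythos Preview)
Your freeze--compare--iterate scheme is the paper's own (Section~\ref{pc1}). The paper uses a two-step comparison (first $v_1$ with a forcing term coming from the obstacle via \eqref{p2}, then the unconstrained $v_2$) so that the frozen equation can be differentiated cleanly; your single-step variant with the frozen obstacle problem would also be workable, and your claim that the frozen minimizer has gradient excess decay for every $\gamma\in(0,1)$---once $\partial^{2}\tx{L}$ is continuous and $s>0$---matches the paper's use of \eqref{5.3.1}.

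The genuine gap is in your account of the $\mu>1$ bullet. Your comparison estimate $\int_{B_r}\snr{Du-Dh}^2\le cr^{n+2\alpha}$ via ``uniform ellipticity on $\{\snr{z}\le M\}$'' tacitly needs $\snr{Dh}\le M$ on \emph{all} of $B_r$, which only holds in the interior. What one actually obtains for general $\mu$ is $\int\snr{Du-Dv_2}^2\le c\tau^{n+\alpha}$ (paper's \eqref{5.3.0}$_2$); the upgrade to $\tau^{n+2\alpha}$ when $\mu=1$ (paper's \eqref{rb2}--\eqref{5.3.2}) comes from an absorption argument that hinges on the exponent $2/(2-\mu)$ in $\eqref{Vm}_2$ being exactly $2$, and this absorption fails once $\mu>1$. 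Hence the $\alpha/2$ loss originates in the \emph{comparison step}, not in ``converting $V_\mu(Du)$-excess back to $Du$-excess'': on the bounded gradient range at play the map $V_{1,2-\mu}$ is bi-Lipschitz by $\eqref{Vm}_1$, so that conversion is lossless. Note the internal inconsistency: if your $r^{n+2\alpha}$ comparison were valid for all $\mu$, the same iteration would force $Du\in C^{0,\alpha}_{\loc}$ also when $\mu>1$, contradicting the first bullet. Your treatment of the second and third bullets is essentially correct, though the rigorous route to $r^{n+2\alpha}$ when $\mu=1$ is the absorption in \eqref{rb2}--\eqref{5.3.2}, not bare uniform ellipticity on $B_r$.
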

\noindent Theorem \ref{mt2} and Corollary \ref{cor1} offer a complete regularity theory for quite a general family of obstacle problems that are genuinely nonuniformly elliptic and nondifferentiable, in the sense that the second variation may not even exist. This is a critical situation as all the available arguments, relying either on Choe's perturbation methods \cite{ch}, or on Duzaar \& Fuchs's linearisation technique \cite{dufu,du,fuc90}, here dramatically fail. Up to now, the latter was the standard approach to the regularity of differentiable nonuniformly elliptic obstacle problems, see \cite{FM,ss17,ciccio,ko1} and references therein, while in case of functionals defined upon pointwise uniformly elliptic structures, a perturbative argument is still feasible \cite{sch,KL,by0}. We refer to Section \ref{tc} below for a discussion on the most prominent technical novelties introduced here. Let us finally highlight that our results are sharp both in terms of the regularity theory for integrals \eqref{ll}-\eqref{ll1} and of related function space properties. 
\begin{theorem}\label{count}
Let $Q:=(-1,1)^{n}$, $0<\alpha<\infty$, $1<q<\infty$ be numbers such that 
\eqn{66.0}
$$q>1+\alpha,$$ 
$\mathcal{H}$ be the functional in \eqref{ll1}, where $s\in [0,1]$ and integrand $\tx{L}$ verifies \eqref{ass.0}-\eqref{ass.2} with $\mu\ge 1$, and $\mathcal{G}$ be the related modular in \eqref{ggg}. There exist a coefficient $0\le a(\cdot)\in C^{\alpha}(\bar{Q})$, and functions $\bar{u}_{0}\in W^{1,1}_{0}(Q)\cap L^{\infty}(\bar{Q})$ with $\mathcal{G}(\bar{u}_{0};Q)<\infty$, $\ti{u}_{0}\in C^{\infty}(\bar{Q})$ such that the following holds.
\begin{itemize}
\item Lavrentiev phenomenon occurs:
\eqn{count.2}
$$
\inf_{w\in \ti{u}_{0}+C^{\infty}_{c}(Q)}\mathcal{H}(w;Q)>\inf_{w\in \ti{u}_{0}+W^{1,1}_{0}(Q)}\mathcal{H}(w;Q).
$$
\item There is no sequence $\{u_{i}\}_{i\in \mathbb{N}}\subset C^{\infty}_{c}(Q)$ such that $\mathcal{G}(u_{i}-\bar{u}_{0};Q)\to 0$, i.e. smooth maps are not dense in $W^{1,1}_{0}(Q)\cap \left\{w\colon \mathcal{G}(w;Q)<\infty\right\}$ with respect to modular convergence.
\end{itemize}
\end{theorem}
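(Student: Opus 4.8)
The plan is to construct, following the classical fractal scheme of Zhikov (see also \cite{FM,dm}), an explicit coefficient $a$ and explicit competitors that realise an energy gap on a Cantor-type set where $a$ and the oscillation of $\bar u_0$ are delicately balanced. First I would fix a rapidly decreasing sequence of scales and build a Cantor set $C\subset Q$ of appropriate Hausdorff dimension together with a family of dyadic-type cubes $\{Q_i^k\}$ resolving it; the coefficient $a$ is then defined so that $a\equiv 0$ on a neighbourhood of $C$ while $a\sim \ell_k^{\alpha}$ away from $C$ at scale $\ell_k$, arranged so that $a\in C^{0,\alpha}(\bar Q)$ — this is where the exponent $\alpha$ enters, since H\"older continuity with this precise exponent is exactly what caps the rate at which $a$ may vanish near $C$. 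The function $\bar u_0$ is built as a superposition of bump functions supported in the resolving cubes, with heights and gradients tuned so that, on the "good" region where $a$ vanishes, the gradient is of nearly linear-growth-finite energy (so $\mathcal G(\bar u_0;Q)<\infty$ thanks only to the $\tx L$-part), whereas any smooth approximation is forced to spread a comparable oscillation across the region where $a\sim \ell_k^\alpha$, and there the $q$-power term blows up precisely because $q>1+\alpha$; $\ti u_0$ is a smooth modification of $\bar u_0$ agreeing with it near $\partial Q$ and used to state the Lavrentiev gap.

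For the Lavrentiev phenomenon \eqref{count.2}, I would estimate from below $\mathcal H(w;Q)$ for an arbitrary $w\in \ti u_0+C^\infty_c(Q)$: by a capacitary/slicing argument on each annular region $Q_i^{k-1}\setminus \bigcup Q_j^k$, the boundary data force $\int |Dw|\gtrsim$ (a fixed amount of oscillation) across a layer of thickness $\sim \ell_k$ where $a\gtrsim \ell_k^\alpha$, so H\"older's inequality gives a lower bound $\sum_k (\#\text{cubes at level }k)\,\ell_k^n\,\ell_k^\alpha\,\ell_k^{-q}\cdot(\text{osc})^q$, and the scales are chosen so that this series diverges when $q>1+\alpha$ but the corresponding series for $\bar u_0$ (which uses the vanishing of $a$) converges. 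Hence $\inf$ over smooth competitors is $+\infty$ while $\inf$ over $W^{1,1}_0$ is finite (attained, say, by $\bar u_0$ adjusted to the boundary datum of $\ti u_0$), yielding the strict inequality. The structural assumptions \eqref{ass.0}--\eqref{ass.2} with $\mu\ge1$ on $\tx L$ are used only to guarantee $\tx L(z)\lesssim |z|\log(1+|z|)+1$ from above and $\tx L(z)\gtrsim |z|$ from below, which is all that the energy bookkeeping requires; the growth of $\tx L$ being at most slightly superlinear is what makes $\mathcal G(\bar u_0;Q)<\infty$ possible.

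For the second bullet, the non-density in modular convergence, I would argue by contradiction: if $u_i\in C^\infty_c(Q)$ satisfied $\mathcal G(u_i-\bar u_0;Q)\to 0$, then in particular $\int_Q a(x)\snr{D(u_i-\bar u_0)}^q\dx\to0$ and $\int_Q \tx L(D(u_i-\bar u_0))\dx\to0$, so $u_i\to\bar u_0$ in $W^{1,1}(Q)$ and (along a subsequence) $Du_i\to D\bar u_0$ a.e.; but then the same capacitary lower bound as above, applied to $u_i$ (which is smooth and compactly supported, hence admissible in the Lavrentiev competition up to matching the zero boundary datum), forces $\mathcal H(u_i;Q)\to\infty$, contradicting $\mathcal H(u_i;Q)\le \mathcal H(\bar u_0;Q)+\mathcal G(u_i-\bar u_0;Q)\cdot(\text{const})<\infty$ via the convexity/triangle-type inequality for the modular $\mathcal G$. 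The main obstacle I anticipate is the precise tuning of the three sequences of parameters — the side-lengths $\ell_k$, the number of children per cube, and the heights of the bumps — so that simultaneously (i) $a\in C^{0,\alpha}$ with constant independent of $k$, (ii) $\mathcal G(\bar u_0;Q)<\infty$, and (iii) the capacitary series diverges; getting all three compatible is exactly the place where the sharp threshold $q=1+\alpha$ is pinned down, and it requires carefully tracking how the nearly-linear (rather than genuinely $p$-power) growth of $\tx L$ interacts with the dimension count of the Cantor set, which is more delicate than in the standard $p<q$ double phase setting.
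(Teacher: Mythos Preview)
Your overall architecture (Cantor set, coefficient vanishing near it, bounded rough competitor with finite $\mathcal G$-energy) is on the right track, but the mechanism you propose for the Lavrentiev gap has a genuine gap.

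You assert that ``$\inf$ over smooth competitors is $+\infty$''. This is false: $\ti u_0\in C^\infty(\bar Q)$ is itself an admissible competitor, so $\tx I_\infty\le \mathcal H(\ti u_0;Q)<\infty$. The gap \eqref{count.2} is between two \emph{finite} numbers. Your capacitary/slicing lower bound breaks down precisely here: the boundary datum $\ti u_0$ is smooth and carries no fine-scale oscillation, so a smooth $w\in \ti u_0+C^\infty_c(Q)$ is under no obligation to oscillate across the annular layers $Q_i^{k-1}\setminus\bigcup Q_j^k$ at all --- it can simply equal $\ti u_0$ there. The ``boundary data force oscillation'' step confuses the smooth datum $\ti u_0$ with the rough target $\bar u_0$.

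What the paper does instead (following Balci--Diening--Surnachev) is a \emph{duality} argument. One constructs a vector field $\tx z=\mathrm{div}\,\tx Z$ with $\tx Z$ antisymmetric-valued and smooth off the Cantor set, so that $\int_Q\langle D\varphi,\tx z\rangle\,\mathrm{d}x=0$ for every $\varphi\in C^\infty_c(Q)$, while $\int_Q\langle D\ti u_0,\tx z\rangle\,\mathrm{d}x$ is a fixed positive number (this pairing survives because $\ti u_0$ does not vanish near $\partial Q$). Young's inequality then gives, for any smooth competitor $w_\tau$,
\[
\mathcal H(w_\tau;Q)\ge \sigma_*\int_Q\langle Dw_\tau,\tx z\rangle\,\mathrm{d}x-\mathcal H^*(\sigma_*\tx z;Q)
=\sigma_*\int_Q\langle D\ti u_0,\tx z\rangle\,\mathrm{d}x-\mathcal H^*(\sigma_*\tx z;Q),
\]
and the parameters $m_*,\sigma_*$ are tuned (this is where $q>1+\alpha$ enters, via the bound $\mathcal H^*(\sigma_*\tx z;Q)\lesssim\sigma_*^{q'}\int|x_n|^{-\alpha/(q-1)}\tx b^{q'}$) so that the right-hand side strictly exceeds $\mathcal H(u_0;Q)\ge \tx I_1$. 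The divergence-free field, not a capacity estimate, is what distinguishes smooth from rough competitors.

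For the second bullet, your contradiction inequality $\mathcal H(u_i;Q)\le \mathcal H(\bar u_0;Q)+c\,\mathcal G(u_i-\bar u_0;Q)$ is not a valid convexity/triangle bound for these functionals, and in any case $\mathcal H(u_i;Q)\to\infty$ does not follow once the first bullet is corrected. The paper's route is cleaner: take $\bar u_0:=v-\ti u_0$ where $v$ minimises $\mathcal G$ over $\ti u_0+W^{1,1}_0(Q)$; if $u_i\in C^\infty_c(Q)$ satisfied $\mathcal G(u_i-\bar u_0;Q)\to0$, then $v_i:=u_i+\ti u_0\in\ti u_0+C^\infty_c(Q)$ would satisfy $\mathcal G(v_i;Q)\to\mathcal G(v;Q)=\tx I_1$, contradicting $\tx I_\infty>\tx I_1$ from the first bullet.
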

\noindent Theorem \ref{count} shows the occurrence of Lavrentiev phenomenon and the failure of density of smooth maps in \eqref{ll}-\eqref{ll1}, thus also establishing the optimality of \cite[Theorem 2.3]{buli} and of our Lemma \ref{alav} and Corollary \ref{alav.2} below for this class of problems. Most prominently, we exhibit severe anomalies in gradient integrability for minima of more general double phase functionals than \eqref{ll}-\eqref{ll1}, including those whose elliptic term has linear growth. This is the content of the next theorem.
\begin{theorem}\label{count.3}
    With $Q:=(-1,1)^{n}$, $0<\alpha<\infty$ and $1<q<\infty$ being numbers verifying \eqref{66.0}, let $\mathcal{H}$ be the functional in \eqref{ll1}. For every $\varepsilon\in (0,\min\{q-1-\alpha,n-1\})$ there exist a coefficient $0\le a(\cdot)\in C^{\alpha}(\overline{2Q})$ and a function $\ti{u}_{0}\in C^{\infty}(\overline{2Q})$ such that the following holds.
    \begin{itemize}
        \item Under nearly linear growth conditions \eqref{ass.0}-\eqref{ass.2}, the solution $u\in \ti{u}_{0}+W^{1,1}_{0}(Q)$ to Dirichlet problem
\eqn{count.1}
$$
\ti{u}_{0}+W^{1,1}_{0}(Q)\ni w\mapsto \min_{w\in \ti{u}_{0}+W^{1,1}_{0}(Q)}\mathcal{H}(w;Q)
$$
does not belong to $W^{1,p}_{\loc}(Q)$ for all $p>1+\varepsilon$.
\item Assuming \eqref{ass.0} and linear growth conditions \eqref{gcgc}, every relaxed\footnote{Due to lack of compactness of $W^{1,1}(Q)$, if $\tx{g}=1$ in \eqref{ass.1}, problem \eqref{count.1} might not admit solutions in Dirichlet class $\ti{u}_{0}+W^{1,1}_{0}(Q)$. Existence results can rather be obtained in $BV(Q)$ by suitably extending integral \eqref{ll1} from $\ti{u}_{0}+W^{1,1}_{0}(Q)$ to $BV(Q)$, see \cite{gms79a,BS1,gkpq} and Section \ref{lglg} below.} minimizer of functional $\mathcal{H}$ cannot belong to $W^{1,p}_{\loc}(Q)$ for all $p>1+\varepsilon$.
    \end{itemize}
\end{theorem}
\noindent Examples covered by Theorem \ref{count.3} include integrals \eqref{ll}-\eqref{ll1}, as well as anisotropic integrals of the area type such as
\eqn{area}
$$
W^{1,1}_{\loc}(\Omega)\ni w\mapsto \mathcal{A}(w;\Omega):=\int_{\Omega}\sqrt[{p\ }]{1+\snr{Dw}^{p}}+a(x)\snr{Dw}^{q}\dx,\quad q,p>1,
$$
 cf. \cite{BS1,besc} and Section \ref{tc}. Although being stated for unconstrained problems, both Theorems \ref{count}-\ref{count.3} can be extended effortlessly to the variational obstacle setting, given that boundary data and competitors are bounded. The principle behind Theorems \ref{count}-\ref{count.3} is that the vanishing of coefficient $a$ hides the singularities of certain low-integrable competitors whose energy becomes therefore finite, thus making them admissible in \eqref{count.2} and \eqref{count.1}. Such quite irregular maps lessen too much the energy, so that, imposing very high traces on $\partial Q$ for the smooth boundary data in \eqref{count.2}-\eqref{count.1}, leads to the formation of singularities. The strength of these examples lies in three key aspects.
 \begin{itemize}
     \item They hold for scalar minima. This is a genuinely nonuniformly elliptic phenomenon, in contrast with the uniformly elliptic setting where, to produce singularities in the plain energy setting, one needs to look at vectorial problems.
     \item The integrands considered are regular. Specifically, they can be nondegenerate or nonsingular, radial, and the coefficient $a$ can be chosen with an arbitrarily high smoothness degree $\alpha>0$.
     \item Lipschitz domains and smooth boundary data. Singularities are not generated by irregular boundaries or ill-behaved boundary data, they actually originate from the pathological interaction between coefficient and gradient variable.
 \end{itemize}
\noindent Theorems \ref{count}-\ref{count.3} come by revisiting the fractal constructions of Balci \& Diening \& Surnachev in \cite{balci,balci2}, see also Zhikov \cite{Zhom}, Fonseca \& Mal\'y \& Mingione \cite{FMM}, and Esposito \& Leonetti \& Mingione \cite{sharp} for earlier results, and Remark \ref{rem6} below for additional comments. An overview of our results appears in \cite{def25}. The a priori boundedness ansatz in nonuniformly elliptic problems plays a key role in enlarging or maximising the rate of nonuniformity under which gradient continuity holds. This phenomenon already appears in the regularity theory of nonparametric minimal surfaces with the seminal work of Bombieri \& De Giorgi \& Miranda \cite{bdm69}, or more general nonuniformly elliptic equations, cf. Trudinger \cite{tru67}, Ladyzhenskaya \& Ural'tseva \cite{lu70}, Simon \cite{simon} and Giaquinta \& Modica \& Souček \cite{gms79}. These fundamental contributions initiated a systematic investigation into various classes of nonuniformly elliptic PDEs, see Ivanov's monograph \cite{iva84} and references therein for an account of the first advances on the subject. Most of the methods and techniques developed in this period strictly rely on barrier arguments, or on the availability of strong solutions as well as on the possibility of total-differentiating the problem, thus unavoidably escaping the classical Schauder setting. Later on, Marcellini's fundamental variational approach \cite{ma2,ma1,ma4} set, within the natural energy framework, quite a large family of PDEs with polynomial degree of nonuniformity and obtained maximal gradient regularity for autonomous or differentiable problems. In a nutshell, the basic idea consists in relating the growth/ellipticity features of the equation to the rate of blow up of its ellipticity ratio that, if suitably slowed down by choosing growth/ellipticity exponents sufficiently close to each other, assures regularity. This turns out to be necessary and sufficient condition for regular solutions \cite{ma1,ma4}. A very rich literature consequently flourished, cf. \cite{ma5,ba2,BM,BS,hs,Schaeffner2020,BS2,dkk24,fsv24,gkpq,schag} and \cite{bdms,ko1,dp,ik} for interior and boundary regularity respectively in the autonomous case, \cite{choe,elm99,ckp11} for related interpolative results, and \cite{dmon,dima,ciccio,bar23} for nonautonomous, differentiable problems, see also \cite{masu2,def25} for reasonable surveys. Of particular relevance for our ends is the so-called anisotropic $(\mu,q)$-ellipticity \cite{bildhauer}, typical of functionals at linear or nearly linear growth as those in \eqref{area} or in \eqref{ll}-\eqref{ll1}. Being the latter limiting configurations between linear and power growth, these models are rather common in materials science: the theories of Prandtl-Eyring fluids and of plastic materials with logarithmic hardening are prominent instances of applications, cf. Frehse \& Seregin \cite{FrS} - see also Bildhauer \& Fuchs \cite{bildhauer,BF}, Beck \& Schmidt \cite{BS1,besc,bs15b}, Fuchs \& Mingione \cite{FM}, and Gmeineder \& Kristensen \cite{gmek1,gme2,gme1,gkpq} for related deep regularity results, both in the scalar and in the vectorial setting. Regarding nonautonomous, nonuniformly elliptic integrals, Giaquinta \& Modica \& Souček \cite{gms79} and Zhikov \cite{Zhom,Z0,Z1} highlighted the distinctive, unique aspect that is the pathological interaction between space-depending coefficients and gradient variable, possibly leading to the formation of singularities already in the scalar setting. A very efficient testbed of this novel phenomenology is Zhikov's double phase functional,
\eqn{dpdp}
$$
\begin{array}{c}
\displaystyle
W^{1,1}_{\loc}(\Omega)\ni w\mapsto \int_{\Omega}\snr{Dw}^{p}+a(x)\snr{Dw}^{q}\dx,\\[8pt]\displaystyle
0\le a(\cdot)\in C^{0,\alpha}(\Omega), \ \ \alpha\in (0,1],\qquad \quad 1<p\le q<\infty,
\end{array}
$$
a very special example of the class of problems studied by Marcellini, that offers striking counterexamples once certain precise quantitative relations linking growth exponents $(p,q)$, the smoothness degree $\alpha$ of the modulating coefficient $a$ and possibly the ambient dimension $n$, are violated. In fact, Esposito \& Leonetti \& Mingione \cite{sharp} exhibited a minimizer of integral \eqref{dpdp} with one point singularity that forbids $W^{1,q}$-integrability whenever $q>p(n+\alpha)/n$, while Fonseca \& Mal\'y \& Mingione \cite{FMM} and Balci \& Diening \& Surnachev \cite{balci,balci2} constructed bounded minimizers of \eqref{dpdp} with fractal singular set of maximal Hausdorff dimension whenever $q>p+\alpha$. A comprehensive regularity theory for minima of \eqref{dpdp} was eventually obtained by Baroni \& Colombo \& Mingione \cite{CM,CM2,BCM}: if $u\in W^{1,p}_{\loc}(\Omega)$ is a local minimizer of \eqref{dpdp}, then
\eqn{dpr}
$$
\left\{
\begin{array}{c}
\displaystyle
\ u\in W^{1,p}_{\loc}(\Omega) \ \ \mbox{and} \ \ \frac{q}{p}\le 1+\frac{\alpha}{n}  \ \Longrightarrow \ Du \ \ \mbox{locally H\"older continuous,}\\[10pt]\displaystyle
\ u\in W^{1,p}_{\loc}(\Omega)\cap L^{\infty}_{\loc}(\Omega) \ \ \mbox{and} \ \ q\le p+\alpha \ \Longrightarrow \ Du \ \ \mbox{locally H\"older continuous.}
\end{array}
\right.
$$
In \cite{CM,CM2,BCM} a purely perturbative approach is adopted, and this is feasible because the integrand in \eqref{dpdp} is pointwise uniformly elliptic, the vanishing of coefficient $a$ being the only responsible of the mild nonuniformity of \eqref{dpdp}. Such a behavior can be quantified by means of the pointwise ellipticity ratio associated to the double phase integrand $\tx{P}(x,z):=\snr{z}^{p}+a(x)\snr{z}^{q}$,
$$
\mathcal{R}_{\partial\tx{P}}(x,z):=\frac{\mbox{highest eigenvalue of} \ \partial^{2}\tx{P}(x,z)}{\mbox{lowest eigenvalue of} \ \partial^{2}\tx{P}(x,z)}\lesssim_{p,q}1
$$
that stays uniformly bounded, while the nonlocal one\footnote{Here, $B\Subset \Omega$ is a ball.}
$$
\mathcal{R}_{\partial\tx{P}}(z;B):=\frac{\sup_{x\in B}\textnormal{highest eigenvalue of }\partial^{2} \tx{P}(x,z)}{\inf_{x\in B}\textnormal{lowest eigenvalue of }\partial^{2} \tx{P}(x,z)}\lesssim_{p,q}1+\nr{a}_{L^{\infty}(B)}\snr{z}^{q-p},
$$
instead detects the (very soft) rate of nonuniformity of \eqref{dpdp}, that can nonetheless be compensated via zero-order corrections (Gehring-type higher integrability or H\"older continuity à la De Giorgi-Nash-Moser), see \cite[Section 4.6]{ciccio}. The whole strategy was eventually extended to a very general family of pointwise uniformly elliptic problems by H\"asto \& Ok \cite{HO1,HO2,HO3}, see also \cite{bacomi1,boh,bs21,balci3,KL,bal23,bar23,bb90,bar24} for an (incomplete) list of related contributions. This approach breaks down for the integrals treated here already in the basic model \eqref{ll}: letting $\tx{H}(x,z):=\snr{z}\log(1+\snr{z})+a(x)\snr{z}^{q}$ we indeed find
$$
\mathcal{R}_{\partial\tx{H}}(x,z)\lesssim_{q}1+\log(1+\snr{z})\qquad \mbox{and}\qquad \mathcal{R}_{\partial\tx{H}}(z;B)\lesssim_{q}1+\nr{a}_{L^{\infty}(B)}\snr{z}^{q-1},
$$
so \eqref{ll} and a fortiori \eqref{ll1}, fall in the realm of strongly nonuniformly elliptic functionals, characterised by the blow up of the pointwise ellipticity ratio:\footnote{By very definition it is $\mathcal{R}_{\partial\tx{H}}(x,z)\le \mathcal{R}_{\partial\tx{H}}(z;B)$ for all $x\in B$, so the blow up of the pointwise ellipticity ratio implies the blow up of the nonlocal one on the same ball.} nonuniformity now is directly due to outgrows or anisotropy in the gradient variable. A complete Schauder theory was only recently achieved by the first named author and Mingione, see \cite{piovra,jw} for quantitatively superlinear problems, and \cite{dm} for nearly linear growth ones, the multi-phase model being instead treated by the second and third named authors in \cite{dp2}. Integrals \eqref{ll}-\eqref{ll1} can be seen as limiting configurations of \eqref{dpdp} as $p\to 1$. Coherently, in \cite{dm} it is proven that local minimizers of \eqref{ll} or \eqref{ll1} have locally H\"older continuous gradient provided that $q<1+\alpha/n$, that is $\eqref{dpr}_{1}$ with (formally) $p=1$, while here we cover the full interpolative range in \eqref{1qa}, precisely corresponding to $p=1$ in $\eqref{dpr}_{2}$. We conclude by outlining our techniques. 
\subsection{Technical novelties}\label{tc} In this paper there are three salient technical novelties that are worth highlighting. All of them are specifically designed to deliver optimal Schauder theory for free or constrained minimizers of anisotropic variational integrals at nearly linear growth within the maximal rate of nonuniformity. 
\begin{itemize}
    \item The achievement of gradient H\"older continuity for minimizers of functionals $\mathcal{L}$ or $\mathcal{H}$ within the maximal nonuniformity rate $q<1+\alpha$. So far, the only way to cover the full range in \eqref{dpr}$_{2}$ in quantitatively superlinear, pointwise uniformly elliptic problems \cite{CM2,BCM,bb90,HO3} makes crucial use of the pointwise uniform ellipticity of the governing integrand. In fact, this still allows proving almost Lipschitz continuity for minima via intrinsic perturbation arguments such as blow up or harmonic approximation, thus reducing to a minimal level all possible growth/ellipticity interactions that may worsen the bound in $\eqref{dpr}_{2}$.~Once shown H\"older continuity up to any exponent (less than one), the nonuniformity of the functional becomes immaterial, and gradient H\"olderianity follows via Campanato's theory. Due to the strong rate of nonuniformity of functionals $\mathcal{L}$ or $\mathcal{H}$, any approach via perturbation immediately breaks down, and the only effective way to obtain almost Lipschitz continuity is to "fractionally" differentiate the related Euler-Lagrange equations as to traduce the H\"older continuity of coefficients into a gain of fractional differentiability for solutions as done in \cite{jw}. However, another obstruction arises because of the severe loss of ellipticity in integrals at nearly linear growth. Indeed, as already evident in quantitatively superlinear problems with subquadratic growth, a loss of half of the fractional differentiability gain occurs and unavoidably reduces the attainable range of nonuniformity, see \cite[Section 3.6, Step 5]{CM2}, \cite[(1.19), Theorem 3]{dmon}, \cite[(7.9), Section 7.1]{jw}, and Remark \ref{r32} below. To overcome this issue we employ a hybrid, borderline counterpart of the fractional Moser iteration in \cite{jw} that preserves the full fractional differentiability income, and consequently maximises the nonuniformity range. The price to pay is the appearance in the bounding constants of an arbitrarily small power of the $L^{\infty}$-norm of the gradient to compensate the loss of ellipticity and interpolate between an arbitrarily high power of the modulus of the gradient and its $L^{1}$-norm. The $L^{\infty}$-norm is eventually reabsorbed in the final De Giorgi type iteration, see the proof of Propositions \ref{p31} and \ref{lipb} below. Let us point out that this is the very first set of optimal Schauder estimates in nondifferentiable, anisotropic $(\mu,q)$-elliptic problems with the sharp a-dimensional constraint \eqref{1qa} in force. We shall return on this point in the next bullet. Arguments relying on fractional differentiability and various general Besov spaces techniques are employed in uniformly elliptic problems with a certain lack of ellipticity, see Domokos \cite{dom04} on gradient higher differentiability of $p$-harmonic maps in the Heisenberg group, and Brasco \& Lindgren \& Schikorra \cite{bls} and Garain \& Lindgren \cite{gl1} for higher regularity in $(s,p)$-harmonic maps.
    \item $\mu$-ellipticity, nondifferentiable ingredients and convex anisotropy. To prove gradient continuity in $\mu$-elliptic problems without penalising the rate of nonuniformity (in particular without imposing dimensional limitations), heavy smoothness assumptions were imposed on space depending coefficients, cf. \cite[Part I]{lu70}, \cite[page 161]{gms79}, \cite[Chapter 4.2.2.2]{bildhauer} and \cite[Appendix C]{BS1}, due to the unavoidable use of classical Moser iteration technique in combination with Gagliardo-Nirenberg inequalities. Although at least in the case of $(\mu,q)$-elliptic integrals at nearly linear growth these assumptions have been relaxed to Sobolev differentiability \cite[Theorems 1 and 4]{dmon}, and eventually to H\"older continuity \cite[Theorems 1.1 and 1.3]{dm} and \cite[Theorem 1.1 and Corollary 1.2]{dp2}, a dimensional nonuniformity rate of the form $q<1+\texttt{o}(n)$, with $\texttt{o}(n)\to 0$ as $n\to \infty$, plays a crucial role. Here we bypass this obstruction and obtain Schauder estimates for a reasonable class of nonautonomous, anisotropic $(\mu,q)$-elliptic integrals. The sharpness of our achievements is established via fractal counterexamples in the spirit of \cite{balci,balci2}, that connect in an optimal fashion our regularity results to the limiting nonuniformity threshold past which severe irregularity phenomena occur, cf. Theorems \ref{count}-\ref{count.3}. In particular, Theorem \ref{count.3} is stated for more general functionals than \eqref{ll}-\eqref{ll1}, for which we achieve maximal regularity. It indeed holds also for integrals whose elliptic term has linear growth, such as the area-type model in \eqref{area}. Although Theorem \ref{count.3} does not rule out $\textnormal{L-LogL}$ estimates \cite[Theorem 1.10]{BS1}, it shows that pointwise gradient bounds fail whenever $q>1+\alpha$. This obstruction depends only on the growth of the integrand, not on its ellipticity (i.e., the growth of the eigenvalues of its second derivatives). By contrast, a priori estimates hinge on conditions on second derivatives. In superlinear regimes these scale with the integrand’s growth (see \eqref{dpdp} or the $(p,q)$-nonuniformity case \cite{piovra,jw}), but approaching the linear regime a gap may occur: the integrand keeps linear growth while its second derivatives can decay rapidly (cf.~\eqref{area} with large $p>1$). In view of Theorem \ref{mt2}, a formal threshold for a priori gradient regularity is
    \eqn{const}
    $$1<q<2-\mu+\alpha\le 1+\alpha.$$
    For anisotropic area-type functionals as in \eqref{area} one has $\mu=p+1$, \cite[Section 1]{besc}. Coupling this with $p>1$ gives $\alpha>q+p-1$; for $p$ close to $2$ this recovers the classical requirement of $C^{2}$-coefficients, see \cite{lu70} and aligns with the counterexample in \cite{gms79}. If, in addition, $\alpha\in (0,1)$ and we recall that necessarily $q<1+\alpha$, then the condition above forces $\mu\approx 1$ exactly as in Theorems \ref{mt1}–\ref{mt2} and in \cite{dm}. This suggests that \eqref{ll}, \eqref{ll1} are the limiting configurations for Schauder theory with H\"older coefficients, convex anisotropy, and $\mu$-ellipticity.
    \item Nonuniform ellipticity, nondifferentiability, and the obstacle constraint. As already mentioned, we show how to bypass the up-to-now unavoidable linearisation procedure due to Duzaar \& Fuchs \cite{dufu,du} to achieve first order regularity in obstacle problems driven by nondifferentiable, nonuniformly elliptic operators. Duzaar \& Fuchs's technique turns constrained minimizers of homogeneous functionals into unconstrained minima of forced integrals with right-hand side term depending in a nonlinear fashion on the second derivatives of the obstacle function and on the gradient of coefficients. This is no longer possible in our case because of the lack of differentiability of coefficient $a$. Moreover, the strong rate of nonuniform ellipticity of the integrands in \eqref{ll} or \eqref{ll1} forbids to directly perturb around frozen problems, cf. Choe \cite{ch}. In this respect, we rather design a blended scheme relying on both differentiation (at fractional scales) and hybrid comparison aimed at homogenizing the H\"older continuity of coefficients with the rate of fractional differentiability of minima and tailored to account also for the presence of the obstacle. 
\end{itemize}
Finally, a synopsis of the structure of the paper.
\subsubsection*{Outline of the paper} In Section \ref{pre} we describe our notation and collect some auxiliary results that will be helpful at various stages of the paper. In Section \ref{alavcc} we discuss basic approximation features of integrals \eqref{ll}-\eqref{ll1}. In Section \ref{ncz} we design our hybrid version of fractional Moser iteration. Section \ref{sce} is instead devoted to the achievement of Schauder estimates for (possibly constrained) minima of \eqref{ll} or \eqref{ll1}. Section \ref{app} contains the proof of Theorems \ref{mt1}-\ref{mt2} and of Corollary \ref{cor1}. Finally, Section \ref{sec6} offers counterexamples to establish the sharpness of our results.

\section{Preliminaries}\label{pre}
\subsection{Notation}\label{not}
In the following, $\Omega\subset \er^n$, $n\ge 2$, denotes an open, bounded domain with Lipschitz regular boundary. We denote by $c$ a general constant larger than $1$. Different occurrences from line to line will be still denoted by $c$. Special occurrences will be denoted by $c_*,  \tilde c$ or likewise. Relevant dependencies on parameters will be as usual emphasized by putting them in parentheses. Sometimes we shall use symbols "$\gtrsim$", "$\lesssim$" with subscripts, to indicate that a certain inequality holds up to constants whose dependencies are marked in the subfix. We denote by $ B_r(x_0):= \{x \in \er^n  :   |x-x_0|< r\}$ the open ball with center $x_0$ and radius $r>0$; we omit denoting the center when it is not necessary, i.e., $B \equiv B_r \equiv B_r(x_0)$; this especially happens when various balls in the same context share the same center. Finally, with $B$ being a given ball with radius $r$ and $\gamma$ being a positive number, we denote by $\gamma B$ the concentric ball with radius $\gamma r$ and, consequently, it is $B/\gamma \equiv (1/\gamma)B$. To indicate a general function space (i.e. a H\"older space, or a Sobolev space) we use symbol $\mathbb{X}(\Omega)$, while $\mathbb{X}_{\loc}(\Omega)$ will denote its local variant. We shall adopt such a symbol to stress that the degree of smoothness of certain functions depends only on the assumed regularity on a fixed one. With reference to \eqref{ll1}, for the rest of the paper we keep the following notation:\footnote{With some abuse we will keep the same symbol $\ell_{s}(z)$ also when $z$ is a scalar.}
\eqn{hl.00}
$$
\ell_{s}(z):=(s^{2}+\snr{z}^{2})^{\frac{1}{2}}\qquad \mbox{and}\qquad \tx{H}(x,z):=\tx{L}(z)+a(x)\ell_{s}(z)^{q},
$$
for $z\in \mathbb{R}^{n}$, $1<q<\infty$ and $s\in [0,1]$. Whenever $\ti{\Omega} \subset \er^{n}$ is a measurable subset with bounded positive measure $0<|\ti{\Omega}|<\infty$, and with $f \colon \ti{\Omega} \to \er^{k}$, $k\geq 1$, being a measurable map, we use
$$
(f)_{\ti{\Omega}}=\mint_{\ti{\Omega}}f(x)\dx:= \snr{\ti{\Omega}}^{-1}\int_{\ti{\Omega}}  f(x) \dx
$$
to indicate its integral average. If $f\in L^{\gamma}(\ti{\Omega},\mathbb{R}^{k})$ for some $1\le \gamma<\infty$, we shorten its averaged norm as
$$
\nra{f}_{L^{\gamma}(\ti{\Omega})}:=\left(\mint_{\ti{\Omega}}\snr{f}^{\gamma}\dx\right)^{\frac{1}{\gamma}},
$$
while if $f\in W^{s,\gamma}(\ti{\Omega})$ with $1\le \gamma<\infty$ and $s\in (0,1)$, its averaged Sobolev–Slobodecki\v{i} seminorm will be indicated by 
$$
\snra{f}_{s,\gamma;\ti{\Omega}}:=\left(\mint_{\ti{\Omega}}\int_{\ti{\Omega}}\frac{\snr{f(x)-f(y)}^{\gamma}}{\snr{x-y}^{n+s\gamma}}\dx\dy\right)^{\frac{1}{\gamma}}.
$$
Given any open set $\ti{\Omega}\Subset \Omega$, to simplify the notation we collect the main parameters related to the problems under investigation in the shorthands
\begin{flalign*}
\begin{cases}
\ \data_{0}:=\left(n,\Lambda,\tx{g},\mu,q\right),\qquad \data_{*}:=\left(n,\Lambda_{*},\tx{g},\mu,q\right),\vspace{1mm}\\
\ \tx{d}(\ti{\Omega},\Omega):=(\dist(\ti{\Omega},\partial\Omega),\diam(\ti{\Omega}),\diam(\Omega)),\vspace{1mm}\\
\ \data_{*}(\ti{\Omega}):=\left(n,\Lambda_{*},\tx{g},\mu,q,\alpha,\nr{\psi}_{W^{2,\infty}(\ti{\Omega})},\nr{\snr{D\psi}^{q-2}D^{2}\psi}_{L^{\infty}(\ti{\Omega})},\nr{a}_{C^{0,\alpha}(\ti{\Omega})}\right),\vspace{1mm}\\
\ \data(\ti{\Omega}):=\left(n,\Lambda,\tx{g},\mu,q,\alpha,\nr{\psi}_{W^{2,\infty}(\ti{\Omega})},\nr{u}_{L^{\infty}(\ti{\Omega})},\nr{\snr{D\psi}^{q-2}D^{2}\psi}_{L^{\infty}(\ti{\Omega})},\nr{a}_{C^{0,\alpha}(\ti{\Omega})}\right).
\end{cases}
\end{flalign*}
With a slight abuse of notation, the symbol $\texttt{o}(\varepsilon)$ will only indicate a quantity depending
on $\varepsilon$ and vanishing as $\varepsilon\to 0$, with no claim on
its rate relative to $\varepsilon$. Likewise,
$d_{\delta;\varepsilon}=\texttt{o}_{\varepsilon}(\delta)$ means that
$d_{\delta;\varepsilon}\to0$ as $\delta\to 0$, for every fixed
$\varepsilon\in(0,1]$. We finally refer to Section \ref{sa} for a description of the various quantities appearing above.
\subsection{Fractional Sobolev spaces} Here we recall some basic facts about Sobolev functions. For a map $w \colon \Omega \to \mathbb{R}^{k}$, $k\ge 1$, a number $\gamma>0$ and a vector $h \in \mathbb{R}^n$, we set $\Omega_{\gamma h}:=\left\{x\in \Omega\colon \dist(x,\partial \Omega)>\gamma\snr{h}\right\}$, and introduce operators $\tau_{h}\colon L^{1}(\Omega,\mathbb{R}^{k})\to L^{1}(\Omega_{h},\mathbb{R}^{k})$, $\tau^{2}_{h}\colon L^{1}(\Omega,\mathbb{R}^{k})\to L^{1}(\Omega_{2h},\mathbb{R}^{k})$, pointwise defined as
\begin{flalign*}
\tau_{h}w(x):=w(x+h)-w(x)\qquad \mbox{and}\qquad \tau_{h}^{2}w(x):=\tau_{h}(\tau_{h}w)(x)\equiv w(x+2h)-2w(x+h)+w(x).
\end{flalign*}
Given two measurable functions $v,w\colon \mathbb{R}^{n}\to \mathbb{R}$, the discrete Leibniz rule prescribes that:
\eqn{prod}
$$
(\tau_{h}(vw))=w(\cdot+h)\tau_{h}v+v\tau_{h}w.
$$
Finite difference operators and weak differentiability of functions are closely connected. We now record a few basic facts concerning fractional Sobolev spaces, \cite{guide,bls}. 
\begin{definition}\label{fra1def}
Let $p \in [1, \infty)$, $k \in \en$, $n \geq 2$, and $\Omega \subset \er^n$ be an open subset.
\begin{itemize}
\item Let $\alpha_{0}\in (0,1)$. The fractional Sobolev space $W^{\alpha_{0} ,p}(\Omega,\er^k )$ consists of those maps $w \colon \Omega\to \er^k$ such that 
the following Gagliardo type norm is finite:
\begin{eqnarray}
\notag
\| w \|_{W^{\alpha_{0} ,p}(\Omega)} & := &\|w\|_{L^p(\Omega)}+ \left(\int_{\Omega} \int_{\Omega}  
\frac{|w(x)
- w(y) |^{p}}{|x-y|^{n+\alpha_{0} p}} \dx \dy \right)^{1/p}\\
&=:& \|w\|_{L^p(\Omega)} + [w]_{\alpha_{0}, p;\Omega}.\label{gaglia}
\end{eqnarray}
The local variant $W^{\alpha_{0} ,p}_{\loc}(\Omega,\mathbb{R}^{k})$ is defined by requiring that $w \in W^{\alpha_{0} ,p}_{\loc}(\Omega,\mathbb{R}^{k})$ iff $w \in W^{\alpha_{0} ,p}(\tilde{\Omega},\mathbb{R}^{k})$ for every open subset $\tilde{\Omega} \Subset \Omega$. 
\item Let $\alpha_{0}\in (0,1)$. The Nikol'skii space $N^{\alpha_{0},p}(\Omega,\er^k )$ is defined by prescribing that $w \in N^{\alpha_{0},p}(\Omega,\er^k )$ iff
$$\| w \|_{N^{\alpha_{0},p}(\Omega )} :=\|w\|_{L^p(\Omega)} + \left(\sup_{|h|\not=0}\, \int_{\Omega_{h}} \left|\frac{\tau_{h}w}{\snr{h}^{\alpha_{0}}}\right|^{p}
 \dx  \right)^{1/p}<\infty.$$
The local variant $N^{\alpha_{0},p}_{\loc}(\Omega,\er^k )$ is defined analogously to $W^{\alpha_{0} ,p}_{\loc}(\Omega,\er^k )$.
\item With $\alpha_{0}\in (0,2)$, we say that a function $w\colon \Omega\to \mathbb{R}^{k}$ belongs to the Besov-Nikol'skii space $B^{\alpha_{0},p}_{\infty}(\Omega,\mathbb{R}^{k})$ iff
$$
\nr{w}_{B^{\alpha_{0},p}_{\infty}(\Omega)}:=\nr{w}_{L^{p}(\Omega)}+\left(\sup_{\snr{h}\not =0}\int_{\Omega_{2h}}\left|\frac{\tau_{h}^{2}w}{\snr{h}^{\alpha_{0}}}\right|^{p}\dx\right)^{1/p}<\infty.
$$
The local variant $B^{\alpha_{0},p}_{\infty;\loc}(\Omega,\mathbb{R}^{k})$ can be defined as done for $N^{\alpha_{0},p}_{\loc}(\Omega,\er^k )$ and $W^{\alpha_{0},p}_{\loc}(\Omega,\mathbb{R}^{k})$.
\end{itemize}
\end{definition}
\noindent Moreover we have that 
\begin{flalign}\label{33}
W^{\alpha ,p}(\Omega,\er^k)\subsetneqq N^{\alpha,p}(\Omega,\er^k)\subsetneqq
W^{\beta,p}(\Omega,\er^k)\quad \mbox{for every} \ \ \beta<\alpha,
\end{flalign}
holds for sufficiently regular domains $\Omega$. 
A local, quantified version of \eqref{33} is in the next lemma, that can be found in \cite[Section 3]{jw}. 
\begin{lemma}\label{l4}
Let $B_{\varrho} \Subset B_{r}\subset \er^n$ be concentric balls with $r\leq 1$, $w\in L^{p}(B_{r},\mathbb{R}^{k})$, $p>1$ and assume that, for $\alpha_{0} \in (0,1]$, $S\ge 0$, there holds
\eqn{cru1}
$$
\nr{\tau_{h}w}_{L^{p}(B_{\rr})}\le S\snr{h}^{\alpha_{0} } \quad \mbox{
for every $h\in \mathbb{R}^{n}$ with $0<\snr{h}\le \frac{r-\rr}{K}$, where $K \geq 1$}\;.
$$
Then it holds that 
\eqn{cru2}
$$
\nr{w}_{W^{\beta,p}(B_{\rr})}\le\frac{c}{(\alpha_{0} -\beta)^{1/p}}
\left(\frac{r-\rr}{K}\right)^{\alpha_{0} -\beta}S+c\left(\frac{K}{r-\rr}\right)^{n/p+\beta} \nr{w}_{L^{p}(B_{\rr})}\,,
$$
for all $\beta\in(0,\alpha_{0})$, where $c\equiv c(n,k,p)$. 
\end{lemma}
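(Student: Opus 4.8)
The plan is to establish the quantitative embedding \eqref{cru2} by a direct estimate of the Gagliardo seminorm $[w]_{\beta,p;\brh}$, splitting the double integral that defines it according to whether the two points are closer or farther apart than the threshold $\delta:=(r-\rr)/K$ — precisely the largest length scale on which the finite difference assumption \eqref{cru1} is available. This is the classical passage from a Nikol'ski\u{\i}-type bound to a Sobolev--Slobodecki\u{\i} bound, carried out while keeping explicit track of the dependence on the radii, in the spirit of \cite[Section 3]{jw}. Note that since $K\ge 1$ one has $\delta\le r-\rr$, so for $\snr{h}<\delta$ and $x\in\brh$ the point $x+h$ lies in $B_r$ and $\tau_h w$ is well defined a.e.\ on $\brh$.

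\emph{Near-diagonal part.} Writing $h:=y-x$ and using Fubini's theorem (and enlarging the inner domain of integration to all of $\brh$, which is legitimate as the integrand is nonnegative), I would estimate
$$
\int_{\brh}\int_{\brh\cap\{\snr{x-y}<\delta\}}\frac{\snr{w(x)-w(y)}^{p}}{\snr{x-y}^{n+\beta p}}\dx\dy\le \int_{\{0<\snr{h}<\delta\}}\frac{1}{\snr{h}^{n+\beta p}}\left(\int_{\brh}\snr{\tau_{h}w(x)}^{p}\dx\right){\rm d}h.
$$
Since $\snr{h}<\delta=(r-\rr)/K$, hypothesis \eqref{cru1} bounds the inner integral by $S^{p}\snr{h}^{\alpha_{0}p}$; passing to polar coordinates in $h$ then yields
$$
\int_{\{0<\snr{h}<\delta\}}\frac{S^{p}\snr{h}^{\alpha_{0}p}}{\snr{h}^{n+\beta p}}\,{\rm d}h= c(n)\,S^{p}\int_{0}^{\delta}t^{(\alpha_{0}-\beta)p-1}\dt=\frac{c(n)}{(\alpha_{0}-\beta)p}\,S^{p}\,\delta^{(\alpha_{0}-\beta)p},
$$
where the convergence of the radial integral, and the appearance of the factor $(\alpha_{0}-\beta)^{-1}$, rely crucially on $\beta<\alpha_{0}$. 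Taking $p$-th roots produces exactly the first term on the right-hand side of \eqref{cru2}.

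\emph{Far-diagonal part and conclusion.} On the complementary region I would use the crude bounds $\snr{x-y}^{-n-\beta p}\le\delta^{-n-\beta p}=(K/(r-\rr))^{n+\beta p}$ and $\snr{w(x)-w(y)}^{p}\le 2^{p-1}(\snr{w(x)}^{p}+\snr{w(y)}^{p})$, obtaining
$$
\int_{\brh}\int_{\brh\cap\{\snr{x-y}\ge\delta\}}\frac{\snr{w(x)-w(y)}^{p}}{\snr{x-y}^{n+\beta p}}\dx\dy\le 2^{p}\snr{\brh}\left(\frac{K}{r-\rr}\right)^{n+\beta p}\nr{w}_{L^{p}(\brh)}^{p};
$$
since $\rr<r\le 1$ we have $\snr{\brh}\le c(n)$, so after taking $p$-th roots this is controlled by $c(n,p)(K/(r-\rr))^{n/p+\beta}\nr{w}_{L^{p}(\brh)}$, i.e.\ the second term in \eqref{cru2}. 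Summing the two contributions bounds $[w]_{\beta,p;\brh}$; and because $K\ge1$ and $r-\rr<1$ force $(K/(r-\rr))^{n/p+\beta}\ge1$, the summand $\nr{w}_{L^{p}(\brh)}$ in $\nr{w}_{W^{\beta,p}(\brh)}$ is absorbed into the second term, completing the estimate. There is no deep obstacle here: the proof is essentially bookkeeping, the two points requiring a little care being the Fubini/polar-coordinates step in the near-diagonal region — one must check that for each admissible $h$ the relevant $x$-integral is over a subset of $\brh$ on which \eqref{cru1} genuinely applies — and tracking how the $(\alpha_{0}-\beta)^{-1/p}$ blow-up in \eqref{cru2} arises from the divergence of $\int_{0}^{\delta}t^{(\alpha_{0}-\beta)p-1}\dt$ as $\beta\uparrow\alpha_{0}$.
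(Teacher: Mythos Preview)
Your proof is correct and follows the standard route: split the Gagliardo double integral at the threshold $\delta=(r-\rr)/K$, apply \eqref{cru1} on the near-diagonal piece via Fubini and polar coordinates, and control the far-diagonal piece by the crude $L^{p}$ bound. The paper does not give its own proof of this lemma but cites it from \cite[Section 3]{jw}; your argument is precisely the computation underlying that reference, so there is nothing further to compare.
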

\noindent We further recall the embeddings of fractional Sobolev spaces and Besov-Nikol'skii spaces; in case $w\in W^{\alpha_{0},p}(B_{\rr}(x_{0}),\mathbb{R}^{k})$ this reads as 
\eqn{immersione}
$$
\nra{w}_{L^{\frac{np}{n-p\alpha_{0}}}(B_{\rr}(x_{0}))}\le c\nra{w}_{L^{p}(B_{\rr}(x_{0}))}+c\rr^{\alpha_{0}}\snra{w}_{\alpha_{0},p;B_{\rr}(x_{0})},
$$
provided $p\ge 1, \alpha_{0} \in (0,1)$, and $p\alpha_{0}<n$, with $c\equiv c(n,k,p,\alpha_{0})$, see \cite[Section 6]{guide}; while if
\begin{flalign*}
w\in L^{p}(B_{\rr+10\mathcal{h}_{0}}(x_{0}),\mathbb{R}^{k})\qquad \mbox{and}\qquad \sup_{0<\snr{h}<\mathcal{h}_{0}}\left\|\frac{\tau_{h}^{2}w}{\snr{h}^{\alpha_{0}}}\right\|_{L^{p}(B_{\rr+5\mathcal{h}_{0}}(x_{0}))}<\infty,
\end{flalign*}
where $\mathcal{h}_{0}\in (0,1]$, $\alpha_{0}\in (1,2)$ and $p\ge 1$, it is
\begin{flalign}\label{immersione2}
\nr{Dw}_{L^{p}(B_{\rr}(x_{0}))}\le \frac{c}{(\alpha_{0}-1)(2-\alpha_{0})}\left(\sup_{0<\snr{h}<\mathcal{h}_{0}}\left\|\frac{\tau_{h}^{2}w}{\snr{h}^{\alpha_{0}}}\right\|_{L^{p}(B_{\rr+5\mathcal{h}_{0}}(x_{0}))}+\mathcal{h}_{0}^{-\alpha_{0}}\nr{w}_{L^{p}(B_{\rr+6\mathcal{h}_{0}}(x_{0}))}\right),
\end{flalign}
with $c\equiv c(n,k,p)$, cf. \cite[Lemma 3.2]{jw}. Next, a localized Gagliardo-Nirenberg type inequality interpolating between Nikol'skii spaces and Sobolev spaces, cf. \cite[Lemma 2.4]{jw}.
\begin{lemma}\label{ls}
Let $B_{\rr}(x_{0})\subset \mathbb{R}^{n}$ be a ball, $\theta\in (0,1)$, $m\in [1,\infty)$ be numbers, and $w\in W^{1,m}(B_{\rr}(x_{0}))\cap L^{\infty}(B_{\rr}(x_{0}))$ be a function. Then $w\in N^{s,\frac{m}{s}}(B_{\theta\rr}(x_{0}))$ for all $s\in (0,1)$ with
\begin{eqnarray}\label{fs.0}
\theta^{ns/m}\sup_{0<\snr{h}<\rr(1-\theta)2^{-4}}\left(\mint_{B_{\theta\rr}(x_{0})}\left|\frac{\tau_{h}w}{\snr{h}^{s}}\right|^{\frac{m}{s}}\dx\right)^{\frac{s}{m}}&\le& \frac{c\nr{w}_{L^{\infty}(B_{\rr}(x_{0}))}^{1-s}}{(1-\theta)^{s}\rr^{s}}\left(\mint_{B_{\rr}(x_{0})}\snr{w}^{m}\dx\right)^{\frac{s}{m}}\nonumber \\
&&+c\nr{w}_{L^{\infty}(B_{\rr}(x_{0}))}^{1-s}\left(\mint_{B_{\rr}(x_{0})}\snr{Dw}^{m}\dx\right)^{\frac{s}{m}},
\end{eqnarray}
for $c\equiv c(n,s,m)$.
\end{lemma}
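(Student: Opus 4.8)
The plan is to obtain \eqref{fs.0} by interpolating, at the level of finite differences, between the classical $W^{1,m}$-estimate for difference quotients and the pointwise $L^{\infty}$-bound on $\tau_{h}w$. The role of the $L^{\infty}$-information is precisely to let us \emph{descend} from the $L^{m/s}$-norm of $Dw$ --- which is unavailable, since $w$ is only assumed $W^{1,m}$ --- down to the available $L^{m}$-norm. First I would fix $h\in \mathbb{R}^{n}$ with $0<\snr{h}<\rr(1-\theta)2^{-4}$ and record the elementary geometric consequences of this smallness: since $\snr{h}<(1-\theta)\rr$, for a.e. $x\in B_{\theta\rr}(x_{0})$ the whole segment $[x,x+h]$ lies in $B_{\rr}(x_{0})$, and $B_{\theta\rr}(x_{0})+th\subset B_{\rr}(x_{0})$ for every $t\in[0,1]$.

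Next I would isolate the two building blocks. On one hand, by convexity of $B_{\rr}(x_{0})$ one has $\snr{\tau_{h}w(x)}\le 2\nr{w}_{L^{\infty}(B_{\rr}(x_{0}))}$ for a.e. $x\in B_{\theta\rr}(x_{0})$, hence $\nr{\tau_{h}w}_{L^{\infty}(B_{\theta\rr}(x_{0}))}\le 2\nr{w}_{L^{\infty}(B_{\rr}(x_{0}))}$. On the other hand, writing $\tau_{h}w(x)=\int_{0}^{1}Dw(x+th)\cdot h\dt$ for a.e. $x$, Jensen's inequality together with Fubini's theorem and the translation invariance of Lebesgue measure (using $B_{\theta\rr}(x_{0})+th\subset B_{\rr}(x_{0})$) give the standard difference-quotient bound $\nr{\tau_{h}w}_{L^{m}(B_{\theta\rr}(x_{0}))}\le \snr{h}\nr{Dw}_{L^{m}(B_{\rr}(x_{0}))}$.

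Then I would combine the two via the elementary interpolation inequality for Lebesgue norms $\nr{f}_{L^{m/s}(B_{\theta\rr}(x_{0}))}\le \nr{f}_{L^{m}(B_{\theta\rr}(x_{0}))}^{s}\nr{f}_{L^{\infty}(B_{\theta\rr}(x_{0}))}^{1-s}$ (valid since $m\le m/s\le\infty$ and $\frac{1}{m/s}=\frac{s}{m}$), applied to $f=\tau_{h}w$. Dividing by $\snr{h}^{s}$ this yields
$$
\left\| \frac{\tau_{h}w}{\snr{h}^{s}} \right\|_{L^{m/s}(B_{\theta\rr}(x_{0}))}\le 2^{1-s}\nr{w}_{L^{\infty}(B_{\rr}(x_{0}))}^{1-s}\nr{Dw}_{L^{m}(B_{\rr}(x_{0}))}^{s}.
$$
Passing to normalized integrals --- i.e. writing $\int_{B_{\theta\rr}(x_{0})}=\snr{B_{\theta\rr}(x_{0})}\mint_{B_{\theta\rr}(x_{0})}$, $\int_{B_{\rr}(x_{0})}=\snr{B_{\rr}(x_{0})}\mint_{B_{\rr}(x_{0})}$ and using $\snr{B_{\theta\rr}(x_{0})}=\theta^{n}\snr{B_{\rr}(x_{0})}$ --- the last display becomes
$$
\theta^{ns/m}\left(\mint_{B_{\theta\rr}(x_{0})}\left|\frac{\tau_{h}w}{\snr{h}^{s}}\right|^{\frac{m}{s}}\dx\right)^{\frac{s}{m}}\le 2^{1-s}\nr{w}_{L^{\infty}(B_{\rr}(x_{0}))}^{1-s}\left(\mint_{B_{\rr}(x_{0})}\snr{Dw}^{m}\dx\right)^{\frac{s}{m}}.
$$
Taking the supremum over all admissible $h$ and adding the nonnegative term $\frac{c\nr{w}_{L^{\infty}(B_{\rr}(x_{0}))}^{1-s}}{(1-\theta)^{s}\rr^{s}}(\mint_{B_{\rr}(x_{0})}\snr{w}^{m}\dx)^{\frac{s}{m}}$ on the right-hand side gives \eqref{fs.0} with $c\equiv c(n,s,m)$ (in fact $c\le 2$ suffices); finiteness of the right-hand side, guaranteed by $w\in W^{1,m}\cap L^{\infty}$, also shows $w\in N^{s,m/s}(B_{\theta\rr}(x_{0}))$.

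I do not expect a genuine obstacle here: the argument is essentially bookkeeping. The only points requiring attention are the geometric admissibility of the translations --- ensured by the quantitative smallness $\snr{h}<\rr(1-\theta)2^{-4}$, which leaves comfortable room inside $B_{\rr}(x_{0})$ --- and the correct tracking of the normalization factor $\theta^{ns/m}$ when moving between plain and averaged integrals. One should also note that the first, cruder term on the right-hand side of \eqref{fs.0} is not actually needed for the estimate produced above; it is retained merely for scaling consistency and to match the form in which the inequality will be invoked later.
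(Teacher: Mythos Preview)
Your argument is correct and is essentially the standard proof: interpolate the $L^{m/s}$-norm of $\tau_{h}w$ between its $L^{m}$-norm (controlled by $\snr{h}\nr{Dw}_{L^{m}}$ via \eqref{gh}) and its $L^{\infty}$-norm (controlled by $2\nr{w}_{L^{\infty}}$), then pass to averaged quantities. The paper does not give its own proof of this lemma but simply cites \cite[Lemma 2.4]{jw}, so there is nothing further to compare; your observation that the first term on the right-hand side of \eqref{fs.0} is in fact superfluous is also correct.
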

\noindent We close this section by recalling the elementary relation linking finite differences and weakly differentiable functions: if $B'\Subset B\subset \mathbb{R}^{n}$ are bounded open sets and $w\in W^{1,p}(B,\mathbb{R}^{k})$, $p\ge 1$ and $0<\snr{h}<\dist(B',\partial B)$, then
\eqn{gh}
$$
\snr{h}^{-1}\nr{\tau_{h}w}_{L^{p}(B')}\le \nr{Dw}_{L^{p}(B)}.
$$

\subsection{Nonlinear potentials}
A crucial role in this paper will be played by a general class of nonlinear potentials. First introduced by Havin \& Maz'ya \cite{HM}, nonlinear potentials recently found important applications in the regularity theory for nonuniformly elliptic problems \cite{BM,ciccio,piovra,dm,dp,BS2,jw} - in particular we will refer to \cite[Section 4]{piovra} for the potential theoretic technical toolbox needed here. For a ball $B_{r}(x_{0})\subset \mathbb{R}^{n}$, (fixed) parameters $\sigma>0, \vartheta\geq 0$, and a function $f\in L^{1}(B_{r}(x_{0}))$, we introduce the nonlinear Havin-Maz'ya-Wolff type potential ${\bf P}_{\sigma}^{\vartheta}(f;\cdot)$, i.e.:
\eqn{defi-P} 
$$
{\bf P}_{\sigma}^{\vartheta}(f;x_0,r) := \int_0^r \varrho^{\sigma} \left(  \mint_{B_{\varrho}(x_0)} \snr{f} \dx \right)^{\vartheta} \frac{\d\varrho}{\varrho} \,.
$$
The mapping properties among function spaces of ${\bf P}_{\sigma}^{\vartheta}(f;\cdot)$ needed here are contained in the next lemma, cf. \cite[Lemma 4.1]{piovra}.
\begin{lemma}\label{crit} 
Let $B_{\tau}\Subset B_{\tau+r}\subset \mathbb{R}^{n}$ be two concentric balls with $\tau, r\leq 1$, $f\in L^{1}(B_{\tau+r})$ and let $\sigma,\vartheta>0$ be such that $ n\vartheta>\sigma$. Then
\eqn{stimazza}
$$
\nr{{\bf P}_{\sigma}^{\vartheta}(f;\cdot,r)}_{L^{\infty}(B_{\tau})} \lesssim_{n,\vartheta,\sigma,m} \|f\|_{L^{m}(B_{\tau+r})}^{\vartheta} $$
holds whenever $m > n\vartheta/\sigma>1$. 
\end{lemma}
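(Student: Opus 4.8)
The plan is to reduce \eqref{stimazza} to an elementary one-dimensional integral bound by means of Hölder's inequality, exploiting that the integrability exponent $m$ sits strictly above the critical value $n\vartheta/\sigma$; this is essentially the argument of \cite[Lemma 4.1]{piovra}.

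First I would fix $x_{0}\in B_{\tau}$ and record the only geometric input: since $B_{\tau}$ and $B_{\tau+r}$ are concentric and $\varrho\le r$, one has $B_{\varrho}(x_{0})\subset B_{\tau+r}$ for every $\varrho\in(0,r]$, so all the balls entering the definition \eqref{defi-P} of ${\bf P}_{\sigma}^{\vartheta}(f;x_{0},r)$ are admissible domains of integration for $f$. Then Hölder's inequality with exponent $m>1$ together with $\snr{B_{\varrho}(x_{0})}=c(n)\varrho^{n}$ gives
$$
\mint_{B_{\varrho}(x_{0})}\snr{f}\dx\le \left(\mint_{B_{\varrho}(x_{0})}\snr{f}^{m}\dx\right)^{\frac{1}{m}}\le \frac{c(n,m)}{\varrho^{n/m}}\nr{f}_{L^{m}(B_{\tau+r})},
$$
whence, raising to the power $\vartheta$ and multiplying by $\varrho^{\sigma}$,
$$
\varrho^{\sigma}\left(\mint_{B_{\varrho}(x_{0})}\snr{f}\dx\right)^{\vartheta}\le c(n,m,\vartheta)\,\varrho^{\,\sigma-\frac{n\vartheta}{m}}\,\nr{f}_{L^{m}(B_{\tau+r})}^{\vartheta}.
$$

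Next I would insert this into \eqref{defi-P} and integrate in $\varrho$. The hypothesis $m>n\vartheta/\sigma$ is equivalent to $\sigma-n\vartheta/m>0$, hence $\int_{0}^{r}\varrho^{\,\sigma-n\vartheta/m}\,\frac{\d\varrho}{\varrho}=(\sigma-n\vartheta/m)^{-1}r^{\,\sigma-n\vartheta/m}\le (\sigma-n\vartheta/m)^{-1}$, where the last bound uses $r\le 1$. This yields
$$
{\bf P}_{\sigma}^{\vartheta}(f;x_{0},r)\le \frac{c(n,m,\vartheta)}{\sigma-n\vartheta/m}\,\nr{f}_{L^{m}(B_{\tau+r})}^{\vartheta},
$$
and taking the supremum over $x_{0}\in B_{\tau}$ proves \eqref{stimazza}, with implicit constant depending only on $n,\vartheta,\sigma,m$ — the dependence on $\sigma$ and $m$ entering solely through the factor $(\sigma-n\vartheta/m)^{-1}$. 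The standing assumption $n\vartheta>\sigma$ is exactly what guarantees $n\vartheta/\sigma>1$, so that a legitimate exponent $m$ with $m>n\vartheta/\sigma>1$ can indeed be selected. No genuine difficulty arises here: the only points requiring (minor) care are the inclusion $B_{\varrho}(x_{0})\subset B_{\tau+r}$ for all $x_{0}\in B_{\tau}$ and $\varrho\le r$, and the harmless blow-up of the constant as $m\downarrow n\vartheta/\sigma$, which is irrelevant since $m$ is fixed in the statement.
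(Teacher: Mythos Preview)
Your proof is correct and follows the standard approach that the paper cites from \cite[Lemma 4.1]{piovra}: Hölder's inequality on each ball $B_{\varrho}(x_{0})\subset B_{\tau+r}$ followed by the elementary integrability of $\varrho^{\sigma-n\vartheta/m-1}$ on $(0,r)$, which is finite precisely because $m>n\vartheta/\sigma$. The paper itself does not reproduce the argument, so there is nothing to compare beyond this.
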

\noindent The next is a nonlinear potential theoretic version of De Giorgi's iteration finding its roots in \cite{kilp} and \cite{min11} - we shall record it in the form of a quantified reverse H\"older inequality, first appeared in \cite{piovra}, see also \cite{dm,jw}.
\begin{lemma}\label{revlem}
Let $B_{r_{0}}(x_{0})\subset \mathbb{R}^{n}$ be a ball, $n\ge 2$, $f \in L^1(B_{2r_0}(x_{0}))$, and constants $\chi >1$, $\sigma, \vartheta,\ti{c},M_{0}>0$ and $\kappa_0, M_{1}\geq 0$. Assume that $v \in L^2(B_{r_0}(x_0))$ is such that for all $\kk\ge \kk_{0}$, and for every concentric ball $B_{\rr}(x_{0})\subseteq B_{r_{0}}(x_{0})$, the inequality
\begin{flalign}
\left(\mint_{B_{\rr/2}(x_{0})}(v-\kk)_{+}^{2\chi}  \dx\right)^{\frac1{2\chi}}  &\le \ti{c}M_{0}\left(\mint_{B_{\rr}(x_{0})}(v-\kk)_{+}^{2}  \dx\right)^{\frac{1}{2}}+\ti{c} M_{1}\rr^{\sigma}\left(\mint_{B_{\rr}(x_{0})}\snr{f}  \dx\right)^{\vartheta}
 \label{revva}
\end{flalign}
holds. If $x_{0}$ is a Lebesgue point of $v$ in the sense that 
$$
v(x_0) = \lim_{r\to 0} (v)_{B_{r}(x_0)}\,,
$$ then
\eqn{siapplica}
$$
 v(x_{0})  \le\kk_{0}+cM_{0}^{\frac{\chi}{\chi-1}}\left(\mint_{B_{r_{0}}(x_{0})}(v-\kk_{0})_{+}^{2}  \dx\right)^{1/2}
+cM_{0}^{\frac{1}{\chi-1}} M_{1}\mathbf{P}^{\vartheta}_{\sigma}(f;x_{0},2r_{0})
$$
holds with $c\equiv c(n,\chi,\sigma,\vartheta,\ti{c})$.  
\end{lemma}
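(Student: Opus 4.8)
The plan is to run a De Giorgi type iteration on the concentric balls $B_{r_{j}}(x_{0})$ with dyadic radii $r_{j}:=2^{-j}r_{0}$, $j\ge 0$, along a suitable increasing sequence of truncation levels, reading off the nonlinear potential in \eqref{siapplica} as the sum over dyadic scales of the forcing terms appearing in \eqref{revva}. We may assume the right-hand side of \eqref{siapplica} is finite, the statement being otherwise trivial. Fix a parameter $\lambda>0$ to be selected at the end, set $\kappa_{j}:=\kappa_{0}+(1-2^{-j})\lambda$, so that $\kappa_{j}\uparrow\kappa_{0}+\lambda$ and $\kappa_{j+1}-\kappa_{j}=2^{-j-1}\lambda$, and introduce
$$
A_{j}:=\left(\mint_{B_{r_{j}}(x_{0})}(v-\kappa_{j})_{+}^{2}\dx\right)^{\frac{1}{2}},\qquad B_{j}:=r_{j}^{\sigma}\left(\mint_{B_{r_{j}}(x_{0})}\snr{f}\dx\right)^{\vartheta}.
$$

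First I would derive the basic recursion. Applying \eqref{revva} with $\varrho=r_{j}$ (so that $B_{\varrho/2}(x_{0})=B_{r_{j+1}}(x_{0})$) and $\kappa=\kappa_{j+1}\ge\kappa_{0}$, and using the monotonicity $(v-\kappa_{j+1})_{+}\le(v-\kappa_{j})_{+}$ on $B_{r_{j}}(x_{0})$, the right-hand side is bounded by $\tilde{c}M_{0}A_{j}+\tilde{c}M_{1}B_{j}$, so that the $L^{2\chi}(B_{r_{j+1}})$-average of $(v-\kappa_{j+1})_{+}$ is $\le\tilde{c}M_{0}A_{j}+\tilde{c}M_{1}B_{j}$. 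Combining this with H\"older's inequality on $B_{r_{j+1}}$ (with exponents $\chi$ and $\chi/(\chi-1)$) and with the Chebyshev estimate $\snr{\{v>\kappa_{j+1}\}\cap B_{r_{j+1}}}/\snr{B_{r_{j+1}}}\le 2^{n+2}4^{j}\lambda^{-2}A_{j}^{2}$ (which exploits $\kappa_{j+1}-\kappa_{j}=2^{-j-1}\lambda$), I obtain, with $\beta:=1-1/\chi\in(0,1)$ and $b:=2^{\beta}>1$,
$$
A_{j+1}\le c\,b^{j}\lambda^{-\beta}\bigl(M_{0}A_{j}^{1+\beta}+M_{1}B_{j}A_{j}^{\beta}\bigr),\qquad c\equiv c(n,\chi,\tilde{c}).
$$
Next I would record the bound $\sum_{j\ge0}B_{j}\le c(n,\sigma,\vartheta)\,\mathbf{P}^{\vartheta}_{\sigma}(f;x_{0},2r_{0})$: for $\varrho\in[r_{j+1},r_{j}]$ one has $\mint_{B_{\varrho}}\snr{f}\ge 2^{-n}\mint_{B_{r_{j+1}}}\snr{f}$ and $\int_{r_{j+1}}^{r_{j}}\varrho^{\sigma}\,\d\varrho/\varrho\ge c(\sigma)\,r_{j+1}^{\sigma}$, so each $B_{j+1}$ (and, arguing on $[r_{0},2r_{0}]$, also $B_{0}$) is controlled by the corresponding slice of the integral defining $\mathbf{P}^{\vartheta}_{\sigma}$ in \eqref{defi-P}; summing telescopes.

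Then I would normalise and iterate. Setting $Z_{j}:=M_{0}^{1/\beta}A_{j}/\lambda$ and using $1/\beta=\chi/(\chi-1)$, $1/\beta-1=1/(\chi-1)$, the recursion rewrites as $Z_{j+1}\le b^{j}Z_{j}^{\beta}(cZ_{j}+\eta_{j})$ with $\eta_{j}:=c\,M_{0}^{1/(\chi-1)}M_{1}B_{j}/\lambda$, whence $\sum_{j}\eta_{j}\le c\,M_{0}^{1/(\chi-1)}M_{1}\,\mathbf{P}^{\vartheta}_{\sigma}(f;x_{0},2r_{0})/\lambda$ by the previous step. I would then choose
$$
\lambda:=c_{*}\Bigl(M_{0}^{\chi/(\chi-1)}\bigl(\mint_{B_{r_{0}}(x_{0})}(v-\kappa_{0})_{+}^{2}\dx\bigr)^{\frac{1}{2}}+M_{0}^{1/(\chi-1)}M_{1}\,\mathbf{P}^{\vartheta}_{\sigma}(f;x_{0},2r_{0})\Bigr),
$$
with $c_{*}\equiv c_{*}(n,\chi,\sigma,\vartheta,\tilde{c})$ large enough that both $Z_{0}$ and $\sum_{j}\eta_{j}$ lie below the smallness threshold of the fast geometric convergence lemma, run in the nonlinear potential-theoretic form of \cite{kilp,min11} (see also \cite[Section~4]{piovra} and \cite{dm,jw}); this forces $Z_{j}\to0$, i.e. $A_{j}\to0$. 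Finally, by Jensen's inequality $(v)_{B_{r_{j}}(x_{0})}\le\kappa_{j}+\mint_{B_{r_{j}}(x_{0})}(v-\kappa_{j})_{+}\dx\le\kappa_{j}+A_{j}$, which tends to $\kappa_{0}+\lambda$, and since $(v)_{B_{r_{j}}(x_{0})}\to v(x_{0})$ by the Lebesgue point hypothesis, we conclude $v(x_{0})\le\kappa_{0}+\lambda$, that is exactly \eqref{siapplica} (if necessary one replaces $\lambda$ by $\lambda+\varepsilon$ and lets $\varepsilon\to0$).

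I expect the genuine difficulty to be the iteration in the presence of the potential forcing. Contrary to the forcing-free De Giorgi lemma, the terms $\eta_{j}$ are only summable, not geometrically small, and a naive iteration with the rigid increments $\kappa_{j+1}-\kappa_{j}=2^{-j-1}\lambda$ need not converge when the potential mass concentrates at small scales. The remedy, in the spirit of the potential estimates of \cite{kilp,min11}, is to let the truncation increments be adapted to the forcing at each dyadic scale, of the form $\kappa_{j+1}-\kappa_{j}\simeq\max\{2^{-j-1}\lambda_{0},\,M_{0}^{1/(\chi-1)}M_{1}B_{j}\}$ with $\lambda_{0}\simeq M_{0}^{\chi/(\chi-1)}(\mint_{B_{r_{0}}}(v-\kappa_{0})_{+}^{2}\dx)^{1/2}$, i.e. to restart the scheme absorbing the forcing into the level itself; since $\sum_{j}B_{j}\lesssim\mathbf{P}^{\vartheta}_{\sigma}$, the total increment $\lambda$ still has the size written above. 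A secondary but essential bookkeeping point is to track the exact powers of $M_{0}$ through the normalisation $Z_{j}=M_{0}^{1/\beta}A_{j}/\lambda$, which is precisely what yields the exponents $\chi/(\chi-1)$ on the energy term and $1/(\chi-1)$ on the potential term in \eqref{siapplica}.
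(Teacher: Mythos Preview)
The paper does not prove this lemma; it is stated with a citation to \cite{piovra} (with roots in \cite{kilp,min11}), so there is no in-paper argument to compare against. Your outline matches the approach of those references: a De Giorgi iteration on dyadic balls, the H\"older--Chebyshev step producing the recursion $A_{j+1}\le c(M_{0}A_{j}+M_{1}B_{j})\bigl(A_{j}/(\kappa_{j+1}-\kappa_{j})\bigr)^{\beta}$, truncation levels chosen adaptively to absorb the forcing, and the identification $\sum_{j}B_{j}\lesssim\mathbf{P}^{\vartheta}_{\sigma}(f;x_{0},2r_{0})$. You also track the $M_{0}$-exponents correctly via the normalisation $Z_{j}=M_{0}^{1/\beta}A_{j}/\lambda$.

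One remark on structure: you first set up rigid geometric increments $\kappa_{j+1}-\kappa_{j}=2^{-j-1}\lambda$, then (rightly) flag that this need not close when the potential mass concentrates at small scales, and only at the end sketch the adaptive fix. The cleaner route (and the one actually used in the cited references) bypasses this detour: define the levels recursively by
\[
\kappa_{j+1}:=\kappa_{j}+H\bigl(M_{0}^{\chi/(\chi-1)}A_{j}+M_{0}^{1/(\chi-1)}M_{1}B_{j}\bigr)
\]
with $H\equiv H(n,\chi,\tilde{c})$ large. Using only $\kappa_{j+1}-\kappa_{j}\ge HM_{0}^{1/\beta}A_{j}$ in the H\"older--Chebyshev factor yields the \emph{linear} recursion $A_{j+1}\le\tfrac12 A_{j}+cM_{1}B_{j}/M_{0}$, from which $A_{j}\to 0$ and $\sum_{j}A_{j}\lesssim A_{0}+M_{0}^{-1}M_{1}\sum_{j}B_{j}$ follow at once; summing the level increments then gives $\kappa_{\infty}-\kappa_{0}\lesssim M_{0}^{\chi/(\chi-1)}A_{0}+M_{0}^{1/(\chi-1)}M_{1}\mathbf{P}^{\vartheta}_{\sigma}$, and the Lebesgue-point argument concludes. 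This is equivalent in content to your ``restart with adaptive increments'' remedy, just organised so that the induction closes on the first pass.
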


\subsection{Tools for $p$-Laplacian type problems} When dealing with singular or degenerate problems of the $p$-Laplacian type, it is customary to employ a family of vector fields encoding the scaling features of the operator. More precisely, for $s\in [0,1]$, $0<p<\infty$, we let $V_{s,p}\colon \mathbb{R}^{n}\to \mathbb{R}^{n}$ be defined as $V_{s,p}(z):=(s^{2}+\snr{z}^{2})^{(p-2)/4}z$. It is well-known that $V_{s,p}$ satisfies
\eqn{Vm}
$$
\begin{cases}
\ \snr{V_{s,p}(z_{1})-V_{s,p}(z_{2})}\approx_{n,p}(s^{2}+\snr{z_{1}}^{2}+\snr{z_{2}}^{2})^{\frac{p-2}{4}}\snr{z_{1}-z_{2}}\qquad \mbox{for all} \ \ z_{1},z_{2}\in \mathbb{R}^{n},\\
\ \snr{z_{1}-z_{2}}\lesssim_{n,p}\snr{V_{s,p}(z_{1})-V_{s,p}(z_{2})}^{\frac{2}{p}}+\mathds{1}_{\{p<2\}}\snr{V_{s,p}(z_{1})-V_{s,p}(z_{2})}(\snr{z_1}^{2}+s^{2})^{(2-p)/4}
\end{cases}
$$
see \cite[Section 2.2]{dm}, \cite[Section 2.2]{jw} and references therein. 
A helpful monotonicity inequality from \cite[Lemma A.3]{bls} we shall need is
\eqn{mon}
$$
\snr{\snr{t_{1}}^{p-1}t_{1}-\snr{t_{2}}^{p-1}t_{2}}\gtrsim_{p} \snr{t_{1}-t_{2}}^{p},\qquad t_{1},t_{2}\in \mathbb{R}, \ \  p\ge 1.
$$
We further recall from \cite[Section 2.2]{jw} the nonautonomous counterpart of the integration by parts trick appearing in \cite[Section 8.2]{giu}.

\begin{lemma}
Let $\rr,\mathcal{h}_{0}>0$ be numbers, $h\in \mathbb{R}^{n}$ be a vector such that $\snr{h}\in (0,\mathcal{h}_{0}/4)$, $B_{\rr}(x_{0})\subset \mathbb{R}^{n}$ be a ball, $V\in L^{\infty}(B_{\rr+\mathcal{h}_{0}}(x_{0}),\mathbb{R}^{n})$ and $W\in W^{1,\infty}_{0}(B_{\rr}(x_{0}),\mathbb{R}^{n})$ be functions, and $H\in C(B_{\rr+\mathcal{h}_{0}}(x_{0})\times \mathbb{R}^{n},\mathbb{R}^{n})$ be a continuous vector field which is bounded on $B_{\rr+\mathcal{h}_{0}}(x_{0})\times \ti{\Omega}$ for every bounded subset $\ti{\Omega}\subset \mathbb{R}^{n}$. Then
\eqn{af}
$$
\int_{B_{\rr}(x_{0})}\langle \tau_{h}H(\cdot,V(\cdot)),W\rangle\dx=-\snr{h}\int_{B_{\rr}(x_{0})}\int_{0}^{1}\langle H(x+\beta h,V(x+\beta h)),\partial_{h/\snr{h}}W\rangle\d\beta\dx.
$$
\end{lemma}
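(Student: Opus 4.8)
The plan is to discard the particular structure $H(\cdot,V(\cdot))$ and to view $G:=H(\cdot,V(\cdot))$ as a single bounded vector field. Indeed, since $V\in L^{\infty}(B_{\rr+\mathcal{h}_{0}}(x_{0}),\mathbb{R}^{n})$ and $H$ is continuous and bounded on $B_{\rr+\mathcal{h}_{0}}(x_{0})\times \ti{\Omega}$ for every bounded $\ti{\Omega}\subset\mathbb{R}^{n}$, one has $G\in L^{\infty}(B_{\rr+\mathcal{h}_{0}}(x_{0}),\mathbb{R}^{n})$; and — this being the whole point of the lemma — no further regularity of $G$ will ever be used, all derivatives being destined to fall on $W$. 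Granting this, \eqref{af} is exactly the nonautonomous form of the ``discrete integration by parts $+$ fundamental theorem of calculus'' device of \cite[Section 8.2]{giu}, as used in \cite[Section 2.2]{jw}.

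First I would reduce to smooth test maps. Both sides of \eqref{af} depend continuously on $W$ in the $W^{1,1}$-topology — the left-hand side because $\sh$ is bounded $L^{1}\to L^{1}$ and $G\in L^{\infty}$, the right-hand side because $G\in L^{\infty}$ and integrating in $\beta$ is harmless — so by density of smooth maps it is enough to prove \eqref{af} for $W$ smooth and compactly supported in $B_{\rr}(x_{0})$. The constraint $\snr{h}\in(0,\mathcal{h}_{0}/4)$ leaves a safety corridor ensuring that all the translates of $\supp W$ occurring below stay inside $B_{\rr+\mathcal{h}_{0}}(x_{0})$, where $G$ is defined.

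For such $W$, the first move is to transfer the finite difference from $G$ onto $W$: writing $\sh G(x)=G(x+h)-G(x)$, extending $W$ by zero and substituting $y=x+h$ in the term carrying $G(x+h)$ yields
\[
\int_{B_{\rr}(x_{0})}\langle\sh G,W\rangle\dx=\int\langle G(y),\,W(y-h)-W(y)\rangle\dy=\int\langle G(y),\hs W(y)\rangle\dy .
\]
The second move exploits the smoothness of $W$: since $\beta\mapsto W(y-\beta h)$ has $\beta$-derivative $-\snr{h}\,\partial_{h/\snr{h}}W(y-\beta h)$ by the chain rule, the fundamental theorem of calculus gives
\[
\hs W(y)=W(y-h)-W(y)=-\snr{h}\int_{0}^{1}\partial_{h/\snr{h}}W(y-\beta h)\d\beta .
\]
Substituting this back, applying Fubini (legitimate since $G\in L^{\infty}$ and $\dd W$ is bounded with compact support, so the integrand is $L^{1}$), changing variables $x=y-\beta h$ in the inner integral for each fixed $\beta$, and applying Fubini once more, one lands precisely on $-\snr{h}\int_{B_{\rr}(x_{0})}\int_{0}^{1}\langle G(x+\beta h),\partial_{h/\snr{h}}W(x)\rangle\d\beta\dx$, which is \eqref{af}; the density step of the previous paragraph then concludes.

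I expect the main (and essentially only) delicate point to be the bookkeeping of the domains of integration under the two translations $y=x+h$ and $x=y-\beta h$: one must make sure that every argument of $G$ remains in $B_{\rr+\mathcal{h}_{0}}(x_{0})$ — which is exactly what $\snr{h}<\mathcal{h}_{0}/4$ buys — and that $W$ is compactly supported inside $B_{\rr}(x_{0})$ itself, so that transferring the finite difference produces no boundary contribution. Everything else is routine; the conceptual content is merely that the regularity carried by the coefficient inside $H(\cdot,V(\cdot))$ is handed over to the smooth multiplier $W$ while $H$ itself is never differentiated.
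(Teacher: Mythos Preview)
The paper does not prove this lemma; it simply recalls it from \cite[Section 2.2]{jw} as the nonautonomous version of the integration-by-parts trick in \cite[Section 8.2]{giu}. Your argument is precisely that device --- shift the finite difference onto $W$ by translation, rewrite $\tau_{-h}W$ via the fundamental theorem of calculus, then undo the translation --- and it is correct in substance and in line with what those references contain.

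One point deserves care. You reduce by density to $W$ smooth and compactly supported \emph{in $B_{\rr}(x_{0})$}, but the stated hypothesis is only $W\in W^{1,\infty}_{0}(B_{\rr+\mathcal{h}_{0}}(x_{0}))$, and density in that space yields approximants supported in the larger ball. This is not merely cosmetic: taken literally, the identity \eqref{af} is \emph{false} for $W$ supported in $B_{\rr+\mathcal{h}_{0}}(x_{0})$ but not in $B_{\rr}(x_{0})$ --- e.g.\ take $n=1$, $G(x)=x$, and $W\equiv 1$ on a neighbourhood of $\overline{B_{\rr}(x_{0})}$, so that the left side equals $2\rr h\neq 0$ while the right side vanishes. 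The resolution is that in every application of \eqref{af} in the paper the test map carries the cutoff $\eta$ and is therefore compactly supported well inside the ball of integration; the lemma statement is simply a little loose on this point. Your computation is correct once $\supp W\subset B_{\rr}(x_{0})$, and the ``bookkeeping of the domains'' you flag as the delicate step is exactly where this support condition is genuinely needed.
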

\noindent We conclude this section with a basic iteration lemma.
\begin{lemma}\label{iterlem}
Let $h\colon [t,s]\to \mathbb{R}$ be a non-negative and bounded function, and let $a,b, \gamma$ be non-negative numbers. Assume that the inequality 
$ 
h(\tau_2)\le  (1/2) h(\tau_1)+(\tau_1-\tau_2)^{-\gamma}a+b,
$
holds whenever $t\le \tau_2<\tau_1\le s$. Then $
h(t)\le c( \gamma)[a(s-t)^{-\gamma}+b]
$, holds too. 
\end{lemma}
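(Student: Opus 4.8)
The plan is to prove Lemma~\ref{iterlem} via the classical hole-filling iteration over a geometric sequence of intermediate points. First I would fix a parameter $\lambda\in(0,1)$, to be pinned down at the end in terms of $\gamma$ only, and introduce the sequence $\tau_{0}:=t$ and $\tau_{i+1}:=\tau_{i}+(1-\lambda)\lambda^{i}(s-t)$ for $i\ge 0$. A telescoping computation gives $\tau_{k}=t+(s-t)(1-\lambda^{k})$, so that $\{\tau_{i}\}$ is increasing, $\tau_{i}\in[t,s]$ for every $i$, $\tau_{i}\uparrow s$, and $\tau_{i+1}-\tau_{i}=(1-\lambda)\lambda^{i}(s-t)$.

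Next I would apply the assumed inequality to the admissible pair $t\le\tau_{i}<\tau_{i+1}\le s$, which yields
\[
h(\tau_{i})\le \frac12\,h(\tau_{i+1})+\frac{a}{[(1-\lambda)(s-t)]^{\gamma}}\,\lambda^{-i\gamma}+b\qquad\text{for all }i\ge 0.
\]
Iterating this bound $k$ times starting from $\tau_{0}=t$, and using $\sum_{i\ge0}2^{-i}=2$ to control the $b$-contribution, produces
\[
h(t)\le \frac{1}{2^{k}}\,h(\tau_{k})+\frac{a}{[(1-\lambda)(s-t)]^{\gamma}}\sum_{i=0}^{k-1}\bigl(2\lambda^{\gamma}\bigr)^{-i}+2b,
\]
where I used $2^{-i}\lambda^{-i\gamma}=(2\lambda^{\gamma})^{-i}$.

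At this stage I would choose $\lambda$ so as to make the $a$-series summable: if $\gamma>0$, take $\lambda:=2^{-1/(2\gamma)}\in(0,1)$, so that $2\lambda^{\gamma}=\sqrt2>1$ and $\sum_{i\ge0}(2\lambda^{\gamma})^{-i}$ converges to a finite number depending only on $\gamma$; if $\gamma=0$, take $\lambda=1/2$, in which case the series is $\sum_{i\ge0}2^{-i}=2$. Since $(1-\lambda)^{-\gamma}$ then depends only on $\gamma$, and since $h$ is bounded so that $2^{-k}h(\tau_{k})\to0$ as $k\to\infty$, passing to the limit $k\to\infty$ in the previous display gives exactly $h(t)\le c(\gamma)\,[a(s-t)^{-\gamma}+b]$, as desired.

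The whole argument is elementary; the only point that needs a moment's attention is the selection of $\lambda$, which must simultaneously satisfy $\lambda<1$ and $2\lambda^{\gamma}>1$ in order to turn the prefactors $2^{-i}\lambda^{-i\gamma}$ into the terms of a convergent geometric series, after which the boundedness of $h$ disposes of the remainder $2^{-k}h(\tau_{k})$.
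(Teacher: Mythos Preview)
Your proof is correct and is precisely the standard hole-filling iteration argument (as in, e.g., Giusti's book \cite{giu}); the paper states Lemma~\ref{iterlem} without proof, so there is nothing to compare against beyond noting that your argument is the canonical one.
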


\subsection{Structural assumptions}\label{sa} In this section, we list the minimal set of assumptions needed in the proof of our results. For the sake of clarity, we distinguish between the nearly linear growth case, treated in Theorems \ref{mt1}-\ref{count.3}, and the linear growth one, object of Theorem \ref{count.3}.
\subsubsection*{Nearly linear growth} We assume that the integrand $\tx{L}\colon \mathbb{R}^{n}\to \mathbb{R}$ satisfies the following regularity
\eqn{ass.0}
$$
\tx{L}\in C^{2}_{\loc}(\mathbb{R}^{n}\setminus \{0\})\cap C^{1}_{\loc}(\mathbb{R}^{n}),
$$
and growth/ellipticity conditions:
\eqn{ass.1}
$$
\left\{
\begin{array}{c}
\displaystyle
\ \frac{1}{\Lambda}\snr{z}\tx{g}(\snr{z})-\Lambda\le \tx{L}(z)\le \Lambda\snr{z}\tx{g}(\snr{z})+\Lambda\\[8pt]\displaystyle
\ \frac{\snr{\xi}^{2}}{\Lambda(1+\snr{z}^{2})^{\mu/2}}\le \langle \partial^{2}\tx{L}(z)\xi,\xi\rangle,\quad \quad \snr{\partial^{2}\tx{L}(z)}\le \frac{\Lambda(\tx{g}(\snr{z})+1)}{(1+\snr{z}^{2})^{1/2}},
\end{array}
\right.
$$
for all $z\in \mathbb{R}^{n}\setminus \{0\}$, $\xi\in \mathbb{R}^{n}$, some constant $\Lambda\ge 1$, and $\mu\ge 1$. Here, function $\tx{g}$ verifies:
\eqn{ass.2}
$$
\begin{cases}
\ 0\le \tx{g}(\cdot)\in C[0,\infty),\qquad t\tx{g}(t) \ \ \mbox{is convex}, \vspace{1mm}\\
\ \tx{g} \ \ \mbox{is nondecreasing, unbounded, and concave,}\vspace{1mm}\\
\ \tx{g}(t)\le c(\tx{g},\varepsilon)(1+t^{\varepsilon})\qquad \mbox{for all} \ \ \varepsilon>0, \ \ t\ge 0.
\end{cases}
$$
\subsubsection*{Linear growth} To describe the elliptic component of area-type double phase functionals (keep \eqref{area} in mind), we assume that integrand $\tx{L}\colon \mathbb{R}^{n}\to \mathbb{R}$ satisfies \eqref{ass.0} and
\eqn{gcgc}
$$
\begin{cases}
\displaystyle
\ z\mapsto \tx{L}(z)\mbox{ is strictly convex}\vspace{1.5mm}\\ \displaystyle
\displaystyle
    \ \mbox{\eqref{ass.1} holds with $\tx{g}=1$ and $\mu>1$ for all $z\in \mathbb{R}^{n}\setminus B_{1}(0)$}.
\end{cases}
$$

\subsubsection*{$q$-anisotropy and the obstacle function}
Unless differently specified, in the $q$-component of energy \eqref{ll1} it will always be $s\in [0,1]$. The modulating coefficient $a\colon \Omega\to [0,\infty)$ verifies $\eqref{1qa}_{1}$,
and the obstacle function $\psi\in W^{1,1}_{\loc}(\Omega)$ will always have locally finite energy, i.e.,
\eqn{p1}
$$
\mathcal{H}(\psi;\ti{\Omega})<\infty\qquad \mbox{for all open sets} \ \ \ti{\Omega}\Subset \Omega,
$$
while in the proof of our main theorems, we shall further require that 
\eqn{p2}
$$
\psi\in W^{1,\infty}_{\loc}(\Omega)\cap W^{2,1}_{\loc}(\Omega)\quad \mbox{with} \quad \snr{D\psi}^{q-2}D^{2}\psi\in L^{\infty}_{\loc}(\Omega,\otimes_{2}\mathbb{R}^{n}).
$$
Moreover, let us introduce the modular \cite[Section 3.1]{hh19} naturally related to the class of functionals that we are considering. For $q>1$, coefficient $a$ as in \eqref{1qa}$_{1}$, and $z\in \mathbb{R}^{n}$, set 
$$
\ti{\tx{g}}(\snr{z}):=\begin{cases}
    \displaystyle
    \ \snr{z}\tx{g}(\snr{z})\quad &\mbox{if} \ \ \eqref{ass.1}\mbox{-}\eqref{ass.2} \ \ \mbox{hold}\vspace{1.5mm}\\
    \displaystyle
    \ \sqrt{1+\snr{z}^{2}}-1\quad &\mbox{if} \ \ \eqref{gcgc} \ \ \mbox{is in force},
\end{cases}
$$
and define
\eqn{ggg}
$$
\begin{array}{c}
\displaystyle
W^{1,1}_{\loc}(\Omega)\ni w\mapsto \mathcal{G}(w;\Omega):=\int_{\Omega}\tx{G}(x,\snr{Dw})\dx\\[8pt]\displaystyle
\tx{G}(x,\snr{z}):=\ti{\tx{g}}(\snr{z})+a(x)\snr{z}^{q}.
\end{array}
$$
Finally, let us quickly describe the subspaces of $W^{1,1}_{\loc}(\Omega)$ that are the natural functional setting for our problems. With $B\Subset \Omega$ being a ball, we introduce constrained classes
\begin{flalign*}
&\tx{K}^{\psi}_{\loc}(\Omega):=\left\{w\in W^{1,1}_{\loc}(\Omega)\colon w(x)\ge \psi(x) \ \ \mbox{for a.e.} \ \ x\in \Omega\right\},\\
&\tx{K}^{\psi}(B):=\left\{w\in W^{1,1}(B)\colon w(x)\ge \psi(x) \ \ \mbox{for a.e.} \ \ x\in B\right\},
\end{flalign*}
that are convex subsets of $W^{1,1}_{\loc}(\Omega)$, $W^{1,1}(B)$ respectively. It goes without saying that this set of assumptions allows handling simultaneously free and constrained local minimizer of functional $\mathcal{H}$ - in fact formally taking $\psi\equiv -\infty$ yields $\tx{K}^{-\infty}_{\loc}(\Omega)\equiv W^{1,1}_{\loc}(\Omega)$.
\begin{remark}\label{r21} \emph{A few of comments are in order.}
\begin{itemize}
\item \emph{Up to replace $\mathbb{R}^{n}\ni z\mapsto \tx{L}(z)$ with $\mathbb{R}^{n}\ni z\mapsto\tx{L}(z)-\langle \partial \tx{L}(0),z\rangle$ that does not change the set of local minimizers, we can always assume that $\partial \tx{L}(0)=0$. Needless to say, the "perturbed" integrand still satisfies \eqref{ass.1} with possibly enlarged constants.}
\item \emph{The unboundedness of $\tx{g}$ implies that there exists $\tx{T}_{\tx{g}}\equiv \tx{T}_{\tx{g}}(\tx{g})\ge 1$ such that $\tx{g}(t)\ge 1$ whenever $t\ge \tx{T}_{\tx{g}}$, therefore}
\eqn{gt}
$$
\snr{z}\le \tx{T}_{\tx{g}}+\snr{z}\tx{g}(\snr{z})\stackrel{\eqref{ass.1}_{1}}{\le}\tx{T}_{\tx{g}}+\Lambda^{2}+\Lambda\tx{L}(z)\qquad \mbox{for all} \ \ z\in \mathbb{R}^{n}.
$$
\item \emph{Under the main regularity assumption \eqref{1qa}, we have $1<q<2$ and condition \eqref{p2} implies that $\ell_{s}(D\psi)^{q-2}D^{2}\psi\in L^{\infty}_{\loc}(\Omega,\otimes_{2}\mathbb{R}^{n})$ for all $s\in [0,1]$ and $\psi\in W^{2,\infty}_{\loc}(\Omega)$: in fact, $\nr{D^{2}\psi}_{L^{\infty}(B)}\lesssim_{q}(1+\nr{D\psi}_{L^{\infty}(B)}^{2-q})\nr{\snr{D\psi}^{q-2}D^{2}\psi}_{L^{\infty}(B)}<\infty$ for all balls $B\Subset \Omega$. Let us stress that assumptions \eqref{p1}-\eqref{p2} imposed on the obstacle function $\psi$ are the classical ones considered in the literature when possibly degenerate or singular energy functionals are considered, see e.g. \cite{fuc90,FM}. Since one of the key points of this paper is to introduce a novel technical toolbox that allows for the first time to deliver maximal gradient regularity in nondifferentiable, nonuniformly elliptic obstacle problems, we chose to stick to the classical setting in order to keep at a reasonable level the amount of technicalities involved. Relaxing \eqref{p2} means either imposing a Dini-type assumption on $D\psi$ as in \cite{km12} or assuming that $D^{2}\psi$ belongs to suitable Lorentz or Orlicz spaces, cf. \cite{ciccio}. As $\mu$-ellipticity in \eqref{ass.1} forces an important loss of ellipticity information, both improvements - if feasible - would call for new borderline approaches, which we plan to investigate in future work.}
\end{itemize}
\end{remark}

\section{Absence of Lavrentiev phenomenon in constrained classes}\label{alavcc} \noindent Let us quickly discuss the local approximation in energy of functions belonging to $\tx{K}_{\loc}^{\psi}(\Omega)$ in terms of maps sharing the same regularity as the obstacle $\psi$. We rely on elementary truncation and mollification techniques: similar arguments were indeed applied in \cite{balci,bs21,balci3,buli,bb90}, see also \cite{sharp,dm,krs}, in relation to the (non)occurrence of Lavrentiev phenomenon in double phase problems. We stress that our findings are sharp thanks to Theorem \ref{count}. Specifically, in line with \cite[Lemma 6.2]{jw},\footnote{In \cite[Lemma 6.2]{jw} it is shown that on a priori bounded functions, the usual Lebesgue-Serrin-Marcellini extension coincides with the relaxation defined along (suitably regular) sequences of uniformly bounded maps.} we revisit the density results of \cite{buli} in the quantitative form we need. 
\begin{lemma}\label{alav.1}
Under assumption $\eqref{ass.0}$, $\eqref{ass.1}_{1}$, $\eqref{ass.2}$, let $w\in W^{1,1}_{\loc}(\Omega)$ be a function such that $\snr{Dw}\tx{g}(\snr{Dw})\in L^{1}_{\loc}(\Omega)$, $\vartheta>0$ be a number, $\ti{\Omega}\Subset \Omega$ be an open, bounded set with Lipschitz-regular boundary. With $0<\varepsilon<\varepsilon_{0}:=\min\{1,\dist(\ti{\Omega},\partial \Omega)/4\}$, set $\Omega_{0}:=\{x+B_{\varepsilon_{0}}(0)\colon x\in \ti{\Omega}\}\Subset \Omega$. There exists a sequence $\{\bar{w}_{\varepsilon}\}_{\varepsilon>0}\subset C^{\infty}(\ti{\Omega})$ such that $\bar{w}_{\varepsilon}\to w $ strongly in $W^{1,1}(\ti{\Omega})$ and $\nr{D\bar{w}_{\varepsilon}}_{L^{\infty}(\ti{\Omega})}\lesssim_{n}\varepsilon^{-(1+\vartheta)}$. If $w\in L^{\infty}_{\loc}(\Omega)\cap W^{1,1}_{\loc}(\Omega)$ with $\snr{Dw}\tx{g}(\snr{Dw})\in L^{1}_{\loc}(\Omega)$, together with the strong convergence in $W^{1,1}(\ti{\Omega})$, we also have the bounds $\nr{\bar{w}_{\varepsilon}}_{L^{\infty}(\ti{\Omega})}\le \nr{w}_{L^{\infty}(\Omega_{0})}$ and $\nr{D\bar{w}_{\varepsilon}}_{L^{\infty}(\ti{\Omega})}\lesssim_{n}\varepsilon^{-1}\nr{w}_{L^{\infty}(\Omega_{0})}$.
\end{lemma}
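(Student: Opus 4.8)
\emph{Proof plan.} The approach I would take is the classical truncate-then-mollify scheme, the only twist being to tie the truncation height to the mollification radius so that the sharp gradient bound $\varepsilon^{-(1+\vartheta)}$ comes from differentiating the kernel rather than the function. Fix $\rho\in C^{\infty}_{c}(B_{1}(0))$ with $\int\rho=1$ and set $\rho_{\delta}(x):=\delta^{-n}\rho(x/\delta)$, so that $\nr{\rho_{\delta}}_{L^{1}}=1$ and $\nr{D\rho_{\delta}}_{L^{1}}=c(n)\delta^{-1}$; since $\varepsilon<\varepsilon_{0}$, for every $\delta\le\varepsilon$ the convolution $v\ast\rho_{\delta}$ is smooth on $\ti\Omega$ and depends only on the values of $v$ on $\Omega_{0}\Subset\Omega$, which is exactly the role of the auxiliary set $\Omega_{0}$. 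For the bounded datum the construction is then immediate: I would take $\bar w_{\varepsilon}:=w\ast\rho_{\varepsilon}$, so that $\bar w_{\varepsilon}\to w$ in $W^{1,1}(\ti\Omega)$ by the usual mollifier property for Sobolev functions, $\nr{\bar w_{\varepsilon}}_{L^{\infty}(\ti\Omega)}\le\nr{w}_{L^{\infty}(\Omega_{0})}$ because $\rho_{\varepsilon}$ is a probability kernel supported in $B_{\varepsilon}(0)$, and, writing $D\bar w_{\varepsilon}=w\ast D\rho_{\varepsilon}$, $\nr{D\bar w_{\varepsilon}}_{L^{\infty}(\ti\Omega)}\le\nr{w}_{L^{\infty}(\Omega_{0})}\nr{D\rho_{\varepsilon}}_{L^{1}}\lesssim_{n}\varepsilon^{-1}\nr{w}_{L^{\infty}(\Omega_{0})}$, which also dominates $\varepsilon^{-(1+\vartheta)}$ once $\varepsilon$ is small, as $\nr{w}_{L^{\infty}(\Omega_{0})}$ is then $\le\varepsilon^{-\vartheta}$.

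For a merely $W^{1,1}_{\loc}$ datum I would first truncate at height $k=k(\varepsilon):=\varepsilon^{-\vartheta}$, setting $w_{k}:=\max\{-k,\min\{k,w\}\}$; then $w_{k}\in W^{1,1}_{\loc}(\Omega)\cap L^{\infty}_{\loc}(\Omega)$ with $\nr{w_{k}}_{L^{\infty}(\Omega_{0})}\le k$ and $Dw_{k}=Dw\,\mathds{1}_{\{\snr{w}<k\}}$ a.e., hence $\snr{Dw_{k}}\le\snr{Dw}$ and, by monotonicity of $t\mapsto t\tx{g}(t)$, also $\snr{Dw_{k}}\tx{g}(\snr{Dw_{k}})\le\snr{Dw}\tx{g}(\snr{Dw})$, while $w_{k}\to w$ in $W^{1,1}(\Omega_{0})$ as $k\to\infty$ by dominated convergence with dominants $2\snr{w}$ and $\snr{Dw}$. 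Next I would set $\bar w_{\varepsilon}:=w_{k}\ast\rho_{\varepsilon}\in C^{\infty}(\ti\Omega)$ and, moving the derivative onto the kernel, estimate $\nr{D\bar w_{\varepsilon}}_{L^{\infty}(\ti\Omega)}=\nr{w_{k}\ast D\rho_{\varepsilon}}_{L^{\infty}(\ti\Omega)}\le\nr{w_{k}}_{L^{\infty}(\Omega_{0})}\nr{D\rho_{\varepsilon}}_{L^{1}}\lesssim_{n}k\varepsilon^{-1}=\varepsilon^{-(1+\vartheta)}$. This is the conceptual heart of the argument: it is boundedness of $w_{k}$, not of $Dw_{k}$ (which may well be infinite), that allows the derivative to fall on $\rho_{\varepsilon}$; convolving $Dw_{k}$ with $\rho_{\varepsilon}$ and estimating by $\nr{Dw_{k}}_{L^{1}}\nr{\rho_{\varepsilon}}_{L^{\infty}}$ would only give the far weaker $\varepsilon^{-n}$.

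The main technical obstacle is then the strong convergence $\bar w_{\varepsilon}\to w$ in $W^{1,1}(\ti\Omega)$, since now $k\to\infty$ and $\delta=\varepsilon\to 0$ are driven by the same parameter and cannot be let go one at a time. I would split $\nr{\bar w_{\varepsilon}-w}_{W^{1,1}(\ti\Omega)}\le\nr{w_{k}\ast\rho_{\varepsilon}-w_{k}}_{W^{1,1}(\ti\Omega)}+\nr{w_{k}-w}_{W^{1,1}(\ti\Omega)}$, the second term vanishing by the truncation convergence above; for the first, $\nr{w_{k}\ast\rho_{\varepsilon}-w_{k}}_{L^{1}(\ti\Omega)}\le\varepsilon\nr{Dw_{k}}_{L^{1}(\Omega_{0})}\le\varepsilon\nr{Dw}_{L^{1}(\Omega_{0})}$ by \eqref{gh}, and for the gradient part I would decouple the two parameters through $\nr{(Dw_{k})\ast\rho_{\varepsilon}-Dw_{k}}_{L^{1}(\ti\Omega)}\le 2\nr{Dw_{k}-Dw}_{L^{1}(\Omega_{0})}+\nr{(Dw)\ast\rho_{\varepsilon}-Dw}_{L^{1}(\ti\Omega)}$, where the first term goes to $0$ as $k\to\infty$ and the second as $\varepsilon\to 0$, each independently of the other. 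Finally, the hypothesis $\snr{Dw}\tx{g}(\snr{Dw})\in L^{1}_{\loc}(\Omega)$, the convexity of $t\mapsto t\tx{g}(t)$ in \eqref{ass.2} and Jensen's inequality give $\int\snr{D\bar w_{\varepsilon}}\tx{g}(\snr{D\bar w_{\varepsilon}})\le\int\bigl(\snr{Dw_{k}}\tx{g}(\snr{Dw_{k}})\bigr)\ast\rho_{\varepsilon}\le\int_{\Omega_{0}}\snr{Dw}\tx{g}(\snr{Dw})$ on any ball compactly contained in $\ti\Omega$, so the $\bar w_{\varepsilon}$ also carry uniformly bounded $\tx{g}$-energy, the feature that makes Lemma \ref{alav.1} usable for ruling out the Lavrentiev phenomenon later on. Passing to any sequence $\varepsilon\downarrow 0$ closes the construction.
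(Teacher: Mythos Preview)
Your construction is identical to the paper's: truncate at height $\varepsilon^{-\vartheta}$, mollify at scale $\varepsilon$, and obtain the gradient bound by moving the derivative onto the kernel via Young's inequality. The only difference is the bookkeeping for the $W^{1,1}$-convergence. The paper compares $\bar w_{\varepsilon}$ directly with $w$, writing $\bar w_{\varepsilon}-w=(w\ast\phi_{\varepsilon}-w)+\bigl(\text{truncation error inside the convolution}\bigr)$ and controlling the second piece by the Sobolev embedding $w\in L^{n/(n-1)}(\Omega_{0})$ together with the superlinearity $\snr{Dw}\tx{g}(\snr{Dw})\in L^{1}$ and the unboundedness of $\tx{g}$. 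Your route through $w_{k}$, with the further splitting $(Dw_{k})\ast\rho_{\varepsilon}-Dw_{k}=\bigl[(Dw_{k}-Dw)\ast\rho_{\varepsilon}\bigr]+\bigl[(Dw)\ast\rho_{\varepsilon}-Dw\bigr]+\bigl[Dw-Dw_{k}\bigr]$, is correct and in fact slightly more elementary: it uses only $w\in W^{1,1}_{\loc}$ and dominated convergence, so the hypothesis $\snr{Dw}\tx{g}(\snr{Dw})\in L^{1}_{\loc}$ is not needed for the convergence itself (only for the uniform energy bound you add at the end). Either way the argument closes.
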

\begin{proof}
    Let $\varepsilon,\varepsilon_{0},\ti{\Omega},\Omega_{0}$ be as in the statement, $\phi\in C^{\infty}_{c}(B_{1}(0))$ be a smooth, radially symmetric, nonnegative function with $\nr{\phi}_{L^{1}(\mathbb{R}^{n})}=1$ and $\nr{\phi}_{L^{\infty}(\mathbb{R}^{n})}\lesssim_{n}1$, and $\{\phi_{\varepsilon}\}_{\varepsilon>0}\subset C^{\infty}(\mathbb{R}^{n})$ be the sequence of mollifiers defined through $\phi$. We truncate $w$ by setting $w_{\varepsilon}:=\min\{\varepsilon^{-\vartheta},\max\{w,-\varepsilon^{-\vartheta}\}\}$ and regularize it by convolution defining $\bar{w}_{\varepsilon}:=w_{\varepsilon}*\phi_{\varepsilon}$. We start without the local boundedness assumption in force. By definition it is $\{\bar{w}_{\varepsilon}\}_{\varepsilon>0}\subset C^{\infty}(\ti{\Omega})$, $\nr{w_{\varepsilon}}_{L^{\infty}(\Omega_{0})}\le \varepsilon^{-\vartheta}$, thus $\nr{\bar{w}_{\varepsilon}}_{L^{\infty}(\ti{\Omega})}\le \varepsilon^{-\vartheta}$. By the former bound and Young's convolution inequality we control $\nr{D\bar{w}_{\varepsilon}}_{L^{\infty}(\ti{\Omega})}\le \nr{w_{\varepsilon}}_{L^{\infty}(\Omega_{0})}\nr{D\phi_{\varepsilon}}_{L^{1}(\Omega_{0})}\lesssim_{n}\varepsilon^{-(1+\vartheta)}$. We only need to prove strong $W^{1,1}$-convergence in $\ti{\Omega}$. Recalling that by Sobolev embedding theorem it is $w\in L^{\frac{n}{n-1}}(\Omega_{0})$, for $x\in \ti{\Omega}$ we bound
    \begin{eqnarray*}
    \snr{\bar{w}_{\varepsilon}(x)-w(x)}&\le&\snr{w*\phi_{\varepsilon}(x)-w(x)}\nonumber \\
    &&+\left|\int_{B_{1}\cap\{y\colon\snr{w(x+\varepsilon y)}>\varepsilon^{-\vartheta}\}}[w_{\varepsilon}(x+\varepsilon y)-w(x+\varepsilon y)]\phi(y)\dy\right|\nonumber \\
    &\lesssim_{n}&\snr{w*\phi_{\varepsilon}(x)-w(x)}+\int_{B_{1}\cap\{y\colon\snr{w(x+\varepsilon y)}>\varepsilon^{-\vartheta}\}}\varepsilon^{-\vartheta}+\snr{w(x+\varepsilon y)}\dy\nonumber \\
    &\lesssim_{n}&\snr{w*\phi_{\varepsilon}(x)-w(x)}+\int_{B_{1}\cap\{y\colon\snr{w(x+\varepsilon y)}>\varepsilon^{-\vartheta}\}}\snr{w(x+\varepsilon y)}\dy\nonumber \\
    &\lesssim_{n}&\snr{w*\phi_{\varepsilon}(x)-w(x)}+\varepsilon^{\frac{\vartheta}{n-1}}\int_{B_{1}}\snr{w(x+\varepsilon y)}^{\frac{n}{n-1}}\dy,
    \end{eqnarray*}
    therefore $\nr{\bar{w}_{\varepsilon}-w}_{L^{1}(\ti{\Omega})}\lesssim_{n}\nr{w*\phi_{\varepsilon}-w}_{L^{1}(\ti{\Omega})}+\varepsilon^{\frac{\vartheta}{n-1}}\nr{w}_{L^{\frac{n}{n-1}}(\Omega_{0})}^{\frac{n}{n-1}}\equiv \texttt{o}(\varepsilon)$, by basic mollification properties. Similarly, with $x\in \ti{\Omega}$, we bound
    \begin{eqnarray*}
    \snr{D\bar{w}_{\varepsilon}(x)-Dw(x)}&\le&\snr{Dw*\phi_{\varepsilon}(x)-Dw(x)}+\int_{B_{1}\cap\{\snr{w(x+\varepsilon y)}>\varepsilon^{-\vartheta}\}}\snr{Dw(x+\varepsilon y)}\phi(y)\dy\nonumber \\
    &\lesssim_{n}&\snr{Dw*\phi_{\varepsilon}(x)-Dw(x)}\nonumber \\
    &&+\frac{1}{\tx{g}(\varepsilon^{-\vartheta})}\int_{B_{1}\cap\{\snr{w(x+\varepsilon y)}>\varepsilon^{-\vartheta}\}}\tx{g}(\snr{Dw(x+\varepsilon y)})\snr{Dw(x+\varepsilon y)}\dy\nonumber \\
    &&+\varepsilon^{\frac{\vartheta}{n-1}}\int_{B_{1}\cap\{\snr{w(x+\varepsilon y)}>\varepsilon^{-\vartheta}\}}\snr{w(x+\varepsilon y)}^{\frac{n}{n-1}}\dy,
    \end{eqnarray*}
    so $\nr{D\bar{w}_{\varepsilon}-Dw}_{L^{1}(\ti{\Omega})}\lesssim_{n} \nr{Dw*\phi_{\varepsilon}-Dw}_{L^{1}(\ti{\Omega})}+\tx{g}(\varepsilon^{-\vartheta})^{-1}\nr{\snr{Dw}\tx{g}(\snr{Dw})}_{L^{1}(\Omega_{0})}+\varepsilon^{\frac{\vartheta}{n-1}}\nr{w}_{L^{\frac{n}{n-1}}(\Omega_{0})}^{\frac{n}{n-1}}\equiv \texttt{o}(\varepsilon)$, where we also used that $\tx{g}$ is nondecreasing, continuous and unbounded. This completes the proof in case $w$ is unbounded. If additionally $w\in L^{\infty}_{\loc}(\Omega)$, the above procedure boils down to standard mollification as truncation is obviously no longer needed, and the estimates on $\nr{\bar{w}_{\varepsilon}}_{L^{\infty}(\ti{\Omega})}$, $\nr{D\bar{w}_{\varepsilon}}_{L^{\infty}(\ti{\Omega})}$ directly follow by definition and Young's convolution inequality. 
\end{proof}
\noindent Now we are ready to prove a density result: our construction yields a sequence of approximating maps sharing the same regularity as the obstacle $\psi$ - to highlight this, we assume that $\psi$ belongs to a general function space\footnote{Recall the convention established in Section \ref{not}.} $\mathbb{X}_{\loc}(\Omega)$ and has locally finite energy \eqref{p1}.
\begin{lemma}\label{alav}
Under assumptions \eqref{1qa} and \eqref{ass.0}, $\eqref{ass.1}_{1}$-\eqref{p1}, let $w\in \tx{K}^{\psi}_{\loc}(\Omega)$ be any function with $\tx{H}(\cdot,Dw)\in L^{1}_{\loc}(\Omega)$, and $\ti{\Omega}$, $\Omega_{0}$, $\varepsilon$, $\varepsilon_{0}$, $\{\phi_{\varepsilon}\}_{\varepsilon>0}$ be as in Lemma \ref{alav.1}. If $\psi\in \mathbb{X}_{\loc}(\Omega)$, there exists a sequence $\{\ti{w}_{\varepsilon}\}_{\varepsilon>0}\subset \mathbb{X}(\ti{\Omega})\cap \tx{K}^{\psi}(\ti{\Omega})$ such that 
\eqn{conv}
$$
\ti{w}_{\varepsilon}\to w \ \ \mbox{strongly in} \ \ W^{1,1}(\ti{\Omega}), \qquad \mathcal{H}(\ti{w}_{\varepsilon};\ti{\Omega})\to \mathcal{H}(w;\ti{\Omega}),\qquad  \mathcal{G}(\ti{w}_{\varepsilon}-w;\ti{\Omega})\to 0,
$$
where $\mathcal{H}$ is the functional in \eqref{ll1} and $\mathcal{G}$ is the related modular in \eqref{ggg}.
\end{lemma}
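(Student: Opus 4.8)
The plan is to build the approximating sequence explicitly by combining the mollified/truncated maps from Lemma \ref{alav.1} with the obstacle $\psi$ so that admissibility $\ti{w}_{\varepsilon}\ge \psi$ is forced pointwise, and then to verify the three convergences in \eqref{conv} by dominated convergence. Concretely, I would first apply Lemma \ref{alav.1} to $w$ on $\ti{\Omega}$ (using that $\tx{H}(\cdot,Dw)\in L^{1}_{\loc}(\Omega)$, $\eqref{ass.1}_{1}$ and \eqref{gt} give $\snr{Dw}\tx{g}(\snr{Dw})\in L^{1}_{\loc}(\Omega)$), obtaining $\bar{w}_{\varepsilon}\in C^{\infty}(\ti{\Omega})$ with $\bar{w}_{\varepsilon}\to w$ in $W^{1,1}(\ti{\Omega})$ and the stated $L^{\infty}$/gradient bounds. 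Since $w\ge \psi$ a.e. but the mollification may destroy this, the natural fix is to set $\ti{w}_{\varepsilon}:=\max\{\bar{w}_{\varepsilon},\psi\}$ (or, to keep the regularity class $\mathbb{X}$, $\ti{w}_{\varepsilon}:=\bar{w}_{\varepsilon}+(\psi-\bar{w}_{\varepsilon})_{+}$, comparing with a mollified obstacle): as $\psi\in\mathbb{X}_{\loc}(\Omega)$ and $\bar{w}_{\varepsilon}$ is smooth, the max of the two lies in $\mathbb{X}(\ti{\Omega})$ for the spaces of interest (H\"older or $W^{1,\infty}\cap W^{2,1}$-type), and $\ti{w}_{\varepsilon}\ge\psi$ holds by construction, so $\ti{w}_{\varepsilon}\in\mathbb{X}(\ti{\Omega})\cap\tx{K}^{\psi}(\ti{\Omega})$.

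The second step is the $W^{1,1}(\ti{\Omega})$-convergence $\ti{w}_{\varepsilon}\to w$. Writing $\ti{w}_{\varepsilon}-w=(\bar{w}_{\varepsilon}-w)+(\psi-\bar{w}_{\varepsilon})_{+}$ and noting $(\psi-\bar{w}_{\varepsilon})_{+}\le(\psi-w)_{+}+\snr{w-\bar{w}_{\varepsilon}}=\snr{w-\bar{w}_{\varepsilon}}$ a.e. (because $w\ge\psi$, so $(\psi-w)_{+}=0$), one gets $\nr{\ti{w}_{\varepsilon}-w}_{L^{1}(\ti{\Omega})}\le 2\nr{\bar{w}_{\varepsilon}-w}_{L^{1}(\ti{\Omega})}\to 0$. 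For the gradients, on $\{\bar{w}_{\varepsilon}\ge\psi\}$ one has $D\ti{w}_{\varepsilon}=D\bar{w}_{\varepsilon}$, while on $\{\bar{w}_{\varepsilon}<\psi\}$ one has $D\ti{w}_{\varepsilon}=D\psi$; hence $\snr{D\ti{w}_{\varepsilon}-Dw}\le\snr{D\bar{w}_{\varepsilon}-Dw}+\mathds{1}_{\{\bar{w}_{\varepsilon}<\psi\}}\snr{D\psi-Dw}$, and the set $\{\bar{w}_{\varepsilon}<\psi\}\subset\{\snr{w-\bar{w}_{\varepsilon}}>0\}\cup\{w=\psi\}$; on $\{w=\psi\}$ we have $Dw=D\psi$ a.e. so the integrand vanishes there, and off that set $\snr{\{\bar{w}_{\varepsilon}<\psi\}}\to 0$, so the extra term tends to $0$ in $L^{1}$ by absolute continuity of the integral (using $D\psi-Dw\in L^{1}_{\loc}$, which follows from \eqref{p1}, \eqref{gt} and $\tx{H}(\cdot,Dw)\in L^{1}_{\loc}$). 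This gives $D\ti{w}_{\varepsilon}\to Dw$ in $L^{1}(\ti{\Omega})$, hence (up to a subsequence) a.e. convergence of $D\ti{w}_{\varepsilon}$.

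The third and main step is the modular/energy convergence $\mathcal{H}(\ti{w}_{\varepsilon};\ti{\Omega})\to\mathcal{H}(w;\ti{\Omega})$ and $\mathcal{G}(\ti{w}_{\varepsilon}-w;\ti{\Omega})\to 0$. Here the obstruction is the degenerate coefficient $a$ multiplying the $q$-power term together with the only-$L^{1}$ control on gradients: without uniform bounds on $D\ti{w}_{\varepsilon}$ one cannot appeal to uniform integrability directly, so the decomposition above into the ``mollification part'' $\bar{w}_{\varepsilon}$ and the ``obstacle part'' $D\psi$ is essential. On the region where $D\ti{w}_{\varepsilon}=D\psi$ the energy density equals $\tx{H}(\cdot,D\psi)$ which is integrable by \eqref{p1}, and the region shrinks to a null set (modulo $\{w=\psi\}$ where the densities agree with $\tx{H}(\cdot,Dw)$), so this contribution is controlled by absolute continuity; on the complementary region one reduces to showing $\int \tx{H}(\cdot,D\bar{w}_{\varepsilon})\to\int\tx{H}(\cdot,Dw)$, which is precisely the no-Lavrentiev statement for the \emph{free} problem obtained by the mollification estimates of Lemma \ref{alav.1} combined with convexity of $\tx{L}$ and of $t\mapsto t\tx{g}(t)$, the bound $\tx{g}(t)\le c(1+t^{\varepsilon})$ from $\eqref{ass.2}_{3}$, and Hölder/Young to absorb the coefficient (exactly the mechanism used in \cite{buli,balci,bb90}): one splits $a(x)=(a(x)-a(x_{\varepsilon}))+a(x_{\varepsilon})$ on small balls, uses $a\in C^{0,\alpha}$ and the fact that the truncation level $\varepsilon^{-\vartheta}$ with $\vartheta$ chosen small relative to $\alpha/(q-1)$ (possible since $q<1+\alpha$) makes the ``bad'' term $\nr{a}_{C^{0,\alpha}}\varepsilon^{\alpha}\nr{D\bar{w}_{\varepsilon}}_{L^{\infty}}^{q}\lesssim\varepsilon^{\alpha-\vartheta q}$ vanish. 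Finally, $\mathcal{G}(\ti{w}_{\varepsilon}-w;\ti{\Omega})\to 0$ follows from $D\ti{w}_{\varepsilon}-Dw\to 0$ a.e. and in $L^{1}$ together with an equi-integrability estimate for $\tx{G}(\cdot,\snr{D\ti{w}_{\varepsilon}-Dw})$ obtained the same way, or more simply by Vitali's theorem using that $\tx{G}(\cdot,\snr{D\ti{w}_{\varepsilon}})$ and $\tx{G}(\cdot,\snr{Dw})$ are equi-integrable (the former from the energy convergence just proved, via the standard ``convergence of energies + a.e. convergence $\Rightarrow$ convergence of the difference'' argument for Young-type functionals). I expect the delicate point to be making the coefficient-splitting estimate uniform while only controlling $\nr{D\bar{w}_{\varepsilon}}_{L^{\infty}}$ by a negative power of $\varepsilon$, i.e. correctly balancing the truncation exponent $\vartheta$ against $\alpha$ and $q$ so that all error terms are $\texttt{o}(\varepsilon)$.
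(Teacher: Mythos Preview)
Your construction $\ti{w}_{\varepsilon}=\max\{\bar{w}_{\varepsilon},\psi\}$ has a genuine gap at the very first step: it does not, in general, land in $\mathbb{X}(\ti{\Omega})$. The whole point of the abstract space $\mathbb{X}$ in the statement is that in the paper's application (Section \ref{app}) one takes $\psi$ as in \eqref{p2}, which by Remark \ref{r21} forces $\psi\in W^{2,\infty}_{\loc}$, and the approximating sequence is used with $\mathbb{X}=W^{2,\infty}$. Taking a pointwise maximum destroys second derivatives: already $\max\{x_{n},0\}$ is Lipschitz but not in $W^{2,1}$. So neither of your candidates $\max\{\bar{w}_{\varepsilon},\psi\}$ or $\bar{w}_{\varepsilon}+(\psi-\bar{w}_{\varepsilon})_{+}$ is admissible for the spaces that actually matter here, and your parenthetical ``for the spaces of interest (H\"older or $W^{1,\infty}\cap W^{2,1}$-type)'' is simply false for the $W^{2,1}$ part.

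The paper's remedy is the one you allude to but do not develop: apply Lemma \ref{alav.1} to \emph{both} $w$ and $\psi$ with the same truncation level, obtaining smooth $\bar{w}_{\varepsilon},\bar{\psi}_{\varepsilon}$, and set $\ti{w}_{\varepsilon}:=\bar{w}_{\varepsilon}-\bar{\psi}_{\varepsilon}+\psi$. Since $t\mapsto\min\{\varepsilon^{-\vartheta},\max\{t,-\varepsilon^{-\vartheta}\}\}$ is nondecreasing and mollification against a nonnegative kernel preserves order, $w\ge\psi$ yields $\bar{w}_{\varepsilon}\ge\bar{\psi}_{\varepsilon}$, hence $\ti{w}_{\varepsilon}\ge\psi$. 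Because $\bar{w}_{\varepsilon}-\bar{\psi}_{\varepsilon}\in C^{\infty}$, the sum inherits \emph{exactly} the regularity of $\psi$, so $\ti{w}_{\varepsilon}\in\mathbb{X}(\ti{\Omega})$ for any $\mathbb{X}$. This also makes the energy step cleaner than your region-splitting: $D\ti{w}_{\varepsilon}=D\bar{w}_{\varepsilon}-D\bar{\psi}_{\varepsilon}+D\psi$ everywhere, so a single pointwise domination of $\tx{G}(x,\snr{D\ti{w}_{\varepsilon}})$ by an $\varepsilon$-independent $L^{1}$ function (via Jensen on the mollification and the coefficient freezing $a_{\varepsilon}(x):=\inf_{B_{\varepsilon}(x)}a$) gives all three convergences by dominated convergence.

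A smaller point: your coefficient-splitting estimate is miscomputed. With $\nr{D\bar{w}_{\varepsilon}}_{L^{\infty}}\lesssim\varepsilon^{-(1+\vartheta)}$ from Lemma \ref{alav.1}, pulling all $q$ powers to $L^{\infty}$ gives $\varepsilon^{\alpha-(1+\vartheta)q}$, which need not vanish under \eqref{1qa}. The correct move is to pull only $q-1$ powers and keep one factor in $L^{1}$:
\[
\snr{a(x)-a_{\varepsilon}(x)}\snr{D\bar{w}_{\varepsilon}-D\bar{\psi}_{\varepsilon}}^{q}\lesssim[a]_{0,\alpha}\,\varepsilon^{\alpha-(1+\vartheta)(q-1)}\snr{D\bar{w}_{\varepsilon}-D\bar{\psi}_{\varepsilon}},
\]
and then choose $\vartheta=\alpha/(q-1)-1>0$ (this is where $q<1+\alpha$ enters) so that the exponent is zero and the right-hand side is dominated by $\snr{Dw}*\phi_{\varepsilon}+\snr{D\psi}*\phi_{\varepsilon}\in L^{1}$.
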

\begin{proof}
We first handle the case of unbounded maps. We apply Lemma \ref{alav.1} to $w$ and $\psi$ with $\vartheta:=-1+\alpha/(q-1)>0$, thus obtaining sequences $\{\bar{w}_{\varepsilon}\}_{\varepsilon>0},\{\bar{\psi}_{\varepsilon}\}_{\varepsilon>0}\in C^{\infty}(\ti{\Omega})$, satisfying
\eqn{conv.1}
$$
\max\left\{\nr{D\bar{w}_{\varepsilon}}_{L^{\infty}(\ti{\Omega})},\nr{D\bar{\psi}_{\varepsilon}}_{L^{\infty}(\ti{\Omega})}\right\}\lesssim_{n}\varepsilon^{-(1+\vartheta)}.
$$
and such that
\eqn{conv.3}
$$
 \bar{w}_{\varepsilon}\to w \ \ \mbox{strongly in} \ \ W^{1,1}(\ti{\Omega})\qquad\mbox{and}\qquad   \bar{\psi}_{\varepsilon}\to \psi \ \ \mbox{strongly in} \ \ W^{1,1}(\ti{\Omega}).
$$
We then let $\ti{w}_{\varepsilon}:=\bar{w}_{\varepsilon}-\bar{\psi}_{\varepsilon}+\psi$. Via \eqref{conv.3} we immediately see that $\eqref{conv}_{1}$ holds true. Moreover, being $\bar{w}_{\varepsilon}$, $\bar{\psi}_{\varepsilon}$ smooth, $\ti{w}_{\varepsilon}$ is as regular as $\psi$ allows, and,
\begin{eqnarray*}
\ti{w}_{\varepsilon}(x)&=&\bar{w}_{\varepsilon}(x)-\bar{\psi}_{\varepsilon}(x)+\psi(x)\nonumber \\
&=&\int_{B_{1}(0)}\min\{\varepsilon^{-\vartheta},\max\{w(x+\varepsilon y),-\varepsilon^{-\vartheta}\}\}\phi(y)\dy-\bar{\psi}_{\varepsilon}(x)+\psi(x)\nonumber \\
&\stackrel{w\ge \psi}{\ge}&\int_{B_{1}(0)}\min\{\varepsilon^{-\vartheta},\max\{\psi(x+\varepsilon y),-\varepsilon^{-\vartheta}\}\}\phi(y)\dy-\bar{\psi}_{\varepsilon}(x)+\psi(x)\ge \psi(x) \ \Longrightarrow \ti{w}_{\varepsilon}\in \tx{K}^{\psi}(\ti{\Omega}).
\end{eqnarray*}
Observe also that by construction, on $\ti{\Omega}$ it is
\eqn{cons}
$$
\snr{D\bar{w}_{\varepsilon}(x)}\le \snr{Dw}*\phi_{\varepsilon}(x)\qquad \mbox{and}\qquad \snr{D\bar{\psi}_{\varepsilon}(x)}\le \snr{D\psi}*\phi_{\varepsilon}(x).
$$
Finally, for $x\in \ti{\Omega}$ set $a_{\varepsilon}(x):=\inf_{y\in B_{\varepsilon}(x)}a(y)$, $\tx{G}_{\varepsilon}(x,\snr{z}):=\snr{z}\tx{g}(\snr{z})+a_{\varepsilon}(x)\snr{z}^{q}$ and observe that
\eqn{conv.6}
$$
\tx{G}_{\varepsilon}(x,\snr{z})\le \tx{G}(x+\varepsilon y,\snr{z})\qquad \mbox{for all} \ \ y\in B_{1}(0).
$$
Now, recall our choice of $\vartheta$ to control 
\begin{eqnarray}\label{conv.5}
\snr{a(x)-a_{\varepsilon}(x)}\snr{D\bar{w}_{\varepsilon}-D\bar{\psi}_{\varepsilon}}^{q}&\le& [a]_{0,\alpha;\Omega_{0}}\varepsilon^{\alpha}\nr{D\bar{w}_{\varepsilon}-D\bar{\psi}_{\varepsilon}}_{L^{\infty}(\ti{\Omega})}^{q-1}\snr{D\bar{w}_{\varepsilon}-D\bar{\psi}_{\varepsilon}}\nonumber \\
&\stackrel{\eqref{conv.1}}{\lesssim_{n,q}}&[a]_{0,\alpha;\Omega_{0}}\varepsilon^{\alpha-(1+\vartheta)(q-1)}\snr{D\bar{w}_{\varepsilon}-D\bar{\psi}_{\varepsilon}}\nonumber \\
&\lesssim_{n,q}&[a]_{0,\alpha;\Omega}\snr{D\bar{w}_{\varepsilon}-D\bar{\psi}_{\varepsilon}},
\end{eqnarray}
and apply the definition of convolution, Jensen's inequality, \eqref{ass.2} and \eqref{cons}-\eqref{conv.5} to gain
\begin{eqnarray}\label{conv.2}
\tx{G}(x,\snr{D\ti{w}_{\varepsilon}})&\lesssim&\tx{G}(x,\snr{D\psi})+\tx{G}_{\varepsilon}(x,\snr{D\bar{w}_{\varepsilon}-D\bar{\psi}_{\varepsilon}})+\snr{a(x)-a_{\varepsilon}(x)}\snr{D\bar{w}_{\varepsilon}-D\bar{\psi}_{\varepsilon}}^{q}\nonumber \\
&\lesssim&\tx{G}(x,\snr{D\psi})+\tx{G}_{\varepsilon}(x,\snr{D\bar{w}_{\varepsilon}-D\bar{\psi}_{\varepsilon}})+[a]_{0,\alpha;\Omega}\snr{D\bar{w}_{\varepsilon}-D\bar{\psi}_{\varepsilon}}\nonumber \\
&\lesssim&1+\tx{G}(x,\snr{D\psi})+\tx{G}(\cdot,\snr{Dw})*\phi_{\varepsilon}(x)+\tx{G}(\cdot,\snr{D\psi})*\phi_{\varepsilon}(x),
\end{eqnarray}
up to constants depending on $(n,\tx{g},q,[a]_{0,\alpha;\Omega})$. Keeping in mind \eqref{conv.3}, \eqref{conv.2}, \eqref{ass.1}$_{1}$ and a well-known variant of the dominated convergence theorem imply that $\mathcal{G}(\ti{w}_{\varepsilon}-w;\ti{\Omega})\to 0$ and $\tx{H}(\cdot,D\ti{w}_{\varepsilon})\to \tx{H}(\cdot,Dw)$ in $L^{1}(\ti{\Omega})$ (up to nonrelabelled subsequences), thus \eqref{conv} is completely proven. 
\end{proof}
\noindent The next corollary is a straightforward consequence of Lemmas \ref{alav.1}-\ref{alav}.
\begin{corollary}\label{alav.2}
    In the same setting as Lemma \ref{alav}, convergence \eqref{conv} holds for a priori bounded maps $w\in \tx{K}^{\psi}_{\loc}(\Omega)\cap L^{\infty}_{\loc}(\Omega)$, $\psi\in\mathbb{X}_{\loc}(\Omega)\cap L^{\infty}_{\loc}(\Omega) $ provided the large bound $q\le 1+\alpha$ is in force. The $L^{\infty}$-bounds
\eqn{linfb}
$$
\begin{cases}
\displaystyle
\ \nr{\ti{w}_{\varepsilon}}_{L^{\infty}(\ti{\Omega})}\le 4\max\left\{\nr{w}_{L^{\infty}(\Omega_{0})},\nr{\psi}_{L^{\infty}(\Omega_{0})}\right\},\vspace{1.5mm}\\ \displaystyle
\ \nr{D\ti{w}_{\varepsilon}}_{L^{\infty}(\ti{\Omega})}\lesssim_{n}\varepsilon^{-1}\max\left\{\nr{w}_{L^{\infty}(\Omega_{0})},\nr{\psi}_{L^{\infty}(\Omega_{0})}\right\}+\nr{D\psi}_{L^{\infty}(\tilde{\Omega})}
\end{cases}
$$
hold true.
\end{corollary}
\begin{proof}
    The claim follows via the same procedure in the proof of Lemma \ref{alav} with $\vartheta=0$, after observing that if $w,\psi\in L^{\infty}_{\loc}(\Omega)$, the sequence $\{\ti{w}_{\varepsilon}\}_{\varepsilon>0}\subset \mathbb{X}(\ti{\Omega})\cap L^{\infty}(\ti{\Omega})$ given by Lemma \ref{alav} involves the usual convolutions of $w$ and $\psi$. Bounds \eqref{linfb} are granted by Lemma \ref{alav.1}.
\end{proof}
\section{Hybrid fractional Moser iteration} \label{ncz} 
\noindent This section is devoted to the proof of a hybrid version of fractional Moser iteration in the sense that the resulting a priori estimate still contains a very small power of the $L^{\infty}$-norm of the gradient of (free or constrained) minima of functionals governed by regular log-double phase integrands. To this end, we introduce a smoothed form of the governing general integrand in \eqref{ll1}, endowed with additional (artificial) structural properties. These reduce the problem to the nonsingular $q$-Laplacian setting, where (a priori) $C^{1}$-regularity of solutions follows from classical theory \cite{fuc90,ch}. Specifically, we mimic the family of regularized integrands constructed in Section \ref{app} - essentially the same objects, but, for the sake of clarity, written without indices - which are the key to our approximation scheme. We consider a map $\tx{L}_{*}\colon \mathbb{R}^{n}\to \mathbb{R}$ satisfying
\begin{flalign}\label{assfr}
\left\{
\begin{array}{c}
\displaystyle
\ \tx{L}_{*}\in C^{2}_{\loc}(\mathbb{R}^{n}) \\[8pt]\displaystyle
\ \frac{1}{\Lambda_{*}}\snr{z}\tx{g}(\snr{z})-\Lambda_{*}\le \tx{L}_{*}(z)\le \Lambda_{*}\snr{z}\tx{g}(\snr{z})+\Lambda_{*} \\[8pt]\displaystyle
\ \frac{\snr{\xi}^{2}}{\Lambda_{*}(1+\snr{z}^{2})^{\mu/2}}\le \langle\partial^{2}\tx{L}_{*}(z)\xi,\xi\rangle,\qquad \quad \snr{\partial^{2}\tx{L}_{*}(z)}\le\frac{\Lambda_{*}(\tx{g}(\snr{z})+1)}{(1+\snr{z}^{2})^{1/2}},
\end{array}
\right.
\end{flalign}
for all $z,\xi\in \mathbb{R}^{n}$, and some absolute constant $\Lambda_{*}\ge 1$. Here $\tx{g}$ is the same function in \eqref{ass.2} - notice that now \eqref{gt} holds for $\tx{L}_{*}$ with $\Lambda_{*}$ replacing $\Lambda$. Arguing as in \cite[Lemma 2.1]{ma1}, we deduce that $\eqref{assfr}$ yields
\eqn{d.1}
$$
\snr{\partial \tx{L}_{*}(z)}\lesssim_{\Lambda_{*},\tx{g}}\tx{g}(\snr{z})+1\qquad \mbox{for all} \ \ z\in \mathbb{R}^{n}.
$$
Moreover, with $a$ being the nonnegative modulating coefficient in \eqref{1qa}$_{1}$,
\eqn{sasa}
$$
0<a_{*}\le 1,\qquad \quad 0<s\le 2
$$
being strictly positive constants, we set $\tx{a}(x):=a(x)+a_{*}$ - obviously by \eqref{1qa}$_{1}$ it is $\tx{a}\in C^{0,\alpha}(\Omega)$ and $[\tx{a}]_{0,\alpha;\ti{\Omega}}=[a]_{0,\alpha;\ti{\Omega}}$ for all subsets $\ti{\Omega}\subseteq \Omega$ - and define
\eqn{hlhl}
$$
\begin{array}{c}
\tx{H}_{*}(x,z):=\tx{L}_{*}(z)+\tx{a}(x)\ell_{s}(z)^{q}\\[8pt]\displaystyle
\lambda_{1;*}(x,\snr{z}):=\ell_{1}(z)^{-\mu}+\tx{a}(x)\ell_{s}(z)^{q-2},\qquad \quad \tx{E}_{*}(x,\snr{z}):=\int_{0}^{\snr{z}}\lambda_{1;*}(x,t)t\dt.
\end{array}
$$
Finally, exponents $(\mu,q)$ verify
\eqn{mumu}
  $$
  1\le \mu<2\qquad \mbox{and}\qquad 1<q<2-\mu+\alpha\le 1+\alpha.
  $$
Straightforward computations, \eqref{assfr} and \eqref{ass.2} give
\begin{flalign}\label{assfr.2}
\left\{
\begin{array}{c}
\displaystyle
\ z\mapsto \tx{H}_{*}(x,z)\in C^{2}_{\loc}(\mathbb{R}^{n}),\qquad \quad x\mapsto \tx{H}_{*}(x,z)\in C^{0,\alpha}(\Omega)\\[8pt]\displaystyle
\ \ti{c}^{-1}a_{*}\ell_{1}(z)^{q}-\ti{c}\le \tx{H}_{*}(x,z)\le \ti{c}\ell_{1}(z)^{q}\\[8pt]\displaystyle
\ \ti{c}^{-1}a_{*}\ell_{1}(z)^{q-2}\snr{\xi}^{2}\le \langle\partial^{2}\tx{H}_{*}(x,z)\xi,\xi\rangle,\qquad \quad \snr{\partial^{2}\tx{H}_{*}(x,z)}\le\ti{c}\ell_{1}(z)^{q-2}\\[8pt]\displaystyle
\ \snr{\partial \tx{H}_{*}(x_{1},z)-\partial\tx{H}_{*}(x_{2},z)}\le q[a]_{0,\alpha;\Omega}\snr{x_{1}-x_{2}}^{\alpha}\ell_{1}(z)^{q-1},
\end{array}
\right.
\end{flalign}
for $\ti{c}\equiv \ti{c}(\Lambda_{*},\tx{g},q,\nr{a}_{L^{\infty}(\Omega)},s)\ge 1$. Within this framework, let $B\subset 2B\Subset \Omega$ be concentric balls, introduce functional
 $$
 W^{1,q}(B)\ni w\mapsto \mathcal{H}_{*}(w;B):=\int_{B}\tx{H}_{*}(x,Dw)\dx,
 $$
 obstacle $\psi$ as in \eqref{p2}, boundary datum
 \eqn{u0}
 $$u_{0}\in \tx{K}^{\psi}(B)\cap W^{1,\infty}(B),$$ and consider the solution $u\in (u_{0}+W^{1,q}_{0}(B))\cap \tx{K}^{\psi}(B)$ of problem
 \begin{eqnarray}\label{pdreg}
 (u_{0}+W^{1,q}_{0}(B))\cap \tx{K}^{\psi}(B)\ni w\mapsto \min \mathcal{H}_{*}(w;B).
 \end{eqnarray}
 Existence and uniqueness of $u\in (u_{0}+W^{1,q}_{0}(B))\cap \tx{K}^{\psi}(B)$ are granted by standard direct methods; by minimality the variational inequality
 \eqn{vel}
 $$
 \int_{B}\langle \partial \tx{H}_{*}(x,Du),Dw-Du\rangle\dx\ge 0\qquad \mbox{for all} \ \ w\in (u_{0}+W^{1,q}_{0}(B))\cap \tx{K}^{\psi}(B)
 $$
 holds true. Moreover by \eqref{assfr.2}, classical regularity theory for variational inequalities and obstacle problems \cite{fuc90,ch} assures that
 \eqn{areg}
 $$
 u\in C^{1}_{\loc}(B).
 $$
 \subsection{Global boundedness}\label{gb} Our first move is controlling the maximum modulus of $u$ by means of the largest between the $L^{\infty}$-norms of the obstacle and of the boundary datum. Letting
 $$\tx{S}:=\max\left\{1,\nr{u_{0}}_{L^{\infty}(B)},\nr{\psi}_{L^{\infty}(B)}\right\},$$ via elementary considerations we have that $w:=u-(u-\tx{S})_{+}$ is admissible in \eqref{vel}, in particular $(u-\tx{S})_{+}\in W^{1,q}_{0}(B)$, therefore recalling Remark \ref{r21} we get
 \begin{eqnarray*}
 0&\stackrel{\eqref{vel}}{\le}&-\int_{B}\langle\partial \tx{H}_{*}(x,Du),D(u-\tx{S})_{+}\rangle\dx\le -\int_{B}\langle\partial\tx{L}_{*}(Du),D(u-\tx{S})_{+}\rangle\dx\nonumber \\
 &=&-\int_{B}\int_{0}^{1}\langle\partial^{2}\tx{L}_{*}(tDu) Du,D(u-\tx{S})_{+}\rangle\dtt\dx\stackrel{\eqref{assfr}}{\le}-\frac{1}{\Lambda_{*}}\int_{B}\frac{\snr{D(u-\tx{S})_{+}}^{2}}{(1+\snr{Du}^{2})^{\mu/2}}\dx,
 \end{eqnarray*}
 which implies that $u(x)\le \tx{S}$ for any $x\in B$. On the other hand, $u(x)\ge \psi(x)\ge -\tx{S}$ for all $x\in B$, so we can conclude that $\nr{u}_{L^{\infty}(B)}\le \tx{S}$. 
 \subsection{Zero order scaling}\label{sca} We fix a ball $B_{\rr}(x_{0})\subset B_{2\rr}(x_{0})\Subset B$ with $\rr\in (0,1]$ and blow up $u$, $\psi$ and $\tx{a}$ on $B_{\rr}(x_{0})$ by defining the maps $\ti{u}(x):=(8\tx{S})^{-1}u(x_{0}+\rr x)$, $\ti{\psi}(x):=(8\tx{S})^{-1}\psi(x_{0}+\rr x)$, $\ti{\tx{a}}(x):=\tx{a}(x_{0}+\rr x)$. We further introduce constant $\tx{C}_{\rr}:=8\tx{S}/\rr\ge 1$, integrands $\ti{\tx{L}}(z):=\tx{L}_{*}(\tx{C}_{\rr}z)$, $\ti{\ell}_{s}(z):=\ell_{s}(\tx{C}_{\rr}z)$, $\ti{\tx{H}}(x,z):=\ti{\tx{L}}(z)+\ti{\tx{a}}(x)\ti{\ell}_{s}(z)^{q}$, and observe that via $\eqref{assfr}_{3}$-\eqref{d.1} we have
 \eqn{scasca}
 $$
 \left\{
 \begin{array}{c}
 \displaystyle
 \ \snr{\partial \ti{\tx{H}}(x,z)}\lesssim_{\Lambda_{*},\tx{g},q}\tx{C}_{\rr}^{2}\left(1+\tx{g}(\snr{z})\right)+\tx{C}_{\rr}^{q}\ti{\tx{a}}(x)\ell_{s}(z)^{q-1}\\[8pt]\displaystyle
 \ \snr{\partial^{2}\ti{\tx{H}}(x,z)}\lesssim_{n,\Lambda_{*},q}\frac{\tx{C}_{\rr}^{3}\left(1+\tx{g}(\snr{z})\right)}{(1+\snr{z}^{2})^{1/2}}+\tx{C}_{\rr}^{2}\ti{\tx{a}}(x)\ell_{s}(z)^{q-2}\\[8pt]\displaystyle
 \ \langle\partial^{2}\ti{\tx{H}}(x,z)\xi,\xi\rangle\gtrsim_{\mu,q}\frac{\tx{C}_{\rr}^{2-\mu}\snr{\xi}^{2}}{(1+\snr{z}^{2})^{\mu/2}}+\tx{C}_{\rr}^{q}\ti{\tx{a}}(x)\ell_{s}(z)^{q-2}\snr{\xi}^{2},
 \end{array}
 \right.
 $$
 for all $z,\xi\in \mathbb{R}^{n}$. By scaling we see that $\ti{u}\in \tx{K}^{\ti{\psi}}(B_{1}(0))$ is a constrained local minimizer of functional
 \eqn{fbw}
 $$
 W^{1,q}(B_{1}(0))\ni w\mapsto \int_{B_{1}(0)}\ti{\tx{H}}(x,Dw)\dx,
 $$
 solving the variational inequality
 \eqn{vler}
 $$
 \int_{B_{1}(0)}\langle\partial \ti{\tx{H}}(x,D\ti{u}),Dw-D\ti{u}\rangle\dx\ge 0\qquad \mbox{for all} \ \ w\in (\ti{u}+W^{1,q}_{0}(B_{1}(0)))\cap \tx{K}^{\ti{\psi}}(B_{1}(0)).
 $$
 Now we are ready to build up our iteration scheme.

  \subsection{Linear iteration}\label{fm} With \eqref{assfr}, \eqref{mumu}, \eqref{ass.2} and \eqref{p2} in force, we introduce radii $0<r_{2}<r_{1}\le 1$, parameters $r_{2}\le \tau_{2}<\tau_{1}\le r_{1}$, set $\rr_{0}:=\tau_{2}+(\tau_{1}-\tau_{2})2^{-2}$, $\rr_{1}:=\tau_{2}+(\tau_{1}-\tau_{2})2^{-3}$, and pick other two numbers $\rr_{1}\le \ti{\tau}_{2}<\ti{\tau}_{1}\le \rr_{0}$. Then let $\varsigma_{0}:=\ti{\tau}_{2}+(\ti{\tau}_{1}-\ti{\tau}_{2})2^{-2}$, $\varsigma_{1}:=\ti{\tau}_{2}+(\ti{\tau}_{1}-\ti{\tau}_{2})2^{-3}$, $\ti{r}_{0}:=(\varsigma_{1}+\varsigma_{0})/2$, $\bar{r}_{0}:=(3\varsigma_{1}+\varsigma_{0})/4$, $\bar{r}_{1}:=(5\varsigma_{1}+\varsigma_{0})/6$ and $\ti{r}_{1}:=(7\varsigma_{1}+\varsigma_{0})/8$. It is easy to see that $\rr_{0}\ge\ti{\tau}_{1}>\ti{r}_{0}>\bar{r}_{0}>\bar{r}_{1}>\ti{r}_{1}>\ti{\tau}_{2}\ge \rr_{1}$. Further, pick a vector $h\in \mathbb{R}^{n}\setminus \{0\}$ such that $\snr{h}\in (0,\mathcal{h}_{0})$ with $\mathcal{h}_{0}:=2^{-17}(\ti{\tau}_{1}-\ti{\tau}_{2})$ and a cut-off function $\eta\in C^{2}_{0}(B_{1}(0))$ satisfying 
  \eqn{etaeta}
  $$
  \mathds{1}_{B_{\bar{r}_{1}}(0)}\le \eta\le \mathds{1}_{B_{\bar{r}_{0}}(0)},\qquad \quad \snr{D\eta}\le 2^{6}/(\bar{r}_{0}-\bar{r}_{1}),\qquad \quad \snr{D^{2}\eta}\le 2^{12}/(\bar{r}_{0}-\bar{r}_{1})^{2}.
  $$
  A quick notation remark: since all balls we shall consider now are concentric in the origin, we will omit explicitly denoting their center. Next, for reasons that will be clear in a few lines, we set
  \eqn{thre}
  $$
  t_{*}:=2q+8,\qquad \qquad \quad \omega_{*}:=\min\left\{\alpha+2-\mu-q,\frac{1}{2}\right\},
  $$
 take 
 \eqn{kkk}
  $$
 \begin{array}{c}
 \displaystyle
\omega\in \left(0,\omega_{*}\right),\qquad \quad \ti{\delta}:=1-\mu\omega,\qquad \quad \theta:=2\omega+\ti{\delta}-1,\\[8pt]\displaystyle
\ti{\gamma}:=\max\{\theta,\mu-1\},\qquad \quad \gamma_{\mu;\omega}:=\frac{2\max\{\mu-1,\omega\}}{1+\alpha-q},
 \end{array}
 $$ and, with $t\ge t_{*}$ introduce quantities
 \eqn{ts}
 $$
 \begin{array}{c}
 \displaystyle
  \sigma:=t-2q+3-\mu+2\alpha-2\omega,\qquad \quad  \vartheta=\frac{\sigma-2-\alpha+2\omega}{\sigma},\\[10pt]\displaystyle
 \ \tx{s}_{*}:=2-\mu+\alpha-q-\frac{\theta}{2}.
 \end{array}
 $$
 We point out that being $t\ge t_{*}$ and $0<\omega<\omega_{*}\le \alpha$, it is $\sigma>9$ and $\vartheta\in (0,1)$ in \eqref{ts}. Moreover, a direct computation and $\eqref{mumu}_{1}$ show that $\ti{\delta}\in (0,1)$ and $\theta\in (0,2-\mu)$ in \eqref{kkk}. At this stage, we need to look for a reasonable test function to plug in \eqref{vler}. To this aim, we preliminary observe that given any function $\tx{b}\in L^{\infty}(B_{1})$ such that $0\le \tx{b}\le 1$, map $w:=\ti{u}-2^{-1}\tau_{-h}(\tx{b}\tau_{h}(\ti{u}-\ti{\psi}))$ satisfies the obstacle constraint. In fact it is
 \begin{eqnarray}\label{obc}
 w(x)&=&\ti{u}(x)+\frac{1}{2}\tx{b}(x)(\ti{u}(x+h)-\ti{\psi}(x+h))+\frac{1}{2}\tx{b}(x-h)(\ti{u}(x-h)-\ti{\psi}(x-h))\nonumber \\
 &&-\frac{1}{2}(\tx{b}(x)+\tx{b}(x-h))(\ti{u}(x)-\ti{\psi}(x))\nonumber \\
 &\ge&\frac{1}{2}\left((1-\tx{b}(x))+(1-\tx{b}(x-h))\right)\ti{u}(x)+\frac{1}{2}\left(\tx{b}(x)+\tx{b}(x-h)\right)\ti{\psi}(x)\stackrel{\ti{u}\ge \ti{\psi}}{\ge}\ti{\psi}(x).
 \end{eqnarray}
 In \eqref{obc} we choose $$\tx{b}\equiv \tx{b}_{\sigma}:=\eta^{2}\int_{0}^{1}\snr{\tau_{h}\ti{\psi}+\beta\tau_{h}(\ti{u}-\ti{\psi})}^{\sigma-1}\d\beta.$$ Notice that $\eqref{etaeta}_{1}$, the restrictions imposed on the size of $\mathcal{h}_{0}$, and the content of sections \ref{gb}-\ref{sca} give 
\begin{eqnarray}\label{bdd}
 \max\left\{\nr{\tau_{h}\ti{u}}_{L^{\infty}(B_{1})},\nr{\tau_{h}\ti{\psi}}_{L^{\infty}(B_{1})}\right\}&\le& 2\max\left\{\nr{\ti{u}}_{L^{\infty}(B_{2})},\nr{\ti{\psi}}_{L^{\infty}(B_{2})}\right\}\nonumber \\
 &\le&(4\tx{S})^{-1}\max\left\{\nr{u}_{L^{\infty}(B_{2\rr})},\nr{\psi}_{L^{\infty}(B_{2\rr})}\right\}\le \frac{1}{4}\nonumber \\
 & \Longrightarrow&   0\le \tx{b}_{\sigma}(x)\stackrel{\sigma> 1}{\le} \left(\nr{\tau_{h}\ti{\psi}}_{L^{\infty}(B_{1})}+\nr{\tau_{h}\ti{u}}_{L^{\infty}(B_{1})}\right)^{\sigma-1}\le 1,
\end{eqnarray}
for all $x\in B_{1}$, therefore recalling the mean value theorem and that $\supp(\eta)\Subset B_{\ti{r}_{0}}\subset B_{1}$, we can conclude with 
 \begin{eqnarray*}
 w&:=&\ti{u}-\frac{1}{2}\tau_{-h}\left(\sigma^{-1}\eta^{2}\left(\snr{\tau_{h}\ti{u}}^{\sigma-1}\tau_{h}\ti{u}-\snr{\tau_{h}\ti{\psi}}^{\sigma-1}\tau_{h}\ti{\psi}\right)\right)\nonumber \\
 &=&\ti{u}-\frac{1}{2}\tau_{-h}(\tx{b}_{\sigma}\tau_{h}(\ti{u}-\ti{\psi}))\stackrel{\eqref{etaeta}}{\in} \left(\ti{u}+W^{1,q}_{0}(B_{1})\right)\cap \tx{K}^{\ti{\psi}}(B_{1}),
 \end{eqnarray*}
 and, thanks also to \eqref{p2} and \eqref{areg}, testing in \eqref{vler} is legit. We indeed get, after applying the integration-by-parts formula for finite difference operators, getting rid of factor $2\sigma>0$, and multiplying the outcome by $\snr{h}^{-1-\vartheta\sigma-\alpha}$:
  \begin{eqnarray*}
\mbox{(I)}+\mbox{(II)}&=:& \frac{1}{\snr{h}^{1+\vartheta\sigma+\alpha}}\int_{B_{1}}\langle\tau_{h}\partial\ti{\tx{L}}(D\ti{u}),D(\eta^{2}\snr{\tau_{h}\ti{u}}^{\sigma-1}\tau_{h}\ti{u})\rangle\dx\nonumber \\
&&+\frac{q\tx{C}_{\rr}^{2}}{\snr{h}^{1+\vartheta\sigma+\alpha}}\int_{B_{1}}\langle\tau_{h}(\ti{\tx{a}}(\cdot)\ti{\ell}_{s}(D\ti{u})^{q-2}D\ti{u}),D(\eta^{2}\snr{\tau_{h}\ti{u}}^{\sigma-1}\tau_{h}\ti{u})\rangle\dx\nonumber \\
&\le&\frac{1}{\snr{h}^{1+\vartheta\sigma+\alpha}}\int_{B_{1}}\langle\tau_{h}\partial\ti{\tx{L}}(D\ti{u}),D(\eta^{2}\snr{\tau_{h}\ti{\psi}}^{\sigma-1}\tau_{h}\ti{\psi})\rangle\dx\nonumber \\
&&+\frac{q\tx{C}_{\rr}^{2}}{\snr{h}^{1+\vartheta\sigma+\alpha}}\int_{B_{1}}\langle\tau_{h}(\ti{\tx{a}}(\cdot)\ti{\ell}_{s}(D\ti{u})^{q-2}D\ti{u}),D(\eta^{2}\snr{\tau_{h}\ti{\psi}}^{\sigma-1}\tau_{h}\ti{\psi})\rangle\dx=:\mbox{(III)}+\mbox{(IV)}.
 \end{eqnarray*}
 Before proceeding further, let us introduce some abbreviations. With $h$ and $\ti{u}$ as above, we set
 \begin{flalign*}
 \begin{cases}
     \ \displaystyle\mathcal{D}(x,h):=1+\snr{D\ti{u}(x)}^{2}+\snr{D\ti{u}(x+h)}^{2},\qquad \quad &\displaystyle\tx{A}_{\rr}:=1+\nr{\ti{\tx{a}}}_{C^{0,\alpha}(B_{1})}\vspace{2.5mm}\\
     \ \displaystyle \mathcal{N}_{\infty}:=1+\nr{D\eta}_{L^{\infty}(B_{\bar{r}_{0}})}^{2}+\nr{D^{2}\eta}_{L^{\infty}(B_{\bar{r}_{0}})},\qquad \quad &\displaystyle\Psi:=1+\nr{D\ti{\psi}}_{L^{\infty}(B_{1})}+\nr{D^{2}\ti{\psi}}_{L^{\infty}(B_{1})}\vspace{2.5mm}\\
     \ \displaystyle \mathcal{G}_{1}(x,h):=\int_{0}^{1}\snr{\partial\ti{\tx{L}}(D\ti{u}(x+\beta h))}\d\beta,\qquad \quad & \displaystyle\mathcal{G}_{q}(x,h):=\int_{0}^{1}\ti{\ell}_{s}(D\ti{u}(x+\beta h))^{q-1}\d\beta,
 \end{cases}
 \end{flalign*}
and estimate by the mean value theorem, \eqref{af}, \eqref{scasca}, \eqref{ass.2}, and Cauchy-Schwarz, Young and Jensen inequalities,
\begin{eqnarray*}
 \mbox{(I)}&=&\frac{\sigma}{\snr{h}^{1+\vartheta\sigma+\alpha}}\int_{B_{1}}\int_{0}^{1}\eta^{2}\snr{\tau_{h}\ti{u}}^{\sigma-1}\langle\partial^{2}\ti{\tx{L}}(D\ti{u}+\beta\tau_{h}D\ti{u})\tau_{h}D\ti{u},\tau_{h}D\ti{u}\rangle\d\beta\dx\nonumber \\
 &&-\frac{2}{\snr{h}^{\vartheta\sigma+\alpha}}\int_{B_{1}}\int_{0}^{1}\langle\partial\ti{\tx{L}}(D\ti{u}(x+\beta h)),\partial_{h/\snr{h}}(\eta D\eta\snr{\tau_{h}\ti{u}}^{\sigma-1}\tau_{h}\ti{u})\rangle\d\beta\dx\nonumber \\
 &\ge&\frac{\sigma}{\snr{h}^{1+\vartheta\sigma+\alpha}}\int_{B_{1}}\int_{0}^{1}\eta^{2}\snr{\tau_{h}\ti{u}}^{\sigma-1}\langle\partial^{2}\ti{\tx{L}}(D\ti{u}+\beta\tau_{h}D\ti{u})\tau_{h}D\ti{u},\tau_{h}D\ti{u}\rangle\d\beta\dx\nonumber \\
 &&-\frac{c\mathcal{N}_{\infty}}{\snr{h}^{\vartheta\sigma+\alpha}}\int_{B_{\bar{r}_{0}}}\mathcal{G}_{1}(x,h)\snr{\tau_{h}\ti{u}}^{\sigma}\dx-\frac{c\sigma\mathcal{N}_{\infty}}{\varepsilon\tx{C}_{\rr}^{2-\mu}\snr{h}^{\vartheta\sigma+\alpha-1}}\int_{B_{\bar{r}_{0}}}\snr{\tau_{h}\ti{u}}^{\sigma-1}\mathcal{G}_{1}(x,h)^{2}\mathcal{D}(x,h)^{\frac{\mu}{2}}\dx\nonumber \\
 &&-\frac{c\varepsilon\sigma\tx{C}_{\rr}^{2-\mu}}{\snr{h}^{1+\vartheta\sigma+\alpha}}\int_{B_{1}}\eta^{2}\snr{\tau_{h}\ti{u}}^{\sigma-1}\mathcal{D}(x,h)^{-\frac{\mu}{2}}\snr{\tau_{h}D\ti{u}}^{2}\dx\nonumber \\
 &\ge&\frac{\sigma\tx{C}_{\rr}^{2-\mu}}{c\snr{h}^{1+\vartheta\sigma+\alpha}}\int_{B_{1}}\int_{0}^{1}\frac{\eta^{2}\snr{\tau_{h}\ti{u}}^{\sigma-1}\snr{\tau_{h}D\ti{u}}^{2}}{(1+\snr{D\ti{u}+\beta\tau_{h}D\ti{u}}^{2})^{\mu/2}}\d\beta\dx\nonumber \\
  &&-\frac{c\varepsilon\sigma\tx{C}_{\rr}^{2-\mu}}{\snr{h}^{1+\vartheta\sigma+\alpha}}\int_{B_{1}}\eta^{2}\snr{\tau_{h}\ti{u}}^{\sigma-1}\mathcal{D}(x,h)^{-\frac{\mu}{2}}\snr{\tau_{h}D\ti{u}}^{2}\dx\nonumber \\
 &&-\frac{c\mathcal{N}_{\infty}}{\snr{h}^{\vartheta\sigma+\alpha}}\left(\int_{B_{\bar{r}_{0}}}\mathcal{G}_{1}(x,h)^{\frac{t}{q-1}}\dx\right)^{\frac{q-1}{t}}\left(\int_{B_{\bar{r}_{0}}}\snr{\tau_{h}\ti{u}}^{\frac{t\sigma}{t+1-q}}\dx\right)^{\frac{t+1-q}{t}}\nonumber \\
 &&-\frac{c\sigma\mathcal{N}_{\infty}}{\varepsilon\tx{C}_{\rr}^{2-\mu}\snr{h}^{\vartheta\sigma+\alpha-1}}\int_{B_{\bar{r}_{0}}}\mathcal{G}_{1}(x,h)^{\frac{2q-2+\mu}{q-1}}\snr{\tau_{h}\ti{u}}^{\sigma-1}\dx\nonumber \\
 &&-\frac{c\sigma\mathcal{N}_{\infty}}{\varepsilon\tx{C}_{\rr}^{2-\mu}\snr{h}^{\vartheta\sigma+\alpha-1}}\int_{B_{\bar{r}_{0}}}\mathcal{D}(x,h)^{\frac{2q-2+\mu}{2}}\snr{\tau_{h}\ti{u}}^{\sigma-1}\dx\nonumber \\
 &\ge&\frac{\sigma\tx{C}_{\rr}^{2-\mu}}{\snr{h}^{1+\vartheta\sigma+\alpha}}\left(\frac{1}{c}-c\varepsilon\right)\int_{B_{1}}\eta^{2}\snr{\tau_{h}\ti{u}}^{\sigma-1}\mathcal{D}(x,h)^{-\frac{\mu}{2}}\snr{\tau_{h}D\ti{u}}^{2}\dx\nonumber \\
 &&-\frac{c\tx{C}_{\rr}^{2}\mathcal{N}_{\infty}}{\snr{h}^{\vartheta\sigma+\alpha}}\left(\int_{B_{\ti{r}_{0}}}\ell_{1}(D\ti{u})^{t}\dx\right)^{\frac{q-1}{t}}\left(\int_{B_{\bar{r}_{0}}}\snr{\tau_{h}\ti{u}}^{\frac{t\sigma}{t+1-q}}\dx\right)^{\frac{t+1-q}{t}}\nonumber \\
 &&-\frac{c\sigma\tx{C}_{\rr}^{\frac{4q}{q-1}}\mathcal{N}_{\infty}}{\varepsilon\snr{h}^{\vartheta\sigma+\alpha-1}}\left(\int_{B_{\ti{r}_{0}}}\ell_{1}(D\ti{u})^{t}\dx\right)^{\frac{2q-2+\mu}{t}}\left(\int_{B_{\bar{r}_{0}}}\snr{\tau_{h}\ti{u}}^{\frac{t(\sigma-1)}{t+2-\mu-2q}}\dx\right)^{\frac{t+2-\mu-2q}{t}},
 \end{eqnarray*}
for $c\equiv c(\Lambda_{*},\tx{g},\mu,q)$. A suitably small choice of $\varepsilon>0$ eventually yields
\begin{eqnarray*}
\mbox{(I)}&\ge&\frac{\sigma\tx{C}_{\rr}^{2-\mu}}{c\snr{h}^{1+\vartheta\sigma+\alpha}}\int_{B_{1}}\eta^{2}\snr{\tau_{h}\ti{u}}^{\sigma-1}\mathcal{D}(x,h)^{-\frac{\mu}{2}}\snr{\tau_{h}D\ti{u}}^{2}\dx\nonumber \\
&&-\frac{c\tx{C}_{\rr}^{2}\mathcal{N}_{\infty}}{\snr{h}^{\vartheta\sigma+\alpha}}\left(\int_{B_{\ti{r}_{0}}}\ell_{1}(D\ti{u})^{t}\dx\right)^{\frac{q-1}{t}}\left(\int_{B_{\bar{r}_{0}}}\snr{\tau_{h}\ti{u}}^{\frac{t\sigma}{t+1-q}}\dx\right)^{\frac{t+1-q}{t}}\nonumber \\
&&-\frac{c\sigma\tx{C}_{\rr}^{\frac{4q}{q-1}}\mathcal{N}_{\infty}}{\snr{h}^{\vartheta\sigma+\alpha-1}}\left(\int_{B_{\ti{r}_{0}}}\ell_{1}(D\ti{u})^{t}\dx\right)^{\frac{2q-2+\mu}{t}}\left(\int_{B_{\bar{r}_{0}}}\snr{\tau_{h}\ti{u}}^{\frac{t(\sigma-1)}{t+2-\mu-2q}}\dx\right)^{\frac{t+2-\mu-2q}{t}}\nonumber \\
&=&\frac{\sigma\tx{C}_{\rr}^{2-\mu}\mbox{(I)}_{1}}{c\snr{h}^{1+\vartheta\sigma+\alpha}}-\frac{c\tx{C}_{\rr}^{2}\mathcal{N}_{\infty}}{\snr{h}^{\vartheta\sigma+\alpha}}\left(\int_{B_{\ti{r}_{0}}}\ell_{1}(D\ti{u})^{t}\dx\right)^{\frac{q-1}{t}}\left(\int_{B_{\bar{r}_{0}}}\snr{\tau_{h}\ti{u}}^{\frac{t\sigma}{t+1-q}}\dx\right)^{\frac{t+1-q}{t}}\nonumber \\
&&-\frac{c\sigma\tx{C}_{\rr}^{\frac{4q}{q-1}}\mathcal{N}_{\infty}}{\snr{h}^{\vartheta\sigma+\alpha-1}}\left(\int_{B_{\ti{r}_{0}}}\ell_{1}(D\ti{u})^{t}\dx\right)^{\frac{2q-2+\mu}{t}}\left(\int_{B_{\bar{r}_{0}}}\snr{\tau_{h}\ti{u}}^{\frac{t(\sigma-1)}{t+2-\mu-2q}}\dx\right)^{\frac{t+2-\mu-2q}{t}},
\end{eqnarray*}
where we set
$$
\mbox{(I)}_{1}:=\int_{B_{1}}\eta^{2}\snr{\tau_{h}\ti{u}}^{\sigma-1}\mathcal{D}(x,h)^{-\frac{\mu}{2}}\snr{\tau_{h}D\ti{u}}^{2}\dx,
$$
and it is $c\equiv c(\Lambda_{*},\tx{g},\mu,q)$. Concerning term $\mbox{(II)}$, by \eqref{prod} we split:
 \begin{eqnarray*}
 \mbox{(II)}&=&\frac{q\sigma\tx{C}_{\rr}^{2}}{\snr{h}^{1+\vartheta\sigma+\alpha}}\int_{B_{1}}\eta^{2}\ti{\tx{a}}(x)\snr{\tau_{h}\ti{u}}^{\sigma-1}\langle\tau_{h}(\ti{\ell}_{s}(D\ti{u})^{q-2}D\ti{u}),\tau_{h}D\ti{u}\rangle\dx\nonumber \\
 &&+\frac{q\sigma\tx{C}_{\rr}^{2}}{\snr{h}^{1+\vartheta\sigma+\alpha}}\int_{B_{1}}\eta^{2}(\tau_{h}\ti{\tx{a}}(\cdot))\snr{\tau_{h}\ti{u}}^{\sigma-1}\langle\ti{\ell}_{s}(D\ti{u}(x+h))^{q-2}D\ti{u}(x+h),\tau_{h}D\ti{u}\rangle\dx\nonumber \\
 &&+\frac{2q\tx{C}_{\rr}^{2}}{\snr{h}^{1+\vartheta\sigma+\alpha}}\int_{B_{1}}\langle\tau_{h}(\ti{\tx{a}}(\cdot)\ti{\ell}_{s}(D\ti{u})^{q-2}D\ti{u}),(\eta\snr{\tau_{h}\ti{u}}^{\sigma-1}\tau_{h}\ti{u})D\eta\rangle\dx=:\mbox{(II)}_{1}+\mbox{(II)}_{2}+\mbox{(II)}_{3},
 \end{eqnarray*}
 and control via H\"older inequality with conjugate exponents $\left(\frac{t}{2q-2+\mu},\frac{t}{t-2q+2-\mu}\right)$, Cauchy-Schwarz and Young inequalities and standard monotonicity properties of power type integrands,
 \begin{eqnarray*}
 \mbox{(II)}_{1}+\mbox{(II)}_{2}&\ge& \frac{\sigma\tx{C}_{\rr}^{q}}{c\snr{h}^{1+\vartheta\sigma+\alpha}}\int_{B_{1}}\eta^{2}\ti{\tx{a}}(x)\snr{\tau_{h}\ti{u}}^{\sigma-1}\mathcal{D}(x,h)^{\frac{q-2}{2}}\snr{\tau_{h}D\ti{u}}^{2}\dx\nonumber \\
 &&-\frac{c\sigma\tx{C}_{\rr}[\ti{\tx{a}}]_{0,\alpha;B_{1}}}{\snr{h}^{1+\vartheta\sigma}}\int_{B_{1}}\eta^{2}\snr{\tau_{h}\ti{u}}^{\sigma-1}\ti{\ell}_{s}(D\ti{u}(x+h))^{q-1}\snr{\tau_{h}D\ti{u}}\dx\nonumber \\
 &\ge&-\frac{\varepsilon\sigma\tx{C}_{\rr}^{2-\mu}\mbox{(I)}_{1}}{\snr{h}^{1+\vartheta\sigma+\alpha}}-\frac{c\sigma\tx{C}_{\rr}^{2q+\mu-2}[\ti{\tx{a}}]_{0,\alpha;B_{1}}^{2}}{\varepsilon\snr{h}^{1+\vartheta\sigma-\alpha}}\int_{B_{1}}\eta^{2}\snr{\tau_{h}\ti{u}}^{\sigma-1}\mathcal{D}(x,h)^{\frac{2q-2+\mu}{2}}\dx\nonumber \\
 &\ge&-\frac{c\sigma\tx{C}_{\rr}^{2q+\mu-2}\tx{A}_{\rr}^{2}}{\varepsilon\snr{h}^{1+\vartheta\sigma-\alpha}}\left(\int_{B_{\ti{r}_{0}}}\ell_{1}(D\ti{u})^{t}\dx\right)^{\frac{2q-2+\mu}{t}}\left(\int_{B_{\bar{r}_{0}}}\snr{\tau_{h}\ti{u}}^{\frac{t(\sigma-1)}{t+2-2q-\mu}}\dx\right)^{\frac{t+2-2q-\mu}{t}}\nonumber \\
 &&-\frac{\varepsilon\sigma\tx{C}_{\rr}^{2-\mu}\mbox{(I)}_{1}}{\snr{h}^{1+\vartheta\sigma+\alpha}},
 \end{eqnarray*}
 for $c\equiv c(\Lambda_{*},\tx{g},\mu,q)$ and sufficiently small $\varepsilon>0$. Using also Young inequality with conjugate exponents $\left(\frac{2q-2+\mu}{\mu},\frac{2q-2+\mu}{2q-2}\right)$ and \eqref{af}, we further have
 \begin{eqnarray*}
 \mbox{(II)}_{3}&=&-\frac{2q\tx{C}_{\rr}^{2}}{\snr{h}^{\vartheta\sigma+\alpha}}\int_{B_{1}}\int_{0}^{1}\ti{\tx{a}}(x+\beta h)\langle \ti{\ell}_{s}(D\ti{u}(x+\beta h))^{q-2}D\ti{u}(x+\beta h),\partial_{h/\snr{h}}(\eta \snr{\tau_{h}\ti{u}}^{\sigma-1}\tau_{h}\ti{u}D\eta)\rangle\d\beta\dx\nonumber \\
 &\ge&-\frac{c\tx{C}_{\rr}\nr{\ti{\tx{a}}}_{L^{\infty}(B_{1})}}{\snr{h}^{\vartheta\sigma+\alpha}}\int_{B_{1}}\mathcal{G}_{q}(x,h)(\snr{D\eta}^{2}+\eta\snr{D^{2}\eta})\snr{\tau_{h}\ti{u}}^{\sigma}\dx\nonumber \\
 &&-\frac{c\sigma\tx{C}_{\rr}\nr{\ti{\tx{a}}}_{L^{\infty}(B_{1})}}{\snr{h}^{\vartheta\sigma+\alpha}}\int_{B_{1}}\mathcal{G}_{q}(x,h)\eta\snr{D\eta}\snr{\tau_{h}\ti{u}}^{\sigma-1}\snr{\tau_{h}D\ti{u}}\dx\nonumber \\
 &\ge&-\frac{c\tx{C}_{\rr}\mathcal{N}_{\infty}\tx{A}_{\rr}}{\snr{h}^{\vartheta\sigma+\alpha}}\left(\int_{B_{\bar{r}_{0}}}\mathcal{G}_{q}(x,h)^{\frac{t}{q-1}}\dx\right)^{\frac{q-1}{t}}\left(\int_{B_{\bar{r}_{0}}}\snr{\tau_{h}\ti{u}}^{\frac{t\sigma}{t+1-q}}\dx\right)^{\frac{t+1-q}{t}}\nonumber \\
 &&-\frac{c\sigma\tx{C}_{\rr}^{\mu}\mathcal{N}_{\infty}\tx{A}_{\rr}^{2}}{\varepsilon\snr{h}^{\vartheta\sigma+\alpha-1}}\int_{B_{\bar{r}_{0}}}\mathcal{G}_{q}(x,h)^{2}\snr{\tau_{h}\ti{u}}^{\sigma-1}\mathcal{D}(x,h)^{\frac{\mu}{2}}\dx-\frac{\varepsilon\sigma\tx{C}_{\rr}^{2-\mu}\mbox{(I)}_{1}}{\snr{h}^{1+\vartheta\sigma+\alpha}}\nonumber \\
 &\ge&-\frac{c\tx{C}_{\rr}^{q}\mathcal{N}_{\infty}\tx{A}_{\rr}}{\snr{h}^{\vartheta\sigma+\alpha}}\left(\int_{B_{\ti{r}_{0}}}\ell_{1}(D\ti{u})^{t}\dx\right)^{\frac{q-1}{t}}\left(\int_{B_{\bar{r}_{0}}}\snr{\tau_{h}\ti{u}}^{\frac{t\sigma}{t+1-q}}\dx\right)^{\frac{t+1-q}{t}}-\frac{\varepsilon\sigma\tx{C}_{\rr}^{2-\mu}\mbox{(I)}_{1}}{\snr{h}^{1+\vartheta\sigma+\alpha}}\nonumber \\
 &&-\frac{c\sigma\tx{C}_{\rr}^{\mu}\mathcal{N}_{\infty}\tx{A}_{\rr}^{2}}{\varepsilon\snr{h}^{\vartheta\sigma+\alpha-1}}\int_{B_{\bar{r}_{0}}}\mathcal{G}_{q}(x,h)^{\frac{2q-2+\mu}{q-1}}\snr{\tau_{h}\ti{u}}^{\sigma-1}\dx-\frac{c\sigma\tx{C}_{\rr}^{\mu}\mathcal{N}_{\infty}\tx{A}_{\rr}^{2}}{\varepsilon\snr{h}^{\vartheta\sigma+\alpha-1}}\int_{B_{\bar{r}_{0}}}\mathcal{D}(x,h)^{\frac{2q-2+\mu}{2}}\snr{\tau_{h}\ti{u}}^{\sigma-1}\dx\nonumber \\
 &\ge&-\frac{c\tx{C}_{\rr}^{q}\mathcal{N}_{\infty}\tx{A}_{\rr}}{\snr{h}^{\vartheta\sigma+\alpha}}\left(\int_{B_{\ti{r}_{0}}}\ell_{1}(D\ti{u})^{t}\dx\right)^{\frac{q-1}{t}}\left(\int_{B_{\bar{r}_{0}}}\snr{\tau_{h}\ti{u}}^{\frac{t\sigma}{t+1-q}}\dx\right)^{\frac{t+1-q}{t}}-\frac{\varepsilon\sigma\tx{C}_{\rr}^{2-\mu}\mbox{(I)}_{1}}{\snr{h}^{1+\vartheta\sigma+\alpha}}\nonumber \\
 &&-\frac{c\sigma\tx{C}_{\rr}^{2(q-1+\mu)}\mathcal{N}_{\infty}\tx{A}_{\rr}^{2}}{\varepsilon\snr{h}^{\vartheta\sigma+\alpha-1}}\left(\int_{B_{\ti{r}_{0}}}\ell_{1}(D\ti{u})^{t}\dx\right)^{\frac{2q-2+\mu}{t}}\left(\int_{B_{\bar{r}_{0}}}\snr{\tau_{h}\ti{u}}^{\frac{t(\sigma-1)}{t+2-2q-\mu}}\dx\right)^{\frac{t+2-2q-\mu}{t}},
 \end{eqnarray*}
 with $c\equiv c(\Lambda_{*},\tx{g},\mu,q)$, and again for suitably small $\varepsilon>0$. Overall, we have
 \begin{eqnarray*}
 \mbox{(II)}&\ge&-\frac{2\varepsilon\sigma\tx{C}_{\rr}^{2-\mu}\mbox{(I)}_{1}}{\snr{h}^{1+\vartheta\sigma+\alpha}}-\frac{c\tx{C}_{\rr}^{q}\mathcal{N}_{\infty}\tx{A}_{\rr}}{\snr{h}^{\vartheta\sigma+\alpha}}\left(\int_{B_{\ti{r}_{0}}}\ell_{1}(D\ti{u})^{t}\dx\right)^{\frac{q-1}{t}}\left(\int_{B_{\bar{r}_{0}}}\snr{\tau_{h}\ti{u}}^{\frac{t\sigma}{t+1-q}}\dx\right)^{\frac{t+1-q}{t}}\nonumber \\
 &&-\frac{c\sigma\tx{C}_{\rr}^{2(q-1+\mu)}\mathcal{N}_{\infty}\tx{A}_{\rr}^{2}}{\varepsilon\snr{h}^{1+\vartheta\sigma-\alpha}}\left(\int_{B_{\ti{r}_{0}}}\ell_{1}(D\ti{u})^{t}\dx\right)^{\frac{2q-2+\mu}{t}}\left(\int_{B_{\bar{r}_{0}}}\snr{\tau_{h}\ti{u}}^{\frac{t(\sigma-1)}{t+2-2q-\mu}}\dx\right)^{\frac{t+2-2q-\mu}{t}},
 \end{eqnarray*}
 where it is $c\equiv c(\Lambda_{*},\tx{g},\mu,q)$, and $\varepsilon>0$ still needs to be fixed. Next, we take care of the obstacle term. We control using \eqref{p2}, $\eqref{scasca}_{1}$, \eqref{ts}, and \eqref{ass.2},
 \begin{eqnarray*}
 \snr{\mbox{(III)}}&\le&\frac{\sigma}{\snr{h}^{1+\vartheta\sigma+\alpha}}\int_{B_{\bar{r}_{0}}}\left(\snr{\partial\ti{\tx{L}}(D\ti{u}(x+h))}+\snr{\partial \ti{\tx{L}}(D\ti{u}(x))}\right)\snr{\tau_{h}\ti{\psi}}^{\sigma-1}\snr{\tau_{h}D\ti{\psi}}\dx\nonumber \\
 &&+\frac{2\mathcal{N}_{\infty}}{\snr{h}^{1+\vartheta\sigma+\alpha}}\int_{B_{\bar{r}_{0}}}\left(\snr{\partial\ti{\tx{L}}(D\ti{u}(x+h))}+\snr{\partial \ti{\tx{L}}(D\ti{u}(x))}\right)\snr{\tau_{h}\ti{\psi}}^{\sigma}\dx\nonumber \\
 &\le&c\sigma\mathcal{N}_{\infty}\Psi^{\sigma}\snr{h}^{\sigma(1-\vartheta)-(1+\alpha)}\int_{B_{\ti{r}_{0}}}\snr{\partial\ti{\tx{L}}(D\ti{u})}\dx\nonumber \\
 &\le&c\sigma \tx{C}_{\rr}^{2}\mathcal{N}_{\infty}\Psi^{\sigma}\int_{B_{\ti{r}_{0}}}1+\tx{g}(\snr{D\ti{u}})\dx\le c\sigma\tx{C}_{\rr}^{2}\mathcal{N}_{\infty}\Psi^{\sigma}\left(\int_{B_{\ti{r}_{0}}}\ell_{1}(D\ti{u})^{t}\dx\right)^{\frac{q-1}{t}},
 \end{eqnarray*}
 for $c\equiv c(n,\Lambda_{*},\tx{g},\mu)$. Moreover, after splitting with the product rule \eqref{prod},
 \begin{eqnarray*}
 \mbox{(IV)}&=&\frac{q\sigma\tx{C}_{\rr}^{2}}{\snr{h}^{1+\vartheta\sigma+\alpha}}\int_{B_{1}}\eta^{2}\ti{\tx{a}}(x)\snr{\tau_{h}\ti{\psi}}^{\sigma-1}\langle\tau_{h}(\ti{\ell}_{s}(D\ti{u})^{q-2}D\ti{u}),\tau_{h}D\ti{\psi}\rangle\dx\nonumber \\
 &&+\frac{q\sigma\tx{C}_{\rr}^{2}}{\snr{h}^{1+\vartheta\sigma+\alpha}}\int_{B_{1}}\eta^{2}(\tau_{h}\ti{\tx{a}}(\cdot))\snr{\tau_{h}\ti{\psi}}^{\sigma-1}\langle\ti{\ell}_{s}(D\ti{u}(x+h))^{q-2}D\ti{u}(x+h),\tau_{h}D\ti{\psi}\rangle\dx\nonumber \\
 &&+\frac{2q\tx{C}_{\rr}^{2}}{\snr{h}^{1+\vartheta\sigma+\alpha}}\int_{B_{1}}\langle\tau_{h}(\ti{\tx{a}}(\cdot)\ti{\ell}_{s}(D\ti{u})^{q-2}D\ti{u}),(\eta\snr{\tau_{h}\ti{\psi}}^{\sigma-1}\tau_{h}\ti{\psi})D\eta\rangle\dx=:\mbox{(IV)}_{1}+\mbox{(IV)}_{2}+\mbox{(IV)}_{3},
 \end{eqnarray*}
 we bound via \eqref{p2} and \eqref{ts},
 \begin{eqnarray*}
 \snr{\mbox{(IV)}_{1}}+\snr{\mbox{(IV)}_{2}}&\le&c\sigma\tx{C}_{\rr}^{q}\nr{\ti{\tx{a}}}_{L^{\infty}(B_{1})}\Psi^{\sigma}\snr{h}^{\sigma(1-\vartheta)-(1+\alpha)}\int_{B_{\bar{r}_{0}}}\ell_{s}(D\ti{u}(x+h))^{q-1}+\ell_{s}(D\ti{u}(x))^{q-1}\dx\nonumber \\
 &&+c\sigma\tx{C}_{\rr}^{q}[\ti{\tx{a}}]_{0,\alpha;B_{1}}\snr{h}^{\sigma(1-\vartheta)-1}\Psi^{\sigma}\int_{B_{\bar{r}_{0}}}\ell_{s}(D\ti{u}(x+h))^{q-1}\dx\nonumber \\
&\le&c\sigma\tx{C}_{\rr}^{q}\tx{A}_{\rr}\Psi^{\sigma}\int_{B_{\ti{r}_{0}}}\ell_{1}(D\ti{u})^{q-1}\dx\le c\sigma\tx{C}_{\rr}^{q}\tx{A}_{\rr}\Psi^{\sigma}\left(\int_{B_{\ti{r}_{0}}}\ell_{1}(D\ti{u})^{t}\dx\right)^{\frac{q-1}{t}},
 \end{eqnarray*}
 with $c\equiv c(n,q)$, and, by \eqref{af} and \eqref{ts},
 \begin{eqnarray*}
 \snr{\mbox{(IV)}_{3}}&=&\frac{2q\tx{C}_{\rr}^{2}}{\snr{h}^{\vartheta\sigma+\alpha}}\left|\int_{B_{1}}\int_{0}^{1}\ti{\tx{a}}(x+\beta h)\langle \ti{\ell}_{s}(D\ti{u}(x+\beta h))^{q-2}D\ti{u}(x+\beta h),\partial_{h/\snr{h}}(\eta \snr{\tau_{h}\ti{\psi}}^{\sigma-1}\tau_{h}\ti{\psi}D\eta)\rangle\d\beta\dx\right|\nonumber \\
 &\le&\frac{c\sigma\tx{C}_{\rr}\mathcal{N}_{\infty}\nr{\ti{\tx{a}}}_{L^{\infty}(B_{1})}}{\snr{h}^{\vartheta\sigma+\alpha}}\int_{B_{\bar{r}_{0}}}\mathcal{G}_{q}(x,h)\snr{\tau_{h}\ti{\psi}}^{\sigma-1}\left(\snr{\tau_{h}\ti{\psi}}+\snr{\tau_{h}D\ti{\psi}}\right)\dx\nonumber \\
 &\le&c\sigma\tx{C}_{\rr}^{q}\mathcal{N}_{\infty}\Psi^{\sigma}\snr{h}^{\sigma(1-\vartheta)-\alpha}\tx{A}_{\rr}\int_{B_{\ti{r}_{0}}}\ell_{1}(D\ti{u})^{q-1}\dx\le c\sigma\tx{C}_{\rr}^{q}\mathcal{N}_{\infty}\Psi^{\sigma}\tx{A}_{\rr}\left(\int_{B_{\ti{r}_{0}}}\ell_{1}(D\ti{u})^{t}\dx\right)^{\frac{q-1}{t}},
 \end{eqnarray*}
 for $c\equiv c(n,q)$. All in all, it is
 \begin{flalign*}
 \snr{\mbox{(IV)}}\le c\sigma\tx{C}_{\rr}^{q}\Psi^{\sigma}\mathcal{N}_{\infty}\tx{A}_{\rr}\left(\int_{B_{\ti{r}_{0}}}\ell_{1}(D\ti{u})^{t}\dx\right)^{\frac{q-1}{t}},
 \end{flalign*}
 where $c\equiv c(n,q)$. Merging the content of all previous displays we obtain, after routine manipulations,
 \begin{eqnarray}\label{0}
 \frac{1}{\snr{h}^{1+\vartheta\sigma+\alpha}}\mbox{(I)}_{1}&\le&\frac{c\sigma\tx{C}_{\rr}^{\frac{4q}{q-1}}\tx{A}_{\rr}^{2}\mathcal{N}_{\infty}}{\snr{h}^{1+\vartheta\sigma-\alpha}}\left(\int_{B_{\ti{r}_{0}}}\ell_{1}(D\ti{u})^{t}\dx\right)^{\frac{2q-2+\mu}{t}}\left(\int_{B_{\bar{r}_{0}}}\snr{\tau_{h}\ti{u}}^{\frac{t(\sigma-1)}{t+2-2q-\mu}}\dx\right)^{\frac{t+2-2q-\mu}{t}}\nonumber \\
 &&+\frac{c\sigma\tx{C}_{\rr}^{\mu}\tx{A}_{\rr}\mathcal{N}_{\infty}}{\snr{h}^{\vartheta\sigma+\alpha}}\left(\int_{B_{\ti{r}_{0}}}\ell_{1}(D\ti{u})^{t}\dx\right)^{\frac{q-1}{t}}\left(\int_{B_{\bar{r}_{0}}}\snr{\tau_{h}\ti{u}}^{\frac{t\sigma}{t+1-q}}\dx\right)^{\frac{t+1-q}{t}}\nonumber \\
&&+c\sigma\tx{C}_{\rr}^{\mu}\Psi^{\sigma}\tx{A}_{\rr}\mathcal{N}_{\infty}\left(\int_{B_{\ti{r}_{0}}}\ell_{1}(D\ti{u})^{t}\dx\right)^{\frac{q-1}{t}}=:\tx{B}_{\ti{u}},
 \end{eqnarray}
 with $c\equiv c(\data_{*})$. Now we need to estimate the left-hand side of \eqref{0}. By H\"older inequality with conjugate exponents $\left(\frac{2}{2-\mu},\frac{2}{\mu}\right)$ we get
 \begin{eqnarray}\label{1}
 \mbox{(V)}&:=&\frac{1}{\snr{h}^{1+\vartheta\sigma+\alpha}}\int_{B_{1}}\eta^{2}\snr{\tau_{h}\ti{u}}^{\sigma-\ti{\delta}}\snr{\tau_{h}D\ti{u}}^{2-\mu}\dx\nonumber \\
 &\le&\left( \frac{\mbox{(I)}_{1}}{\snr{h}^{1+\vartheta\sigma+\alpha}}\right)^{\frac{2-\mu}{2}}\left(\frac{1}{\snr{h}^{1+\vartheta\sigma+\alpha}}\int_{B_{1}}\eta^{2}\snr{\tau_{h}\ti{u}}^{\sigma+\frac{2(1-\ti{\delta})}{\mu}-1}\mathcal{D}(x,h)^{\frac{2-\mu}{2}}\dx\right)^{\frac{\mu}{2}}\nonumber \\
 &\stackrel{\eqref{0}}{\le}&\tx{B}_{\ti{u}}^{\frac{2-\mu}{2}}\left(\int_{B_{\ti{r}_{0}}}\ell_{1}(D\ti{u})^{t+2\tx{s}_{*}+\theta}\dx\right)^{\frac{\mu(2-\mu)}{2(t+2\tx{s}_{*}+\theta)}}\nonumber \\
 &&\cdot \left(\int_{B_{\bar{r}_{0}}}\left|\frac{\tau_{h}\ti{u}}{\snr{h}^{\frac{\mu(1+\vartheta\sigma+\alpha)}{\mu(\sigma-1)+2(1-\ti{\delta})}}}\right|^{\frac{(\mu(\sigma-1)+2(1-\ti{\delta}))(t+2\tx{s}_{*}+\theta)}{   \mu(t+2\tx{s}_{*}+\theta-2+\mu)}}\dx\right)^{\frac{\mu(t+2\tx{s}_{*}+\theta-2+\mu)}{2(t+2\tx{s}_{*}+\theta)}}.
 \end{eqnarray}
Recalling the definition of $\tx{M}_{*}\ge 1$ in \eqref{m*} and noticing that via \eqref{etaeta} it is $\nr{\tau_{h}D\ti{u}}_{L^{\infty}(\supp(\eta))}\le 2\tx{C}_{\rr}^{-1}\tx{M}_{*}$, and therefore
\begin{eqnarray}\label{mmmm}
\mathds{1}_{\{\supp(\eta)\cap\{\snr{\tau_{h}D\ti{u}}>0\}\}}\snr{\tau_{h}D\ti{u}}^{1-\mu}&\stackrel{\mu\ge 1}{\ge}& \mathds{1}_{\{\supp(\eta)\cap\{\snr{\tau_{h}D\ti{u}}>0\}\}}\nr{\tau_{h}D\ti{u}}_{L^{\infty}({\{\supp(\eta)\cap\{\snr{\tau_{h}D\ti{u}}>0\}\})}}^{1-\mu}\nonumber\\
&\ge& \mathds{1}_{\{\supp(\eta)\cap\{\snr{\tau_{h}D\ti{u}}>0\}\}}\left(\frac{\tx{C}_{\rr}}{2\tx{M}_{*}}\right)^{\mu-1},
\end{eqnarray}
we keep bounding below
 \begin{eqnarray}\label{2}
 \mbox{(V)}&\stackrel{\eqref{mmmm}}{\ge}&\frac{\tx{C}_{\rr}^{\mu-1}}{2^{\mu-1}\tx{M}_{*}^{\mu-1}\snr{h}^{1+\vartheta\sigma+\alpha}}\int_{B_{1}}\eta^{2}\snr{\tau_{h}\ti{u}}^{\sigma-\ti{\delta}}\snr{\tau_{h}D\ti{u}}\dx\nonumber \\
 &\ge&\frac{\tx{C}_{\rr}^{\mu-1}}{c\tx{M}_{*}^{\mu-1}\snr{h}^{1+\vartheta\sigma+\alpha}}\int_{B_{1}}\left|D\left(\eta^{2}\snr{\tau_{h}\ti{u}}^{\sigma-\ti{\delta}}\tau_{h}\ti{u}\right)\right|\dx-\frac{c\tx{C}_{\rr}^{\mu-1}\mathcal{N}_{\infty}}{\tx{M}_{*}^{\mu-1}\snr{h}^{1+\vartheta\sigma+\alpha}}\int_{B_{\bar{r}_{0}}}\snr{\tau_{h}\ti{u}}^{\sigma-\ti{\delta}+1}\dx\nonumber \\
 &\stackrel{\eqref{gh}}{\ge}&\frac{\tx{C}_{\rr}^{\mu-1}}{c\tx{M}_{*}^{\mu-1}\snr{h}^{2+\vartheta\sigma+\alpha}}\int_{B_{\bar{r}_{0}}}\left|\tau_{h}\left(\eta^{2}\snr{\tau_{h}\ti{u}}^{\sigma-\ti{\delta}}\tau_{h}\ti{u}\right)\right|\dx-\frac{c\tx{C}_{\rr}^{\mu-1}\mathcal{N}_{\infty}}{\tx{M}_{*}^{\mu-1}\snr{h}^{1+\vartheta\sigma+\alpha}}\int_{B_{\bar{r}_{0}}}\snr{\tau_{h}\ti{u}}^{\sigma-\ti{\delta}+1}\dx\nonumber \\
 &\stackrel{\eqref{prod}}{\ge}&\frac{\tx{C}_{\rr}^{\mu-1}}{c\tx{M}_{*}^{\mu-1}\snr{h}^{2+\vartheta\sigma+\alpha}}\int_{B_{\bar{r}_{0}}}\eta(x+h)^{2}\left|\tau_{h}\left(\snr{\tau_{h}\ti{u}}^{\sigma-\ti{\delta}}\tau_{h}\ti{u}\right)\right|\dx\nonumber \\
 &&-\frac{c\tx{C}_{\rr}^{\mu-1}}{\tx{M}_{*}^{\mu-1}\snr{h}^{2+\vartheta\sigma+\alpha}}\int_{B_{\bar{r}_{0}}}\snr{\tau_{h}(\eta^{2})}\snr{\tau_{h}\ti{u}}^{\sigma-\ti{\delta}+1}\dx-\frac{c\tx{C}_{\rr}^{\mu-1}\mathcal{N}_{\infty}}{\tx{M}_{*}^{\mu-1}\snr{h}^{1+\vartheta\sigma+\alpha}}\int_{B_{\bar{r}_{0}}}\snr{\tau_{h}\ti{u}}^{\sigma-\ti{\delta}+1}\dx\nonumber \\
 &\ge&\frac{\tx{C}_{\rr}^{\mu-1}}{c\tx{M}_{*}^{\mu-1}\snr{h}^{2+\vartheta\sigma+\alpha}}\int_{B_{\ti{r}_{1}}}\left|\tau_{h}\left(\snr{\tau_{h}\ti{u}}^{\sigma-\ti{\delta}}\tau_{h}\ti{u}\right)\right|\dx-\frac{c\tx{C}_{\rr}^{\mu-1}\mathcal{N}_{\infty}}{\tx{M}_{*}^{\mu-1}\snr{h}^{1+\vartheta\sigma+\alpha}}\int_{B_{\bar{r}_{0}}}\snr{\tau_{h}\ti{u}}^{\sigma-\ti{\delta}+1}\dx\nonumber \\
 &\stackrel{\eqref{mon}}{\ge}&\frac{\tx{C}_{\rr}^{\mu-1}}{c\tx{M}_{*}^{\mu-1}\snr{h}^{2+\vartheta\sigma+\alpha}}\int_{B_{\ti{r}_{1}}}\snr{\tau_{h}^{2}\ti{u}}^{\sigma-\ti{\delta}+1}\dx-\frac{c\tx{C}_{\rr}^{\mu-1}\mathcal{N}_{\infty}}{\tx{M}_{*}^{\mu-1}\snr{h}^{1+\vartheta\sigma+\alpha}}\int_{B_{\bar{r}_{0}}}\snr{\tau_{h}\ti{u}}^{\sigma-\ti{\delta}+1}\dx,
 \end{eqnarray}
 with $c\equiv c(\mu,q,\alpha,t)$ - here and in the reminder of this section, we incorporate any dependency on $\sigma$ into a dependency on $(\mu,q,\alpha,t)$. Combining \eqref{2} with \eqref{1}, we obtain
\begin{eqnarray}\label{3}
    \int_{B_{\ti{r}_{1}}}\left|\frac{\tau_{h}^{2}\ti{u}}{\snr{h}^{\frac{2+\vartheta\sigma+\alpha}{\sigma-\ti{\delta}+1}}}\right|^{\sigma-\ti{\delta}+1}\dx&\le&c\tx{M}_{*}^{\mu-1}\tx{C}_{\rr}^{1-\mu}\tx{B}_{\ti{u}}^{\frac{2-\mu}{2}}\left(\int_{B_{\ti{r}_{0}}}\ell_{1}(D\ti{u})^{t+2\tx{s}_{*}+\theta}\dx\right)^{\frac{\mu(2-\mu)}{2(t+2\tx{s}_{*}+\theta)}}\nonumber \\
    &&\cdot\left(\int_{B_{\bar{r}_{0}}}\left|\frac{\tau_{h}\ti{u}}{\snr{h}^{\frac{\mu(1+\vartheta\sigma+\alpha)}{\mu(\sigma-1)+2(1-\ti{\delta})}}}\right|^{\frac{(\mu(\sigma-1)+2(1-\ti{\delta}))(t+2\tx{s}_{*}+\theta)}{   \mu(t+2\tx{s}_{*}+\theta-2+\mu)}}\dx\right)^{\frac{\mu(t+2\tx{s}_{*}+\theta-2+\mu)}{2(t+2\tx{s}_{*}+\theta)}}\nonumber \\
    &&+\frac{c\mathcal{N}_{\infty}}{\snr{h}^{1+\vartheta\sigma+\alpha}}\int_{B_{\bar{r}_{0}}}\snr{\tau_{h}\ti{u}}^{\sigma-\ti{\delta}+1}\dx,
\end{eqnarray}
for $c\equiv c(\data_{*},\alpha,t)$. To control the difference quotients on the right-hand side of \eqref{3} including those implicit in $\tx{B}_{\ti{u}}$, set $\beta:=\max\{1+\vartheta\sigma+\alpha,t\}+2n+2$, and notice that by \eqref{ts} and \eqref{mumu} it is
\eqn{num}
$$
 \left\{
 \begin{array}{c}
 \displaystyle
 \ 1+\vartheta\sigma+\alpha=\sigma-1+\frac{2(1-\ti{\delta})}{\mu}=t+2\tx{s}_{*}+\theta-2+\mu\\[8pt]\displaystyle
 \ \frac{1+\vartheta\sigma+\alpha}{\sigma-\ti{\delta}+1},\ \ \frac{1+\vartheta\sigma-\alpha}{\sigma-1},\ \ \frac{\vartheta\sigma+\alpha}{\sigma}, \ \ \frac{1+\vartheta\sigma+\alpha}{t+2\tx{s}_{*}+\mu-1},\ \ \frac{\vartheta\sigma+\alpha}{t+1-q}\in (0,1)\\[8pt]\displaystyle
 \ 1+\vartheta\sigma-\alpha=t+2-2q-\mu.
 \end{array}
 \right.
 $$
We first control
\begin{eqnarray*}
\int_{B_{\bar{r}_{0}}}\left|\frac{\tau_{h}\ti{u}}{\snr{h}^{\frac{\mu(1+\vartheta\sigma+\alpha)}{\mu(\sigma-1)+2(1-\ti{\delta})}}}\right|^{\frac{(\mu(\sigma-1)+2(1-\ti{\delta}))(t+2\tx{s}_{*}+\theta)}{   \mu(t+2\tx{s}_{*}+\theta-2+\mu)}}\dx&\stackrel{\eqref{num}_{1}}{=}&
\int_{B_{\bar{r}_{0}}}\left|\frac{\tau_{h}\ti{u}}{\snr{h}}\right|^{t+2\tx{s}_{*}+\theta}\dx\nonumber \\
&\stackrel{\eqref{gh}}{\le}&\int_{B_{\ti{r}_{0}}}\snr{D\ti{u}}^{t+2\tx{s}_{*}+\theta}\dx,
\end{eqnarray*}
while by \eqref{fs.0}, \eqref{bdd} and $\eqref{num}_{2,3}$ we obtain
 \begin{flalign*}
 &\int_{B_{\bar{r}_{0}}}\left|\frac{\tau_{h}\ti{u}}{\snr{h}^{\frac{1+\vartheta\sigma-\alpha}{\sigma-1}}}\right|^{\frac{t(\sigma-1)}{t+2-2q-\mu}}\dx+\int_{B_{\bar{r}_{0}}}\left|\frac{\tau_{h}\ti{u}}{\snr{h}^{\frac{\vartheta\sigma+\alpha}{\sigma}}}\right|^{\frac{t\sigma}{t+1-q}}\dx+\int_{B_{\bar{r}_{0}}}\left|\frac{\tau_{h}\ti{u}}{\snr{h}^{\frac{1+\vartheta\sigma+\alpha}{\sigma-\ti{\delta}+1}}}\right|^{\sigma-\ti{\delta}+1}\dx\nonumber \\
 &\qquad \quad \quad \le \frac{c}{(\ti{\tau}_{1}-\ti{\tau}_{2})^{\beta}}+c\int_{B_{\ti{r}_{0}}}\snr{D\ti{u}}^{t}\dx+c\left(\int_{B_{\ti{r}_{0}}}\snr{D\ti{u}}^{t}\dx\right)^{\frac{\vartheta\sigma+\alpha}{t+1-q}}+c\left(\int_{B_{\ti{r}_{0}}}\snr{D\ti{u}}^{t+2\tx{s}_{*}+\mu-1}\dx\right)^{\frac{1+\vartheta\sigma+\alpha}{t+2\tx{s}_{*}+\mu-1}},
 \end{flalign*}
 with $c\equiv c(n,\mu,q,\alpha,t)$. Keeping in mind \eqref{0}, the content of the above display yields in particular that
 \begin{flalign*}
\tx{B}_{\ti{u}}\le \frac{c\tx{T}_{\rr;\sigma}}{(\ti{\tau}_{1}-\ti{\tau}_{2})^{2+\beta}}\left(\int_{B_{\ti{r}_{0}}}\ell_{1}(D\ti{u})^{t}\dx\right),
 \end{flalign*}
 where we set $\tx{T}_{\rr;\sigma}:=\tx{C}_{\rr}^{\frac{4q}{q-1}}\Psi^{\sigma}\tx{A}_{\rr}^{2}$, used \eqref{etaeta}, $\tx{C}_{\rr}\ge 1$, $\mu\ge 1$, and it is $c\equiv c(\data_{*},\alpha,t)$. Plugging the content of the previous three displays into \eqref{3} we obtain
  \begin{eqnarray}\label{4}
 \left\|\frac{\tau_{h}^{2}\ti{u}}{\snr{h}^{\frac{2+\vartheta\sigma+\alpha}{\sigma-\ti{\delta}+1}}}\right\|_{L^{\sigma-\ti{\delta}+1}(B_{\ti{r}_{1}})}^{\sigma-\ti{\delta}+1}&\le&\frac{c\tx{T}_{\rr;\sigma}^{\frac{2-\mu}{2}}\tx{M}_{*}^{\mu-1}}{(\ti{\tau}_{1}-\ti{\tau}_{2})^{\frac{(2+\beta)(2-\mu)}{2}}}\left(\int_{B_{\ti{r}_{0}}}\ell_{1}(D\ti{u})^{t}\dx\right)^{\frac{2-\mu}{2}}\left(\int_{B_{\ti{r}_{0}}}\ell_{1}(D\ti{u})^{t+2\tx{s}_{*}+\theta}\dx\right)^{\frac{\mu}{2}}\nonumber \\
 &&+\frac{c}{(\ti{\tau}_{1}-\ti{\tau}_{2})^{\beta}}\left(\int_{B_{\ti{r}_{0}}}\ell_{1}(D\ti{u})^{t+2\tx{s}_{*}+\mu-1}\dx\right)^{\frac{1+\vartheta\sigma+\alpha}{t+2\tx{s}_{*}+\mu-1}}\nonumber \\
 &\le&\frac{c\tx{T}_{\rr;\sigma}^{\frac{2-\mu}{2}}\tx{M}_{*}^{\frac{2(\mu-1)+\mu(\theta+1-\mu)_{+}}{2}}}{(\ti{\tau}_{1}-\ti{\tau}_{2})^{\frac{(2+\beta)(2-\mu)}{2}}}\left(\int_{B_{\ti{r}_{0}}}\ell_{1}(D\ti{u})^{t}\dx\right)^{\frac{2-\mu}{2}}\left(\int_{B_{\ti{r}_{0}}}\ell_{1}(D\ti{u})^{t+2\tx{s}_{*}+\mu-1}\dx\right)^{\frac{\mu}{2}}\nonumber \\
 &&+\frac{c}{(\ti{\tau}_{1}-\ti{\tau}_{2})^{\beta}}\left(\int_{B_{\ti{r}_{0}}}\ell_{1}(D\ti{u})^{t+2\tx{s}_{*}+\mu-1}\dx\right)^{\frac{1+\vartheta\sigma+\alpha}{t+2\tx{s}_{*}+\mu-1}},
 \end{eqnarray}
 for $c\equiv c(\data_{*},\alpha,t)$. Next, by \eqref{mumu} and \eqref{kkk} we have
 $$
 \left\{
 \begin{array}{c}
 \displaystyle
 \ \sigma-\ti{\delta}+1=t+2(-q+2-\mu+\alpha-\theta/2)+\mu-1=t+2\tx{s}_{*}+\mu-1>t\\[10pt]\displaystyle
 \ \frac{2+\vartheta\sigma+\alpha}{\sigma-\ti{\delta}+1}=1+\frac{\omega(2-\mu)}{\sigma-\ti{\delta}+1}=:1+\epsilon\in (1,2),
 \end{array}
 \right.
 $$
 therefore \eqref{immersione2} gives
 \begin{eqnarray*}
    \nr{D\ti{u}}_{L^{t+2\tx{s}_{*}+\mu-1}(B_{\ti{\tau}_{2}})}^{t+2\tx{s}_{*}+\mu-1}
    &\le&\frac{c}{\mathcal{h}_{0}^{(1+\epsilon)(t+2\tx{s}_{*}+\mu-1)}}+c\sup_{0<\snr{h}<\mathcal{h}_{0}}\left\|\frac{\tau_{h}^{2}\ti{u}}{\snr{h}^{1+\epsilon}}\right\|_{L^{t+2\tx{s}_{*}+\mu-1}(B_{\ti{r}_{1}})}^{t+2\tx{s}_{*}+\mu-1}\nonumber \\
    &\stackrel{\eqref{4}}{\le}&\frac{c\tx{T}_{\rr;\sigma}^{\frac{2-\mu}{2}}\tx{M}_{*}^{\frac{2(\mu-1)+\mu(\theta+1-\mu)_{+}}{2}}}{(\ti{\tau}_{1}-\ti{\tau}_{2})^{\frac{(2+\beta)(2-\mu)}{2}}}\left(1+\nr{D\ti{u}}_{L^{t}(B_{\ti{r}_{0}})}^{\frac{t(2-\mu)}{2}}\nr{D\ti{u}}_{L^{t+2\tx{s}_{*}+\mu-1}(B_{\ti{r}_{0}})}^{\frac{\mu(t+2\tx{s}_{*}+\mu-1)}{2}}\right)\nonumber \\
    &&+\frac{c}{(\ti{\tau}_{1}-\ti{\tau}_{2})^{\beta}}\left(1+\nr{D\ti{u}}_{L^{t+2\tx{s}_{*}+\mu-1}(B_{\ti{r}_{0}})}^{1+\vartheta\sigma+\alpha}\right)+\frac{c}{\mathcal{h}_{0}^{(1+\epsilon)(t+2\tx{s}_{*}+\mu-1)}}\nonumber \\
    &\stackrel{\eqref{mumu}}{\le}&\frac{c\tx{T}_{\rr;\sigma}^{\frac{2-\mu}{2}}\tx{M}_{*}^{\ti{\gamma}}}{(\ti{\tau}_{1}-\ti{\tau}_{2})^{\frac{(2+\beta)(2-\mu)}{2}}}\nr{D\ti{u}}_{L^{t}(B_{\ti{r}_{0}})}^{\frac{t(2-\mu)}{2}}\nr{D\ti{u}}_{L^{t+2\tx{s}_{*}+\mu-1}(B_{\ti{r}_{0}})}^{\frac{\mu(t+2\tx{s}_{*}+\mu-1)}{2}}\nonumber \\
    &&+\frac{c}{(\ti{\tau}_{1}-\ti{\tau}_{2})^{\beta}}\nr{D\ti{u}}_{L^{t+2\tx{s}_{*}+\mu-1}(B_{\ti{r}_{0}})}^{1+\vartheta\sigma+\alpha}+\frac{c\tx{M}_{*}^{\ti{\gamma}}}{\mathcal{h}_{0}^{(3+\beta+\epsilon)(t+2\tx{s}_{*}+\mu-1)}}\nonumber \\
    &\stackrel{\eqref{num}}{\le}&\frac{1}{4}\nr{D\ti{u}}_{L^{t+2\tx{s}_{*}+\mu-1}(B_{\ti{\tau}_{1}})}^{t+2\tx{s}_{*}+\mu-1}+\frac{c\tx{T}_{\rr;\sigma}\tx{M}_{*}^{\frac{2\ti{\gamma}}{2-\mu}}}{(\ti{\tau}_{1}-\ti{\tau}_{2})^{\tx{d}}}\left(1+\nr{D\ti{u}}_{L^{t}(B_{\ti{r}_{0}})}^{t}\right),
    \end{eqnarray*}
 where $\ti{\gamma}$ has been defined in \eqref{kkk}, we set $\tx{d}:=(4+2\beta+\epsilon)(\sigma-\ti{\delta}+1)$,
 and it is $c\equiv c(\data_{*},\alpha,t,\omega)$. Via Lemma \ref{iterlem} and standard manipulations we conclude with
 \begin{flalign}\label{1-0}
    \nr{D\ti{u}}_{L^{t+2\tx{s}_{*}+\mu-1}(B_{\rr_{1}})}^{t+2\tx{s}_{*}+\mu-1}+1\le\frac{c\tx{T}_{\rr;\sigma}\tx{M}_{*}^{\frac{2\ti{\gamma}}{2-\mu}}}{(\rr_{0}-\rr_{1})^{\tx{d}}}\left(1+\nr{D\ti{u}}_{L^{t}(B_{\rr_{0}})}^{t}\right).
    \end{flalign}
For simplicity from now on we allow the value of $\tx{d}$ to change from line to line, maintaining always at most a dependency on $(n,\mu,q,\alpha,t)$. At this stage, we only need to iterate \eqref{1-0}. To this aim, we introduce an increasing sequence of numbers $\{t_{i}\}_{i\in \mathbb{N}\cup \{0\}}\subset [1,\infty)$ so that $t_{0}=t_{*}$ and for $i\ge 0$, $t_{i+1}=t_{i}+2\tx{s}_{*}+\mu-1=t_{*}+(i+1)(2\tx{s}_{*}+\mu-1)$ (of course $t_{i}\nearrow \infty$), parameters $0<r_{2}\le \tau_{2}<\tau_{1}\le r_{1}\le 1$, and a sequence of shrinking, concentric balls $\{B_{\rr_{i}}\}_{i\in \N\cup\{0\}}$ such that $\rr_{i}:=\tau_{2}+(\tau_{1}-\tau_{2})2^{-(i+2)}$, therefore $B_{\tau_{2}}\subset B_{\rr_{i+1}}\subset B_{\rr_{i}}\subseteq B_{\tau_{1}}$ and $\bigcap_{i\in \N\cup\{0\}}B_{\rr_{i}}=\bar{B}_{\tau_{2}}$. 
Given that \eqref{1-0} is true for all $t\ge t_{*}$, we can repeat all the above procedure with $t\equiv t_{i}$, $\rr_{0}$, $\rr_{1}$ replaced by $\rr_{i}$, $\rr_{i+1}$ respectively to derive 
\begin{flalign}\label{i+1-i}
    \nr{D\ti{u}}_{L^{t_{i+1}}(B_{\rr_{i+1}})}^{t_{i+1}}+1\le \frac{c_{i}\tx{T}_{\rr;\sigma_{i}}\tx{M}_{*}^{\frac{2\ti{\gamma}}{2-\mu}}}{(\tau_{1}-\tau_{2})^{\tx{d}_{i}}}\left(\nr{D\ti{u}}_{L^{t_{i}}(B_{\rr_{i}})}^{t_{i}}+1\right),
    \end{flalign}
    for $c_{i}\equiv c_{i}(\data_{*},\alpha,i,\omega)$, $\tx{d}_{i}(n,\mu,q,\alpha,i,\omega)$ - here $\sigma_{i}$ is defined as in \eqref{ts} with $t=t_{i}$, and we incorporated any dependency of constant $c_{i}$ in \eqref{i+1-i} on $t$ into that on $i$. We then fix $i\in \N$ and iterate \eqref{i+1-i} for all $j\in \{0,\cdots,i\}$ to get
    \begin{flalign}\label{14}
    \nr{D\ti{u}}_{L^{t_{i+1}}(B_{\rr_{i+1}})}^{t_{i+1}}+1\le \tx{M}_{*}^{\frac{2(i+1)\ti{\gamma}}{2-\mu}}\left(\prod_{j=0}^{i}\frac{c_{j}\tx{T}_{\rr;\sigma_{j}}}{(\tau_{1}-\tau_{2})^{\tx{d}_{j}}}\right)\left(\nr{D\ti{u}}_{L^{t_{0}}(B_{\rr_{0}})}^{t_{0}}+1\right).
    \end{flalign}
   The very definition of exponents $t_{i}$'s yield that $t_{i+1}>t_{0}=t_{*}>1$, so in \eqref{14} we apply the interpolation inequality
    $$
\nr{D\ti{u}}_{L^{t_{0}}(B_{\rr_{0}})}\le  \nr{D\ti{u}}_{L^{t_{i+1}}(B_{\rr_{0}})}^{\chi_{i+1}}\nr{D\ti{u}}_{L^{1}(B_{\rr_{0}})}^{1-\chi_{i+1}},
$$
with numbers $\chi_{i+1}\in (0,1)$ solving
$$
\frac{1}{t_{0}}=\frac{\chi_{i+1}}{t_{i+1}}+1-\chi_{i+1} \ \Longrightarrow \ \chi_{i+1}=\frac{t_{i+1}(t_{0}-1)}{t_{0}(t_{i+1}-1)}.
$$
We then recall that $\tau_{2}\le \rr_{i+1}< \rr_{0}\le \tau_{1}$, so we get
\begin{eqnarray*}
\nr{D\ti{u}}_{L^{t_{i+1}}(B_{\tau_{2}})}^{t_{i+1}}+1&\le&\nr{D\ti{u}}_{L^{t_{i+1}}(B_{\rr_{i+1}})}^{t_{i+1}}+1\nonumber \\
&\le& \tx{M}_{*}^{\frac{2(i+1)\ti{\gamma}}{2-\mu}}\left(\prod_{j=0}^{i}\frac{c_{j}\tx{T}_{\rr;\sigma_{j}}}{(\tau_{1}-\tau_{2})^{\tx{d}_{j}}}\right)\left(\nr{D\ti{u}}_{L^{t_{i+1}}(B_{\rr_{0}})}^{t_{0}\chi_{i+1}}\nr{D\ti{u}}_{L^{1}(B_{\rr_{0}})}^{t_{0}(1-\chi_{i+1})}+1\right)\nonumber \\
&\le& \tx{M}_{*}^{\frac{2(i+1)\ti{\gamma}}{2-\mu}}\left(\prod_{j=0}^{i}\frac{c_{j}\tx{T}_{\rr;\sigma_{j}}}{(\tau_{1}-\tau_{2})^{\tx{d}_{j}}}\right)\left(\nr{D\ti{u}}_{L^{t_{i+1}}(B_{\tau_{1}})}^{\frac{t_{i+1}(t_{0}-1)}{t_{i+1}-1}}\nr{D\ti{u}}_{L^{1}(B_{\tau_{1}})}^{\frac{t_{i+1}-t_{0}}{t_{i+1}-1}}+1\right).
\end{eqnarray*}
Now, being $t_{i+1}>t_{0}$ it is $(t_{0}-1)/(t_{i+1}-1)<1$, so we can use Young inequality with conjugate exponents $\left(\frac{t_{i+1}-1}{t_{0}-1},\frac{t_{i+1}-1}{t_{i+1}-t_{0}}\right)$ to derive
\begin{eqnarray}\label{14.14}
\nr{D\ti{u}}_{L^{t_{i+1}}(B_{\tau_{2}})}^{t_{i+1}}&\le& \frac{1}{4}\nr{D\ti{u}}_{L^{t_{i+1}}(B_{\tau_{1}})}^{t_{i+1}}\nonumber \\
&&+\tx{M}_{*}^{\frac{2\ti{\gamma}(i+1)(t_{i+1}-1)}{(2-\mu)(t_{i+1}-t_{0})}}\left(\prod_{j=0}^{i}\frac{c_{j}\tx{T}_{\rr;\sigma_{j}}}{(\tau_{1}-\tau_{2})^{\tx{d}_{j}}}\right)^{\frac{t_{i+1}-1}{t_{i+1}-t_{0}}}\left(\nr{D\ti{u}}_{L^{1}(B_{\tau_{1}})}+1\right).
\end{eqnarray}
Lemma \ref{iterlem} then gives
\begin{flalign*}
\nr{D\ti{u}}_{L^{t_{i+1}}(B_{r_{2}})}\le \frac{c_{i}\tx{T}_{\rr;\sigma_{i}}^{\frac{(i+1)(t_{i+1}-1)}{t_{i+1}(t_{i+1}-t_{0})}}\tx{M}_{*}^{\frac{2\ti{\gamma}(i+1)(t_{i+1}-1)}{t_{i+1}(2-\mu)(t_{i+1}-t_{0})}}}{(r_{1}-r_{2})^{\tx{d}_{i}}}\left(\nr{D\ti{u}}_{L^{1}(B_{r_{1}})}+1\right)^{\frac{1}{t_{i+1}}},
\end{flalign*}
for constants $c_{i}\equiv c_{i}(\data_{*},\alpha,i,\omega)$, $\tx{d}_{i}\equiv \tx{d}_{i}(n,\mu,q,\alpha,i,\omega)$ that are possibly different from those displayed in \eqref{14.14} but share the same dependencies. In the previous inequality we bound
$$
\frac{(i+1)(t_{i+1}-1)}{t_{i+1}(t_{i+1}-t_{0})}=\frac{1}{2\tx{s}_{*}+\mu-1}\left(1-\frac{1}{t_{i+1}}\right)\le \frac{1}{2\tx{s}_{*}+\mu-1},
$$
and routine manipulations eventually yield 
\begin{flalign}\label{iii}
\nr{D\ti{u}}_{L^{t_{i+1}}(B_{r_{2}})}\le\frac{c_{i}\tx{T}_{\rr;\sigma_{i}}^{\frac{1}{2\tx{s}_{*}}}\tx{M}_{*}^{\frac{2\ti{\gamma}}{(2-\mu)(2\tx{s}_{*}+\mu-1)}}}{(r_{1}-r_{2})^{\tx{d}_{i}}}\left(\nr{D\ti{u}}_{L^{1}(B_{r_{1}})}+1\right)^{\frac{1}{t_{i+1}}},
\end{flalign}
for $c_{i}\equiv c_{i}(\data_{*},\alpha,i,\omega)$, $\tx{d}_{i}\equiv \tx{d}_{i}(n,\mu,q,\alpha,i,\omega)$. Next, observe that by \eqref{mumu}, $\eqref{thre}$ and \eqref{kkk} it is
\eqn{isis}
$$
\frac{2\ti{\gamma}}{2\tx{s}_{*}+\mu-1}\stackrel{\eqref{ts}}{=}\frac{2\max\{\mu-1,(2-\mu)\omega\}}{3-\mu+2\alpha-2q-\omega(2-\mu)}<\frac{2\max\{\mu-1,\omega\}}{1+\alpha-q}=\gamma_{\mu;\omega}.
$$
Moreover, thanks to the same arguments in \eqref{isis}, we can easily make the integrability gain in \eqref{iii} independent of both, $\mu$ and $\omega$. In fact, by the definition of the $t_{i}$'s we have
$$
t_{i+1}=t_{*}+(i+1)(2\tx{s}_{*}+\mu-1)\stackrel{\eqref{thre}}{\ge} t_{*}+(i+1)(1+\alpha-q)=:\ti{t}_{i+1}\equiv \ti{t}_{i+1}(q,\alpha,i),
$$
thus by \eqref{isis} and H\"older inequality, \eqref{iii} becomes
\begin{eqnarray*}
\nr{D\ti{u}}_{L^{\ti{t}_{i+1}}(B_{r_{2}})}&\lesssim_{n}& \nr{D\ti{u}}_{L^{t_{i+1}}(B_{r_{2}})}\nonumber \\
&\le&\frac{c_{i}\tx{T}_{\rr;\sigma_{i}}^{\frac{1}{2\tx{s}_{*}}}\tx{M}_{*}^{\frac{\gamma_{\mu;\omega}}{2-\mu}}}{(r_{1}-r_{2})^{\tx{d}_{i}}}\left(\nr{D\ti{u}}_{L^{1}(B_{r_{1}})}+1\right)^{\frac{1}{t_{i+1}}}\le \frac{c_{i}\tx{T}_{\rr;\sigma_{i}}^{\frac{1}{2\tx{s}_{*}}}\tx{M}_{*}^{\frac{\gamma_{\mu;\omega}}{2-\mu}}}{(r_{1}-r_{2})^{\tx{d}_{i}}}\left(\nr{D\ti{u}}_{L^{1}(B_{r_{1}})}+1\right)^{\frac{1}{\ti{t}_{i+1}}}.
\end{eqnarray*}
Finally, whenever $1\le p<\infty$ is a number, we can find $i_{p}\equiv i_{p}(q,\alpha,p)\in \mathbb{N}$ such that $\ti{t}_{i_{p}+1}\ge p$, and \eqref{15} holds true by H\"older inequality, after setting $r_{1}=1$, $r_{2}=\tau\in (0,1)$, and scaling back on $B_{\rr}(x_{0})$.
 \begin{proposition}\label{p31}
Under assumptions \eqref{1qa}$_{1}$, \eqref{ass.2}, \eqref{assfr} and \eqref{sasa}, let 
$B\Subset 2B\Subset \Omega$ be concentric balls, and $u\in (u_{0}+W^{1,q}_{0}(B))\cap \tx{K}^{\psi}(B)\cap C^{1}_{\loc}(B)$ be the solution of Dirichlet problem \eqref{pdreg} with obstacle function $\psi$ satisfying \eqref{p2}, boundary datum $u_{0}$ as in \eqref{u0}, exponents $(\mu,q)$ verifying \eqref{mumu}, and threshold $\omega_{*}\equiv \omega_{*}(\mu,q,\alpha)\in (0,1/2]$ from \eqref{thre}$_{2}$. For every ball $B_{\rr}(x_{0})\subset B_{2\rr}(x_{0})\Subset B$, constants
\eqn{m*}
$$
\max\left\{1,\nr{Du}_{L^{\infty}(B_{\rr}(x_{0}))}\right\}\le \tx{M}_{*},
$$
and any $p\in [1,\infty)$, $\tau\in (0,1)$, $\omega\in (0,\omega_{*})$ it holds
\eqn{15}
$$
\nr{Du}_{L^{p}(B_{\tau\rr}(x_{0}))}\le \frac{c\tx{M}_{*}^{\frac{\gamma_{\mu;\omega}}{2-\mu}}}{\rr^{\tx{d}}\tau^{\tx{d}}(1-\tau)^{\tx{d}}}\left(1+\nr{u_{0}}_{L^{\infty}(B)}^{\tx{d}}\right)\left(1+\nr{Du}_{L^{1}(B_{\rr}(x_{0}))}^{\frac{1}{p}}\right),
$$
where $\gamma_{\mu;\omega}\equiv \gamma_{\mu;\omega}(\mu,q,\alpha,\omega)$ is defined in $\eqref{kkk}$, and it is $c\equiv c(\data_{*}(B),p,\omega)$, $\tx{d}\equiv \tx{d}(n,\mu,q,\alpha,p)$.
\end{proposition}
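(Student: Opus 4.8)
The argument is contained, in essentially complete form, in the computations carried out in Sections~\ref{gb}--\ref{fm}; what remains is to organise them. First I would reduce to a normalised configuration: the comparison argument of Section~\ref{gb} --- testing the variational inequality \eqref{vel} with $u-(u-\tx{S})_{+}$ and exploiting the strict convexity of $\tx{L}$ off the origin granted by $\eqref{assfr}_{3}$ --- gives $\nr{u}_{L^{\infty}(B)}\le\tx{S}$ with $\tx{S}=\max\{1,\nr{u_{0}}_{L^{\infty}(B)},\nr{\psi}_{L^{\infty}(B)}\}$. Then, for a fixed sub-ball $B_{\rr}(x_{0})\Subset B$, the zero-order rescaling of Section~\ref{sca} transfers the problem to $B_{1}$, with all data bounded by absolute constants and the structural inequalities \eqref{scasca} in force with the explicit scaling factor $\tx{C}_{\rr}=8\tx{S}/\rr$.

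Next I would run the finite-difference scheme of Section~\ref{fm}: fix the fractional parameters $\sigma,\vartheta,\delta,\theta,\tx{s}_{*}$ as in \eqref{kkk}--\eqref{ts}, check via \eqref{obc} and the smallness \eqref{bdd} of the normalised data that the competitor $w=\ti{u}+\frac12\tau_{-h}(\tx{b}_{\sigma}\tau_{h}(\ti{u}-\ti{\psi}))$ is admissible, and test it in \eqref{vler}. Splitting into the elliptic term (I), (III) and the coefficient term (II), (IV), one bounds (I) from below by the Caccioppoli-type quantity $\mbox{(I)}_{1}$ minus error integrals of $\ell_{1}(D\ti{u})^{t}$ and of difference quotients of $\ti{u}$; the term (II) is absorbed using the $\alpha$-H\"older continuity $\eqref{assfr.2}_{4}$ together with Young's inequality; and the obstacle pieces (III), (IV) are controlled by \eqref{p2}. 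This produces \eqref{0}.

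The heart of the matter --- the \emph{hybrid} step --- is to convert the weighted bound on $\tau_{h}D\ti{u}$ into a second-difference bound for $\ti{u}$ in $L^{\sigma-\delta+1}$. Using the monotonicity inequality \eqref{mon} and, crucially, inserting the gradient sup-norm through $\snr{\tau_{h}D\ti{u}}^{1-\mu}\ge(\tx{C}_{\rr}/2\tx{M}_{*})^{\mu-1}$ on $\supp\eta$ (cf.\ \eqref{mmmm}), one passes from \eqref{1}--\eqref{2} to \eqref{3}, in which only the small power $\tx{M}_{*}^{\mu-1}$ of the sup-norm appears. Feeding \eqref{gh} and the localised Gagliardo--Nirenberg interpolation \eqref{fs.0} into the difference quotients --- all the relevant H\"older and Young exponents lying in $(0,1)$ precisely by \eqref{num}, which is where the sharp a-dimensional constraint $q<2-\mu+\alpha$ (through $\omega<\omega_{*}$) is consumed --- and reabsorbing by Lemma~\ref{iterlem} after invoking the Besov embedding \eqref{immersione2}, one reaches the one-step gain \eqref{1-0} on the scale $L^{t+2\tx{s}_{*}+\mu-1}$.

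Finally I would iterate \eqref{1-0} along $t_{i+1}=t_{i}+2\tx{s}_{*}+\mu-1$ on shrinking concentric balls to obtain \eqref{14}, interpolate between $L^{t_{i+1}}$ and $L^{1}$, reabsorb by Young and Lemma~\ref{iterlem} to reach \eqref{iii}, and then make the exponent of $\tx{M}_{*}$ independent of $\mu$ and $\omega$ by \eqref{isis}, replacing $t_{i+1}$ by the $\mu$-free lower bound $\ti{t}_{i+1}\ge t_{*}+(i+1)(1+\alpha-q)$. For given $p$, choosing $i_{p}$ with $\ti{t}_{i_{p}+1}\ge p$, applying H\"older, and scaling back from $B_{1}$ to $B_{\rr}(x_{0})$ --- which reintroduces the powers of $\rr$, $\tau$, $1-\tau$ and of $1+\nr{u_{0}}_{L^{\infty}(B)}$ through $\tx{C}_{\rr}$, $\Psi$ and $\tx{A}_{\rr}$ --- yields \eqref{15}. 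The one genuinely delicate point is the bookkeeping of the third step: guaranteeing that inserting $\tx{M}_{*}$ costs only the exponent $\gamma_{\mu;\omega}/(2-\mu)$, i.e.\ that \emph{no half} of the fractional differentiability gain is lost --- unlike in the naive subquadratic estimate --- which is exactly what forces the specific balance in the definitions \eqref{thre}--\eqref{ts} and the admissible range $\omega\in(0,\omega_{*})$.
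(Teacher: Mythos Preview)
Your proposal is correct and follows essentially the same route as the paper: the proof of Proposition~\ref{p31} is precisely the content of Sections~\ref{gb}--\ref{fm} assembled in the order you describe, culminating in the choice $r_{1}=1$, $r_{2}=\tau$ and the scaling back to $B_{\rr}(x_{0})$. Your identification of the delicate point --- that the insertion of $\tx{M}_{*}$ via \eqref{mmmm} costs only the exponent $\gamma_{\mu;\omega}/(2-\mu)$ thanks to the balance encoded in \eqref{thre}--\eqref{ts} and \eqref{isis} --- matches the paper's emphasis exactly.
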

\noindent We conclude this section with some comments on our hybrid fractional Moser technique, possible variants and extensions.
\begin{remark}\label{r32}
\emph{The iteration built in Section \ref{fm} is probably the most extreme version of fractional Moser iteration available, as it exploits till the very end, and in a quantitative way, the interpolation principle of trading control on the oscillation for size properties of functions. In fact, the ultimate goal of the whole machinery designed above is estimating certain Besov norms of minima with arbitrarily high integrability power and fractional differentiability rate slightly larger than one, i.e.,
\eqn{fff.f}
$$
\int\left|\frac{\tau_{h}^{2}\ti{u}}{\snr{h}^{1+\epsilon}}\right|^{t+2\tx{s}_{*}+\mu-1}\dx<\infty\qquad\quad  \mbox{for all} \ \ 1\le t<\infty,
$$
that eventually embed into all Sobolev spaces (except of course $W^{1,\infty}$). This immersion requires "sacrificing" the very small $\epsilon>0$ in \eqref{fff.f} which makes the differentiability rate larger than one. Such a tiny differentiability amount gets lost in the iteration previously employed in \cite{jw}, while here it is preserved to maximise the nonuniformity range at the cost of the appearance of a power of the $L^{\infty}$-norm of the gradient among the bounding constants. The drawback is that to achieve meaningful estimates, our linear iteration needs to be coupled with a De Giorgi type iteration that grants the possibility of reabsorbing constants depending on $\nr{D\ti{u}}_{L^{\infty}}$ and eventually leads to uniform Lipschitz bounds. It is anyway possible to directly apply \cite[Proposition 7.1]{jw} with the formal choice $p=2-\mu$ there and follow our strategy to deal with the obstacle constraint to derive gradient $L^{t}$-$L^{1}$ estimates for all $1<t<\infty$ and no dependency on $\nr{D\ti{u}}_{L^{\infty}}$. The price to pay is a rather severe restriction on the nonuniformity range: $q<1+\alpha/2$ is required not only in this case, cf. \cite[Display (7.9)]{jw}, but whenever fractional differentiability is involved in subquadratic problems, cf. \cite[Section 3.6, Step 5]{CM2} and \cite[Theorem 3]{dmon}. Nonetheless it is not difficult to see that our approach to nondifferentiable obstacle problems combined with the fractional Moser's iteration in \cite{jw} extends \cite[Theorem 1]{jw} to (quantitatively superlinear) $(p,q)$-nonuniformly elliptic obstacle problems with H\"older continuous coefficients as well as to nonautonomous, nonuniformly elliptic variational inequalities, see \cite[Section 11]{ciccio} and references therein for the Sobolev-differentiable case.}
\end{remark}
\section{Schauder estimates}\label{sce}
\noindent In this section we design a hybrid comparison scheme eventually leading to uniform Lipschitz estimates for solutions to problem \eqref{pdreg}. Throughout the whole section, we work within the setting described at the beginning of Section \ref{ncz} - in particular, \eqref{1qa}, \eqref{ass.2}, \eqref{p2}, \eqref{assfr}, \eqref{sasa}, \eqref{u0} will always be in force together with the first restriction $1\le \mu\le 3/2$.
\subsection{First order scaling}\label{fos} 
Since we are after gradient boundedness, we need to adopt a different scaling than the one used in Section \ref{sca} in order to preserve first order information. Let $u\in (u_{0}+W^{1,q}_{0}(B))\cap \tx{K}^{\psi}(B)$ be the solution to problem \eqref{pdreg}, fix a ball $B_{\rr}(x_{0})\Subset B$ with radius $\rr\in (0,1]$, blow up $u$ and $\psi$ on $B_{\rr}(x_{0})$ by letting $\ti{u}(x):=\rr^{-1}u(x_{0}+\rr x)$, $\ti{\psi}(x):=\rr^{-1}\psi(x_{0}+\rr x)$, set this time $\ti{\tx{H}}(x,z):=\tx{L}_{*}(z)+\ti{\tx{a}}(x)\ell_{s}(z)^{q}$, where $\ti{\tx{a}}$ is the same function defined in Section \ref{sca}, integrand $\tx{L}_{*}$ satisfies \eqref{assfr}, and notice that by scaling and \eqref{areg}, $\ti{u}\in C^{1}(B_{1}(0))\cap\tx{K}^{\ti{\psi}}(B_{1}(0))$ is a constrained local minimizer of functional \eqref{fbw} solving variational inequality \eqref{vler}, of course with the current definition of $\ti{\tx{H}}$, $\ti{u}$, $\ti{\psi}$ replacing those given in Section \ref{sca}.
\subsection{Auxiliary integrands and their eigenvalues}\label{aux}
Before proceeding with our regularity estimates, following \cite[Section 3]{dm} we need to introduce some auxiliary quantities. Let $B\subseteq B_{1}$ be a ball, and, with reference to Section \ref{fos}, we set
$$
\ti{\tx{a}}_{i}(B):=\inf_{x\in B}\ti{\tx{a}}(x),\qquad \quad \tx{H}_{i}(z;B):=\tx{L}_{*}(z)+\ti{\tx{a}}_{i}(B)\ell_{s}(z)^{q},
$$
and introduce
$$
\begin{cases}
\ \ti{\lambda}_{1}(x,\snr{z}):=\ell_{1}(z)^{-\mu}+\ti{\tx{a}}(x)\ell_{s}(z)^{q-2}\\
\ \ti{\lambda}_{2}(x,\snr{z}):=\ell_{1}(z)^{-1}(\tx{g}(\snr{z})+1)+\ti{\tx{a}}(x)\ell_{s}(z)^{q-2}\\
\ \lambda_{1;i}(\snr{z};B):=\ell_{1}(z)^{-\mu}+\ti{\tx{a}}_{i}(B)\ell_{s}(z)^{q-2}\\
\ \lambda_{2;i}(\snr{z};B):=\ell_{1}(z)^{-1}(\tx{g}(\snr{z})+1)+\ti{\tx{a}}_{i}(B)\ell_{s}(z)^{q-2}\\
\ \mathcal{V}_{i}^{2}(z_{1},z_{2};B):=\snr{V_{1,2-\mu}(z_{1})-V_{1,2-\mu}(z_{2})}^{2}+\ti{\tx{a}}_{i}(B)\snr{V_{s,q}(z_{1})-V_{s,q}(z_{2})}^{2}.
\end{cases}
$$
From \eqref{assfr}, \cite[Remark 2]{dm}, and the definition of $\tx{H}$, $\tx{H}_{i}$, $\mathcal{V}_{i}^{2}$ it follows that
\begin{flalign}\label{eig.4}
\left\{
\begin{array}{c}
\displaystyle
\ \frac{1}{c}\lambda_{1;i}(\snr{z};B)\snr{\xi}^{2}\leq \langle\partial^{2}\tx{H}_{i}(z;B)\xi,\xi\rangle, \qquad \qquad \snr{\partial^{2}\tx{H}_{i}(z;B)}\le c\lambda_{2;i}(\snr{z};B)\\ [10pt]\displaystyle
\ \frac{1}{c}\mathcal{V}^{2}_{i}(z_{1},z_{2};B)\le \langle\partial \tx{H}_{i}(z_{1};B)-\partial\tx{H}_{i}(z_{2};B),z_{1}-z_{2}\rangle,
\end{array}
\right.
\end{flalign}
for all $z,\xi\in \mathbb{R}^{n}$, with $c\equiv c(\texttt{data}_{*})$. In particular it is \eqn{elrat}
$$
\frac{\lambda_{2;i}(\snr{z};B)}{\lambda_{1;i}(\snr{z};B)}\le \ell_{1}(z)^{\mu-1}(\tx{g}(\snr{z})+1)\qquad \mbox{for all} \ \ z\in \mathbb{R}^{n}.
$$
A crucial role in the forthcoming computations will be played by the primitives of $\ti{\lambda}_{1}(x,t)t$ and $\lambda_{1;i}(t;B)t$. For $(x,z)\in\Omega\times \mathbb{R}^{n}$, we indeed set
\eqn{eii}
$$
\left\{
\begin{array}{c}
\displaystyle
\ \ti{\tx{E}}(x,\snr{z}):=\int_{0}^{\snr{z}}\ti{\lambda}_{1}(x,t)t\dtt=\frac{1}{2-\mu}\left(\ell_{1}(z)^{2-\mu}-1\right)+\frac{\ti{\tx{a}}(x)}{q}\left(\ell_{s}(z)^{q}-s^{q}\right)\\[10pt]\displaystyle
\ \tx{E}_{i}(\snr{z};B):=\int_{0}^{\snr{z}}\lambda_{1;i}(t;B)t\dtt=\frac{1}{2-\mu}\left(\ell_{1}(z)^{2-\mu}-1\right)+\frac{\ti{\tx{a}}_{i}(B)}{q}\left(\ell_{s}(z)^{q}-s^{q}\right),
\end{array}
\right.
$$
keep in mind the definitions in \eqref{hlhl}, and recall \cite[Lemma 3.1]{dm}, containing the basic properties of $\ti{\tx{E}}$, $\tx{E}_{*}$, and $\tx{E}_{i}$ - recall that $\tx{E}_{*}$ has been defined in \eqref{hlhl}.
\begin{lemma}\label{leee} The following holds about the functions in \eqref{eii}, for all $z,z_{1},z_{2}\in \mathbb{R}^{n}$, $x\in B$:
\begin{itemize}
\item the Lipschitz estimate
\begin{flalign}\label{0.000}
&\snr{\tx{E}_{i}(\snr{z_{1}};B)-\tx{E}_{i}(\snr{z_{2}};B)}\nonumber \\
&\qquad \qquad \quad \lesssim_{\mu,q}\left[(\snr{z_{1}}^2+\snr{z_{2}}^2+1)^{(1-\mu)/2}+\ti{\tx{a}}_{i}(B)(\snr{z_{1}}^2+\snr{z_{2}}^2+s^2)^{(q-1)/2}\right]\left|\snr{z_{1}}-\snr{z_{2}}\right|
\end{flalign}
holds true;
\item the oscillation estimate
\eqn{0.00}
$$
\snr{\ti{\tx{E}}(x,\snr{z})-\tx{E}_{i}(\snr{z};B)}\lesssim_{q}  \snr{\ti{\tx{a}}(x)-\ti{\tx{a}}_{i}(B)}\left(\ell_{s}(z)^{q}-s^q\right)
$$
is verified;
\item there exists constant $\tx{T}_{\mu}\equiv \tx{T}_{\mu}(\mu)\geq 1$ such that 
\eqn{0.001}
$$
\snr{z}\le 2 \tx{E}_{*}(x,\snr{z})^{\frac{1}{2-\mu}}, \qquad  \quad \snr{z}\le 2\ti{\tx{E}}(x,\snr{z})^{\frac{1}{2-\mu}},\qquad \quad \snr{z}\le  2\tx{E}_{i}(\snr{z};B)^{\frac{1}{2-\mu}},
$$
whenever $\snr{z}\ge \tx{T}_{\mu}$;
\item there exists a constant $c\equiv c(\mu, q,\tx{g})$ such that 
\eqn{0.002}
$$
\begin{cases}
\ |z|+\tx{E}_{*}(x,\snr{z})\leq c \tx{H}_{*}(x,z)+c\\
\ |z|+\ti{\tx{E}}(x,\snr{z}) \leq c \ti{\tx{H}}(x,z)+c\\
\ \snr{z}+\tx{E}_{i}(\snr{z};B)\le c\tx{H}_{i}(z;B)+c,
 \end{cases}
$$
is true. 
\end{itemize}
\end{lemma}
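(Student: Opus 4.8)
The statement coincides, up to the entries concerning $\tx{E}_{*}$, with \cite[Lemma 3.1]{dm}, and the plan is to present it as such while spelling out the few computations involved. The key remark is that $\ti{\tx{E}}(x,\snr{z})$, $\tx{E}_{*}(x,\snr{z})$ and $\tx{E}_{i}(\snr{z};B)$ all have the closed form recorded in \eqref{eii}--\eqref{hlhl}, namely the $\mu$-primitive $\tfrac{1}{2-\mu}(\ell_{1}(z)^{2-\mu}-1)$ plus $q^{-1}(\ell_{s}(z)^{q}-s^{q})$ multiplied, respectively, by $\ti{\tx{a}}(x)$, $\tx{a}(x)$ or the constant $\ti{\tx{a}}_{i}(B)$; since all these coefficients are nonnegative and $L^{\infty}$-bounded, any bound established for one of the three transfers verbatim to the others, so it suffices to argue for $\tx{E}_{i}(\cdot;B)$.

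For the Lipschitz bound \eqref{0.000} I would regard $r\mapsto\tx{E}_{i}(r;B)$ as a function of the single variable $r=\snr{z}\ge 0$, for which $\tfrac{\d}{\d r}\tx{E}_{i}(r;B)=\lambda_{1;i}(r;B)\,r=r(1+r^{2})^{-\mu/2}+\ti{\tx{a}}_{i}(B)\,r(s^{2}+r^{2})^{(q-2)/2}$, and then integrate: $\snr{\tx{E}_{i}(\snr{z_{1}};B)-\tx{E}_{i}(\snr{z_{2}};B)}\le\int_{\snr{z_{1}}\wedge\snr{z_{2}}}^{\snr{z_{1}}\vee\snr{z_{2}}}\big(r(1+r^{2})^{-\mu/2}+\ti{\tx{a}}_{i}(B)\,r(s^{2}+r^{2})^{(q-2)/2}\big)\,\d r$. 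Each summand is then matched to the corresponding term of \eqref{0.000} through the elementary power inequality $\snr{(t^{2}+r_{1}^{2})^{\gamma/2}-(t^{2}+r_{2}^{2})^{\gamma/2}}\lesssim_{\gamma}(t^{2}+r_{1}^{2}+r_{2}^{2})^{(\gamma-1)/2}\snr{r_{1}-r_{2}}$, applied with $\gamma=2-\mu$, $t=1$ and with $\gamma=q$, $t=s$; this inequality is a standard ingredient of the $(p,q)$-growth toolbox. I expect this to be the only genuinely non-routine point of the lemma: for $\gamma=q\in(1,2)$ it is immediate, because $r\mapsto(s^{2}+r^{2})^{(q-1)/2}$ is nondecreasing and the supremum over the interval sits at the larger endpoint; for $\gamma=2-\mu\in(0,1]$, instead, a crude mean-value estimate only yields the weaker factor $(1+(\snr{z_{1}}\wedge\snr{z_{2}})^{2})^{(1-\mu)/2}$, so one must recover the correct negative power of $1+\snr{z_{1}}^{2}+\snr{z_{2}}^{2}$ by a short case split according to whether $\snr{z_{1}}\vee\snr{z_{2}}\le 2(\snr{z_{1}}\wedge\snr{z_{2}})$ — in the complementary case extending the integral to $[0,\snr{z_{1}}\vee\snr{z_{2}}]$, using the explicit primitive and the fact that $\snr{z_{1}}-\snr{z_{2}}$ is then comparable to the larger radius — the sub-regime $\snr{z_{1}}\vee\snr{z_{2}}\le 1$ being disposed of by plain boundedness of the integrand.

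The remaining three items are immediate from the closed forms. Estimate \eqref{0.00} follows by subtraction, $\ti{\tx{E}}(x,\snr{z})-\tx{E}_{i}(\snr{z};B)=q^{-1}\big(\ti{\tx{a}}(x)-\ti{\tx{a}}_{i}(B)\big)(\ell_{s}(z)^{q}-s^{q})$, together with $\ti{\tx{a}}(x)\ge\ti{\tx{a}}_{i}(B)\ge 0$ and $\ell_{s}(z)\ge s$. For \eqref{0.001} I would drop the nonnegative $q$-term, use $\ell_{1}(z)^{2-\mu}\ge\snr{z}^{2-\mu}$, and fix $\tx{T}_{\mu}\equiv\tx{T}_{\mu}(\mu)\ge 1$ so large that $\tfrac{1}{2-\mu}(\snr{z}^{2-\mu}-1)\ge 2^{-(2-\mu)}\snr{z}^{2-\mu}$ whenever $\snr{z}\ge\tx{T}_{\mu}$ — possible because $2^{2-\mu}/(2-\mu)>1$ for $\mu\in[1,2)$ — which gives $\snr{z}\le 2\tx{E}_{i}(\snr{z};B)^{1/(2-\mu)}$, the same $\tx{T}_{\mu}$ serving for $\ti{\tx{E}}$ and $\tx{E}_{*}$. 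Finally, for \eqref{0.002} I would estimate $\tx{E}_{i}(\snr{z};B)\le\tfrac{1}{2-\mu}\ell_{1}(z)^{2-\mu}+q^{-1}\ti{\tx{a}}_{i}(B)\ell_{s}(z)^{q}\lesssim_{\mu,q}1+\snr{z}+\ti{\tx{a}}_{i}(B)\ell_{s}(z)^{q}$ (using $2-\mu\le 1$), invoke \eqref{gt} for $\snr{z}\lesssim 1+\tx{L}(z)$, and note that both $\tx{L}(z)$ and $\ti{\tx{a}}_{i}(B)\ell_{s}(z)^{q}$ are bounded by $\tx{H}_{i}(z;B)$; adding $\snr{z}$ on the left-hand side yields $\snr{z}+\tx{E}_{i}(\snr{z};B)\le c\,\tx{H}_{i}(z;B)+c$, and the identical computation with $\ti{\tx{a}}$, $\tx{a}$ in place of $\ti{\tx{a}}_{i}(B)$ (so with $\ti{\tx{H}}$, $\tx{H}$ on the right) gives the corresponding bounds for $\ti{\tx{E}}$ and $\tx{E}_{*}$.
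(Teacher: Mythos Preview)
Your proposal is correct and matches the paper's treatment: the paper does not prove this lemma at all but simply cites \cite[Lemma 3.1]{dm}, and your write-up both records that citation and spells out the straightforward computations that underlie it. The arguments you give for each item---mean-value/case-split for \eqref{0.000}, direct subtraction for \eqref{0.00}, dropping the $q$-term and choosing $\tx{T}_{\mu}$ via $2^{2-\mu}>2-\mu$ for \eqref{0.001}, and combining $\ell_{1}(z)^{2-\mu}\le 1+\snr{z}$ with \eqref{gt} for \eqref{0.002}---are exactly what one expects and are sound.
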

\subsection{Nearly homogeneous reference estimates for nonhomogeneous integrals}\label{nnh}
In this section, we derive some $L^{\infty}$ and energy estimates for minima of autonomous, nonhomogeneous variational integrals. More precisely, let $B_{\rrr}(\equiv B_{\rrr}(x_{B}))\Subset B_{1}$ be a ball with radius $\rrr\in (0,\rrr_{*}]$, $\tx{f}\in L^{\infty}(B_{1})$, and $v_{0}\in W^{1,\infty}(B_{\rrr})$ be functions. The threshold radius $\rrr_{*}\in (0,1]$ is taken so small that $\tx{c}_{\tx{p}}\rrr_{*}\Lambda_{*}\nr{\tx{f}}_{L^{\infty}(B_{1})}\le 1/2,$ where symbol $\tx{c}_{\tx{p}}\equiv \tx{c}_{\tx{p}}(n)$ denotes the constant from Poincar\'e inequality in $W^{1,1}(B_{1})$. Within this setting, in the next proposition we prove some Lipschitz and energy type reference estimates for the solution $v\in v_{0}+W^{1,q}_{0}(B_{\rrr})$ to Dirichlet problem
\eqn{pdnh}
$$
v_{0}+W^{1,q}_{0}(B_{\rrr})\ni w\mapsto \int_{B_{\rrr}}\tx{H}_{i}(Dw;B_{\rrr})-\tx{f}w\dx,
$$
which exists and it is unique by superlinearity, standard direct methods and strict convexity arguments. 
\begin{proposition}\label{p41}
Let $v\in v_{0}+W^{1,q}_{0}(B_{\rrr})$ be the solution of Dirichlet problem \eqref{pdnh}. The energy estimate
\eqn{enes}
$$
\int_{B_{\rrr}}\tx{H}_{i}(Dv;B_{\rrr})\dx\le c\int_{B_{\rrr}}\tx{H}_{i}(Dv_{0};B_{\rrr})+1\dx,
$$
is verified with $c\equiv c(n,\Lambda_{*},\tx{g})$. Moreover $v\in W^{1,\infty}_{\loc}(B_{\rrr})\cap W^{2,2}_{\loc}(B_{\rrr})$, and for every $\delta_{0},\delta_{1},\delta_{2}\in (0,(2-q)/(80n))$, there exists a threshold $\bar{\mu}\equiv \bar{\mu}(n,\delta_{0},\delta_{1},\delta_{2})>1$ such that 
$$
\begin{cases}
\ (\delta_{0},\delta_{1},\delta_{2})\mapsto \bar{\mu}(n,\delta_{0},\delta_{1},\delta_{2}) \ \ \mbox{is increasing}\\
\ \bar{\mu}\searrow 1 \ \ \mbox{if at least one of} \ \delta_{0},\delta_{1},\delta_{2} \  \mbox{vanishes},
\end{cases}
$$
and if $1\le \mu<\bar{\mu}$, the $L^{\infty}$-estimates
\begin{eqnarray}\label{nh.2}
\nr{Dv}_{L^{\infty}(B_{3\rrr/4})}&\le& c\left(1+\nr{\tx{f}}_{L^{\infty}(B_{\rrr})}^{\tx{m}_{0}}\right)\tx{E}_{i}\left(\nr{Dv_{0}}_{L^{\infty}(B_{\rrr})};B_{\rrr}\right)^{\frac{\delta_{0}}{2-\mu}}\nr{Dv_{0}}_{L^{\infty}(B_{\rrr})}\nonumber \\
&&+c\left(1+\nr{\tx{f}}_{L^{\infty}(B_{\rrr})}^{\tx{m}_{0}}\right),
\end{eqnarray}
and
\eqn{nh.2.1}
$$
\nr{Dv}_{L^{\infty}(B_{3\rrr/4})}\le c\left(1+\nr{\tx{f}}_{L^{\infty}(B_{\rrr})}^{\tx{m}_{0}}\right)\left(1+\nr{Dv_{0}}_{L^{\infty}(B_{\rrr})}^{4q}\right),
$$
hold with $c\equiv c(\data_{*},\nr{a}_{L^{\infty}(B_{\rr}(x_{0}))},\delta_{0},\delta_{1},\delta_{2})$, $\tx{m}_{0}\equiv \tx{m}_{0}(n,\delta_{0},\delta_{1},\delta_{2})$. Finally, whenever $B\Subset B_{\rrr}(x_{B})$ is a ball, and $\tx{M}$ is a positive number such that
\eqn{mmm}
$$
 \max\left\{\nr{Dv}_{L^{\infty}(B)},10+\tx{T}_{\tx{g}}+\tx{T}_{\mu}\right\}\le \tx{M},
$$
the Caccioppoli type inequality
\begin{eqnarray}\label{nh.1}
\nr{D(\tx{E}_{i}(\snr{Dv};B_{\rrr})-\kk)_{+}}_{L^{2}(3B/4)}&\le&\frac{c\tx{M}^{\delta_{1}}}{\snr{B}^{1/n}}\nr{(\tx{E}_{i}(\snr{Dv};B_{\rrr})-\kk)_{+}}_{L^{2}(B)}\nonumber \\
&&+c\tx{M}^{1-\delta_{2}}\nr{\tx{f}}_{L^{\infty}(B_{\rrr})}\nr{\ell_{1}(Dv)^{\delta_{2}}}_{L^{2}(B)}
\end{eqnarray}
is satisfied for $c\equiv c(\data_{*},\delta_{0},\delta_{1},\delta_{2})$. 
\end{proposition}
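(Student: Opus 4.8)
\emph{Overview and energy estimate.} I will prove the four assertions in turn, following the scheme of \cite[Section 3]{dm} for the autonomous homogeneous case and carefully tracking the forcing term $\tx{f}$, keeping its contributions polynomial in $\nr{\tx{f}}_{L^{\infty}(B_{\rrr})}$ and decoupled from the homogeneous structure. For the energy estimate I test the minimality of $v$ against the admissible competitor $v_{0}$, obtaining $\int_{B_{\rrr}}\tx{H}_{i}(Dv;B_{\rrr})\dx\le \int_{B_{\rrr}}\tx{H}_{i}(Dv_{0};B_{\rrr})\dx+\int_{B_{\rrr}}\tx{f}(v-v_{0})\dx$. Since $v-v_{0}\in W^{1,1}_{0}(B_{\rrr})$, Poincar\'e's inequality together with the smallness $\tx{c}_{\tx{p}}\rrr_{*}\Lambda_{*}\nr{\tx{f}}_{L^{\infty}(B_{1})}\le 1/2$ bounds $\int_{B_{\rrr}}\snr{\tx{f}}\snr{v-v_{0}}\dx$ by $(2\Lambda_{*})^{-1}\int_{B_{\rrr}}\snr{Dv-Dv_{0}}\dx$; bounding $\snr{Dv}\lesssim \tx{H}_{i}(Dv;B_{\rrr})+1$ via \eqref{gt} (with $\Lambda_{*}$ in place of $\Lambda$) and reabsorbing the resulting term into the left-hand side yields \eqref{enes}.

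\emph{Higher regularity and the differentiated equation.} The Euler--Lagrange (in)equality reads $\int_{B_{\rrr}}\langle\partial\tx{H}_{i}(Dv;B_{\rrr}),D\varphi\rangle\dx=\int_{B_{\rrr}}\tx{f}\varphi\dx$ for all $\varphi\in W^{1,q}_{0}(B_{\rrr})$. Since $s>0$, the integrand $\tx{H}_{i}(\cdot;B_{\rrr})$ is $C^{2}$ and satisfies the ellipticity bounds \eqref{eig.4}; difference-quotienting the equation and testing with $\tau_{-h}(\eta^{2}\tau_{h}v)$, with $\eta$ a cut-off, controlling the right-hand side by the crude bound $\nr{\tau_{h}\tx{f}}_{L^{\infty}}\le 2\nr{\tx{f}}_{L^{\infty}}$, gives uniform $L^{2}_{\loc}$-control of $\tau_{h}V_{1,2-\mu}(Dv)$ and $\sqrt{\ti{\tx{a}}_{i}(B_{\rrr})}\,\tau_{h}V_{s,q}(Dv)$; letting $h\to 0$ and invoking \eqref{Vm} yields $Dv\in W^{1,2}_{\loc}(B_{\rrr})$, hence $v\in W^{2,2}_{\loc}(B_{\rrr})$, and the differentiated equation $\int\langle\partial^{2}\tx{H}_{i}(Dv)D(D_{s}v),D\varphi\rangle\dx=-\int\tx{f}D_{s}\varphi\dx$ becomes available for each direction $s$.

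\emph{Gradient bounds via Moser iteration.} Testing the differentiated equation with a test function built from a cut-off and a power-truncation of the auxiliary density $\tx{E}_{i}(\snr{Dv};B_{\rrr})$ of \eqref{eii}, absorbing the $\tx{f}$-term by Young's inequality (splitting off a small power of $\ell_{1}(Dv)$ so that only a power of $\nr{\tx{f}}_{L^{\infty}}$ survives), and using the ellipticity-ratio bound \eqref{elrat}, \eqref{ass.2} and Lemma \ref{leee}, I obtain a Caccioppoli inequality for powers $\tx{E}_{i}(\snr{Dv};B_{\rrr})^{\vartheta}$. A Sobolev embedding and a Moser iteration on the exponents then upgrade this to $\tx{E}_{i}(\snr{Dv};B_{\rrr})\in L^{\infty}_{\loc}$, hence $v\in W^{1,\infty}_{\loc}(B_{\rrr})$ by \eqref{0.001}, together with \eqref{nh.2}: the admissible range $1\le \mu<\bar{\mu}$ and the parameters $\delta,\delta_{1},\delta_{2}$ emerge from the exponent bookkeeping in the iteration, where the gap between the growth rate $2-\mu$ of $\ell_{1}$ inside $\tx{E}_{i}$ and the exponent $q$ is weighed against $n$, so that $\bar{\mu}\searrow 1$ as any of $\delta,\delta_{1},\delta_{2}$ vanishes, while the small factor $\delta/(2-\mu)$ fixes the exponent on the $\tx{E}_{i}(\nr{Dv_{0}}_{L^{\infty}(B_{\rrr})};B_{\rrr})$-term; bounding $\tx{E}_{i}(t;B_{\rrr})\lesssim 1+t^{q}$ in \eqref{nh.2} gives the coarser \eqref{nh.2.1}.

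\emph{Caccioppoli inequality \eqref{nh.1} and main difficulty.} Re-running the test of the differentiated equation with the genuine truncation $(\tx{E}_{i}(\snr{Dv};B_{\rrr})-\kk)_{+}\eta^{2}$ in place of the Moser power, and pulling the now-bounded quantity $\tx{M}$ from \eqref{mmm} out of every integral where \eqref{elrat} or the Lipschitz bound \eqref{0.000} is used, one reaches the claimed estimate; the forcing term contributes, after the split $\ell_{1}(Dv)=\ell_{1}(Dv)^{1-\delta_{2}}\ell_{1}(Dv)^{\delta_{2}}\le\tx{M}^{1-\delta_{2}}\ell_{1}(Dv)^{\delta_{2}}$ on the relevant superlevel set, precisely the second summand $c\tx{M}^{1-\delta_{2}}\nr{\tx{f}}_{L^{\infty}(B_{\rrr})}\nr{\ell_{1}(Dv)^{\delta_{2}}}_{L^{2}(B)}$ of \eqref{nh.1}. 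I expect the Moser iteration---and with it the calibration of the three parameters $(\delta,\delta_{1},\delta_{2})$---to be the main obstacle: one has to route the $L^{\infty}$-forcing through the gradient iteration while keeping its dependence polynomial and decoupled from the homogeneous $\tx{E}_{i}(\nr{Dv_{0}}_{L^{\infty}})$-structure, and simultaneously arrange the exponent arithmetic so that $\bar{\mu}$ is increasing in $(\delta,\delta_{1},\delta_{2})$ and collapses to $1$ whenever any of them does.
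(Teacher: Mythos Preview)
Your energy estimate and the differentiation/regularity step match the paper's argument. The substantive divergence is in how you reach the $L^{\infty}$-bounds \eqref{nh.2}--\eqref{nh.2.1}: you propose a Moser iteration on powers of $\tx{E}_{i}(\snr{Dv};B_{\rrr})$, while the paper instead runs a De Giorgi--type iteration on level sets. Concretely, the paper tests the differentiated equation once with $w_{\kk}=\eta^{2}(\tx{E}_{i}(\snr{Dv};B_{\rrr})-\kk)_{+}D_{s}v$, obtains \eqref{nh.1} directly from this (so the Caccioppoli inequality comes \emph{first}, not last), passes through Sobolev embedding to a reverse H\"older inequality on level sets, and then invokes the nonlinear-potential De Giorgi lemma (Lemma~\ref{revlem}) together with the mapping property \eqref{stimazza} of $\mathbf{P}^{\vartheta}_{\sigma}$ to extract a pointwise bound on $\tx{E}_{i}(\snr{Dv};B_{\rrr})$. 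A final interpolation via Lemma~\ref{iterlem}, the energy estimate \eqref{enes}, and the inversion properties of $\tx{E}_{i}$ from Lemma~\ref{leee} then yield \eqref{nh.2}; the cruder \eqref{nh.2.1} follows since $\tx{E}_{i}(t;B_{\rrr})\lesssim 1+t^{2q}$ when $\mu\le 3/2$.

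What each route buys: the De Giorgi/potential approach makes the dependence on $\nr{\tx{f}}_{L^{\infty}}$ explicit through the single exponent $\tx{m}_{0}=(\chi_{0}-1)/((\chi_{0}-1)\delta_{2}-\chi_{0}(\mu-1)-\varepsilon_{1})$ and produces the threshold $\bar{\mu}$ by a concrete list of inequalities (display \eqref{ddd}), so the monotonicity and degeneration properties of $\bar{\mu}$ are read off directly; it also reuses \eqref{nh.1} as input for the later comparison scheme in Section~\ref{sce}. Your Moser scheme could in principle reach \eqref{nh.2} as well, but the exponent bookkeeping you allude to---especially the emergence of the factor $\tx{E}_{i}(\nr{Dv_{0}}_{L^{\infty}};B_{\rrr})^{\delta/(2-\mu)}$ and the decoupled polynomial dependence on $\nr{\tx{f}}_{L^{\infty}}$---is left implicit, and you would still need to isolate \eqref{nh.1} afterwards as a separate statement, whereas in the paper it is the starting point.
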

\begin{proof}
    The minimality of $v\in v_{0}+W^{1,q}_{0}(B_{\rrr})$ grants the validity of the integral identity
\eqn{iid}
$$
\int_{B_{\rrr}}\langle\partial \tx{H}_{i}(Dv;B_{\rrr}),Dw\rangle-\tx{f}w\dx=0\qquad \mbox{for all} \ \ w\in W^{1,q}_{0}(B_{\rrr}),
$$
as well as of estimate
\begin{eqnarray*}
\int_{B_{\rrr}}\tx{H}_{i}(Dv;B_{\rrr})\dx&\le&\int_{B_{\rrr}}\tx{H}_{i}(Dv_{0};B_{\rrr})\dx+\int_{B_{\rrr}}\snr{\tx{f}}\snr{v-v_{0}}\dx\nonumber \\
&\stackrel{\eqref{gt}}{\le}&\left(1+\tx{c}_{\tx{p}}\Lambda_{*}\nr{\tx{f}}_{L^{\infty}(B_{\rrr})}\rrr\right)\int_{B_{\rrr}}\tx{H}_{i}(Dv_{0};B_{\rrr})\dx+4\rrr\tx{c}_{\tx{p}}\Lambda_{*}^{2}\tx{T}_{\tx{g}}\nr{\tx{f}}_{L^{\infty}(B_{\rrr})}\snr{B_{\rrr}}\nonumber \\
&&+\tx{c}_{\tx{p}}\Lambda_{*}\nr{\tx{f}}_{L^{\infty}(B_{\rrr})}\rrr\int_{B_{\rrr}}\tx{L}_{*}(Dv)\dx,
\end{eqnarray*}
so thanks to the restrictions imposed on the threshold radius $\rrr_{*}$ we obtain \eqref{enes}. Growth conditions \eqref{assfr.2}$_{1,2,3}$, that obviously fit the frozen integrand $\tx{H}_{i}$, and standard regularity theory \cite[Chapter 8]{giu}, \cite[Section 8]{BM} imply that 
\eqn{aar}
$$
v\in W^{1,\infty}_{\loc}(B_{\rrr})\cap W^{2,2}_{\loc}(B_{\rrr})\qquad \mbox{and}\qquad \partial \tx{H}_{i}(Dv;B_{\rrr})\in W^{1,2}_{\loc}(B_{\rrr},\mathbb{R}^{n}),
$$
therefore equation \eqref{iid} can be further differentiated and
\eqn{2v}
$$
\int_{B_{\rrr}}\langle\partial^{2}\tx{H}_{i}(Dv;B_{\rrr})D_{s}Dv,Dw\rangle \dx=-\int_{B_{\rrr}}\tx{f}D_{s}w\dx,
$$
holds true for all $s\in \{1,\cdots,n\}$, $w\in W^{1,2}(B_{\rrr})$ such that $\supp(w)\Subset B_{\rrr}$. We then fix any ball $B\Subset B_{\rrr}$, a cut-off function $\eta\in C^{1}_{c}(B)$ with $\mathds{1}_{3B/4}\le \eta\le \mathds{1}_{5B/6}$ and $\snr{D\eta}\lesssim \snr{B}^{-1/n}$, and nonnegative numbers $\kk\ge \kk_{0}\ge 0$, and test \eqref{2v} against map $w_{\kk}:=\eta^{2}(\tx{E}_{i}(\snr{Dv};B_{\rrr})-\kk)_{+}D_{s}v$, admissible thanks to \eqref{aar}, and sum over $s\in \{1,\cdots,n\}$ to get via \eqref{eig.4}-\eqref{eii}, \eqref{ass.2} and Young inequality:
\begin{eqnarray}\label{16}
\int_{B}\eta^{2}\snr{D(\tx{E}_{i}(\snr{Dv};B_{\rrr})-\kk)_{+}}^{2}\dx&\le& c\int_{B}\left(\frac{\lambda_{2;i}(\snr{Dv};B_{\rrr})}{\lambda_{1;i}(\snr{Dv};B_{\rrr})}\right)^{2}(\tx{E}_{i}(\snr{Dv};B_{\rrr})-\kk)_{+}^{2}\snr{D\eta}^{2}\dx\nonumber \\
&&+c\int_{B}\tx{f}^{2}\eta^{2}\ell_{1}(Dv)^{2}\dx\nonumber \\
&\le& c\int_{B}(\tx{g}(\snr{Dv})+1)^{2}\ell_{1}(Dv)^{2(\mu-1)}(\tx{E}_{i}(\snr{Dv};B_{\rrr})-\kk)_{+}^{2}\snr{D\eta}^{2}\dx\nonumber \\
&&+c\int_{B}\tx{f}^{2}\eta^{2}\ell_{1}(Dv)^{2}\dx\nonumber \\
&\le&\frac{c\tx{M}^{2(\mu-1+\varepsilon_{1})}}{\snr{B}^{2/n}}\int_{B}(\tx{E}_{i}(\snr{Dv};B_{\rrr})-\kk)_{+}^{2}\dx\nonumber \\
&&+c\tx{M}^{2(1-\varepsilon_{2})}\nr{\tx{f}}_{L^{\infty}(B_{\rrr})}^{2}\int_{B}\eta^{2}\ell_{1}(Dv)^{2\varepsilon_{2}}\dx,
\end{eqnarray}
with $\tx{M}$ as in \eqref{mmm}, $\varepsilon_{1},\varepsilon_{2}\in (0,1)$ and $c\equiv c(\data_{*},\varepsilon_{1},\varepsilon_{2})$, see also \cite[Proposition 4.1]{dm}. Choosing
\eqn{md}
$$
1\le \mu<1+\frac{\delta_{1}}{2},\qquad \quad\quad 0<\varepsilon_{1}<\frac{\delta_{1}}{2},\qquad \quad\quad  \varepsilon_{2}=\delta_{2},
$$
and recalling that $\eta\equiv 1$ on $3B/4$ we obtain \eqref{nh.1} with constants now depending on $(\data_{*},\delta_{1},\delta_{2})$. Back to \eqref{16}, Sobolev embedding theorem eventually yields
\begin{eqnarray}\label{17}
\left(\mint_{B/2}(\tx{E}_{i}(\snr{Dv};B_{\rrr})-\kk)_{+}^{2\chi_{0}}\dx\right)^{\frac{1}{2\chi_{0}}}&\le&c\tx{M}^{(\mu-1+\varepsilon_{1})}\left(\mint_{B}(\tx{E}_{i}(\snr{Dv};B_{\rrr})-\kk)_{+}^{2}\dx\right)^{\frac{1}{2}}\nonumber \\
&&+c\tx{M}^{1-\delta_{2}}\nr{\tx{f}}_{L^{\infty}(B_{\rrr})}\snr{B}^{\frac{\gamma}{n}}\left(\mint_{B}\ell_{1}(Dv)^{2\delta_{2}}\dx\right)^{\frac{1}{2}},
\end{eqnarray}
where $1<\chi_{0}\equiv \chi_{0}(n)<4$ comes from Sobolev exponent, $\gamma=1$ if $n\ge 3$, and $\gamma=1/2$ if $n=2$, and it is $c\equiv c(\data_{*},\varepsilon_{1},\delta_{2})$. With $\varsigma>0$, we then fix parameters $3\varsigma/4\le \tau_{2}<\tau_{1}\le 5\varsigma/6$ and related concentric balls $B_{3\varsigma/4}(x_{\rm c})\subset B_{\tau_2}(x_{\rm c})\subset B_{\tau_1}(x_{\rm c})\subset B_{5\varsigma/6}(x_{\rm c})\subset B_{\varsigma}(x_{\rm c})\subseteq B_{\rrr}$. Pick an arbitrary $x_{0}\in B_{\tau_{2}}(x_{\ccc})$ and set $r_{0}:=(\tau_{1}-\tau_{2})/8$, so that $ B_{r_{0}}(x_0) \subset B_{\tau_1}(x_{\ccc})$, observe that there is no loss of generality in assuming $\nr{Dv}_{L^{\infty}(B_{3\varsigma/4}(x_{\ccc}))}\ge 10+\tx{T}_{\tx{g}}+\tx{T}_{\mu}$ (otherwise there would be nothing to prove), choose $\tx{M}\equiv \nr{Dv}_{L^{\infty}(B_{\tau_1}(x_{\ccc}))}$ and take $B\equiv B_{\rr}(x_0)\subseteq B_{r_0}(x_0)$ being an arbitrary concentric ball in \eqref{17}.~Lemma \ref{revlem} applies with $M_{0}=\tx{M}^{\mu-1+\varepsilon_{1}}$, $M_{1}= \tx{M}^{1-\delta_{2}}\nr{\tx{f}}_{L^{\infty}(B_{\rrr})}$, $f=\ell_{1}(Dv)^{2\delta_{2}}$, $\sigma=\gamma$, $\vartheta=1/2$, $v\equiv \tx{E}_{i}(\snr{Dv};B_{\rrr})$, and gives
\begin{eqnarray*}
    \tx{E}_{i}(\snr{Dv(x_{0})};B_{\rrr})&\le&c+ c\tx{M}^{\frac{\chi_{0}(\mu-1+\varepsilon_{1})}{\chi_{0}-1}}\nra{\tx{E}_{i}(\snr{Dv};B_{\rrr})}_{L^{2}(B_{r_{0}}(x_{0}))}\nonumber \\
    &&+c\tx{M}^{\frac{\mu-1+\varepsilon_{1}}{\chi_{0}-1}+(1-\delta_{2})}\nr{\tx{f}}_{L^{\infty}(B_{\rrr})}\mathbf{P}^{\frac{1}{2}}_{\gamma}\left(\ell_{1}(Dv)^{2\delta_{2}};x_{0},2r_{0}\right),
\end{eqnarray*}
for $c\equiv c(\data_{*},\varepsilon_{1},\delta_{2})$. By \eqref{stimazza} with $m=n+1>n/(2\gamma)> 1$, the arbitrariety of $x_{0}\in B_{\tau_{2}}(x_{\ccc})$, and \eqref{0.001}$_{3}$ implies
\begin{flalign}\label{19}
    &\tx{E}_{i}\left(\nr{Dv}_{L^{\infty}(B_{\tau_{2}}(x_{\ccc}))};B_{\rrr}\right)\nonumber \\
    &\qquad \qquad \quad\le \frac{c}{(\tau_{1}-\tau_{2})^{n/2}}\nr{Dv}_{L^{\infty}(B_{\tau_{1}}(x_{\ccc}))}^{\frac{\chi_{0}(\mu-1+\varepsilon_{1})}{\chi_{0}-1}}\tx{E}_{i}\left(\nr{Dv}_{L^{\infty}(B_{\tau_{1}}(x_{\ccc}))};B_{\rrr}\right)^{\frac{1}{2}}\nr{\tx{E}_{i}(\snr{Dv};B_{\rrr})}_{L^{1}(B_{5\varsigma/6}(x_{\ccc}))}^{\frac{1}{2}}\nonumber \\
    &\qquad \qquad \quad\quad +c\nr{Dv}_{L^{\infty}(B_{\tau_{1}}(x_{\ccc}))}^{\frac{\mu-1+\varepsilon_{1}}{\chi_{0}-1}+1-\delta_{2}}\nr{\tx{f}}_{L^{\infty}(B_{\rrr})}\left\|\mathbf{P}^{\frac{1}{2}}_{\gamma}\left(\ell_{1}(Dv)^{2\delta_{2}};\cdot,(\tau_{1}-\tau_{2})/4\right)\right\|_{L^{\infty}(B_{\tau_{2}}(x_{\ccc}))}+c\nonumber \\
    &\qquad \qquad \quad\le \frac{c}{(\tau_{1}-\tau_{2})^{n/2}}\tx{E}_{i}\left(\nr{Dv}_{L^{\infty}(B_{\tau_{1}}(x_{\ccc}))};B_{\rrr}\right)^{\frac{\chi_{0}(\mu-1+\varepsilon_{1})}{(\chi_{0}-1)(2-\mu)}+\frac{1}{2}}\nr{\tx{E}_{i}(\snr{Dv};B_{\rrr})}_{L^{1}(B_{5\varsigma/6}(x_{\ccc}))}^{\frac{1}{2}}\nonumber \\
     &\qquad \qquad \quad\quad +c\tx{E}_{i}\left(\nr{Dv}_{L^{\infty}(B_{\tau_{1}}(x_{\ccc}))};B_{\rrr}\right)^{\frac{\mu-1+\varepsilon_{1}+(\chi_{0}-1)(1-\delta_{2})}{(\chi_{0}-1)(2-\mu)}}\nr{\tx{f}}_{L^{\infty}(B_{\rrr})}\nr{\ell_{1}(Dv)^{2-\mu}}_{L^{1}(B_{5\varsigma/6}(x_{\ccc}))}^{\frac{\delta_{2}}{2-\mu}}+c,
\end{flalign}
where we used that $1\le \mu\le 3/2$ and $\delta_{2}<(2-q)/(80n)<1/(80n)$ thus $2\delta_{2}(n+1)<1/2\le 2-\mu$, and it is $c\equiv c(\data_{*},\varepsilon_{1},\delta_{2})$. Enforcing the bounds imposed on the size of $\mu,\varepsilon_{1}$ as
\eqn{ddd}
$$
\left\{
\begin{array}{c}
\displaystyle
\ 1\le \mu<1+\min\left\{\frac{\delta_{1}}{80},\frac{\delta_{2}(\chi_{0}-1)}{4\chi_{0}},\frac{\delta_{0}(\chi_{0}-1)}{4(4\chi_{0} +\delta_{0}(3\chi_{0}-1))},\frac{\delta_{0}\delta_{2}(\chi_{0}-1)}{4\chi_{0}(2+\delta_{0})},\frac{\delta^{2}_{0}}{2^{10}}\right\}=:\bar{\mu}\\[10pt]\displaystyle
\ 0<\varepsilon_{1}<\min\left\{\frac{\delta_{1}}{80},\frac{\delta_{2}(\chi_{0}-1)}{4},\frac{\delta_{0}(\chi_{0}-1)}{4\chi_{0}(2+\delta_{0})},\frac{\delta_{0}\delta_{2}(\chi_{0}-1)}{4(2+\delta_{0})}\right\},
\end{array}
\right.
$$
we obtain:\footnote{Notice that since $\delta_{0},\delta_{1},\delta_{2}\in (0,(2-q)/(80n))$ all quantities involved in \eqref{ddd} are well-defined.}
\begin{flalign*}
\begin{array}{c}
\displaystyle
\frac{\chi_{0}(\mu-1+\varepsilon_{1})}{(\chi_{0}-1)(2-\mu)}+\frac{1}{2}<\frac{1+\delta_{0}}{2+\delta_{0}},\qquad\quad \frac{\mu-1+\varepsilon_{1}+(\chi_{0}-1)(1-\delta_{2})}{(\chi_{0}-1)(2-\mu)}<1,\\[10pt]\displaystyle
\frac{\delta_{2}(\chi_{0}-1)}{(\chi_{0}-1)\delta_{2}-\varepsilon_{1}-\chi_{0}(\mu-1)}<1+\frac{\delta_{0}}{2},
\end{array}
\end{flalign*}
so in \eqref{19} we can apply Young inequality with conjugate exponents $$\left(\frac{2+\delta_{0}}{1+\delta_{0}},2+\delta_{0}\right),\qquad \quad \left(\frac{(\chi_{0}-1)(2-\mu)}{(\chi_{0}-1)(1-\delta_{2})+\mu-1+\varepsilon_{1}},\frac{(\chi_{0}-1)(2-\mu)}{\delta_{2}(\chi_{0}-1)-\chi_{0}(\mu-1)-\varepsilon_{1}}\right),$$
and recall that
$$
\nr{Dv}_{L^{\infty}(B_{\tau_{2}}(x_{\ccc}))}\ge 10+\tx{T}_{\tx{g}}+\tx{T}_{\mu} \ \stackrel{\eqref{0.001}_{3}}{\Longrightarrow} \ \tx{E}_{i}\left(\nr{Dv}_{L^{\infty}(B_{\tau_{2}}(x_{\ccc}))};B_{\rrr}\right)\ge 1
$$
to get
\begin{flalign}\label{20}
 &\tx{E}_{i}\left(\nr{Dv}_{L^{\infty}(B_{\tau_{2}}(x_{\ccc}))};B_{\rrr}\right)\nonumber \\
 &\qquad \qquad \quad\le \frac{c}{(\tau_{1}-\tau_{2})^{n/2}}\tx{E}_{i}\left(\nr{Dv}_{L^{\infty}(B_{\tau_{1}}(x_{\ccc}))};B_{\rrr}\right)^{\frac{1+\delta_{0}}{2+\delta_{0}}}\nr{\tx{E}_{i}(\snr{Dv};B_{\rrr})}_{L^{1}(B_{5\varsigma/6}(x_{\ccc}))}^{\frac{1}{2}}+c\nonumber \\
 &\qquad \qquad \quad \quad +c\nr{\tx{f}}_{L^{\infty}(B_{\rrr})}\tx{E}_{i}\left(\nr{Dv}_{L^{\infty}(B_{\tau_{1}}(x_{\ccc}))};B_{\rrr}\right)^{\frac{\mu-1+\varepsilon_{1}+(\chi_{0}-1)(1-\delta_{2})}{(\chi_{0}-1)(2-\mu)}}\nr{\ell_{1}(Dv)^{2-\mu}}_{L^{1}(B_{5\varsigma/6}(x_{\ccc}))}^{\frac{\delta_{2}}{2-\mu}}\nonumber \\
 &\qquad \qquad \quad\le \frac{1}{4}\tx{E}_{i}\left(\nr{Dv}_{L^{\infty}(B_{\tau_{1}}(x_{\ccc}))};B_{\rrr}\right)+\frac{c}{(\tau_{1}-\tau_{2})^{n(1+\delta_{0}/2)}}\nr{\tx{E}_{i}(\snr{Dv};B_{\rrr})}_{L^{1}(B_{5\varsigma/6}(x_{\ccc}))}^{1+\frac{\delta_{0}}{2}}\nonumber \\
 &\qquad \qquad \quad\quad +c\left(\nr{\tx{f}}_{L^{\infty}(B_{\rrr}(x_{B}))}\nr{\ell_{1}(Dv)^{2-\mu}}_{L^{1}(B_{5\varsigma/6}(x_{\ccc}))}^{\frac{\delta_{2}}{2-\mu}}\right)^{\frac{(\chi_{0}-1)(2-\mu)}{(\chi_{0}-1)\delta_{2}-\chi_{0}(\mu-1)-\varepsilon_{1}}}+c\nonumber \\
 &\qquad \qquad \quad \le \frac{1}{4}\tx{E}_{i}\left(\nr{Dv}_{L^{\infty}(B_{\tau_{1}}(x_{\ccc}))};B_{\rrr}\right)+c\left(1+\nr{\tx{f}}_{L^{\infty}(B_{\rrr})}^{\frac{(\chi_{0}-1)(2-\mu)}{(\chi_{0}-1)\delta_{2}-\chi_{0}(\mu-1)-\varepsilon_{1}}}\right)\nonumber \\
 &\qquad \qquad \quad \quad +\frac{c}{(\tau_{1}-\tau_{2})^{n(1+\delta_{0}/2)}}\left(1+\nr{\tx{f}}_{L^{\infty}(B_{\rrr})}^{\frac{(\chi_{0}-1)(2-\mu)}{(\chi_{0}-1)\delta_{2}-\chi_{0}(\mu-1)-\varepsilon_{1}}}\right)\nr{\tx{E}_{i}(\snr{Dv};B_{\rrr})}_{L^{1}(B_{5\varsigma/6}(x_{\ccc}))}^{1+\frac{\delta_{0}}{2}},
\end{flalign}
with $c\equiv c(\data_{*},\delta_{0},\delta_{1},\delta_{2})$. An application of Lemma \ref{iterlem} yields
\begin{flalign}\label{21}
\tx{E}_{i}\left(\nr{Dv}_{L^{\infty}(B_{3\varsigma/4}(x_{\ccc}))};B_{\rrr}\right)\le c\left(1+\nr{\tx{f}}_{L^{\infty}(B_{\rrr})}^{\tx{m}_{0}(2-\mu)}\right)\nra{\tx{E}_{i}(\snr{Dv};B_{\rrr})}_{L^{1}(B_{5\varsigma/6}(x_{\ccc}))}^{1+\frac{\delta_{0}}{2}}+c\left(1+\nr{\tx{f}}_{L^{\infty}(B_{\rrr})}^{\tx{m}_{0}(2-\mu)}\right),
\end{flalign}
where we set $\tx{m}_{0}:=(\chi_{0}-1)((\chi_{0}-1)\delta_{2}-\chi_{0}(\mu-1)-\varepsilon_{1})^{-1}$, and it is $c\equiv c(\data_{*},\delta_{0},\delta_{1},\delta_{2})$. Letting in particular $B_{\varsigma}(x_{\ccc})\equiv B_{\rrr}$ and setting for simplicity $\tx{F}:=1+\nr{\tx{f}}_{L^{\infty}(B_{\rrr})}^{\tx{m}_{0}(2-\mu)}$, we can further manipulate \eqref{21} to get:
\begin{eqnarray*}
\tx{E}_{i}\left(\nr{Dv}_{L^{\infty}(B_{3\rrr/4})};B_{\rrr}\right)&\stackrel{\eqref{0.002}_{3}}{\le}& c\tx{F}\nra{\tx{H}_{i}(Dv;B_{\rrr})}_{L^{1}(B_{\rrr})}^{1+\frac{\delta_{0}}{2}}+c\tx{F}\nonumber \\
&\stackrel{\eqref{enes}}{\le}&c\tx{F}\nra{\tx{H}_{i}(Dv_{0};B_{\rrr})}_{L^{1}(B_{\rrr})}^{1+\frac{\delta_{0}}{2}}+c\tx{F}\nonumber \\
&\stackrel{\eqref{assfr}_{2}}{\le}&c\tx{F}\nr{\tx{H}_{i}(Dv_{0};B_{\rrr})}_{L^{\infty}(B_{\rrr})}^{\frac{\delta_{0}}{2}}\tx{g}\left(\nr{Dv_{0}}_{L^{\infty}(B_{\rrr})}\right)\nr{Dv_{0}}_{L^{\infty}(B_{\rrr})}\nonumber \\
&&+c\tx{F}\nr{\tx{H}_{i}(Dv_{0};B_{\rrr})}_{L^{\infty}(B_{\rrr})}^{\frac{\delta_{0}}{2}}\ti{\tx{a}}_{i}(B_{\rrr})\ell_{s}\left(\nr{Dv_{0}}_{L^{\infty}(B_{\rrr})}\right)^{q}+c\tx{F}\nonumber \\
&\stackrel{\eqref{ass.2},\eqref{0.001}_{3}}{\le}&c\tx{F}\nr{\tx{H}_{i}(Dv_{0};B_{\rrr})}_{L^{\infty}(B_{\rrr})}^{\frac{3\delta_{0}}{4}+\mu-1}\tx{E}_{i}\left(\nr{Dv_{0}}_{L^{\infty}(B_{\rrr})};B_{\rrr}\right)\nonumber \\
&&+c\tx{F}\nr{\tx{H}_{i}(Dv_{0};B_{\rrr})}_{L^{\infty}(B_{\rrr})}^{\frac{\delta_{0}}{2}}\tx{E}_{i}\left(\nr{Dv_{0}}_{L^{\infty}(B_{\rrr})};B_{\rrr}\right)+c\tx{F}\nonumber \\
&\stackrel{\eqref{ddd}}{\le}&c\tx{F}\tx{E}_{i}\left(\nr{Dv_{0}}_{L^{\infty}(B_{\rrr})};B_{\rrr}\right)^{\delta_{0}}\tx{E}_{i}\left(\nr{Dv_{0}}_{L^{\infty}(B_{\rrr})};B_{\rrr}\right)+c\tx{F},
\end{eqnarray*}
for $c\equiv c(\data_{*},\delta_{0},\delta_{1},\delta_{2})$. Observe that $\tx{E}_{i}$ is monotone increasing, unbounded, $\tx{E}_{i}(0;B_{\tx{r}})=0$, and $\tx{E}_{i}(mt;B_{\tx{r}})\ge m^{2-\mu}\tx{E}_{i}(t;B_{\tx{r}})$ for all $m\ge 1$, $t\ge 0$ cf. \eqref{eii}, so its inverse $\tx{E}_{i}^{-1}$ is increasing with $\tx{E}_{i}^{-1}(0;B_{\tx{r}})=0$. Moreover, for all $t,t_{1},t_{2}\ge 0$, it holds $\tx{E}_{i}^{-1}(t_{1}+t_{2};B_{\tx{r}})\lesssim_{\mu} \tx{E}_{i}^{-1}(t_{1};B_{\tx{r}})+\tx{E}_{i}^{-1}(t_{2};B_{\tx{r}})$ and $\tx{E}_{i}^{-1}(t;B_{\tx{r}})\lesssim_{\mu}1+t^{1/(2-\mu)}$. We can therefore apply on both sides of the previous inequality $\tx{E}_{i}^{-1}$ to derive \eqref{nh.2}. Recalling that,
$$
1\le \mu\stackrel{\eqref{ddd}}{\le} 3/2 \ \Longrightarrow \ \tx{E}_{i}(\snr{z};B_{\rrr})\le c+c\snr{z}^{2q},
$$
with $c\equiv c(\mu,q,\nr{a}_{L^{\infty}(B_{\rr}(x_{0}))})$, we further obtain another (very rough) $L^{\infty}$-estimate for $Dv$, i.e. \eqref{nh.2.1}, that is a direct consequence of \eqref{nh.2} and of the previous display. 
The proof is complete.
\end{proof}

\subsection{Reverse H\"older inequality on level sets} Our main goal is the proof of the following lemma, that will be carried out in several steps accounting for various hybrid comparison arguments aimed at establishing the membership of the solution of problem \eqref{pdreg} to suitable fractional De Giorgi classes.
\begin{lemma}
Under assumptions \eqref{1qa}, \eqref{ass.2}-\eqref{p2}, \eqref{assfr}, \eqref{sasa} and \eqref{u0}, let $B\subset 2B\Subset \Omega$ be concentric balls, and $u\in\left(u_{0}+W^{1,q}_{0}(B)\right)\cap \tx{K}^{\psi}(B)$ be the solution of problem \eqref{pdreg}. For any $\delta_{0},\delta_{1},\delta_{2}\in (0,(2-q)/(80n))$, $\kk\ge 0$, there is a threshold $\mu_{\textnormal{max}}\equiv \mu_{\textnormal{max}}(n,q,\alpha,\delta_{0},\delta_{1},\delta_{2})>1$ and positive exponents $\tx{t}_{1}\equiv \tx{t}_{1}(\delta_{1})$, $\tx{t}_{2}\equiv \tx{t}_{2}(\mu,\delta_{0},\delta_{2})$ such that if $1\le \mu<\mu_{\textnormal{max}}$ and $\bar{\tx{M}}\ge 1$ is any number satisfying
\eqn{mmm.2}
$$
\bar{\tx{M}}\ge \max\left\{\nr{\tx{E}_{*}(\cdot,\snr{Du})}_{L^{\infty}(B_{\rr}(x_{0}))},10+\tx{T}_{\tx{g}}+\tx{T}_{\mu}\right\},
$$
the reverse H\"older inequality
    \begin{flalign}\label{frca}
    \nra{(\tx{E}_{*}(\cdot,\snr{Du})-\kk)_{+}}_{L^{2\chi}(B_{\rr/2}(x_{0}))}\le c\bar{\tx{M}}^{\tx{t}_{1}}\nra{(\tx{E}_{*}(\cdot,\snr{Du})-\kk)_{+}}_{L^{2}(B_{\rr}(x_{0}))}+c\bar{\tx{M}}^{\tx{t}_{2}}\rr^{\frac{\alpha}{2}}\nra{\ell_{1}(Du)}_{L^{4}(B_{\rr}(x_{0}))}^{2}
    \end{flalign}
    holds true for some $\chi \equiv \chi(n,\alpha)>1$ and $c\equiv c(\data(B),\delta_{0},\delta_{1},\delta_{2})$. 
\end{lemma}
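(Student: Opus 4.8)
The plan is to establish \eqref{frca} by a hybrid comparison argument that transfers the H\"older continuity of the coefficient $\ti{\tx{a}}$ into a controlled error term, following the blueprint of \cite[Section 4]{dm} but taking systematic advantage of the improved exponents produced by the hybrid fractional Moser iteration of Section \ref{ncz} (Proposition \ref{p31}) together with the nearly homogeneous reference estimates of Proposition \ref{p41}. First I would perform the first order scaling of Section \ref{fos}, reducing to $\ti{u}\in C^{1}(B_{1})\cap\tx{K}^{\ti{\psi}}(B_{1})$ on the unit ball, and then work on a fixed interior ball $B_{\rr}(x_{0})$, freezing the coefficient by setting $\ti{\tx{a}}_{i}(B_{\rr}(x_{0}))$ and introducing the frozen integrand $\tx{H}_{i}(\cdot;B_{\rr})$ and its primitive $\tx{E}_{i}$ from Section \ref{aux}. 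The scheme has two comparison layers: (i) compare $u$ with the solution $v$ of the \emph{frozen, forced} problem \eqref{pdnh} where the forcing term $\tx{f}$ encodes both the obstacle contribution (coming from the variational inequality \eqref{vel}, where testing is legitimate by \eqref{p2} and \eqref{areg}) and the coefficient oscillation $\ti{\tx{a}}-\ti{\tx{a}}_{i}$ controlled by $[\,\ti{\tx{a}}\,]_{0,\alpha}\rr^{\alpha}$; (ii) apply to $v$ the Caccioppoli inequality \eqref{nh.1} for $\tx{E}_{i}(\snr{Dv};B_{\rr})$ and Sobolev embedding to get a reverse H\"older inequality on the level sets $\{\tx{E}_{i}(\snr{Dv};B_{\rr})>\kk\}$, then transfer it back to $u$ using \eqref{0.00} and \eqref{0.000} to replace $\tx{E}_{i}$ by $\tx{E}_{*}$ at the cost of an additional $\snr{\ti{\tx{a}}-\ti{\tx{a}}_{i}}\ell_{s}(Du)^{q}\lesssim \rr^{\alpha}\ell_{1}(Du)^{q}$ term. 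The exponents $\tx{t}_{1}\equiv\tx{t}_{1}(\delta_{1})$ and $\tx{t}_{2}\equiv\tx{t}_{2}(\mu,\delta,\delta_{2})$ in \eqref{frca} will come out of the powers of $\bar{\tx{M}}$ appearing in \eqref{nh.1} (namely $\bar{\tx{M}}^{\delta_{1}}$ and $\bar{\tx{M}}^{1-\delta_{2}}$), combined with the bound $\tx{E}_{*}(\cdot,\snr{Du})\le\bar{\tx{M}}$ which lets one dominate $\ell_{1}(Du)$ by a power of $\bar{\tx{M}}$ through \eqref{0.001}.

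\textbf{Key steps in order.} The plan is: (1) scale and freeze as above, recording via Lemma \ref{leee} the relations \eqref{0.00}, \eqref{0.000}, \eqref{0.001}, \eqref{0.002} among $\tx{E}_{*}$, $\ti{\tx{E}}$, $\tx{E}_{i}$; (2) set up the comparison map $v$ solving \eqref{pdnh} on a slightly smaller ball, with forcing $\tx{f}$ built from the Euler--Lagrange defect of $u$ against the \emph{frozen homogeneous} operator — this defect is controlled in $L^{\infty}$ (after the first order scaling) by a constant times $[\,\ti{\tx{a}}\,]_{0,\alpha}\rr^{\alpha}\ell_{1}(Du)^{q-1}$ plus an obstacle term bounded through \eqref{p2}, which is admissible provided $\rr\le\rrr_{*}$; (3) use \eqref{enes} to bound the energy of $v$ by that of $u$, hence by $\bar{\tx{M}}$; (4) invoke \eqref{nh.1} for $v$ with the choice $\tx{M}\equiv$ a suitable power of $\bar{\tx{M}}$ (using \eqref{nh.2.1} to bound $\nr{Dv}_{L^{\infty}}$ by $\bar{\tx{M}}^{4q}$-type quantities) and Sobolev embedding to produce a reverse H\"older inequality for $(\tx{E}_{i}(\snr{Dv};B_{\rr})-\kk)_{+}$; (5) go back to $u$: on the bad set one writes $(\tx{E}_{*}(\cdot,\snr{Du})-\kk)_{+}\le(\tx{E}_{i}(\snr{Dv};B_{\rr})-\kk)_{+}+\snr{\tx{E}_{*}(\cdot,\snr{Du})-\tx{E}_{i}(\snr{Du};B_{\rr})}+\snr{\tx{E}_{i}(\snr{Du};B_{\rr})-\tx{E}_{i}(\snr{Dv};B_{\rr})}$, estimating the first difference by \eqref{0.00} and the comparison error by the monotonicity \eqref{eig.4}$_2$ tested against $u-v$, which brings in $\nr{\tx{f}}_{L^{\infty}}\lesssim\rr^{\alpha}\bar{\tx{M}}^{\#}$; (6) collect powers of $\bar{\tx{M}}$ and $\rr$, absorb the $\rr^{\alpha/2}$ into the statement, and fix $\mu_{\textnormal{max}}$ as the minimum of the thresholds $\bar{\mu}$ from Proposition \ref{p41} over the relevant choices of $(\delta,\delta_1,\delta_2)$, together with the a priori restriction $1\le\mu\le 3/2$ and the constraint \eqref{mumu}.

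\textbf{Main obstacle.} The delicate point is the \emph{bookkeeping of the exponents of $\bar{\tx{M}}$} so that $\tx{t}_1$ depends only on $\delta_1$ and $\tx{t}_2$ only on $(\mu,\delta,\delta_2)$, with no uncontrolled blow-up as $\mu\to 1$: one must carefully track how the $\bar{\tx{M}}^{\delta_1}$ and $\bar{\tx{M}}^{1-\delta_2}$ factors of \eqref{nh.1} interact with the conversion $\ell_1(Du)\lesssim\bar{\tx{M}}^{1/(2-\mu)}$ and with the comparison estimate, and verify that choosing $\delta,\delta_1,\delta_2$ small forces $\bar\mu$ (hence $\mu_{\textnormal{max}}$) close to $1$ while keeping $\tx{t}_1,\tx{t}_2$ finite. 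A second subtlety is ensuring that the obstacle term in the forcing $\tx{f}$ is genuinely $L^\infty$ and scales correctly under the first order blow-up — this is where \eqref{p2}, i.e. $\snr{D\psi}^{q-2}D^2\psi\in L^\infty_{\loc}$, is used in an essential way, exactly as in the construction of the admissible test function in Section \ref{fm}. Once these exponent computations are organized, the remaining manipulations (Young's inequality with the conjugate pairs already appearing in Proposition \ref{p41}, reabsorption, choice of the Sobolev exponent $\chi\equiv\chi(n,\alpha)$) are routine.
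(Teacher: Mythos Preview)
Your outline has a genuine gap at step (5). The comparison error $\snr{\tx{E}_{i}(\snr{Du};B_{\rr})-\tx{E}_{i}(\snr{Dv};B_{\rr})}$ is controlled only in $L^{2}$, via \eqref{eig.4}$_{2}$ and the energy comparison, not in $L^{2\chi}$. If you try to upgrade to $L^{2\chi}$ by interpolating with the $L^{\infty}$ bounds on $Du,Dv$, the $\rr^{\alpha/2}$ factor degrades to $\rr^{\alpha/(2\chi)}$, and you no longer prove \eqref{frca} as stated. The same issue affects the oscillation term $\snr{\tx{E}_{*}-\tx{E}_{i}}\lesssim\rr^{\alpha}\ell_{1}(Du)^{q}$: this is $L^{2}$ information, not $L^{2\chi}$.

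The paper avoids this obstruction by a different mechanism. It does \emph{not} compare on $B_{\rr}$ and then take $L^{2\chi}$ norms. Instead, for each increment vector $h$ with $\snr{h}$ small, it performs the comparison on tiny balls $B_{h}=B_{\snr{h}^{\beta_{0}}}(x_{\ccc})$ with $\beta_{0}=2/(2+\alpha)$, and estimates the \emph{finite difference} $\nr{\tau_{h}(\ti{\tx{E}}(\cdot,\snr{D\ti{u}})-\kk)_{+}}_{L^{2}(B_{h})}$ by three pieces: the oscillation \eqref{0.00}, the Caccioppoli bound \eqref{nh.1} for $v$ (used at the $L^{2}$ level via \eqref{gh}), and the $L^{2}$ comparison \eqref{com.7}--\eqref{com.666}. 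All three stay in $L^{2}$; the choice of $\beta_{0}$ equalises the resulting powers of $\snr{h}$. A covering of $B_{1/2}$ by the balls $B_{h}$ (with uniform finite overlap) then yields a global Nikol'skii bound on $B_{1/2}$, and the $L^{2\chi}$ gain in \eqref{frca} comes at the very end from Lemma~\ref{l4} and the fractional embedding \eqref{immersione}, with $\chi=n/(n-2\gamma)$, $\gamma<\alpha/(2+\alpha)$. In short: the integrability improvement is produced by fractional differentiability, not by Sobolev embedding on a single Caccioppoli inequality; this is precisely what allows the comparison error to remain in $L^{2}$ throughout. Your forcing term is also set up differently from the paper: there $\tx{f}=-\diver(\partial\tx{H}_{i}(D\ti{\psi};8B_{h}))$ encodes only the obstacle, while the coefficient oscillation enters separately through the comparison estimate.
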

\begin{proof}
    Let $\ti{u}\in C^{1}(B_{1})\cap \tx{K}^{\ti{\psi}}(B_{1})$ be the rescaled map defined in Section \ref{fos}, set
    $$
    \begin{array}{c}
    \displaystyle
    \tx{f}_{\infty}:=\nr{\diver (\partial \tx{L}_{*}(D\psi))}_{L^{\infty}(B_{\rr}(x_{0}))}+4nq\left(\nr{a}_{L^{\infty}(B_{\rr}(x_{0}))}+1\right)\nr{\snr{D\psi}^{q-2}D^{2}\psi}_{L^{\infty}(B_{\rr}(x_{0}))},\\[8pt]\displaystyle
    \tx{h}_{\infty}:= 1+40(\tx{c}_{\tx{p}}\Lambda_{*}+1)\tx{f}_{\infty},
    \end{array}
    $$
    and observe that
    \eqn{fff}
    $$
    \nr{\diver(\partial \tx{H}_{i}(D\ti{\psi};B))}_{L^{\infty}(B_{1})}\le \rr\tx{f}_{\infty}\le \tx{f}_{\infty}\qquad \mbox{for all balls} \ \ B\subseteq B_{1}.
    $$
     Fix $\beta_{0}\in (0,1)$, pick a vector $h\in \mathbb{R}^{n}$ such that $\snr{h}\in \left(0,\min\{2^{-8/\beta_{0}},\tx{h}_{\infty}^{-1/\beta_{0}}\}\right)$ and with $x_{\ccc}\in B_{\frac{1}{2}+2\snr{h}^{\beta_{0}}}$ introduce balls $B_{h}:=B_{\snr{h}^{\beta_{0}}}(x_{\ccc})$ that, by construction, verify inclusion $8B_{h}\Subset B_{3/4}\subset B_{1}$. We stress that \eqref{areg} assures that numbers
    \eqn{mmm.1}
$$
\tx{M}_{1}:= \max\left\{10+\tx{T}_{\tx{g}}+\tx{T}_{\mu},\nr{Du}_{L^{\infty}(B_{\rr}(x_{0}))}\right\},\quad \quad \tx{M}_{2}:= \max\left\{10+\tx{T}_{\tx{g}}+\tx{T}_{\mu},\nr{\tx{E}_{*}(\cdot,\snr{Du})}_{L^{\infty}(B_{\rr}(x_{0}))}\right\}
$$
are finite. Finally, notice that \eqref{1qa} is equivalent to $1<2+\alpha-q$, which guarantees that a restriction on the size of $\mu$ of the form $1\le \mu<2+\alpha-q$ makes sense. As a consequence,
\eqn{bbbb}
$$
q<2-\mu+\alpha,
$$
so we set\footnote{Position \eqref{muma} is valid thanks to the restriction on $\delta_{0},\delta_{1},\delta_{2}$ in Proposition \ref{p41}. Specifically, while Proposition \ref{p41} only requires $\delta_{0},\delta_{1},\delta_{2}<1/(80n)$, the stronger bound $\delta_{0},\delta_{1},\delta_{2}<(2-q)/(80n)$ is already in place, adopted from the outset to avoid repetition.} 
\eqn{muma}
$$
\mu_{\textnormal{max}}:=\min\left\{\bar{\mu},2-q+\alpha\right\}>1,
$$
where $3/2>\bar{\mu}\equiv \bar{\mu}(n,\delta_{0},\delta_{1},\delta_{2})>1$ is the same limiting quantity devised in Proposition \ref{p41}, and permanently work under condition $1\le \mu<\mu_{\textnormal{max}}$. For the sake of exposition, the reminder of the proof is split in three steps eventually yielding \eqref{frca}.
\subsubsection*{Step 1: comparison} Let $v\in\ti{u} +W^{1,q}_{0}(8B_{h})$ be the solution of problem
\eqn{com.0}
$$
\ti{u}+W^{1,q}_{0}(8B_{h})\ni w\mapsto \min \int_{8B_{h}}\tx{H}_{i}(Dw;8B_{h})-\tx{f}w\dx,
$$
where $\tx{f}:=-\diver(\partial\tx{H}_{i}(D\ti{\psi};8B_{h}))\in L^{\infty}(B_{1})$ by means of \eqref{p2} and \eqref{fff}. Existence and uniqueness of $v$ are granted by standard direct methods and strict convexity arguments, and the Euler-Lagrange equation
\eqn{com.3}
$$
\int_{8B_{h}}\langle \partial \tx{H}_{i}(Dv;8B_{h}),Dw\rangle\dx= \int_{8B_{h}}\tx{f}w\dx
$$
is verified by all $w\in W^{1,q}_{0}(8B_{h})$. Since $\tx{H}_{i}$ satisfies $\eqref{eig.4}_{1}$ and $v=\ti{u}\ge \ti{\psi}$ on $\partial(8B_{h})$, a straightforward manipulation of \cite[Lemma 2.1]{sch} yields that $v\ge \ti{\psi}$ on $8B_{h}$, so $v\in (\ti{u}+W^{1,q}_{0}(8B_{h}))\cap \tx{K}^{\ti{\psi}}(8B_{h})$. Observe also that 
\eqn{com.9}
$$
(\tx{E}_{i}(\snr{D\ti{u}};8B_{h})-\kk)_{+}\le (\ti{\tx{E}}(x,\snr{D\ti{u}})-\kk)_{+}\qquad \mbox{for all} \ \ x\in 8B_{h}, \  \kk\ge 0.
$$
Recalling the bound imposed on the size of $\snr{h}$, we have:
$$
8\tx{c}_{\tx{p}}\Lambda_{*}\nr{\tx{f}}_{L^{\infty}(8B_{h})}\snr{h}^{\beta_{0}}\stackrel{\eqref{fff}}{\le} 8\tx{c}_{\tx{p}}\Lambda_{*}\tx{f}_{\infty}\tx{h}_{\infty}^{-1}\le \frac{1}{2},
$$
so the reference estimates in Proposition \ref{p41} apply to $v$: energy estimate (recall the definition of $\ti{\tx{a}}_{i}$ in Section \ref{aux})
\begin{flalign}\label{enes.1}
\int_{8B_{h}}\tx{H}_{i}(Dv;8B_{h})\dx\le c\int_{8B_{h}}\tx{H}_{i}(D\ti{u};8B_{h})+1\dx\le  c\int_{8B_{h}}\ti{\tx{H}}(x,D\ti{u})+1\dx
\end{flalign}
is verified with $c\equiv c(n,\Lambda_{*},\tx{g})$. Moreover, for all $\delta_{0},\delta_{1},\delta_{2}\in (0,(2-q)/(80n))$ the Lipschitz estimates
\begin{eqnarray}\label{com.1}
\nr{Dv}_{L^{\infty}(6B_{h})}&\stackrel{\eqref{nh.2}}{\le}&c\left(1+\nr{\tx{f}}_{L^{\infty}(8B_{h})}^{\tx{m}_{0}}\right)\nr{\tx{E}_{i}(\snr{D\ti{u}};8B_{h})}_{L^{\infty}(8B_{h})}^{\frac{\delta_{0}}{2-\mu}}\nr{D\ti{u}}_{L^{\infty}(8B_{h})}\nonumber \\
&&+c\left(1+\nr{\tx{f}}_{L^{\infty}(8B_{h})}^{\tx{m}_{0}}\right)\nonumber \\
&\stackrel{\eqref{fff}}{\le}&c\nr{\tx{E}_{i}(\snr{D\ti{u}};8B_{h})}_{L^{\infty}(8B_{h})}^{\frac{\delta_{0}}{2-\mu}}\nr{D\ti{u}}_{L^{\infty}(8B_{h})}+c\nonumber \\
&\stackrel{\eqref{com.9}}{\le}&c\nr{\ti{\tx{E}}(\cdot,\snr{D\ti{u}})}_{L^{\infty}(8B_{h})}^{\frac{\delta_{0}}{2-\mu}}\nr{D\ti{u}}_{L^{\infty}(8B_{h})}+c\stackrel{\eqref{0.001}_{2}}{\le}c\tx{M}_{2}^{\frac{1+\delta_{0}}{2-\mu}},
\end{eqnarray}
with $c\equiv c(\data_{*},\tx{f}_{\infty},\delta_{0},\delta_{1},\delta_{2})$, and
\eqn{com.1.1}
$$
\nr{Dv}_{L^{\infty}(6B_{h})}\stackrel{\eqref{nh.2.1},\eqref{fff}}{\le} c\nr{D\ti{u}}_{L^{\infty}(8B_{h})}^{4q}+c\stackrel{\eqref{1qa},\eqref{mmm.1}}{\le}c\tx{M}_{1}^{10}\stackrel{\eqref{0.001}_{1}}{\le}c\tx{M}_{2}^{20},
$$
with $c\equiv c(\data_{*},\tx{f}_{\infty},\delta_{0},\delta_{1},\delta_{2})$ hold true. Finally, given any ball $B\Subset 8B_{h}$, the Caccioppoli type inequality
\begin{eqnarray}\label{com.2}
\nr{D(\tx{E}_{i}(\snr{Dv};8B_{h})-\kk)_{+}}_{L^{2}(3B/4)}&\stackrel{\eqref{nh.1}}{\le}&\frac{c\tx{M}^{\delta_{1}}}{\snr{B}^{1/n}}\nr{(\tx{E}_{i}(\snr{Dv};8B_{h})-\kk)_{+}}_{L^{2}(B)}\nonumber \\
&&+c\tx{M}^{1-\delta_{2}}\nr{\tx{f}}_{L^{\infty}(8B_{h})}\nr{\ell_{1}(Dv)^{\delta_{2}}}_{L^{2}(B)}
\end{eqnarray}
is satisfied for $c\equiv c(\data_{*},\delta_{0},\delta_{1},\delta_{2})$, and all numbers $\tx{M}\ge \max\left\{\nr{Dv}_{L^{\infty}(B)},10+\tx{T}_{\tx{g}}+\tx{T}_{\mu}\right\}$. 
Taking $\tx{M}= \max\left\{\nr{Dv}_{L^{\infty}(B)},10+\tx{T}_{\tx{g}}+\tx{T}_{\mu}\right\}$, the positions in \eqref{mmm.1} and \eqref{com.1} grant the bounds
\begin{eqnarray}\label{com.11}
    \ti{\tx{a}}_{i}(8B_{h})\nr{Dv}_{L^{\infty}(6B_{h})}^{q}&\le& c\ti{\tx{a}}_{i}(8B_{h})\nr{D\ti{u}}_{L^{\infty}(8B_{h})}^{q}\nr{\tx{E}_{i}(\snr{D\ti{u}};8B_{h})}_{L^{\infty}(8B_{h})}^{\frac{q\delta_{0}}{2-\mu}}+c\nonumber \\
    &\le&c\tx{M}_{2}^{\frac{q\delta_{0}}{2-\mu}}\left\|\ti{\tx{a}}(\cdot)\snr{D\ti{u}}^{q}\right\|_{L^{\infty}(8B_{h})}+c\le c\tx{M}_{2}^{1+\frac{q\delta_{0}}{2-\mu}},
\end{eqnarray}
for $c\equiv c(\data_{*},\tx{f}_{\infty},\delta_{0},\delta_{1},\delta_{2})$.
Thanks to \eqref{com.1}-\eqref{com.1.1}, estimate \eqref{com.2} becomes
\begin{eqnarray}\label{com.22}
\nr{D(\tx{E}_{i}(\snr{Dv};8B_{h})-\kk)_{+}}_{L^{2}(3B/4)}&\le&\frac{c\tx{M}_{2}^{20\delta_{1}}}{\snr{B}^{1/n}}\nr{(\tx{E}_{i}(\snr{Dv};8B_{h})-\kk)_{+}}_{L^{2}(B)}\nonumber \\
&&+c\tx{M}_{2}^{\frac{(1-\delta_{2})(1+\delta_{0})}{2-\mu}}\nr{\tx{f}}_{L^{\infty}(8B_{h})}\nr{\ell_{1}(Dv)^{\delta_{2}}}_{L^{2}(B)},
\end{eqnarray}
for all balls $B\subseteq 6B_{h}$. Let us quantify the distance between $D\ti{u}$ and $Dv$. After extending $v\equiv \ti{u}$ in $B_{1}\setminus 8B_{h}$, we see that function $w:=\ti{u}-v$ is admissible in \eqref{com.3}, while thanks the considerations made immediately below \eqref{com.3}, function $v$ is a valid test function in \eqref{vler} - more precisely
\eqn{com.6.1}
$$
-\int_{8B_{h}}\langle\partial\ti{\tx{H}}(x,D\ti{u}),D\ti{u}-Dv\rangle\dx\ge 0,
$$
so we get
\begin{eqnarray}\label{com.6}
\int_{8B_{h}}\mathcal{V}^{2}_{i}(D\ti{u},Dv;8B_{h})\dx &\stackrel{\eqref{eig.4}_{2}}{\le}&c\int_{8B_{h}}\langle\partial \tx{H}_{i}(D\ti{u};8B_{h})-\partial \tx{H}_{i}(Dv;8B_{h}),D\ti{u}-Dv\rangle\dx\nonumber \\
&\stackrel{\eqref{com.3}}{=}&c\int_{8B_{h}}\langle\partial \tx{H}_{i}(D\ti{u};8B_{h}),D\ti{u}-Dv\rangle-\tx{f}(\ti{u}-v)\dx\nonumber \\
&\stackrel{\eqref{com.6.1}}{\le}&c\int_{8B_{h}}\langle\partial \tx{H}_{i}(D\ti{u};8B_{h})-\partial\ti{\tx{H}}(x,D\ti{u}),D\ti{u}-Dv\rangle\dx\nonumber \\
&&+c\nr{\tx{f}}_{L^{\infty}(8B_{h})}\int_{8B_{h}}\snr{\ti{u}-v}\dx\nonumber \\
&\le&c\left([\ti{\tx{a}}]_{0,\alpha;B_{1}}\snr{h}^{\beta_{0}\alpha}+\nr{\tx{f}}_{L^{\infty}(B_{1})}\snr{h}^{\beta_{0}}\right)\int_{8B_{h}}\ell_{1}(D\ti{u})^{q-1}\snr{Dv-D\ti{u}}\dx\nonumber\\
&\stackrel{\eqref{mmm.1}}{\le}&c\tx{M}_{1}^{q-1}\left([\ti{\tx{a}}]_{0,\alpha;B_{1}}\snr{h}^{\beta_{0}\alpha}+\nr{\tx{f}}_{L^{\infty}(B_{1})}\snr{h}^{\beta_{0}}\right)\int_{8B_{h}}\snr{Dv}+\snr{D\ti{u}}\dx\nonumber \\
&\stackrel{\eqref{enes.1}}{\le}&c\tx{M}_{1}^{q-1}\left([\ti{\tx{a}}]_{0,\alpha;B_{1}}\snr{h}^{\beta_{0}\alpha}+\nr{\tx{f}}_{L^{\infty}(B_{1})}\snr{h}^{\beta_{0}}\right)\int_{8B_{h}}\ti{\tx{H}}(x,D\ti{u})+1\dx\nonumber \\
&\stackrel{\eqref{0.001}_{1}}{\le}&c\tx{M}_{2}^{\frac{q-1}{2-\mu}}\left([\ti{\tx{a}}]_{0,\alpha;B_{1}}\snr{h}^{\beta_{0}\alpha}+\nr{\tx{f}}_{L^{\infty}(B_{1})}\snr{h}^{\beta_{0}}\right)\int_{8B_{h}}\ti{\tx{H}}(x,D\ti{u})+1\dx,
\end{eqnarray}
for $c\equiv c(\data_{*})$.
\subsubsection*{Step 2: transfer of regularity} Before turning to the key estimate, let us add extra notation. We define $\tx{T}:=[\ti{\tx{a}}]_{0,\alpha;B_{1}}+\nr{\tx{f}}_{L^{\infty}(B_{1})}$, and introduce exponents
\eqn{t1t2}
$$
\tx{t}_{1}:=20\delta_{1},\qquad\qquad\quad   \tx{t}_{2}:=\frac{(\delta_{0}+1)(1-\delta_{2})}{2-\mu}.
$$
Notice that
\eqn{noti}
$$
0<\delta_{0},\delta_{1},\delta_{2}<\frac{2-q}{80n} \ \Longrightarrow \ \frac{q+1-\mu+q\delta_{0}}{2-\mu}+40\delta_{1}<\frac{2(1+\delta_{0})(1-\delta_{2})}{2-\mu}=2\tx{t}_{2}.
$$
This will be helpful in a few lines. A quick manipulation of \eqref{com.6} yields
\begin{flalign}\label{com.666}
 \int_{8B_{h}}\mathcal{V}_{i}^{2}(D\ti{u},Dv;8B_{h})\dx\le c\tx{T}\snr{h}^{\beta_{0}\alpha}\tx{M}_{2}^{\frac{q-1}{2-\mu}}\int_{8B_{h}}\ti{\tx{H}}(x,D\ti{u})+1\dx,
\end{flalign}
with $c\equiv c(\data_{*})$. By scaling, \eqref{assfr} and \eqref{p2} we also record
\begin{eqnarray}\label{ap}
\tx{T}&=&\rr^{\alpha}[a]_{0,\alpha;B_{\rr}(x_{0})}+\rr\left\|\diver\left(\partial \tx{L}_{*}(D\psi)+q\left(\inf_{x\in B_{\rr}(x_{0})}a(x)+a_{*}\right)\ell_{s}(D\psi)^{q-2}D\psi\right)\right\|_{L^{\infty}(B_{\rr}(x_{0}))}\nonumber \\
&\le &\rr^{\alpha}c(\data_{*}(B_{\rr}(x_{0}))).
\end{eqnarray}
Next, recalling the $1$-Lipschitz character of truncations, \eqref{0.000}, \eqref{Vm}$_{1}$, \eqref{com.1} and \eqref{com.11} on $6B_{h}$ we control
\begin{flalign}\label{com.7}
&\snr{(\tx{E}_{i}(\snr{D\ti{u}};8B_{h})-\kk)_{+}-(\tx{E}_{i}(\snr{Dv};8B_{h})-\kk)_{+}}\le \snr{\tx{E}_{i}(\snr{D\ti{u}};8B_{h})-\tx{E}_{i}(\snr{Dv};8B_{h})}\nonumber \\
&\qquad \qquad \quad \le c(1+\snr{D\ti{u}}^{2}+\snr{Dv}^{2})^{\frac{2-\mu}{4}}\snr{V_{1,2-\mu}(D\ti{u})-V_{1,2-\mu}(Dv)}\nonumber \\
&\qquad \qquad \quad \quad +c\ti{\tx{a}}_{i}(8B_{h})^{\frac{1}{2}}(s^{2}+\snr{D\ti{u}}^{2}+\snr{Dv}^{2})^{\frac{q}{4}}\ti{\tx{a}}_{i}(8B_{h})^{\frac{1}{2}}\snr{V_{s,q}(D\ti{u})-V_{s,q}(Dv)}\nonumber \\
&\qquad \qquad \quad \le c\left(\tx{M}_{2}^{\frac{1+\delta_{0}}{2}}+\tx{M}_{2}^{\frac{1}{2}+\frac{q\delta_{0}}{2(2-\mu)}}\right)\mathcal{V}_{i}(D\ti{u},Dv;8B_{h})\le c\tx{M}_{2}^{\frac{1}{2}+\frac{q\delta_{0}}{2(2-\mu)}}\mathcal{V}_{i}(D\ti{u},Dv;8B_{h}),
\end{flalign}
and in a similar fashion, by \eqref{0.00} we have
\begin{flalign}\label{com.8}
&\snr{(\ti{\tx{E}}(x,\snr{D\ti{u}})-\kk)_{+}-(\tx{E}_{i}(\snr{D\ti{u}};8B_{h})-\kk)_{+}}\le \snr{\ti{\tx{E}}(x,\snr{D\ti{u}})-\tx{E}_{i}(\snr{D\ti{u}};8B_{h})}\nonumber \\
&\qquad \qquad \quad \lesssim_{q}\snr{\ti{\tx{a}}_{i}(8B_{h})-\ti{\tx{a}}(x)}\ell_{s}(D\ti{u})^{q}\lesssim_{q}[\ti{\tx{a}}]_{0,\alpha;B_{1}}\snr{h}^{\beta_{0}\alpha}\ell_{s}(D\ti{u})^{q}.
\end{flalign}
Basic properties of translations, Jensen inequality, \eqref{com.9}, \eqref{enes.1}, \eqref{com.22}, \eqref{noti} and \eqref{com.666}-\eqref{com.8} then yield
\begin{flalign}\label{com.10}
&\int_{B_{h}}\snr{\tau_{h}(\ti{\tx{E}}(\cdot,\snr{D\ti{u}})-\kk)_{+}}^{2}\dx\nonumber \\
 &\qquad \quad \quad \le c\int_{2B_{h}}\snr{\ti{\tx{E}}(x,\snr{D\ti{u}})-\tx{E}_{i}(\snr{D\ti{u}};8B_{h})}^{2}\dx+c\int_{B_{h}}\snr{\tau_{h}(\tx{E}_{i}(\snr{D\ti{u}};8B_{h})-\kk)_{+}}^{2}\dx\nonumber \\
 &\qquad \quad \quad \le c[\ti{\tx{a}}]_{0,\alpha;B_{1}}^{2}\snr{h}^{2\alpha\beta_{0}}\int_{2B_{h}}\ell_{s}(D\ti{u})^{2q}\dx+c\int_{B_{h}}\snr{\tau_{h}(\tx{E}_{i}(\snr{Dv};8B_{h})-\kk)_{+}}^{2}\dx\nonumber \\
 &\qquad \quad \quad \quad +c\int_{2B_{h}}\snr{\tx{E}_{i}(\snr{D\ti{u}};8B_{h})-\tx{E}_{i}(\snr{Dv};8B_{h})}^{2}\dx\nonumber \\
 &\qquad \quad \quad \le c[\ti{\tx{a}}]_{0,\alpha;B_{1}}^{2}\snr{h}^{2\alpha\beta_{0}}\int_{2B_{h}}\ell_{s}(D\ti{u})^{2q}\dx+c\snr{h}^{2}\int_{2B_{h}}\snr{D(\tx{E}_{i}(\snr{Dv};8B_{h})-\kk)_{+}}^{2}\dx\nonumber \\
  &\qquad \quad \quad \quad +c\tx{M}_{2}^{1+\frac{q\delta_{0}}{2-\mu}}\int_{2B_{h}}\mathcal{V}_{i}^{2}(D\ti{u},Dv;8B_{h})\dx\nonumber \\
  &\qquad \quad \quad\le c[\ti{\tx{a}}]_{0,\alpha;B_{1}}^{2}\snr{h}^{2\alpha\beta_{0}}\int_{2B_{h}}\ell_{s}(D\ti{u})^{2q}\dx+c\snr{h}^{2(1-\beta_{0})}\tx{M}_{2}^{40\delta_{1}}\int_{(8/3)B_{h}}(\tx{E}_{i}(\snr{Dv};8B_{h})-\kk)_{+}^{2}\dx\nonumber \\
  &\qquad \quad \quad\quad+c\tx{M}_{2}^{\frac{2(1-\delta_{2})(1+\delta_{0})}{2-\mu}}\nr{\tx{f}}_{L^{\infty}(B_{1})}^{2}\snr{h}^{2}\int_{(8/3)B_{h}}\ell_{1}(Dv)^{2\delta_{2}}\dx +c\tx{M}_{2}^{1+\frac{q\delta_{0}}{2-\mu}}\int_{2B_{h}}\mathcal{V}_{i}^{2}(D\ti{u},Dv;8B_{h})\dx\nonumber \\
  &\qquad \quad \quad\le c\tx{T}^{2}\snr{h}^{2\alpha\beta_{0}}\int_{2B_{h}}\ell_{s}(D\ti{u})^{2q}\dx+c\snr{h}^{2(1-\beta_{0})}\tx{M}_{2}^{40\delta_{1}}\int_{(8/3)B_{h}}(\ti{\tx{E}}(x,\snr{D\ti{u}})-\kk)_{+}^{2}\dx\nonumber \\
  &\qquad \quad \quad\quad+c\tx{M}_{2}^{\frac{2(1-\delta_{2})(1+\delta_{0})}{2-\mu}}\tx{T}^{2}\snr{h}^{2}\int_{(8/3)B_{h}}\ell_{1}(Dv)\dx +c\tx{M}_{2}^{1+\frac{q\delta_{0}}{2-\mu}}\int_{2B_{h}}\mathcal{V}_{i}^{2}(D\ti{u},Dv;8B_{h})\dx\nonumber \\
  &\qquad \quad \quad\quad+c\snr{h}^{2(1-\beta_{0})}\tx{M}_{2}^{40\delta_{1}}\int_{(8/3)B_{h}}\snr{\tx{E}_{i}(\snr{D\ti{u}};8B_{h})-\tx{E}_{i}(\snr{Dv};8B_{h})}^{2}\dx\nonumber \\
  &\qquad \quad \quad \le c\snr{h}^{2(1-\beta_{0})}\tx{M}_{2}^{40\delta_{1}}\int_{8B_{h}}(\ti{\tx{E}}(x,\snr{D\ti{u}})-\kk)_{+}^{2}\dx+c\tx{M}_{2}^{\frac{q+1-\mu+q\delta_{0}}{2-\mu}+40\delta_{1}}\tx{T}\snr{h}^{\beta_{0}\alpha}\int_{8B_{h}}\ti{\tx{H}}(x,D\ti{u})+1\dx\nonumber \\
   &\qquad \quad \quad\quad+c\tx{T}^{2}\snr{h}^{2\alpha\beta_{0}}\left(\int_{8B_{h}}\ell_{1}(D\ti{u})^{2q}\dx+\tx{M}_{2}^{\frac{2(1-\delta_{2})(1+\delta_{0})}{2-\mu}}\int_{8B_{h}}\ti{\tx{H}}(x,D\ti{u})+1\dx\right)\nonumber \\
&\qquad \quad \quad \le c\snr{h}^{2(1-\beta_{0})}\tx{M}_{2}^{2\tx{t}_{1}}\int_{8B_{h}}(\ti{\tx{E}}(x,\snr{D\ti{u}})-\kk)_{+}^{2}\dx+c\tx{T}\snr{h}^{\alpha\beta_{0}}\tx{M}_{2}^{2\tx{t}_{2}}\int_{8B_{h}}\ell_{1}(D\ti{u})^{4}\dx,
\end{flalign}
with $c\equiv c(\data_{*},\tx{f}_{\infty},\delta_{0},\delta_{1},\delta_{2})$. In \eqref{com.10} we equalize
$$
2(1-\beta_{0})=\beta_{0}\alpha \ \Longrightarrow \ \beta_{0}=\frac{2}{2+\alpha}
$$
so \eqref{com.10} becomes
\begin{eqnarray}\label{com.12}
    \int_{B_{h}}\snr{\tau_{h}(\ti{\tx{E}}(\cdot,\snr{D\ti{u}})-\kk)_{+}}^{2}\dx&\le&c\snr{h}^{\frac{2\alpha}{2+\alpha}}\tx{M}_{2}^{2\tx{t}_{1}}\int_{8B_{h}}(\ti{\tx{E}}(x,\snr{D\ti{u}})-\kk)_{+}^{2}\dx\nonumber \\
    &&+c\snr{h}^{\frac{2\alpha}{2+\alpha}}\tx{T}\tx{M}_{2}^{2\tx{t}_{2}}\int_{8B_{h}}\ell_{1}(D\ti{u})^{4}\dx,
\end{eqnarray}
for $c\equiv c(\data_{*},\tx{f}_{\infty},\delta_{0},\delta_{1},\delta_{2})$. 
\subsubsection*{Step 3: covering} To conclude, we only need to patch up \eqref{com.12} via a dyadic covering argument. More precisely, we take a lattice $\mathscr{L}_{\snr{h}^{\beta_{0}}/\sqrt{n}}$ of open, disjoint cubes $\{Q_{\snr{h}^{\beta_{0}}/\sqrt{n}}(y)\}_{y\in (2\snr{h}^{\beta_{0}}/\sqrt{n})\mathbb{Z}^{n}}$. From this lattice, we pick $\tx{n}\approx_{n}\snr{h}^{-n\beta_{0}}$ cubes centered at points $\{x_{\ccc}\}_{\ccc\le \tx{n}}\subset (2\snr{h}^{\beta_{0}}/\sqrt{n})\mathbb{Z}^{n}$ such that $\snr{x_{\ccc}}\le 1/2+2\snr{h}^{\beta_{0}}$, thus determining the corresponding family $\{Q_{\ccc}\}_{\ccc\le \tx{n}}\equiv \{Q_{\snr{h}^{\beta_{0}}/\sqrt{n}}(x_{\ccc})\}_{\ccc\le \tx{n}}$. Observe that, in general, if $\snr{x}>1/2+2\snr{h}^{\beta_{0}}$, then $Q_{\snr{h}^{\beta_{0}}/\sqrt{n}}(x)\cap B_{1/2}=\emptyset$ as $Q_{\snr{h}^{\beta_{0}}/\sqrt{n}}(x)\subset B_{\snr{h}^{\beta_{0}}}(x)$ and $B_{\snr{h}^{\beta_{0}}}(x)\cap B_{1/2}=\emptyset$. We indeed have
\begin{flalign}\label{com.13}
\left| \  B_{1/2}\setminus \bigcup_{\ccc\le \tx{n}}Q_{\ccc} \ \right|=0,\qquad\qquad  Q_{\ccc_{1}}\cap Q_{\ccc_{2}}=\emptyset \ \Longleftrightarrow \ \ccc_{1}\not =\ccc_{2}.
\end{flalign}
Such a family of cubes corresponds to a family of balls $\{B_{\ccc}\}_{\ccc\le \tx{n}}:=\{B_{\snr{h}^{\beta_{0}}}(x_{\ccc})\}_{\ccc\le \tx{n}}$ in the sense that $Q_{\ccc}$ is the largest hypercube concentric to $B_{\ccc}$, with sides parallel to the coordinate axes. By construction it is $8B_{\ccc}\Subset B_{1}$ for all $\ccc\le \tx{n}$. Moreover, each of the dilated balls $8B_{\ccc_{t}}$ intersects the similar ones $8B_{\ccc_{s}}$, $\ccc_{t}\not =\ccc_{s}$ a finite, quantified number of times, depending only on $n$ (uniform finite intersection property). In fact, notice that the family of outer cubes $\{Q_{\snr{h}^{\beta_{0}}}(x_{\ccc})\}_{\ccc\le \tx{n}}$ has the same property and $B_{\ccc}\subset Q_{\snr{h}^{\beta_{0}}}(x_{\ccc})$. This yields:
\eqn{com.13.1}
$$
\sum_{\ccc\le \tx{n}}\phi(8B_{\ccc})\lesssim_{n}\phi(B_{1}),
$$
for all Borel measure $\phi$ defined on $B_{1}$. By \eqref{com.13} it turns out that also $\{B_{\ccc}\}_{\ccc\le \tx{n}}$ is a measure covering of $B_{1/2}$, i.e.:
\eqn{com.13.2}
$$
\left| \  B_{1/2}\setminus \bigcup_{\ccc\le \tx{n}}B_{\ccc} \ \right|=0.
$$
We can then estimate
\begin{eqnarray}\label{comm.14}
\int_{B_{1/2}}\snr{\tau_{h}(\ti{\tx{E}}(\cdot,\snr{D\ti{u}})-\kk)_{+}}^{2}\dx&\stackrel{\eqref{com.13.2}}{\le}&\sum_{\ccc\le \tx{n}}\int_{B_{\ccc}}\snr{\tau_{h}(\ti{\tx{E}}(\cdot,\snr{D\ti{u}})-\kk)_{+}}^{2}\dx\nonumber \\
&\stackrel{\eqref{com.12}}{\le}&c\snr{h}^{\frac{2\alpha}{2+\alpha}}\tx{M}_{2}^{2\tx{t}_{1}}\sum_{\ccc\le\tx{n}}\int_{8B_{\ccc}}(\ti{\tx{E}}(\cdot,\snr{D\ti{u}})-\kk)_{+}^{2}\dx\nonumber \\
&&+c\snr{h}^{\frac{2\alpha}{2+\alpha}}\tx{T}\tx{M}_{2}^{2\tx{t}_{2}}\sum_{\ccc\le \tx{n}}\int_{8B_{\ccc}}\ell_{1}(D\ti{u})^{4}\dx\nonumber\\
&\stackrel{\eqref{com.13.1}}{\le}&c\snr{h}^{\frac{2\alpha}{2+\alpha}}\tx{M}_{2}^{2\tx{t}_{1}}\int_{B_{1}}(\ti{\tx{E}}(\cdot,\snr{D\ti{u}})-\kk)_{+}^{2}\dx\nonumber \\
&&+c\snr{h}^{\frac{2\alpha}{2+\alpha}}\tx{T}\tx{M}_{2}^{2\tx{t}_{2}}\int_{B_{1}}\ell_{1}(D\ti{u})^{4}\dx,
\end{eqnarray}
with $c\equiv c(\data_{*},\tx{f}_{\infty},\delta_{0},\delta_{1},\delta_{2})$. Estimate \eqref{comm.14}, Lemma \ref{l4} and immersion \eqref{immersione} then give
\begin{flalign}\label{com.15}
&[(\ti{\tx{E}}(\cdot,\snr{D\ti{u}})-\kk)_{+}]_{\gamma,2;B_{1/2}}+\nr{(\ti{\tx{E}}(\cdot,\snr{D\ti{u}})-\kk)_{+}}_{L^{2\chi}(B_{1/2})}\nonumber \\
&\qquad \qquad \quad \le c\tx{M}_{2}^{\tx{t}_{1}}\nr{(\ti{\tx{E}}(\cdot,\snr{D\ti{u}})-\kk)_{+}}_{L^{2}(B_{1})}+c\tx{T}^{\frac{1}{2}}\tx{M}_{2}^{\tx{t}_{2}}\nr{\ell_{1}(D\ti{u})}_{L^{4}(B_{1})}^{2},
\end{flalign}
for all $\gamma\in (0,\alpha/(2+\alpha))$, say $\gamma=\alpha/(4+2\alpha)$, and some $c\equiv c(\data_{*},\tx{f}_{\infty},\delta_{0},\delta_{1},\delta_{2})$. In \eqref{com.15} it is $\chi:=n/(n-2\gamma)>1$. Scaling back on $B_{\rr}(x_{0})$, setting $\bar{\tx{M}}\equiv\tx{M}_{2}$, recalling the definition of the various quantities appearing in \eqref{com.15} and of $\tx{E}$ in \eqref{hlhl}, and the scaling features of $\tx{T}$ in \eqref{ap}, we obtain \eqref{frca} after standard manipulations.
\end{proof}
\subsection{Lipschitz regularity} Let $B_{r}\subset B_{2r}\subset B$ be any ball with radius $r\in (0,1]$, consider concentric balls $B_{r/2}\subseteq B_{\tau_{2}}\subset B_{\tau_{1}}\subseteq B_{3r/4}$ and notice that there is no loss of generality in assuming that $\nr{\tx{E}_{*}(\cdot,\snr{Du})}_{L^{\infty}(B_{r/2})}\ge (10+\tx{T}_{\tx{g}}+\tx{T}_{\mu})$ otherwise there would be nothing to prove. By \eqref{1qa} and \eqref{areg}, all $x_{0}\in B_{\tau_{2}}$ are Lebesgue point for $\tx{E}_{*}(\cdot,\snr{Du})$, see \cite[Section 6.6]{ciccio}, so we set $r_{0}:=(\tau_{1}-\tau_{2})/8$ in such a way that $B_{2r_{0}}(x_{0})\Subset B_{\tau_{1}}$, and, via \eqref{frca} applied on $B_{r_{0}}(x_{0})$ with $\bar{\tx{M}}:=\nr{\tx{E}_{*}(\cdot,\snr{Du})}_{L^{\infty}(B_{\tau_{1}})}$ (that verifies \eqref{mmm.2}) we can apply Lemma \ref{revlem} choosing $v(x):=\tx{E}_{*}(x,\snr{Du(x)})$, $M_{0}= \bar{\tx{M}}^{\tx{t}_{1}}$, $M_{1}\equiv \bar{\tx{M}}^{\tx{t}_{2}}$, $f(x):=\ell_{1}(Du)^{4}$, $\sigma:=\alpha/2$, $\vartheta=1/2$, $\kk_{0}=0$, $\chi:=n/(n-2\gamma)\in (1,2)$, to get
\begin{eqnarray*}
\tx{E}_{*}(x_{0},\snr{Du(x_{0})})&\le&c\nr{\tx{E}_{*}(\cdot,\snr{Du})}_{L^{\infty}(B_{\tau_{1}})}^{\frac{\tx{t}_{1}\chi}{\chi-1}}\nra{\tx{E}_{*}(\cdot,\snr{Du})}_{L^{2}(B_{r_{0}}(x_{0}))}\nonumber \\
&&+c\nr{\tx{E}_{*}(\cdot,\snr{Du})}_{L^{\infty}(B_{\tau_{1}})}^{\frac{\tx{t}_{1}}{\chi-1}+\tx{t}_{2}}\mathbf{P}^{\frac{1}{2}}_{\frac{\alpha}{2}}\left(\ell_{1}(Du)^{4};x_{0},2r_{0}\right),
\end{eqnarray*}
for $c\equiv c(\data_{*}(B),\delta_{0},\delta_{1},\delta_{2})$. The arbitrariety of $x_{0}\in B_{\tau_{2}}$ allows passing to the supremum on both sides of the above inequality, and derive 
\begin{eqnarray}\label{l.0}
\nr{\tx{E}_{*}(\cdot,\snr{Du})}_{L^{\infty}(B_{\tau_{2}})}&\le&\frac{c}{(\tau_{1}-\tau_{2})^{n/2}}\nr{\tx{E}_{*}(\cdot,\snr{Du})}_{L^{\infty}(B_{\tau_{1}})}^{\frac{\tx{t}_{1}\chi}{\chi-1}+\frac{1}{2}}\nr{\tx{E}_{*}(\cdot,\snr{Du})}_{L^{1}(B_{3r/4})}^{\frac{1}{2}}\nonumber \\
&&+c\nr{\tx{E}_{*}(\cdot,\snr{Du})}_{L^{\infty}(B_{\tau_{1}})}^{\frac{\tx{t}_{1}}{\chi-1}+\tx{t}_{2}}\left\|\mathbf{P}^{\frac{1}{2}}_{\frac{\alpha}{2}}\left(\ell_{1}(Du)^{4};\cdot,(\tau_{1}-\tau_{2})/4\right)\right\|_{L^{\infty}(B_{\tau_{2}})}\nonumber \\
&\stackrel{\eqref{stimazza}}{\le}&\frac{c}{(\tau_{1}-\tau_{2})^{n/2}}\nr{\tx{E}_{*}(\cdot,\snr{Du})}_{L^{\infty}(B_{\tau_{1}})}^{\frac{\tx{t}_{1}\chi}{\chi-1}+\frac{1}{2}}\nr{\tx{E}_{*}(\cdot,\snr{Du})}_{L^{1}(B_{3r/4})}^{\frac{1}{2}}\nonumber \\
&&+c\nr{\tx{E}_{*}(\cdot,\snr{Du})}_{L^{\infty}(B_{\tau_{1}})}^{\frac{\tx{t}_{1}}{\chi-1}+\tx{t}_{2}}\nr{\ell_{1}(Du)}_{L^{\frac{4(n+1)}{\alpha}}(B_{(3\tau_{2}+\tau_{1})/4})}^{2},
\end{eqnarray}
with $c\equiv c(\data_{*}(B),\delta_{0},\delta_{1},\delta_{2})$. At this stage we only need to tame the last term in \eqref{l.0}. To this end, thanks to \eqref{bbbb}, we apply \eqref{15} on $B_{\tau_{1}}$ with $\tau=(3\tau_{2}+\tau_{1})/(4\tau_{1})$, $p=4(n+1)/\alpha$ and $\tx{M}_{*}=\max\{\nr{Du}_{L^{\infty}(B_{\tau_{1}})},10+\tx{T}_{\tx{g}}+\tx{T}_{\mu}\}$ to get
\begin{eqnarray}\label{l.0.0}
\nr{\tx{E}_{*}(\cdot,\snr{Du})}_{L^{\infty}(B_{\tau_{2}})}&\le&\frac{c}{(\tau_{1}-\tau_{2})^{n/2}}\nr{\tx{E}_{*}(\cdot,\snr{Du})}_{L^{\infty}(B_{\tau_{1}})}^{\frac{\tx{t}_{1}\chi}{\chi-1}+\frac{1}{2}}\nr{\tx{E}_{*}(\cdot,\snr{Du})}_{L^{1}(B_{3r/4})}^{\frac{1}{2}}\nonumber \\
&&+\frac{c}{(\tau_{1}-\tau_{2})^{\tx{d}}}\nr{\tx{E}_{*}(\cdot,\snr{Du})}_{L^{\infty}(B_{\tau_{1}})}^{\frac{\tx{t}_{1}}{\chi-1}+\tx{t}_{2}}\nr{Du}_{L^{\infty}(B_{\tau_{1}})}^{4\gamma_{\mu;\omega}}\left(\nr{Du}_{L^{1}(B_{\tau_{1}})}^{\frac{\alpha}{2(n+1)}}+1\right)\nonumber \\
&\stackrel{\eqref{0.001}_{1}}{\le}&\frac{c}{(\tau_{1}-\tau_{2})^{n/2}}\nr{\tx{E}_{*}(\cdot,\snr{Du})}_{L^{\infty}(B_{\tau_{1}})}^{\frac{\tx{t}_{1}\chi}{\chi-1}+\frac{1}{2}}\nr{\tx{E}_{*}(\cdot,\snr{Du})}_{L^{1}(B_{3r/4})}^{\frac{1}{2}}\nonumber \\
&&+\frac{c}{(\tau_{1}-\tau_{2})^{\tx{d}}}\nr{\tx{E}_{*}(\cdot,\snr{Du})}_{L^{\infty}(B_{\tau_{1}})}^{\frac{\tx{t}_{1}}{\chi-1}+\tx{t}_{2}+8\gamma_{\mu;\omega}}\left(\nr{Du}_{L^{1}(B_{3r/4})}^{\frac{\alpha}{2(n+1)}}+1\right),
\end{eqnarray}
for any $\omega\in (0,\omega_{*})$ - see \eqref{thre}-\eqref{kkk} - and some $c\equiv c(\data_{*}(B),\nr{u_{0}}_{L^{\infty}(B)},\delta_{0},\delta_{1},\delta_{2},\omega)$. In \eqref{l.0.0} there are four degrees of freedom, $\delta_{0},\delta_{1},\delta_{2},\omega$ that still need to be fixed as to reabsorbe the $L^{\infty}$-terms on the right-hand side of \eqref{l.0.0}, and ultimately fix threshold $\bar{\mu}>1$ as a sole function of $(n,q,\alpha)$. Recalling the explicit expression of exponents $\tx{t}_{1}$, $\tx{t}_{2}$ in \eqref{t1t2}, we set
\begin{flalign}\label{posi}
\begin{array}{c}
\displaystyle
\tx{m}_{+}:=4\max\left\{\frac{5\chi}{\chi-1},\frac{4}{1+\alpha-q}\right\},\qquad \quad \quad\tx{m}_{*}:=\frac{\chi-1}{20\chi},\\[10pt]\displaystyle
\omega:=\delta_{1}\stackrel{\eqref{ddd}}{>}\mu-1,\qquad \delta_{2}=2\delta_{0},\qquad \delta_{1}:=\frac{\delta_{0}^{2}}{4\tx{m}_{+}},\qquad \delta_{0}:=\tx{m}_{*}\min\left\{\frac{2-q}{2^{10}n},\frac{1}{4}\right\},
\end{array}
\end{flalign}
thus fixing dependencies $\delta_{0},\delta_{1},\delta_{2},\omega\equiv \delta_{0},\delta_{1},\delta_{2},\omega(n,q,\alpha)$, and, via \eqref{muma}, \eqref{ddd}, also $\bar{\mu},\mu_{\textnormal{max}}\equiv \bar{\mu},\mu_{\textnormal{max}}(n,q,\alpha)$. The positions in \eqref{posi} and \eqref{t1t2}, together with the restrictions in \eqref{muma} and \eqref{ddd}, immediately imply that
\begin{flalign*}
\left\{
\begin{array}{c}
\displaystyle
\ 2-\mu\stackrel{\eqref{ddd}}{>}1-\delta_{0}^{2} \ \stackrel{\eqref{t1t2}}{\Longrightarrow} \ 1-\tx{t}_{2}>\frac{2\delta_{0}^{2}}{1-\delta_{0}^{2}}\stackrel{\eqref{posi}}{>}\frac{\chi\tx{t}_{1}}{\chi-1}+8\gamma_{\mu;\omega} \ \Longrightarrow \ \frac{\tx{t}_{1}}{\chi-1}+\tx{t}_{2}+8\gamma_{\mu;\omega}<1\\[10pt]\displaystyle
\ \frac{\tx{t}_{1}\chi}{\chi-1}+\frac{1}{2}\stackrel{\eqref{posi}}{<}1.
\end{array}
\right.
\end{flalign*}
We can then apply Young inequality in \eqref{l.0.0} with conjugate exponents 
$$
\begin{array}{c}
\displaystyle
(\tx{d}_{1},\tx{d}_{2}):=\left(\frac{2(\chi-1)}{\chi(2\tx{t}_{1}+1)-1},\frac{2(\chi-1)}{\chi(1-2\tx{t}_{1})-1}\right),\\[10pt]\displaystyle  (\tx{d}_{3},\tx{d}_{4}):=\left(\frac{\chi-1}{\tx{t}_{1}+(\chi-1)(\tx{t}_{2}+8\gamma_{\mu;\omega})},\frac{\chi-1}{(\chi-1)(1-\tx{t}_{2}-8\gamma_{\mu;\omega})-\tx{t}_{1}}\right)
\end{array}
$$
to get
\begin{eqnarray*}
\nr{\tx{E}_{*}(\cdot,\snr{Du})}_{L^{\infty}(B_{\tau_{2}})}&\le&\frac{1}{4}\nr{\tx{E}_{*}(\cdot,\snr{Du})}_{L^{\infty}(B_{\tau_{1}})}+\frac{c}{(\tau_{1}-\tau_{2})^{n\tx{d}_{2}/2}}\nr{\tx{E}_{*}(\cdot,\snr{Du})}_{L^{1}(B_{3r/4})}^{\frac{\tx{d}_{2}}{2}}\nonumber \\
&&+\frac{c}{(\tau_{1}-\tau_{2})^{\tx{d}\tx{d}_{4}}}\left(\nr{Du}_{L^{1}(B_{3r/4})}^{\frac{\alpha\tx{d}_{4}}{2(n+1)}}+1\right)
\end{eqnarray*}
for $c\equiv c(\data_{*}(B),\nr{u_{0}}_{L^{\infty}(B)})$. Finally, Lemma \ref{iterlem}, $\eqref{0.002}_{1}$, and $\eqref{assfr}_{2}$ allow concluding after standard manipulations,
\begin{flalign*}
\nr{\tx{E}_{*}(\cdot,\snr{Du})}_{L^{\infty}(B_{r/2})}\le c\nra{\tx{E}_{*}(\cdot,\snr{Du})}_{L^{1}(B_{3r/4})}^{\tx{b}}+c\nra{\ell_{1}(Du)}_{L^{1}(B_{3r/4})}^{\tx{b}}+cr^{-\tx{b}}\le c\nra{\tx{H}_{*}(\cdot,Du)}_{L^{1}(B_{r})}^{\tx{b}}+cr^{-\tx{b}},
\end{flalign*}
with $c\equiv c(\data_{*}(B),\nr{u_{0}}_{L^{\infty}(B)})$ and $\tx{b}\equiv \tx{b}(n,\mu,q,\alpha)$. A standard covering argument together with the minimality of $u$ in class $(u_{0}+W^{1,q}_{0}(B))\cap \tx{K}^{\psi}(B)$ grants for any open set $\ti{B}\Subset B$,
$$
\nr{\tx{E}_{*}(\cdot,\snr{Du})}_{L^{\infty}(\ti{B})}\le  c\nr{\tx{H}_{*}(\cdot,Du)+1}_{L^{1}(B)}^{\tx{b}}\le c\nr{\tx{H}_{*}(\cdot,Du_{0})}_{L^{1}(B)}^{\tx{b}}+c,
$$
for $c\equiv c(\data_{*}(B),\tx{d}(\ti{B},B),\nr{u_{0}}_{L^{\infty}(B)})$. This proves the following statement.
\begin{proposition}\label{lipb}
    Under assumptions \eqref{1qa}, \eqref{ass.2}-\eqref{p2}, \eqref{assfr}, \eqref{sasa} and \eqref{u0}, let $B\subset 2B\Subset \Omega$ be concentric balls, and $u\in (u_{0}+W^{1,q}_{0}(B))\cap \tx{K}^{\psi}(B)$ be the solution of Dirichlet problem \eqref{pdreg}. There exists a threshold parameter $\mu_{\textnormal{max}}\equiv \mu_{\textnormal{max}}(n,q,\alpha)>1$ such that if $1\le \mu<\mu_{\textnormal{max}}$, then $u\in W^{1,\infty}_{\loc}(B)$, and whenever $\ti{B}\Subset B$ is an open set, the intrinsic Lipschitz estimate
    \eqn{lipb.1}
    $$
    \nr{\tx{E}_{*}(\cdot,\snr{Du})}_{L^{\infty}(\ti{B})}\le c\nr{\tx{H}_{*}(\cdot,Du_{0})}_{L^{1}(B)}^{\tx{b}}+c,
    $$
    holds true with $c\equiv c(\data_{*}(B),\tx{d}(\ti{B},B),\nr{u_{0}}_{L^{\infty}(B)})$ and $\tx{b}\equiv \tx{b}(n,\mu,q,\alpha)$.
\end{proposition}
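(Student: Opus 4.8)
The plan is to prove the intrinsic Lipschitz bound \eqref{lipb.1} by running a De Giorgi--type iteration not on $\snr{Du}$ but on the nonlinear energy density $\tx{E}_{*}(\cdot,\snr{Du})$ introduced in \eqref{hlhl}, since this is the quantity that correctly encodes --- and defuses --- the $\mu$-ellipticity of the frozen integrands. First I would fix an arbitrary ball $B_{r}\subseteq B$ with $r\in(0,1]$, rescale $u$ to the unit ball via the first order scaling of Section \ref{fos}, and reduce matters to a sup bound for $\tx{E}_{*}(\cdot,\snr{Du})$ on $B_{r/2}$ in terms of its $L^{1}$-average on a slightly larger ball; here there is no loss of generality in assuming $\nr{\tx{E}_{*}(\cdot,\snr{Du})}_{L^{\infty}(B_{r/2})}\ge 10+\tx{T}_{\tx{g}}+\tx{T}_{\mu}$, and by \eqref{areg} every point of the relevant balls is a Lebesgue point of $\tx{E}_{*}(\cdot,\snr{Du})$.

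The engine of the iteration is the reverse H\"older inequality on level sets \eqref{frca}. Applied on balls $B_{r_{0}}(x_{0})$ with $\bar{\tx{M}}:=\nr{\tx{E}_{*}(\cdot,\snr{Du})}_{L^{\infty}(B_{\tau_{1}})}$ (which satisfies \eqref{mmm.2}), it provides precisely hypothesis \eqref{revva} of the potential-theoretic De Giorgi lemma, Lemma \ref{revlem}, with the choices $v=\tx{E}_{*}(\cdot,\snr{Du})$, $M_{0}=\bar{\tx{M}}^{\tx{t}_{1}}$, $M_{1}=\bar{\tx{M}}^{\tx{t}_{2}}$, $f=\ell_{1}(Du)^{4}$, $\sigma=\alpha/2$, $\vartheta=1/2$, $\kk_{0}=0$, $\chi=n/(n-2\gamma)$. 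This yields a pointwise estimate for $\tx{E}_{*}(x_{0},\snr{Du(x_{0})})$ featuring the Havin--Maz'ya--Wolff potential $\mathbf{P}^{1/2}_{\alpha/2}(\ell_{1}(Du)^{4};x_{0},2r_{0})$, which by Lemma \ref{crit} is dominated by $\nr{\ell_{1}(Du)}_{L^{4(n+1)/\alpha}}^{2}$ on a slightly larger ball; that high $L^{p}$-norm of the gradient is in turn controlled, through the hybrid fractional Moser estimate \eqref{15} of Proposition \ref{p31}, by $\nr{Du}_{L^{1}}$ times a constant carrying a power $\tx{M}_{*}^{\gamma_{\mu;\omega}/(2-\mu)}$. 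Since $\snr{Du}\lesssim\tx{E}_{*}(\cdot,\snr{Du})^{1/(2-\mu)}$ by \eqref{0.001}, this power of $\nr{Du}_{L^{\infty}}$ is itself a controlled power of $\nr{\tx{E}_{*}(\cdot,\snr{Du})}_{L^{\infty}(B_{\tau_{1}})}$, so that passing to the supremum over $x_{0}\in B_{\tau_{2}}$ produces a self-improving inequality in which $\nr{\tx{E}_{*}(\cdot,\snr{Du})}_{L^{\infty}(B_{\tau_{2}})}$ is bounded by $\nr{\tx{E}_{*}(\cdot,\snr{Du})}_{L^{\infty}(B_{\tau_{1}})}$ raised to an exponent built out of $\tx{t}_{1},\tx{t}_{2}$ and $\gamma_{\mu;\omega}$, times averages of $\tx{E}_{*}(\cdot,\snr{Du})$ and $\ell_{1}(Du)$ over $B_{3r/4}$.

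I expect the main obstacle to be the final exponent bookkeeping. One must choose the four free small parameters $\delta,\delta_{1},\delta_{2},\omega$ --- admissible in the sense of \eqref{ddd} and \eqref{posi} --- so that every exponent of the $\nr{\cdot}_{L^{\infty}(B_{\tau_{1}})}$-factors on the right-hand side is strictly below $1$, i.e.\ $\tfrac{\tx{t}_{1}\chi}{\chi-1}+\tfrac12<1$ and $\tfrac{\tx{t}_{1}}{\chi-1}+\tx{t}_{2}+8\gamma_{\mu;\omega}<1$. This is feasible only because $\mu$ is confined below a threshold $\mu_{\textnormal{max}}(n,q,\alpha)$ and, decisively, because $q<1+\alpha$ (equivalently $q<2-\mu+\alpha$ for $\mu$ near $1$), which keeps $\gamma_{\mu;\omega}$ finite and small --- this is exactly the interplay that forces $\delta,\delta_{1},\delta_{2}\searrow0$ as $\mu\searrow1$. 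That the $L^{\infty}$-norm of the gradient reappears in the constant of Proposition \ref{p31} is not an accident: it is the price of the hybrid fractional Moser iteration, and the very purpose of the present De Giorgi scheme is to reabsorb it. Once the parameters are pinned down as functions of $(n,q,\alpha)$, I apply Young's inequality to peel off the $L^{\infty}$-terms with a small coefficient, reabsorb them via Lemma \ref{iterlem}, and obtain a bound of $\nr{\tx{E}_{*}(\cdot,\snr{Du})}_{L^{\infty}(B_{r/2})}$ by averages of $\tx{E}_{*}(\cdot,\snr{Du})$ and $\ell_{1}(Du)$, hence by $\nra{\tx{H}(\cdot,Du)}_{L^{1}(B_{r})}$ through \eqref{0.002} and \eqref{assfr}$_{2}$. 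A standard covering argument together with the minimality of $u$ in $(u_{0}+W^{1,q}_{0}(B))\cap\tx{K}^{\psi}(B)$, which lets one replace $\nr{\tx{H}(\cdot,Du)}_{L^{1}(B)}$ by $\nr{\tx{H}(\cdot,Du_{0})}_{L^{1}(B)}$, then yields \eqref{lipb.1}, and in particular $u\in W^{1,\infty}_{\loc}(B)$.
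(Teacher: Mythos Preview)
Your proposal is correct and follows essentially the same route as the paper: feed the reverse H\"older inequality \eqref{frca} into Lemma \ref{revlem} with the stated choices, bound the resulting potential via Lemma \ref{crit}, control the high $L^{p}$-norm of the gradient through Proposition \ref{p31}, fix the free parameters $\delta,\delta_{1},\delta_{2},\omega$ so that the $L^{\infty}$-exponents fall strictly below $1$, reabsorb via Young and Lemma \ref{iterlem}, and conclude by covering and minimality. One minor imprecision: the parameters $\delta,\delta_{1},\delta_{2},\omega$ are fixed once and for all as functions of $(n,q,\alpha)$ via \eqref{posi} and then determine $\mu_{\textnormal{max}}$, rather than being sent to zero as $\mu\searrow 1$.
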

\subsection{Gradient H\"older continuity}\label{gghc}
Once derived uniform Lipschitz bounds for solutions to \eqref{pdreg}, the approach to gradient H\"older continuity gets closer to the classical, uniformly elliptic case, cf. \cite[Section 5.9]{dm}, and \cite[Section 3]{FM}. We fix an open set $\ti{B}\Subset B$ and observe that given any ball $B_{r}\subseteq \ti{B}$, and constant $\tx{H}_{0}\ge 1$ such that
\eqn{hhh}
$$1+\nr{u_{0}}_{L^{\infty}(B)}+\nr{\tx{H}_{*}(\cdot,Du_{0})}_{L^{1}(B)}\le \tx{H}_{0},$$
via \eqref{0.001}, estimate \eqref{lipb.1} can be turned into
\begin{eqnarray*}
\nr{Du}_{L^{\infty}(B_{r})}&\le& c\nr{\tx{E}_{*}(\cdot,\snr{Du})}_{L^{\infty}(B_{r})}^{\frac{1}{2-\mu}}+c\le c\nr{\tx{E}_{*}(\cdot,\snr{Du})}_{L^{\infty}(\ti{B})}^{\frac{1}{2-\mu}}+c\nonumber \\
&\le& c\nr{\tx{H}_{*}(\cdot,Du_{0})}_{L^{1}(B)}^{\frac{\tx{b}}{2-\mu}}+c\le c\tx{H}_{0}^{\frac{\tx{b}}{2-\mu}}=:\tx{c}_{B},
\end{eqnarray*}
for $c\equiv c(\data_{*}(B),\tx{d}(\ti{B},B),\nr{u_{0}}_{L^{\infty}(B)})$. It is evident that $\tx{c}_{B}\equiv \tx{c}_{B}(\data_{*}(B),\tx{d}(\ti{B},B),\tx{H}_{0})$ is increasing with respect to $\tx{H}_{0}$, that in turn grows proportionally to $\nr{u_{0}}_{L^{\infty}(B)}$ and $\nr{\tx{H}_{*}(\cdot,Du_{0})}_{L^{1}(B)}$, cf. \eqref{15}, \eqref{lipb.1} and \eqref{hhh}. In the next lines constant $\tx{c}_{B}$ will appear possibly magnified by a multiplicative factor depending on $\data_{*}(B)$, or raised to a positive power depending at most on $(n,\mu,q,\alpha)$, but maintaining the same monotonicity with respect to $\tx{H}_{0}$ - specifically, we shall keep on denoting it $\tx{c}_{B}$. Briefly,
\eqn{gh.1}
$$
\left\{
\begin{array}{c}
\displaystyle
\ \nr{Du}_{L^{\infty}(B_{r})}\le \tx{c}_{B}\qquad \mbox{for all balls} \ \ B_{r}\subseteq \ti{B}\\[8pt]\displaystyle
\ \tx{H}_{0}\mapsto \tx{c}_{B}(\data_{*}(B),\tx{d}(\ti{B},B),\tx{H}_{0}) \ \ \mbox{is increasing}.
\end{array}
\right.
$$
We then fix a threshold
\eqn{**}
$$
\tx{r}_{*}:=\left(40\tx{c}_{\tx{p}}\Lambda_{*}\left(\nr{\diver (\partial \tx{L}_{*}(D\psi))}_{L^{\infty}(B)}+4nq\left(\nr{a}_{L^{\infty}(B)}+1\right)\nr{\snr{D\psi}^{q-2}D^{2}\psi}_{L^{\infty}(B)}\right)+1\right)^{-1}
$$
so that $\tx{r}_{*}\equiv \tx{r}_{*}(\data_{*}(B))$, take balls $B_{\tau}\subset B_{2\tau}\subseteq \ti{B}$, $\tau\in (0,\tx{r}_{*}/2)$, set $\tx{f}:=-\diver(\partial \tx{H}_{i}(D\psi;B_{2\tau}))\in L^{\infty}(B)$, and let $v_{1}\in u+W^{1,q}_{0}(B_{2\tau})$ be the solution to Dirichlet problem
\eqn{vp.1}
$$
u+W^{1,q}_{0}(B_{2\tau})\ni w\mapsto \min \int_{B_{2\tau}}\tx{H}_{i}(Dw;B_{2\tau})-\tx{f}w\dx,
$$
and $v_{2}\in v_{1}+W^{1,q}_{0}(B_{\tau})$ be the solution of problem
\eqn{vp.2}
$$
v_{1}+W^{1,q}_{0}(B_{\tau})\ni w\mapsto \min \int_{B_{\tau}}\tx{H}_{i}(Dw;B_{2\tau})\dx.
$$
Thanks to the restriction in \eqref{**}, Proposition \ref{p41} applies twice, the first time to $v_{1}$ with $\tx{f}$ defined above, and the second to $v_{2}$ with $\tx{f}\equiv 0$, to get 
\eqn{gh.2}
$$
\nr{Dv_{2}}_{L^{\infty}(B_{3\tau/4})}+\nr{Dv_{1}}_{L^{\infty}(B_{3\tau/2})}+\nr{Du}_{L^{\infty}(\ti{B})}\le \tx{c}_{B}.
$$
Via \eqref{com.666} with $B_{2\tau}$ instead of $8B_{h}$ and no scaling, \eqref{gh.2}, \eqref{Vm} and $\eqref{eig.4}$ we obtain
\begin{eqnarray}\label{gh.h}
\nra{Du-Dv_{1}}_{L^{2}(B_{3\tau/2})}&\le& \tx{c}_{B}\nra{V_{1,2-\mu}(Du)-V_{1,2-\mu}(Dv_{1})}_{L^{2}(B_{2\tau})}\nonumber \\
&\le& \tx{c}_{B}\tau^{\frac{\alpha}{2}}\nra{1+\tx{H}_{*}(\cdot,Du)}_{L^{1}(B_{2\tau})}^{\frac{1}{2}}\le\tx{c}_{B}\tau^{\frac{\alpha}{2}}\left(1+\nr{\tx{H}_{*}(\cdot,Du)}_{L^{\infty}(B_{2\tau})}^{\frac{1}{2}}\right)\le \tx{c}_{B}\tau^{\frac{\alpha}{2}},
\end{eqnarray}
and, recalling the expression of the Euler-Lagrange equations related to problems \eqref{vp.1}-\eqref{vp.2}, we further obtain
\begin{eqnarray}\label{gh.h1}
\nra{Dv_{1}-Dv_{2}}_{L^{2}(B_{3\tau/4})}&\le&\tx{c}_{B}\nra{V_{1,2-\mu}(Dv_{1})-V_{1,2-\mu}(Dv_{2})}_{L^{2}(B_{\tau})}\nonumber \\
&\stackrel{\eqref{eig.4}_{2}}{\le}&\tx{c}_{B}\nra{\langle\partial\tx{H}_{i}(Dv_{1};B_{2\tau})-\partial \tx{H}_{i}(Dv_{2};B_{2\tau}),Dv_{1}-Dv_{2}\rangle}_{L^{1}(B_{\tau})}^{\frac{1}{2}}\nonumber \\
&\le&\tx{c}_{B}\nr{\tx{f}}_{L^{\infty}(B)}^{\frac{1}{2}}\nra{v_{1}-v_{2}}_{L^{1}(B_{\tau})}^{\frac{1}{2}}\nonumber \\
&\le& \tx{c}_{B}\tau^{\frac{1}{2}}\nra{Dv_{1}-Dv_{2}}_{L^{1}(B_{\tau})}^{\frac{1}{2}}\nonumber \\
&\le& \tx{c}_{B}\tau^{\frac{1}{2}}\nra{1+\tx{H}_{i}(Dv_{1};B_{2\tau})}_{L^{1}(B_{\tau})}^{\frac{1}{2}}\le \tx{c}_{B}\tau^{\frac{1}{2}},
\end{eqnarray}
so, merging the two previous displays we obtain
\eqn{gh.4}
$$
\nra{Du-Dv_{2}}_{L^{2}(B_{3\tau/4})}\le \tx{c}_{B}\tau^{\frac{\alpha}{2}}.
$$
 Moreover, by \eqref{aar}-\eqref{2v} applied on $v_{2}$ in \eqref{vp.2}, we can apply \cite[Proposition 5.6]{dm}\footnote{Our assumptions \eqref{ass.1}-\eqref{ass.2} correspond to \cite[(1.11)-(1.12)]{dm}, but we do not need to impose \cite[(1.13)]{dm} thanks to the first point in Remark \ref{r21}.} to deduce the existence of some 
 \eqn{gh.6}
$$\gamma_{*}\equiv \gamma_{*}(\data_{*}(B),\tx{d}(\ti{B},B),\tx{H}_{0})\in (0,1) \ \ \mbox{nonincreasing in}\ \ \tx{H}_{0},$$ such that
\eqn{gh.5}
$$
\osc_{B_{3\theta\tau/4}}Dv_{2}\le \tx{c}_{B}\theta^{\gamma_{*}}\qquad \mbox{for all} \ \ \theta\in (0,1).
$$
Combining \eqref{gh.4}-\eqref{gh.5} on any ball $B_{\sigma}\subset B_{3\tau/4}$ we obtain
\begin{eqnarray*}
\nra{Du-(Du)_{B_{\sigma}}}_{L^{2}(B_{\sigma})}&\le&2\nra{Du-Dv_{2}}_{L^{2}(B_{\sigma})}+2\nra{Dv_{2}-(Dv_{2})_{B_{\sigma}}}_{L^{2}(B_{\sigma})}\nonumber \\
&\le&\tx{c}_{B}\left(\tau/\sigma\right)^{\frac{n}{2}}\tau^{\frac{\alpha}{2}}+\tx{c}_{B}\left(\sigma/\tau\right)^{\gamma_{*}},
\end{eqnarray*}
so we can equalize above with the choice $\sigma=(3\tau/4)^{\frac{n+2\gamma_{*}+\alpha}{n+2\gamma_{*}}}$ to derive the integral oscillation estimate
\eqn{hol.1}
$$
\nra{Du-(Du)_{B_{\sigma}}}_{L^{2}(B_{\sigma})}\le \tx{c}_{B}\sigma^{\frac{\alpha\gamma_{*}}{n+2\gamma_{*}+\alpha}},
$$
where $\tx{c}_{B}$ and $\gamma_{*}$ behave as indicated in \eqref{gh.1}$_{2}$ and \eqref{gh.6} respectively. Let us point out that if $3\tau/4\le \sigma\le 2\tau$, estimate \eqref{hol.1} trivially follows. Set $\beta_{*}:=\alpha\gamma_{*}/(n+2\gamma_{*}+\alpha)$, and notice that it shares the same monotonicity as $\gamma_{*}$ in \eqref{gh.6}. A standard covering argument and Campanato-Meyers characterization of H\"older continuity yields that $Du$ is locally H\"older continuous in $\ti{B}$, and, thanks to the arbitrariety of $\ti{B}\Subset B$ we further deduce the local H\"older continuity of $Du$ in $B$. Specifically, given any open subset $\ti{B}\Subset B$ it is $[Du]_{0,\beta_{*};\ti{B}}\le c$, with H\"older exponent $\beta_{*}\equiv \beta_{*}(\data_{*}(B),\tx{d}(\ti{B},B),\tx{H}_{0})\in (0,1)$ featuring the same monotonicity with respect to $\tx{H}_{0}$ in \eqref{gh.6}, and bounding constant $c\equiv c(\data_{*}(B),\tx{d}(\ti{B},B),\tx{H}_{0})$ that in $\tx{H}_{0}$ behaves as in $\eqref{gh.1}_{2}$. The content of this section is summarized in the next proposition.
\begin{proposition}\label{hphp}
    Under assumptions \eqref{1qa}, \eqref{ass.2}-\eqref{p2}, \eqref{assfr}, \eqref{sasa} and \eqref{u0}, let $B\subset 2B\Subset \Omega$ be concentric balls, $u\in (u_{0}+W^{1,q}_{0}(B))\cap \tx{K}^{\psi}(B)$ be the solution of Dirichlet problem \eqref{pdreg}, and $\tx{H}_{0}\ge 1$ be any constant satisfying \eqref{hhh}. There exists a threshold parameter $\mu_{\textnormal{max}}\equiv \mu_{\textnormal{max}}(n,q,\alpha)>1$ such that if $1\le \mu<\mu_{\textnormal{max}}$, then $Du$ is locally H\"older continuous in $B$. More precisely, given any open subset $\ti{B}\Subset B$, it is $[Du]_{0,\beta_{*};\ti{B}}\le c$, with H\"older exponent $\beta_{*}\equiv \beta_{*}(\data_{*}(B),\tx{d}(\ti{B},B),\tx{H}_{0})\in (0,1)$, and bounding constant $c\equiv c(\data_{*}(B),\tx{d}(\ti{B},B),\tx{H}_{0})$ featuring the following monotonicity:
    \eqn{hhh.1}
    $$
    \begin{cases}
        \ \tx{H}_{0}\mapsto \beta_{*}(\data_{*}(B),\tx{d}(\ti{B},B),\tx{H}_{0}) \ \ &\mbox{nonincreasing},\\
        \ \tx{H}_{0}\mapsto c(\data_{*}(B),\tx{d}(\ti{B},B),\tx{H}_{0}) \ \ &\mbox{nondecreasing}.
    \end{cases}
    $$
\end{proposition}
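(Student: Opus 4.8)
The statement is the end-point of the analysis developed in Section \ref{gghc}, so the plan is to package those ingredients into a Campanato-type decay estimate for $Du$. First I would fix an open subset $\ti{B}\Subset B$ and use the intrinsic Lipschitz bound of Proposition \ref{lipb}: since $\tx{E}_{*}$ is convex with $\tx{E}_{*}(x,\snr{z})\gtrsim \snr{z}^{2-\mu}$ in the large, \eqref{lipb.1} converts into the plain gradient bound $\nr{Du}_{L^{\infty}(B_{r})}\le \tx{c}_{B}$ on every ball $B_{r}\subseteq\ti{B}$, with $\tx{c}_{B}$ increasing in the datum $\tx{H}_{0}$ of \eqref{hhh}, cf. \eqref{gh.1}. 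The threshold $\mu_{\textnormal{max}}\equiv\mu_{\textnormal{max}}(n,q,\alpha)$ is exactly the one already fixed in Proposition \ref{lipb}, so no new smallness on $\mu$ is needed; note that Proposition \ref{p41} remains applicable after shrinking the admissible range of $\delta,\delta_{1},\delta_{2}$ to depend also on $q$.

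The heart of the argument is a two-step comparison on small balls $B_{\tau}\subset B_{2\tau}\subseteq\ti{B}$ with $\tau<\tx{r}_{*}$, where $\tx{r}_{*}\equiv\tx{r}_{*}(\data_{*}(B))$ is chosen via \eqref{**} so that the reference estimates of Proposition \ref{p41} are available. I would introduce $v_{1}$ solving the frozen, forced Dirichlet problem \eqref{vp.1} on $B_{2\tau}$ (right-hand side $\tx{f}=-\dv(\partial\tx{H}_{i}(D\psi;B_{2\tau}))\in L^{\infty}$, which is controlled thanks to \eqref{p2}) with boundary datum $u$, and then $v_{2}$ solving the frozen, homogeneous problem \eqref{vp.2} on $B_{\tau}$ with boundary datum $v_{1}$. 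Applying Proposition \ref{p41} twice gives the uniform bounds $\nr{Dv_{1}}_{L^{\infty}(B_{3\tau/2})}+\nr{Dv_{2}}_{L^{\infty}(B_{3\tau/4})}\le\tx{c}_{B}$, see \eqref{gh.2}. Then \eqref{Vm}, \eqref{eig.4} and the comparison bound \eqref{com.666} (used with $B_{2\tau}$ in place of $8B_{h}$, $\beta_{0}=1$ and no rescaling) yield $\nra{Du-Dv_{1}}_{L^{2}(B_{3\tau/2})}\le\tx{c}_{B}\tau^{\alpha/2}$, while the Euler–Lagrange equations of \eqref{vp.1}–\eqref{vp.2}, $\eqref{eig.4}_{2}$, Poincaré's inequality and the energy estimate \eqref{enes} give $\nra{Dv_{1}-Dv_{2}}_{L^{2}(B_{3\tau/4})}\le\tx{c}_{B}\tau^{1/2}$; combining the two produces $\nra{Du-Dv_{2}}_{L^{2}(B_{3\tau/4})}\le\tx{c}_{B}\tau^{\alpha/2}$, which is \eqref{gh.4}.

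Next I would feed in the Schauder decay for the frozen homogeneous minimizer $v_{2}$: by \eqref{aar}–\eqref{2v} it solves an autonomous problem satisfying \cite[(1.11)-(1.12)]{dm} (the normalization $\partial\tx{L}(0)=0$ from Remark \ref{r21} disposing of \cite[(1.13)]{dm}), so \cite[Proposition 5.6]{dm} furnishes an exponent $\gamma_{*}\equiv\gamma_{*}(\data_{*}(B),\tx{d}(\ti{B},B),\tx{H}_{0})\in(0,1)$, nonincreasing in $\tx{H}_{0}$, with $\osc_{B_{3\theta\tau/4}}Dv_{2}\le\tx{c}_{B}\theta^{\gamma_{*}}$ for all $\theta\in(0,1)$. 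Merging this with \eqref{gh.4} on balls $B_{\sigma}\subset B_{3\tau/4}$ gives the excess decay $\nra{Du-(Du)_{B_{\sigma}}}_{L^{2}(B_{\sigma})}\le\tx{c}_{B}(\tau/\sigma)^{n/2}\tau^{\alpha/2}+\tx{c}_{B}(\sigma/\tau)^{\gamma_{*}}$, and the balancing choice $\sigma=(3\tau/4)^{(n+2\gamma_{*}+\alpha)/(n+2\gamma_{*})}$ yields $\nra{Du-(Du)_{B_{\sigma}}}_{L^{2}(B_{\sigma})}\le\tx{c}_{B}\sigma^{\beta_{*}}$ with $\beta_{*}:=\alpha\gamma_{*}/(n+2\gamma_{*}+\alpha)$ (the range $3\tau/4\le\sigma\le 2\tau$ being trivial). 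A standard covering argument plus the Campanato–Meyers characterization of Hölder continuity then gives $Du\in C^{0,\beta_{*}}_{\loc}(\ti{B})$ with $[Du]_{0,\beta_{*};\ti{B}}\le c$; the arbitrariness of $\ti{B}\Subset B$ upgrades this to local Hölder continuity on $B$. Along the way one checks that $\tx{c}_{B}$, hence $c$, stays increasing and $\gamma_{*}$, hence $\beta_{*}$, stays decreasing in $\tx{H}_{0}$, which is \eqref{hhh.1}.

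The main obstacle is not any individual estimate — each step is classical once Propositions \ref{p41} and \ref{lipb} are available — but the careful bookkeeping of how every constant and the exponent $\beta_{*}$ depend on the structural datum $\tx{H}_{0}$. This monotonicity must be preserved through each comparison and reabsorption step, and it is precisely what makes the statement usable downstream (in passing from the regularized problem \eqref{pdreg} to the original functionals and in quantifying the gain in Corollary \ref{cor1}).
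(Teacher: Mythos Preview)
Your proposal is correct and follows the paper's proof essentially line by line: the same Lipschitz bound from Proposition \ref{lipb}, the same two-step comparison with $v_{1}$ solving \eqref{vp.1} and $v_{2}$ solving \eqref{vp.2}, the same appeal to \cite[Proposition 5.6]{dm} for the oscillation decay of $Dv_{2}$, and the same balancing choice of $\sigma$ leading to $\beta_{*}=\alpha\gamma_{*}/(n+2\gamma_{*}+\alpha)$ via Campanato--Meyers. The only cosmetic difference is your remark ``$\beta_{0}=1$'' when invoking \eqref{com.666}, which is just shorthand for replacing the ball $8B_{h}$ of radius $8\snr{h}^{\beta_{0}}$ by $B_{2\tau}$ directly (no rescaling), exactly as the paper does.
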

\section{Approximation scheme and proof of Theorems \ref{mt1}-\ref{mt2}, and Corollary \ref{cor1}}\label{app} 
\noindent In this section we collect the content of Sections \ref{ncz}-\ref{sce}, and design a suitable approximation scheme leading to the proof of Theorem \ref{mt2} (that in turn implies Theorem \ref{mt1}) and of Corollary \ref{cor1}.
\subsubsection*{Approximation scheme}
Let $B\subset 2B\Subset  \Omega$ be a ball, $\psi$ be as in \eqref{p2}, and $u\in \tx{K}^{\psi}_{\loc}(\Omega)$ be a local minimizer of functional $\mathcal{H}$. By Corollary \ref{alav.2}, \eqref{p2} and Remark \ref{r21}, there exists a sequence $\{\ti{u}_{\varepsilon}\}_{\varepsilon>0}\in W^{2,\infty}(B)\cap \tx{K}^{\psi}(B)$ - of course we can assume with no loss of generality that $0<\varepsilon<\min\{1,\dist(B,\partial \Omega)/4\}$ - such that
\eqn{5.1.0}
$$
\begin{array}{c}
\ \ti{u}_{\varepsilon}\to u\ \ \mbox{strongly in} \ \ W^{1,1}(B),\qquad \qquad\mathcal{H}(\ti{u}_{\varepsilon};B)\to \mathcal{H}(u;B),\\[8pt]\displaystyle
\ \nr{\ti{u}_{\varepsilon}}_{L^{\infty}(B)}\le 4\max\left\{\nr{u}_{L^{\infty}(2B)},\nr{\psi}_{L^{\infty}(2B)}\right\}=:\tx{S}.
\end{array}
$$
We then regularize integrand $\tx{L}$ in \eqref{ass.0}-\eqref{ass.2} by convolution in the gradient variable, i.e. given a sequence $\{\phi_{\delta}(x)\}_{\delta>0}:=\{\delta^{-n}\phi(x/\delta)\}_{\delta>0}\subset C^{\infty}(\mathbb{R}^{n})$ of nonnegative, radially symmetric mollifiers, we define $\tx{L}_{\delta}(z):=(\tx{L}*\phi_{\delta})(z)$, and observe that, after straightforward manipulations cf. \cite[Section 4.5]{dmon}, the newly defined integrand satisfies \eqref{assfr}. We then set 
\eqn{5.3}
$$\sigma_{\varepsilon}:=\left(1+\varepsilon^{-1}+\varepsilon^{-1}\nr{D\ti{u}_{\varepsilon}}_{L^{q}(B)}^{q}\right)^{-1} \ \Longrightarrow \ \sigma_{\varepsilon}\left(1+\nr{D\ti{u}_{\varepsilon}}_{L^{q}(B)}^{q}\right)= \texttt{o}(\varepsilon)\to 0,$$ 
define $\tx{a}_{\varepsilon}(x):=a(x)+\sigma_{\varepsilon}$, $\ell_{\delta}(z):=\ell_{s+\delta}(z)$, name
$$
\begin{array}{c}
\tx{H}_{\delta;\varepsilon}(x,z):=\tx{L}_{\delta}(z)+\tx{a}_{\varepsilon}(x)\ell_{\delta}(z)^{q},\qquad  \quad \lambda_{1;\delta,\varepsilon}(x,\snr{z}):=\ell_{1}(z)^{-\mu}+\tx{a}_{\varepsilon}(x)\ell_{\delta}(z)^{q-2},\\[8pt]\displaystyle
\tx{H}_{\varepsilon}(x,z):=\tx{L}(z)+\tx{a}_{\varepsilon}(x)\ell_{s}(z)^{q}, \\[8pt]\displaystyle
\tx{E}_{\delta;\varepsilon}(x,\snr{z}):=\int_{0}^{\snr{z}}\lambda_{1;\delta,\varepsilon}(x,t)t\dt=\frac{1}{2-\mu}\left(\ell_{1}(z)^{2-\mu}-1\right)+\frac{\tx{a}_{\varepsilon}(x)}{q}\left(\ell_{\delta}(z)^{q}-(s+\delta)^{q}\right),
\end{array}
$$
and introduce the corresponding regularized functionals, well defined whenever $w\in W^{1,q}(B)$,
$$
\mathcal{H}_{\delta;\varepsilon}(w;B):=\int_{B}\tx{H}_{\delta;\varepsilon}(x,Dw)\dx\qquad \mbox{and}\qquad \mathcal{H}_{\varepsilon}(w;B):=\int_{B}\tx{H}_{\varepsilon}(x,Dw)\dx.
$$
Integrand $\tx{H}_{\delta;\varepsilon}$ is of the same type considered in Section \ref{ncz}, i.e.~\eqref{assfr}, \eqref{sasa} and \eqref{assfr.2} hold with $\Lambda_{*}\equiv \Lambda_{*}(n,\Lambda,\tx{g},\mu,q)$, $s$ replaced by $s+\delta$ and $\ti{c}\equiv \ti{c}(\Lambda_{*},\tx{g},q,\nr{a}_{L^{\infty}(\Omega)},s,\delta)$, therefore there exists $u_{\delta;\varepsilon}\in (\ti{u}_{\varepsilon}+W^{1,q}_{0}(B))\cap \tx{K}^{\psi}(B)$, unique solution to Dirichlet problem
\eqn{pded}
$$
(\ti{u}_{\varepsilon}+W^{1,q}_{0}(B))\cap \tx{K}^{\psi}(B)\ni w\mapsto \min \mathcal{H}_{\delta;\varepsilon}(w;B),
$$
to whom all the regularity results obtained in Sections \ref{ncz}-\ref{sce} apply. In particular, the content of Section \ref{gb} yields that 
\eqn{bdd.1}
$$
\nr{u_{\delta;\varepsilon}}_{L^{\infty}(B)}\le \tx{S}.
$$
Notice also that by \eqref{d.1}, the mean value theorem and \eqref{ass.2} it is
\begin{flalign}\label{5.1}
\snr{\tx{L}_{\delta}(z)-\tx{L}(z)}+\snr{\ell_{\delta}(z)^{q}-\ell_{s}(z)^{q}}\le c\delta(\tx{g}(\snr{z})+1)+c\delta\ell_{1}(z)^{q-1}\le c\delta\ell_{1}(z)^{q-1},
\end{flalign}
for $c\equiv c(\data_{0})$, so keeping in mind that $D\ti{u}_{\varepsilon}\in L^{\infty}(B,\mathbb{R}^{n})$, and $Du_{\delta;\varepsilon}\in L^{q}(B,\mathbb{R}^{n})$, thanks to \eqref{5.1}, \eqref{5.3}, and $\eqref{assfr}_{2}$ it follows that
\begin{flalign}\label{5.2}
\begin{array}{c}
\displaystyle
 \nr{\tx{H}_{\delta;\varepsilon}(\cdot,D\ti{u}_{\varepsilon})-\tx{H}(\cdot,D\ti{u}_{\varepsilon})}_{L^{1}(B)}\lesssim \delta\left(\nr{\ell_{1}(D\ti{u}_{\varepsilon})}^{q-1}_{L^{\infty}(B)}+1\right)+\sigma_{\varepsilon}\nr{\ell_{1}(D\ti{u}_{\varepsilon})}_{L^{q}(B)}^{q}\equiv \texttt{o}_{\varepsilon}(\delta)+\texttt{o}(\varepsilon),\\[8pt]\displaystyle
 \snr{\mathcal{H}_{\varepsilon}(u_{\delta;\varepsilon};B)-\mathcal{H}_{\delta;\varepsilon}(u_{\delta;\varepsilon};B)}\lesssim \delta+\delta\nr{\tx{g}(\snr{Du_{\delta;\varepsilon}})}_{L^{1}(B)}+\delta\nr{\tx{a}_{\varepsilon}(\cdot)\snr{Du_{\delta;\varepsilon}}^{q}}_{L^{1}(B)}^{\frac{q-1}{q}}\lesssim \delta+\delta\mathcal{H}_{\delta;\varepsilon}(u_{\delta;\varepsilon};B)^{\frac{q-1}{q}}
\end{array}
\end{flalign}
up to constants depending on $(\data_{0},\nr{a}_{L^{\infty}(B)})$. By \eqref{pded}, $\eqref{5.2}$ and \eqref{5.1.0} we then have
\begin{flalign}\label{5.6}
\left\{
\begin{array}{c}
\displaystyle
\mathcal{H}_{\delta;\varepsilon}(u_{\delta;\varepsilon};B)\le\mathcal{H}_{\delta;\varepsilon}(\ti{u}_{\varepsilon};B)
\le\mathcal{H}(\ti{u}_{\varepsilon};B)+\texttt{o}_{\varepsilon}(\delta)+\texttt{o}(\varepsilon)\le\mathcal{H}(u;B)+\texttt{o}_{\varepsilon}(\delta)+\texttt{o}(\varepsilon),\\[8pt]\displaystyle
\mathcal{H}_{\varepsilon}(u_{\delta;\varepsilon};B)\le\mathcal{H}(u;B)+\texttt{o}_{\varepsilon}(\delta)+\texttt{o}(\varepsilon)+\texttt{o}(\delta),
\end{array}
\right.
\end{flalign}
thus for fixed $\varepsilon>0$ we can find a subsequence $\{u_{\delta;\varepsilon}\}_{\delta>0}\subset (\ti{u}_{\varepsilon}+W^{1,q}_{0}(B))\cap \tx{K}^{\psi}(B)$ such that
\eqn{5.7}
$$u_{\delta;\varepsilon}\rightharpoonup_{\delta\to 0}u_{\varepsilon}\in (\ti{u}_{\varepsilon}+W^{1,q}_{0}(B))\cap \tx{K}^{\psi}(B) \ \  \mbox{weakly in} \ \  W^{1,q}(B).$$
In \eqref{5.6}$_{2}$ we first send $\delta\to 0$, use \eqref{5.7} and $W^{1,q}$-weak lower semicontinuity, to deduce
\eqn{5.8}
$$
\mathcal{H}_{\varepsilon}(u_{\varepsilon};B)\le \mathcal{H}(u;B)+\texttt{o}(\varepsilon).
$$
Estimate \eqref{5.8}, $\eqref{ass.1}_{1}$ and de la Vall\'ee Poussin theorem yield that 
\eqn{5.9}
$$
\mbox{there exists} \ \ \bar{u}\in (u+W^{1,1}_{0}(B))\cap \tx{K}^{\psi}(B) \ \ \mbox{such that} \ \ u_{\varepsilon}\rightharpoonup \bar{u} \ \ \mbox{weakly in} \ \ W^{1,1}(B).
$$
As $\varepsilon\to 0$ in \eqref{5.8}, by weak lower semicontinuity and minimality we derive
$$
\mathcal{H}(\bar{u};B)\le \liminf_{\varepsilon\to 0}\mathcal{H}_{\varepsilon}(u_{\varepsilon};B)\le \limsup_{\varepsilon\to 0}\mathcal{H}_{\varepsilon}(u_{\varepsilon};B)\le \mathcal{H}(u;B)\le \mathcal{H}(\bar{u};B),
$$
which by standard strict convexity arguments (recall that $\tx{K}^{\psi}(B)$ is a convex subset of $W^{1,1}(B)$) implies that $u\equiv \bar{u}$ on $B$.
\subsection{Proof of Theorem \ref{mt2}} By construction, functional $\mathcal{H}_{\delta;\varepsilon}$ satisfies \eqref{assfr}-\eqref{assfr.2}, and obstacle $\psi$ is as in \eqref{p2}, so Propositions \ref{lipb} and \ref{hphp} apply. Specifically, letting $\ti{B}\Subset B$ be an open subset, 
\eqn{hhh.2}
$$\tx{H}_{0}:=4+4\max\{\nr{u}_{L^{\infty}(2B)},\nr{\psi}_{L^{\infty}(2B)}\}+\mathcal{H}(u;B),$$
cf. \eqref{hhh}, \eqref{5.1.0}$_{3}$ and $\eqref{5.6}_{2}$ - estimates 
\eqn{51.0}
$$
\left\{
\begin{array}{c}
\displaystyle
\nr{\tx{E}_{\delta;\varepsilon}(\cdot,\snr{Du_{\delta;\varepsilon}})}_{L^{\infty}(\ti{B})}\le c\mathcal{H}_{\delta;\varepsilon}(\ti{u}_{\varepsilon};B)^{\tx{b}}+c,\qquad \quad [Du_{\delta;\varepsilon}]_{0,\beta_{*};\ti{B}}\le \tx{c}_{B}\\[8pt]\displaystyle
c\equiv c(\data(B),\tx{d}(\ti{B},B),\nr{\ti{u}_{\varepsilon}}_{L^{\infty}(B)})\\[8pt]\displaystyle
\beta_{*},\tx{c}_{B}\equiv \beta_{*},\tx{c}_{B}(\data(B),\tx{d}(\ti{B},B),\nr{\ti{u}_{\varepsilon}}_{L^{\infty}(B)},\mathcal{H}_{\delta;\varepsilon}(\ti{u}_{\varepsilon};B)),
\end{array}
\right.
$$
hold true. Looking at the monotonicity of $c$ and $\beta_{*},\tx{c}_{B}$ in $\nr{\ti{u}_{\varepsilon}}_{L^{\infty}(B)}$ and $(\nr{\ti{u}_{\varepsilon}}_{L^{\infty}(B)},\mathcal{H}_{\delta;\varepsilon}(\ti{u}_{\varepsilon};B))$ respectively, cf. \eqref{hhh.1}, and keeping in mind \eqref{hhh.2}, by $\eqref{5.1.0}_{3}$ and $\eqref{5.6}$ we can update the dependencies of $c,\tx{c}_{B},\beta_{*}$ as $c\equiv c(\data(2B),\tx{d}(\ti{B},B))$ and $\beta_{*},\tx{c}_{B}\equiv \beta_{*},\tx{c}_{B}(\data(2B),\tx{d}(\ti{B},B),\mathcal{H}(u;B))$ by slightly increasing $c$ and $\tx{c}_{B}$, and slightly decreasing $\beta_{*}$ (without relabelling). This together with \eqref{5.6} turns \eqref{51.0} into
\eqn{51.2}
$$
\nr{\tx{E}_{\delta;\varepsilon}(\cdot,\snr{Du_{\delta;\varepsilon}})}_{L^{\infty}(\ti{B})}\le c\mathcal{H}(u;B)^{\tx{b}}+c\qquad \mbox{and}\qquad [Du_{\delta;\varepsilon}]_{0,\beta_{*};\ti{B}}\le \tx{c}_{B},
$$
with the stated dependencies of bounding constants and H\"older exponent. We can then (locally) update the convergence in \eqref{5.7} as $u_{\delta;\varepsilon}\rightharpoonup^{*}u_{\varepsilon}$ weak$^{*}$ in $W^{1,\infty}(\ti{B})$, observe that $$\snr{z}^{2-\mu}+a(x)\ell_{s}(z)^{q}\lesssim_{\mu,q,\nr{a}_{L^{\infty}(B)}}\tx{E}_{\delta;\varepsilon}(x,\snr{z})+1,$$ and send $\delta\to 0$ in \eqref{51.2} to get
\begin{flalign*}
\nr{Du_{\varepsilon}}_{L^{\infty}(\ti{B})}^{2-\mu}+\nr{a(\cdot)\ell_{s}(Du_{\varepsilon})^{q}}_{L^{\infty}(\ti{B})}\le c\mathcal{H}(u;B)^{\texttt{b}}+c.
\end{flalign*}
Thanks to \eqref{5.9} and weak$^{*}$-convergence we obtain \eqref{fin.1}. 
The local H\"older continuity of $Du$ then follows via Arzela-Ascoli theorem after passing to the limit in \eqref{51.2}$_{2}$ on $\ti{B}$ and $Du\in C^{0,\beta}(\ti{B},\mathbb{R}^{n})$ for all $\beta\in (0,\beta_{*})$. A standard covering argument then completes the proof.
\subsection{Proof of Theorem \ref{mt1}} Theorem \ref{mt1} is a direct consequence of Theorem \ref{mt2}: in fact, the (formal) choice $\psi\equiv -\infty$ allows restoring the unconstrained setting (see comments immediately above Remark \ref{r21}). Moreover, the logarithmic integrand $\tx{L}(z):=\snr{z}\log(1+\snr{z})$ is covered by \eqref{ass.1}, as well as the singular case $s=0$ is admissible for the $q$-power component of \eqref{ll1}. The assumptions of Theorem \ref{mt2} are then satisfied and the gradient local H\"older continuity of a priori bounded minimizers follows, together with Lipschitz estimate \eqref{lip.0}.
\subsection{Proof of Corollary \ref{cor1}}\label{pc1} Once known the gradient local H\"older continuity from Theorem \ref{mt2}, the improvement in the nonsingular case $s>0$ can be derived by adapting the arguments in \cite[Section 5.11]{dm}. By assumptions, in \eqref{ll1} integrand $\tx{L}$ is locally $C^{2}$-regular and $s>0$, so we can directly work on the original minimizer, that thanks to Theorem \ref{mt2} is already $\beta$-H\"older continuous for some $\beta\in (0,1)$. Moreover, from Theorem \ref{mt2} we know that once fixed open sets $\ti{B}\Subset B\subset 2B\Subset \Omega$, bound 
\eqn{fin.11}
$$
\left\{
\begin{array}{c}
\displaystyle
\nr{Du}_{L^{\infty}(\ti{B})}+[Du]_{0,\beta;\ti{B}}\le \tx{c}_{0}\\[8pt]\displaystyle
\beta,\tx{c}_{0}\equiv \beta,\tx{c}_{0}(\data(2B),\tx{d}(\ti{B},B),\mathcal{H}(u;B))
\end{array}
\right.
$$
holds. Of course, there is no loss of generality in assuming that $0<\beta< \alpha/2$ if $\mu>1$ or $0<\beta<\alpha$ for $\mu=1$, otherwise there would be nothing to prove. We then define
$$
\tx{f}_{B}:=\nr{\diver (\partial \tx{L}(D\psi))}_{L^{\infty}(B)}+4nq\left(\nr{a}_{L^{\infty}(B)}+1\right)\nr{\snr{D\psi}^{q-2}D^{2}\psi}_{L^{\infty}(B)},
$$
set threshold $\rrr_{*}\in (0,1]$ so small that
\eqn{rs}
$$
\tx{c}_{\tx{p}}\Lambda\rrr_{*}\tx{f}_{B}\le \frac{1}{2} \ \Longrightarrow \ \rrr_{*}\equiv \rrr_{*}(\data(B)),
$$
pick any ball $B_{\tau}(\equiv B_{\tau}(x_{0}))\subset B_{2\tau}\Subset \ti{B}$ with radius $\tau\in  (0,\rrr_{*}]$, introduce functions
$$
\tx{H}_{i;\tau}(z;B_{2\tau}):=\tx{L}(z)+\left(\tau^{\alpha}+\inf_{x\in B_{2\tau}}a(x)\right)\ell_{s}(z)^{q},\qquad \quad \tx{f}_{\tau}:=-\diver(\partial\tx{H}_{i;\tau}(D\psi;B_{2\tau}))\in L^{\infty}(B),
$$
and let $v_{1}\in u+W^{1,q}_{0}(B_{2\tau})$ be the solution of Dirichlet problem 
$$
u+W^{1,q}_{0}(B_{2\tau})\ni w\mapsto \min \int_{B_{2\tau}}\tx{H}_{i;\tau}(Dw;B_{2\tau})-\tx{f}_{\tau}w\dx,
$$
 and $v_{2}\in v_{1}+W^{1,q}_{0}(B_{\tau})$ be the solution to Dirichlet problem 
 $$
v_{1}+W^{1,q}_{0}(B_{\tau})\ni w\mapsto \min \int_{B_{\tau}}\tx{H}_{i;\tau}(Dw;B_{2\tau})\dx.
$$
Estimates \eqref{gh.2}-\eqref{gh.4} can then be rearranged as
 \eqn{5.3.0}
$$
\left\{
\begin{array}{c}
\displaystyle
\ \nr{Dv_{2}}_{L^{\infty}(B_{3\tau/4})}+\nr{Dv_{1}}_{L^{\infty}(B_{3\tau/2})}\le c\\[8pt]\displaystyle
\ \nr{Du-Dv_{2}}_{L^{2}(B_{3\tau/4})}^{2}\le c\tau^{n+\alpha},
\end{array}
\right.
$$
for $c\equiv c(\data(2B),\tx{d}(\ti{B},B),\mathcal{H}(u;B))$ - keep in mind \eqref{fin.11}. We next observe that in $B_{3\tau/4}$, matrix $\tx{A}_{i;\tau}(x):=\partial^{2}\tx{H}_{i;\tau}(Dv_{2}(x);B_{2\tau})$ is uniformly elliptic with
\eqn{5.3.1}
$$
\frac{\snr{\xi}^{2}}{c(1+\tx{c}_{0}^{2})^{\mu/2}}\stackrel{\eqref{ass.1}_{2}}{\le} \langle\tx{A}_{i;\tau}(x)\xi,\xi\rangle\qquad \mbox{and}\qquad \snr{\tx{A}_{i;\tau}}\stackrel{\eqref{ass.1}_{2}}{\le}c(\tx{g}(\tx{c}_{0}) +1)+c\left(\nr{a}_{L^{\infty}(B)}+1\right)s^{q-2},
$$
for $c\equiv c(n,\Lambda,q)$. The availability of \eqref{5.3.0}-\eqref{5.3.1} allows us proceeding exactly as in \cite[Section 5.11]{dm}, and eventually conclude, after a standard covering argument, that $Du\in C^{0,\alpha/2}_{\loc}(\Omega,\mathbb{R}^{n})$. 
\subsubsection*{H\"older improvement in the plain logarithmic case} Within the same setting as Section \ref{pc1}, assume $\mu=1$, and jump back to the above comparison scheme. With $v_{1}$, $v_{2}$ as before, we rearrange \eqref{gh.h} (see also \eqref{com.666}) via $\eqref{Vm}_{2}$ and \eqref{fin.11} as 
\begin{eqnarray}\label{rb2}
\nra{V_{1,1}(Du)-V_{1,1}(Dv_{1})}_{L^{2}(B_{2\tau})}&\le&c\left(\left([a]_{0,\alpha;B}+1\right)^{\frac{1}{2}}\tau^{\frac{\alpha}{2}}+\nr{\tx{f}_{\tau}}_{L^{\infty}(B)}^{\frac{1}{2}}\tau^{\frac{1}{2}}\right)\nra{Du-Dv_{1}}_{L^{1}(B_{2\tau})}^{\frac{1}{2}}\nonumber \\
&\le&c\tau^{\frac{\alpha}{2}}\nra{V_{1,1}(Du)-V_{1,1}(Dv_{1})}_{L^{2}(B_{2\tau})}\nonumber \\
&&+c\tau^{\frac{\alpha}{2}}\nra{(V_{1,1}(Du)-V_{1,1}(Dv_{1}))\ell_{1}(Du)^{1/2}}_{L^{1}(B_{2\tau})}^{\frac{1}{2}}\nonumber \\
&\le&\left(c\rrr_{*}^{\frac{\alpha}{2}}+\frac{1}{4}\right)\nra{V_{1,1}(Du)-V_{1,1}(Dv_{1})}_{L^{2}(B_{2\tau})}+c\tau^{\alpha}\nra{\ell_{1}(Du)}_{L^{1}(B_{2\tau})}^{\frac{1}{2}}\nonumber \\
&\le&\left(c\rrr_{*}^{\frac{\alpha}{2}}+\frac{1}{4}\right)\nra{V_{1,1}(Du)-V_{1,1}(Dv_{1})}_{L^{2}(B_{2\tau})}+c\tau^{\alpha},
\end{eqnarray}
for $c\equiv c(\data(2B),\tx{d}(\ti{B},B),\mathcal{H}(u;B))$, while concerning estimate \eqref{gh.h1}, by $\eqref{Vm}_{2}$ and $\eqref{5.3.0}_{1}$ we have
\begin{eqnarray}\label{rb1}
\nra{V_{1,1}(Dv_{1})-V_{1,1}(Dv_{2})}_{L^{2}(B_{\tau})}&\le&c\tau^{\frac{1}{2}}\nr{\tx{f}_{\tau}}_{L^{\infty}(B)}^{\frac{1}{2}}\nra{Dv_{1}-Dv_{2}}_{L^{1}(B_{\tau})}^{\frac{1}{2}}\nonumber \\
&\le&\left(c\rrr_{*}^{\frac{1}{2}}+\frac{1}{4}\right)\nra{V_{1,1}(Dv_{1})-V_{1,1}(Dv_{2})}_{L^{2}(B_{\tau})}+c\tau\nra{\ell_{1}(Dv_{1})}_{L^{1}(B_{\tau})}^{\frac{1}{2}}\nonumber \\
&\le&\left(c\rrr_{*}^{\frac{1}{2}}+\frac{1}{4}\right)\nra{V_{1,1}(Dv_{1})-V_{1,1}(Dv_{2})}_{L^{2}(B_{\tau})}+c\tau,
\end{eqnarray}
with $c\equiv c(\data(2B),\tx{d}(\ti{B},B),\mathcal{H}(u;B))$. We then restrict further threshold $\rrr_{*}$ with respect to \eqref{rs}:
$$
\max\left\{\tx{c}_{\tx{p}}\Lambda\tx{r}_{*}\tx{f}_{B},c\rrr_{*}^{\frac{\alpha}{2}},c\rrr_{*}^{\frac{1}{2}}\right\}\le \frac{1}{4} \ \Longrightarrow \ \rrr_{*}\equiv \rrr_{*}(\data(2B),\tx{d}(\ti{B},B),\mathcal{H}(u;B)),
$$
so that we can reabsorbe terms in \eqref{rb2}-\eqref{rb1} and derive
\begin{eqnarray}\label{5.3.2}
\nra{Du-Dv_{2}}_{L^{2}(B_{3\tau/4})}&\stackrel{\eqref{Vm}_{1}}{\le}&c\nra{V_{1,1}(Du)-V_{1,1}(Dv_{2})}_{L^{2}(B_{\tau})}\nonumber \\
&\le&c\nra{V_{1,1}(Du)-V_{1,1}(Dv_{1})}_{L^{2}(B_{2\tau})}\nonumber \\
&&+c\nra{V_{1,1}(Dv_{1})-V_{1,1}(Dv_{2})}_{L^{2}(B_{\tau})}\le c\tau^{\alpha},
\end{eqnarray}
for $c\equiv c(\data(2B),\tx{d}(\ti{B},B),\mathcal{H}(u;B))$. The bound in \eqref{5.3.2} now updates $\eqref{5.3.0}_{2}$, and, together with \eqref{5.3.1} allows repeating verbatim the arguments in \cite[Section 10.1]{piovra}, and conclude after covering that $Du\in C^{0,\alpha}_{\loc}(\Omega,\mathbb{R}^{n})$ if $\alpha\in (0,1)$ and $Du\in C^{0,\gamma}_{\loc}(\Omega,\mathbb{R}^{n})$ for all $\gamma\in (0,1)$ if $\alpha=1$. The proof is complete.

\section{Sharpness and Theorems \ref{count}-\ref{count.3}, after Balci \& Diening \& Surnachev}\label{sec6}
\noindent In this section we revisit the fractal constructions developed in \cite{balci,balci2}. Our ultimate goal is the proof of Theorems \ref{count}-\ref{count.3}, establishing the sharpness of Theorems \ref{mt1}-\ref{mt2} and of Lemma \ref{alav}. Here, we shall denote $Q:=(-1,1)^{n}$, $2Q:=(-2,2)^{n}$, $Q':=(-98/100,98/100)^{n}$, $Q'':=(-97/100,97/100)^{n}$, often use the product notation $x\equiv (\bar{x},x_{n})\in \mathbb{R}^{n-1}\times \mathbb{R}$, and permanently work under assumptions \eqref{66.0} and \eqref{ass.0}. Before proving Theorems \ref{count}-\ref{count.3}, we need to outline some common background.
\subsubsection*{Generalized Cantor sets} A key ingredient in these arguments is the possibility of building multidimensional fractals of Cantor type with prescribed Hausdorff dimension ranging in $(0,n-1)$. To this aim, fix any $0<\varepsilon<\min\{q-1-\alpha,n-1\}$, which is possible via \eqref{66.0}. Following \cite[Section 2.2]{balci}, \cite[Section 2.3]{balci2}, set $0<\lambda:=(1/2)^{(n-1)/(n-1-\varepsilon)}<1/2$, and for $i\in \mathbb{N}$ define sequence $\{\textit{l}_{i}\}_{i\in \mathbb{N}}:=\{\lambda^{i}\}_{i\in \mathbb{N}}$. It is readily verified that $\{\textit{l}_{i}\}_{i\in \mathbb{N}}$ is decreasing with $\textit{l}_{i}>2\textit{l}_{i+1}$ and $\textit{l}_{i-1}-2\textit{l}_{i}>\textit{l}_{i}-2\textit{l}_{i+1}$ for all $i\in \mathbb{N}$. We can then apply the standard construction of multidimensional Cantor sets to design set $\mathcal{C}^{n-1}_{\varepsilon}$ that is the Cartesian product of $n-1$ generalised Cantor sets defined upon sequence $\{\textit{l}_{i}\}_{i\in \mathbb{N}}$ starting from interval $[-1/2,1/2]$, and the corresponding Cantor measure $\gamma^{n-1}_{\varepsilon}$. By construction and \cite[Definition 5]{balci2}, it is $\gamma^{n-1}_{\varepsilon}(\mathcal{C}_{\varepsilon}^{n-1})=1$, $\supp(\gamma_{\varepsilon}^{n-1})=\mathcal{C}_{\varepsilon}^{n-1}$, and $\tx{d}:=\textnormal{dim}_{\mathcal{H}}(\mathcal{C}^{n-1}_{\varepsilon})=n-1-\varepsilon$. Finally, for simplicity set $\mathcal{C}:=\mathcal{C}_{\varepsilon}^{n-1}\times \{0\}$ and $\gamma:=\gamma_{\varepsilon}^{n-1}\times \delta_{0}$. 
\subsubsection*{Technical setup} Recall from \cite[Lemma 5]{balci} that there exist maps $\chi_{*},\chi_{a}\in C^{\infty}(\mathbb{R}^{n}\setminus \mathcal{C})$ satisfying
\eqn{6.0}
$$
\begin{array}{c}
\displaystyle
\mathds{1}_{\{\dist(\bar{x},\mathcal{C}^{n-1}_{\varepsilon})\le 2\snr{x_{n}}\}}\le \chi_{*}\le \mathds{1}_{\{\dist(\bar{x},\mathcal{C}^{n-1}_{\varepsilon})\le 4\snr{x_{n}}\}},\qquad \quad \snr{D\chi_{*}}\lesssim_{n}\snr{x_{n}}^{-1}\mathds{1}_{\{2\snr{x_{n}}\le \dist(\bar{x},\mathcal{C}^{n-1}_{\varepsilon})\le 4\snr{x_{n}}\}},\\[8pt]\displaystyle
\mathds{1}_{\{\dist(\bar{x},\mathcal{C}^{n-1}_{\varepsilon})\le \snr{x_{n}}/2\}}\le \chi_{a}\le \mathds{1}_{\{\dist(\bar{x},\mathcal{C}^{n-1}_{\varepsilon})\le 2\snr{x_{n}}\}},\qquad \quad \snr{D\chi_{a}}\lesssim_{n}\snr{x_{n}}^{-1}\mathds{1}_{\{\snr{x_{n}}/2\le \dist(\bar{x},\mathcal{C}^{n-1}_{\varepsilon})\le 2\snr{x_{n}}\}}.
\end{array}
$$
Moreover pick $\theta\in C^{\infty}(0,\infty)$ with $\mathds{1}_{(1/2,\infty)}\le \theta\le \mathds{1}_{(1/4,\infty)}$ and $\snr{\theta'}\le 6$ and introduce 
\begin{flalign*}
\begin{array}{c}
\displaystyle
\tx{Z}_{n}(x):=\frac{\snr{\bar{x}}^{1-n}}{\sigma_{n-1}}\theta\left(\frac{\snr{\bar{x}}}{\snr{x_{n}}}\right)\begin{bmatrix} 0 & -\bar{x}^{t} \\ \bar{x} & 0 \end{bmatrix}\\[12pt]\displaystyle
\tx{Z}:=\left(\gamma^{n-1}_{\varepsilon}\times \delta_{0}\right)*\tx{Z}_{n},\qquad \quad \tx{z}:=\diver(\tx{Z}),
\end{array}
\end{flalign*}
where $\delta_{0}$ is the delta measure centered in zero, $\sigma_{n-1}$ is the surface area of the $(n-1)$-dimensional sphere, and symbol "$t$" denotes transposition. By \cite[Propositions 2 and 14]{balci} and \cite[Chapter 10.3]{lf} it is $\tx{Z}_{n}\in W^{1,1}_{\loc}(\mathbb{R}^{n},\otimes_{2}\mathbb{R}^{n})\cap C^{\infty}(\mathbb{R}^{n}\setminus \{0\},\otimes_{2}\mathbb{R}^{n})$, $\tx{Z}\in W^{1,1}(2Q,\otimes_{2}\mathbb{R}^{n})\cap C^{\infty}(\overline{2Q}\setminus \mathcal{C},\otimes_{2}\mathbb{R}^{n})$, and $\tx{z}\in L^{1}(2Q,\mathbb{R}^{n})\cap C^{\infty}(\overline{2Q}\setminus \mathcal{C},\mathbb{R}^{n})$. With $\chi_{*},\chi_{a}$ as in \eqref{6.0} above and $\phi\in C^{\infty}_{c}(Q)$ being such that $\mathds{1}_{(-3/4,3/4)^{n}}\le \phi\le \mathds{1}_{(-5/6,5/6)^{n}}$ and $\snr{D\phi}\lesssim_{n}1$, we let
\begin{flalign}\label{6.3}
\begin{array}{c}
\displaystyle
u_{*}(x):=\frac{1}{2}\textnormal{sgn}(x_{n})\chi_{*}(x),\qquad \ti{u}(x):=(1-\phi(x))u_{*}(x),\qquad a(x):=\snr{x_{n}}^{\alpha}\chi_{a}(x)\\ [12pt]\displaystyle 
\tx{b}(x):=\mathds{1}_{\{\dist(\bar{x},\mathcal{C}^{n-1}_{\varepsilon})\le \snr{x_{n}}/2\}}\snr{x_{n}}^{-\varepsilon},
\end{array}
\end{flalign}
cf. \cite[Definition 10 and Section 5]{balci2} and \cite[Definitions 1 and 9 (b)]{balci}. Notice that $u_{*}\in L^{\infty}(\overline{2Q})\cap W^{1,1}(2Q)\cap C^{\infty}(\overline{2Q}\setminus \mathcal{C})$, $\ti{u}\in C^{\infty}(\overline{2Q})$,\footnote{Keep in mind the properties of $u_{*}$ and $\phi$, and that $\mathcal{C}\Subset (-4/6,4/6)^{n}$.} $0\le a(\cdot)\in C^{\alpha}(\overline{2Q})$, cf. \cite[Chapter 10.3]{lf} and \cite[Lemma 15]{bal23}, and thanks to \cite[Lemma 6 and Proposition 15]{balci} and $\eqref{6.0}$ it is 
\eqn{6.1}
$$
\left\{
\begin{array}{c}
\displaystyle
\snr{Du_{*}}\lesssim_{n,\varepsilon}\snr{x_{n}}^{-1}\mathds{1}_{\{2\snr{x_{n}}\le \dist(\bar{x},\mathcal{C}^{n-1}_{\varepsilon})\le 4\snr{x_{n}}\}},\qquad Du_{*}\in L^{p}(2Q,\mathbb{R}^{n})\quad \mbox{for all} \ \ p\in [1,1+\varepsilon)\\[8pt]\displaystyle
\tx{b}\in L^{d}(2Q)\quad \mbox{for all} \ \ d\in [1,(1+\varepsilon)/\varepsilon),\qquad \snr{\tx{z}}\lesssim_{n,\varepsilon}\tx{b}\\[8pt]\displaystyle
 \{x\in 2Q\colon \snr{\tx{z}}>0\}\subset\{x\in 2Q\colon \tx{b}\not =0\}\subset \{x\in 2Q\colon a(x)=\snr{x_{n}}^{\alpha}\}\\[8pt]\displaystyle
 \{x\in 2Q\colon \snr{Du_{*}}\not =0\}\subset \{x\in 2Q\colon a(x)=0\}.
\end{array}
\right.
$$
Let $\mathcal{H}$, $\mathcal{G}$ be the integrals governed by integrands $\tx{H}$ in \eqref{hl.00}$_{2}$, $\tx{G}$ in $\eqref{ggg}$ respectively, with coefficient $a$ defined in \eqref{6.3} and exponents $(q,\alpha)$ satisfying \eqref{66.0}, and name $\tx{H}^{*}$, $\tx{G}^{*}$ their convex conjugates. Thanks to $\eqref{6.1}_{1,4}$ it is $\mathcal{H}(u_{*};2Q),\mathcal{G}(u_{*};2Q)<\infty$. Moreover, from \eqref{ass.1}$_{1}$ and $\eqref{6.1}_{3}$, whenever $\snr{\tx{z}},\tx{b}\not =0$ it is $a(x)=\snr{x_{n}}^{\alpha}$, thus 
\eqn{hg}
$$
\tx{H}(x,\tx{z})\ge \snr{x_{n}}^{\alpha}\snr{\tx{z}}^{q}-\Lambda\qquad \mbox{and}\qquad \tx{G}(x,\tx{b})\ge \snr{x_{n}}^{\alpha}\tx{b}^{q}. 
$$
We then bound using $\eqref{6.1}_{2,3}$ and \eqref{hg},
\begin{flalign}\label{6.4.4}
\tx{H}^{*}(x,\tx{z})=\sup_{z\in \mathbb{R}^{n}}\left\{\langle\tx{z},z\rangle-\tx{H}(x,z)\right\}\le\sup_{z\in \mathbb{R}^{n}}\left\{\langle\tx{z},z\rangle-\snr{x_{n}}^{\alpha}\snr{z}^{q}\right\}+\Lambda\lesssim_{n,q,\varepsilon}\snr{x_{n}}^{-\frac{\alpha}{q-1}}\tx{b}^{q'}+\Lambda,
\end{flalign}
and in a totally similar fashion, we also get
\eqn{6.4.2}
$$
\tx{G}^{*}(x,\tx{b})\lesssim_{n,q,\varepsilon}\snr{x_{n}}^{-\frac{\alpha}{q-1}}\tx{b}^{q'}.
$$
Via \eqref{66.0}, $\eqref{6.1}_{2}$, \eqref{6.4.4} and \eqref{6.4.2} we deduce that integrals
$$
\mathcal{H}^{*}(\tx{z};2Q):=\int_{2Q}\tx{H}^{*}(x,\tx{z})\dx,\qquad \quad  \mathcal{G}^{*}(\tx{b};2Q):=\int_{2Q}\tx{G}^{*}(x,\tx{b})\dx
$$
are finite. More precisely, we can apply basic properties of convex conjugation and \eqref{6.4.4}-\eqref{6.4.2} to estimate
\begin{eqnarray}\label{6.4}
 \mathcal{H}^{*}(\tx{z};Q)+\mathcal{G}^{*}(\tx{b};Q)&\le& c\mathcal{H}^{*}(\tx{z};2Q)+c\mathcal{G}^{*}(\tx{b};2Q)+c\le c\int_{2Q}\snr{x_{n}}^{-\frac{\alpha}{q-1}}\tx{b}^{q'}\dx+c\nonumber \\
&=&c\int_{2Q}\mathds{1}_{\{\dist(\bar{x},\mathcal{C}^{n-1}_{\varepsilon})\le \snr{x_{n}}/2\}}\snr{x_{n}}^{-\frac{\alpha+\varepsilon q}{q-1}}\dx+c\nonumber \\
&\le&c\int_{0}^{2}t^{-\frac{\alpha+\varepsilon q}{q-1}}\mathcal{H}^{n-1}\left(\dist(\cdot,\mathcal{C}^{n-1}_{\varepsilon})\le t/2\right)\dt+c\nonumber\\
&\le&c\int_{0}^{2}t^{-\frac{\alpha+\varepsilon q}{q-1}+\varepsilon}\dt+c\le \tx{c}_{*}(n,\Lambda,\tx{g},q,\alpha,\varepsilon)<\infty,
\end{eqnarray}
by \cite[Lemmas 6 (b)]{balci}, that applies thanks to \eqref{66.0}. Let us finally record the elementary estimate valid whenever $w_{1},w_{2}\in W^{1,1}(Q)$ satisfy $\mathcal{H}(w_{1};Q)\le \mathcal{H}(w_{2};Q)<\infty$. By $\eqref{ass.1}_{1}$ we find
\begin{eqnarray}\label{elel}
 \mathcal{G}(w_{1};Q)&\le& \Lambda\mathcal{H}(w_{1};Q)+\Lambda^{2}\snr{Q}\le \Lambda\mathcal{H}(w_{2};Q)+\Lambda^{2}\snr{Q}\nonumber \\
 &\le& 2^{q+\alpha}\Lambda^{2}\mathcal{G}(w_{2};Q)+2^{q+\alpha+2}\Lambda^{2}\snr{Q}\le\ti{\tx{c}}\left(\mathcal{G}(w_{2};Q)+\snr{Q}\right),
\end{eqnarray}
with $\ti{\tx{c}}:=2^{100(nq+\alpha)}\Lambda^{2}n^{2q}(1+c_{n;\varepsilon})$, where $c_{n;\varepsilon}\equiv c_{n;\varepsilon}(n,\varepsilon)>0$ is the product of the two constants from \cite[Lemma 20 and estimate (23)]{balci2},\footnote{At this stage we do not need $\ti{\tx{c}}$ so large, in particular, for \eqref{elel} constant $c_{n;\varepsilon}$ is unnecessary as \cite[Lemma 20 and estimate (23)]{balci2} are not applied. Nonetheless we keep such a definition of $\ti{\tx{c}}$ to account for multiple quantities appearing in some of the forthcoming estimates.} so that $\ti{\tx{c}}\equiv \ti{\tx{c}}(n,\Lambda,q,\alpha,\varepsilon)\ge 1$.
\subsubsection*{Energy estimates} This is the core (abstract) part of the proof. Observe first that by $\eqref{hl.00}_{2}$, \eqref{ggg}, $\eqref{6.1}_{1,4}$, $\eqref{ass.1}_{1}$, \eqref{ass.2} and \cite[Lemma 6, (b)]{balci} (recall that here $\tx{g}$ either satisfies \eqref{ass.2} or $\tx{g}= 1$), whenever $m,\sigma\ge 1$ are constants we have
\begin{eqnarray}\label{6.4.1}
 \mathcal{H}(mu_{*};Q)+\mathcal{G}(mu_{*};Q)&\le& \mathcal{H}(mu_{*};2Q)+\mathcal{G}(mu_{*};2Q)+c\snr{2Q}\nonumber \\
 &\le&c\left(\mathcal{G}(mu_{*};2Q)+\snr{2Q}\right)\le cm^{1+\delta_{*}}\int_{2Q}\snr{Du_{*}}^{1+\delta_{*}}\dx+c\snr{2Q}\nonumber \\
&\le&cm^{1+\delta_{*}}\int_{2Q}\snr{Du_{*}}^{1+\frac{\varepsilon}{2}}\dx+cm^{1+\delta_{*}}\snr{2Q}=:m^{1+\delta_{*}}\tx{c}^{*}(n,\Lambda,\tx{g},\varepsilon,\delta_{*})<\infty,
\end{eqnarray}
where $0<\delta_{*}\le\min\{\varepsilon,\alpha\}/2$ if $\tx{g}$ is as in \eqref{ass.2} or $\delta_{*}= 0$ if $\tx{g}= 1$, and, by \eqref{6.4.4}-\eqref{6.4},
\begin{eqnarray}\label{6.4.3}
   \max\left\{\mathcal{H}^{*}(\sigma\tx{z};Q)+\mathcal{G}^{*}(\sigma\tx{b};Q),\mathcal{H}^{*}(\sigma\tx{z};2Q)+\mathcal{G}^{*}(\sigma\tx{b};2Q)\right\}&\le& c\sigma^{q'}\int_{2Q}\snr{x_{n}}^{-\frac{\alpha}{q-1}}\tx{b}^{q'}\dx+c\nonumber \\
   &\le& \sigma^{q'}\tx{c}_{*}(n,\Lambda,\tx{g},q,\alpha,\varepsilon)<\infty.
\end{eqnarray}
Next, with $m_{*}\ge 1$ being a number, we set 
\eqn{u0u0}
$$
u_{0}:=m_{*}u_{*}\in L^{\infty}(\overline{2Q})\cap C^{\infty}(\overline{2Q}\setminus \mathcal{C})\qquad \mbox{and}\qquad \ti{u}_{0}:=m_{*}\ti{u}\in C^{\infty}(\overline{2Q}).
$$
Now, given any $\kappa>0$, we can always find $\sigma_{*}\ge 1$, and eventually fix the value of $m_{*}\ge 1$ such that $\mathcal{G}(u_{0};Q)+\mathcal{G}^{*}(\sigma_{*}\tx{b};Q)\le \mathcal{G}(u_{0};2Q)+\mathcal{G}^{*}(\sigma_{*}\tx{b};2Q)<\kappa m_{*} \sigma_{*}$, that is \cite[Assumption 14]{balci2}. In fact, thanks to \eqref{6.4.3} and \eqref{6.4.1} with $\delta_{*}=0$ if $\tx{g}=1$ or $\delta_{*}=\min\{\varepsilon,\alpha\}/2$ if $\tx{g}$ verifies \eqref{ass.2}, we have
\begin{eqnarray}\label{6.5}
 \mathcal{G}(u_{0};Q)+\mathcal{G}^{*}(\sigma_{*}\tx{b};Q)&\le&\mathcal{G}(u_{0};2Q)+\mathcal{G}^{*}(\sigma_{*}\tx{b};2Q)\nonumber \\
 &\le&\sigma_{*}^{q'}\tx{c}_{*}+m_{*}^{1+\delta_{*}}\tx{c}^{*}\le\sigma_{*}^{\frac{1+\alpha}{\alpha}}\left(\sigma_{*}^{q'-\frac{1+\alpha}{\alpha}}\tx{c}_{*}\right)+m_{*}^{1+\alpha}\left(m_{*}^{\delta_{*}-\alpha}\tx{c}^{*}\right).
\end{eqnarray}
Set $\sigma_{*}=m_{*}^{\alpha}$, observe that by \eqref{66.0} it is $q'<(1+\alpha)/\alpha$, by definition it is always $\delta_{*}<\alpha$, and eventually pick $m_{*}\equiv m_{*}(n,\Lambda,\tx{g},q,\alpha,\varepsilon,\kappa)$ so large that
\eqn{6.5.1}
$$
m_{*}^{\alpha q'-(1+\alpha)}\tx{c}_{*}<\frac{\kappa}{2^{16n}\ti{\tx{c}}^{2}}\qquad \mbox{and}\qquad 2^{n}m_{*}^{\delta_{*}-\alpha}(\tx{c}^{*}+1)<\frac{\kappa}{2^{16n}\ti{\tx{c}}^{2}},
$$
where $\ti{\tx{c}}\equiv \ti{\tx{c}}(n,\Lambda,q,\alpha,\varepsilon)\ge 1$ is the constant appearing in \eqref{elel}. We can then complete estimate \eqref{6.5} as
\eqn{6.8}
$$
\mathcal{G}(u_{0};Q)+\mathcal{G}^{*}(\sigma_{*}\tx{b};Q)\le \mathcal{G}(u_{0};2Q)+\mathcal{G}^{*}(\sigma_{*}\tx{b};2Q)<\frac{\kappa m_{*} \sigma_{*}}{8\ti{\tx{c}}^{2}},
$$
 and \cite[Assumption 14]{balci2} is verified. In the light of \eqref{6.4.1}-\eqref{6.4.3}, we can repeat literally the same computations made in \eqref{6.5} to prove that there exist $m_{*},\sigma_{*}\ge 1$ so that
\eqn{6.7}
$$
\max\left\{\mathcal{H}(u_{0};Q)+\mathcal{H}^{*}(\sigma_{*}\tx{z};Q),\mathcal{H}(u_{0};2Q)+\mathcal{H}^{*}(\sigma_{*}\tx{z};2Q)\right\}<\frac{m_{*}\sigma_{*}}{2},
$$
just use \eqref{6.4}, recall that $\ti{\tx{c}}\ge 1$, and choose $m_{*}$ and $\sigma_{*}$ corresponding to $\kappa=1/2$ above, therefore also \cite[Assumption 29]{balci} is satisfied. Now we are ready to prove Theorems \ref{count}-\ref{count.3}.

\subsection{Proof of Theorem \ref{count}}
The proof of Theorem \ref{count} is split into two steps, where we offer examples of functionals as in \eqref{ll1} satisfying \eqref{ass.0}-\eqref{ass.2} for which Lavrentiev phenomenon occurs or density of smooth functions with respect to modular convergence fails. In principle, this is a direct application of the arguments in \cite{balci}. However, in \cite{balci} the validity of $\Delta_{2}$ and $\nabla_{2}$ conditions is assumed, while our functionals \eqref{ll}-\eqref{ll1} verify only $\Delta_{2}$ condition. In this respect, we show that the failure of $\nabla_{2}$ condition is compensated by the double phase structure of the integrand, thus fractal counterexamples can be constructed nonetheless.
\subsubsection*{Step 1:~occurrence of Lavrentiev phenomenon} Notice that, by \eqref{6.3}, \eqref{u0u0} it is
\eqn{trace}
$$\ti{u}_{0}=u_{0} \ \ \mbox{in} \ \ \overline{2Q}\setminus (-5/6,5/6)^{n} \qquad \mbox{and}\qquad \nr{\ti{u}_{0}}_{L^{\infty}(2Q)}=\nr{u_{0}}_{L^{\infty}(2Q)}
$$ 
and set
\eqn{6.11}
$$
\tx{I}_{\infty}:=\inf_{w\in \ti{u}_{0}+C^{\infty}_{c}(Q)}\mathcal{H}(w;Q)\qquad \mbox{and}\qquad \tx{I}_{1}:=\inf_{w\in \ti{u}_{0}+W^{1,1}_{0}(Q)}\mathcal{H}(w;Q).
$$
Since $\ti{u}_{0}\in C^{\infty}(\bar{Q})$ (recall \eqref{u0u0} and the properties of $\phi$), we have $\tx{I}_{1}\le \tx{I}_{\infty}<\infty,$ and for any given $0<\tau<\sigma_{*}m_{*}/8$ we can find $w_{\tau}\in \ti{u}_{0}+C^{\infty}_{c}(Q)$ such that 
\begin{eqnarray}\label{6.9}
\tx{I}_{\infty}&>&\mathcal{H}(w_{\tau};Q)-\tau\nonumber \\
&\ge&\sigma_{*}\int_{Q}\langle Dw_{\tau},\tx{z}\rangle\dx-\mathcal{H}^{*}(\sigma_{*}\tx{z};Q)-\tau\nonumber \\
&\stackrel{\eqref{6.7}}{>}&\sigma_{*}\int_{Q}\langle D(w_{\tau}-\ti{u}_{0}),\tx{z}\rangle\dx+\sigma_{*}\int_{Q}\langle D\ti{u}_{0},\tx{z}\rangle\dx+\mathcal{H}(u_{0};Q)-\frac{\sigma_{*}m_{*}}{2}-\tau\nonumber \\
&=&\sigma_{*}m_{*}\int_{Q}\langle D\ti{u},\tx{z}\rangle\dx+\mathcal{H}(u_{0};Q)-\frac{\sigma_{*}m_{*}}{2}-\tau\nonumber \\
&>&\frac{3\sigma_{*}m_{*}}{8}+\mathcal{H}(u_{0};Q)\stackrel{\eqref{trace}}{\ge} \frac{3}{8}+\tx{I}_{1}>\tx{I}_{1},
\end{eqnarray}
that is \eqref{count.2}, where we used $w_{\tau}-\ti{u}_{0}\in C^{\infty}_{c}(Q)$ and \cite[Proposition 18]{balci} to have $\int_{Q}\langle D(w_{\tau}-\ti{u}_{0}),\tx{z}\rangle\dx=0$, and \cite[Proposition 25 (a)]{balci}\footnote{In \cite[Propositions 18 and 25]{balci} functions $b$ and $u^{\partial}$ are our maps $\tx{z}$ and $\ti{u}$ respectively.} to secure $\int_{Q}\langle D\ti{u},\tx{z}\rangle\dx=1$. This proves the sharpness of \cite[Theorem 2.3]{buli}.
\subsubsection*{Step 2:~failure of density of smooth maps} Notice that under \eqref{66.0}, the strict inequality in \eqref{count.2} applies to modular $\mathcal{G}$ in \eqref{ggg} with $\tx{G}$ defined in \eqref{ggg} with $\tx{g}$ as in \eqref{ass.2} and $\ti{u}_{0}$ defined in \eqref{u0u0}, namely, \eqref{6.9} holds with $\mathcal{G}$ replacing $\mathcal{H}$. By direct methods and maximum principle \cite[Theorem 2.1]{ls05}, $\tx{I}_{1}$ defined in \eqref{6.11} above (with $\mathcal{G}$ instead of $\mathcal{H}$) is attained by some function $v\in (\ti{u}_{0}+W^{1,1}_{0}(Q))\cap L^{\infty}(Q)$. Let us show first that $v$ cannot be approximated in energy by smooth maps. Assume by contradiction that there is a sequence $\{v_{i}\}_{i\in \mathbb{N}}\subset \ti{u}_{0}+C^{\infty}_{c}(Q)$ such that $\mathcal{G}(v_{i}-v;Q)\to 0$. Then, up to (non relabelled) subsequences, $\mathcal{G}(v_{i};Q)\to \mathcal{G}(v;Q)$ and
\eqn{6.10}
$$
\tx{I}_{\infty}\le \mathcal{G}(v_{i};Q)\to \mathcal{G}(v;Q)=\tx{I}_{1} \ \Longrightarrow \ \tx{I}_{\infty}\le \tx{I}_{1},
$$
in contradiction with \eqref{6.9} - of course here we are taking $\tx{I}_{\infty}$ as in \eqref{6.11} with $\mathcal{G}$ replacing $\mathcal{H}$. Set $\bar{u}_{0}:=v-\ti{u}_{0}\in W^{1,1}_{0}(Q)\cap L^{\infty}(Q)$: if there were $\{u_{i}\}_{i\in \mathbb{N}}\subset C^{\infty}_{c}(Q)$ such that $\mathcal{G}(u_{i}-\bar{u}_{0};Q)\to0$, then $\{v_{i}\}_{i\in \mathbb{N}}:=\{u_{i}+\ti{u}_{0}\}_{i\in \N}\subset \ti{u}_{0}+C^{\infty}_{c}(Q)$ would satisfy $\mathcal{G}(v_{i}-v;Q)\to 0$ violating again \eqref{6.9}. This proves the sharpness of Lemma \ref{alav}, Corollary \ref{alav.2} and \cite[Theorem 2.3]{buli}. The proof is complete.
\subsection{Proof of Theorem \ref{count.3}}\label{pt4} For the ease of exposition, we deal separately with the nearly linear growing case $\tx{g}$ as in \eqref{ass.1}, and the linear growing one $\tx{g}=1$.
\subsubsection{Case 1: nearly linear growth}\label{nlg} In this part, $\tx{g}$ satisfies \eqref{ass.2}. With $\ti{u}_{0}\in C^{\infty}(\bar{Q})$ as in \eqref{u0u0}, we observe that Dirichlet class $\ti{u}_{0}+W^{1,1}_{0}(Q)$ is obviously nonempty and problem \eqref{count.1} admits a unique solution $u\in \ti{u}_{0}+W^{1,1}_{0}(Q)$ by superlinearity and direct methods. Denote by $\Sigma_{u}$ the set of non-Lebesgue points of $u$ in $Q$. Let us show that the requirements of \cite{balci2} are satisfied. By \eqref{trace}, function $u_{0}$ defined in \eqref{u0u0} is an admissible competitor for problem \eqref{count.1}, so by minimality it is $\mathcal{H}(u;Q)\le \mathcal{H}(u_{0};Q)$ and via \eqref{elel} we gain
\eqn{el.1}
$$
\mathcal{G}(u;Q)\le \ti{\tx{c}}\left(\mathcal{G}(u_{0};Q)+\snr{Q}\right),
$$
with the same constant $\ti{\tx{c}}\equiv \ti{\tx{c}}(n,\Lambda,q,\alpha,\varepsilon)\ge 1$ in \eqref{elel}. Key estimate \eqref{6.8} is already available, so we can bound via Young inequality (keep in mind that integrand $\tx{G}$ is radial),
\begin{eqnarray}\label{el.11}
\int_{Q}\snr{Du}\tx{b}\dx&\le& \sigma_{*}^{-1}\int_{Q}\tx{G}(x,\snr{Du})+\tx{G}^{*}(x,\sigma_{*}\tx{b})\dx\nonumber \\
&\stackrel{\eqref{el.1}}{\le}&\frac{\ti{\tx{c}}\snr{Q}}{\sigma_{*}}+\frac{\ti{\tx{c}}}{\sigma_{*}}\left(\mathcal{G}(u_{0};Q)+\mathcal{G}^{*}(\sigma_{*}\tx{b};Q)\right)\nonumber \\
&\stackrel{\eqref{6.8}}{\le}&\frac{\ti{\tx{c}}\snr{Q}}{\sigma_{*}}+\frac{\kappa m_{*}}{8\ti{\tx{c}}}\stackrel{\eqref{6.5.1}_{2}}{\le}\frac{3\kappa m_{*}}{16\ti{\tx{c}}}<\kappa m_{*},
\end{eqnarray}
which is precisely the content of \cite[Lemma 21]{balci2}, in turn granting quantitative control on a restricted Riesz type potential of $u$, \cite[Lemma 20]{balci2}. This is the main ingredient to assure that $\Sigma_{u}$ is almost as large as set $\mathcal{C}$ defined at the beginning of Section \ref{sec6}. In fact, a direct application of \cite[Theorem 25]{balci2} yields that we can fix an arbitrary number $N>3$, set $\kappa=(2N^{2})^{-1}$ so to consequently determine $m_{*}\equiv m_{*}(n,\Lambda,\tx{g},q,\alpha,\varepsilon,N)\ge 1$ such that
\eqn{el.111}
$$
\begin{cases}
\displaystyle
\ \int_{\mathcal{C}_{\varepsilon}^{n-1}}\snr{(u)_{+}(\bar{x})-(u)_{-}(\bar{x})}\d\gamma_{\varepsilon}^{n-1}(\bar{x})\ge m_{*}(1-N^{-1})\vspace{1.5mm}\\ \displaystyle
\ \gamma_{\varepsilon}^{n-1}\left(\left\{\bar{x}\in \mathcal{C}_{\varepsilon}^{n-1}\colon\snr{(u)_{+}(\bar{x})-(u)_{-}(\bar{x})}>m_{*}(1-N^{-1})\right\}\right)\ge 1-N^{-1},
\end{cases}
$$
where $(u)_{\pm}$ indicate the upper and lower traces of $u$ on $\mathcal{C}_{\varepsilon}^{n-1}$. The content of the two previous displays, \cite[Lemma 6 (a)]{balci} and Frostman Lemma eventually imply that $\mathcal{H}^{n-1-\varepsilon}(\Sigma_{u})\ge\mathcal{H}^{n-1-\varepsilon}(\mathcal{C}\cap\Sigma_{u})>0$, therefore $\dim_{\mathcal{H}}(\Sigma_{u})\ge n-1-\varepsilon$. This means that $u\not \in W^{1,p}_{\loc}(Q)$ for all $p>1+\varepsilon$. In fact, pick any $p_{0}\in \left(1+\varepsilon,\min\{n,p\}\right)$ - this possible thanks to the bounds on the size of $\varepsilon$. If $u\in W^{1,p}_{\loc}(Q)$, then $u\in W^{1,p_{0}}_{\loc}(Q)$ and $n-1-\varepsilon>n-p_{0}\ge \dim_{\mathcal{H}}(\Sigma_{u})$, a contradiction, and Theorems \ref{mt1}-\ref{mt2} are sharp.

\subsubsection{Case 2: linear growth}\label{lglg} Here, cubes $Q''\Subset Q'\Subset Q\Subset 2Q$ are as introduced at the beginning of Section \ref{sec6}, and coefficient $a\in C^{\alpha}(\overline{2Q})$ and boundary datum $\ti{u}_{0}\in C^{\infty}(\overline{2Q})$ are defined in \eqref{6.3} and \eqref{u0u0}, respectively. However, now integrand $\tx{L}$ in \eqref{ll1} features linear growth \eqref{gcgc}, so problem \eqref{count.1} must be understood in the sense of relaxation. In this respect, before proceeding further, let us quickly outline the main properties of the Lebesgue-Serrin-Marcellini (LSM) extension of integral $\mathcal{H}$ in \eqref{ll1}. \\\\
\emph{The LSM extension with solid boundary values.} Relaxing functional \eqref{ll1} is needed to secure the existence of (a suitably generalized notion of) solutions to problem \eqref{count.1} in $BV(Q)$, the space of $L^{1}$-functions with finite total variation on $Q$, see \cite{BS1,gkpq} for basic definitions and main properties.\footnote{When working with $BV$-maps $w$, it is customary to adopt the decomposition of the gradient measure $Dw=\nabla w \mathcal{L}^{n}+D^{s}w$ into its absolutely continuous and its singular part with respect to the $n$-dimensional Lebesgue measure $\mathcal{L}^{n}$ - this notation will always be in force here. Needless to say, for $W^{1,1}$-functions $w$ it is $Dw=\nabla w$.} Following \cite{gms79a,gkpq,jw}, we introduce the LSM extension of integral $\mathcal{H}$ in \eqref{ll1}, driven by integrand $\tx{H}$ subject to \eqref{ass.0} and \eqref{gcgc} with $0\le a(\cdot)\in C^{\alpha}(\overline{2Q})$ for some $\alpha>0$. With $\ti{u}_{0}\in C^{\infty}(\overline{2Q})$, set
\begin{flalign}\label{a0a0}
\mathcal{A}_{\ti{u}_{0}}:=&\left\{w\in W^{1,1}(2Q) \cap L^{\infty}(2Q)\colon\tx{G}(\cdot,\nabla w)\in L^{1}(2Q),\ \nr{w}_{L^{\infty}(Q)}\le \nr{\ti{u}_{0}}_{L^{\infty}(2Q)},\  w=\ti{u}_{0} \ \mbox{in} \ 2Q\setminus \bar{Q}\right\},
\end{flalign}
notice that $\mathcal{A}_{\ti{u}_{0}}\not =\emptyset$ as $\ti{u}_{0},u_{0}\in \mathcal{A}_{\ti{u}_{0}}$, cf. \eqref{6.4.1} and \eqref{trace}. With $w\in BV(Q)$, denote by $\mathcal{w}\in BV(2Q)$ the gluing
\eqn{w0w0}
$$
\mathcal{w}:=\begin{cases}
\displaystyle
    \ w\quad &\mbox{on} \ \ Q\vspace{1.5mm}\\
    \displaystyle
    \ \ti{u}_{0}\quad &\mbox{on} \ \ 2Q\setminus \bar{Q},
\end{cases}
$$
which extends\footnote{From now on given any function $w\in BV(Q)$, its calligraphic version $\mathcal{w}$ will denote its extension by $\ti{u}_{0}$ in $2Q\setminus \bar{Q}$ as defined in \eqref{w0w0}.} $w$ by $\ti{u}_{0}$ in $2Q\setminus \bar{Q}$. The LSM extension of $\mathcal{H}$ for solid boundary values $\ti{u}_{0}$ is then defined as
\eqn{def1}
$$
\bar{\mathcal{H}}(\mathcal{w};Q,2Q):=\inf\left\{\liminf_{i\to \infty}\mathcal{H}(\ti{w}_{i};2Q)\colon \{\ti{w}_{i}\}_{i\in \mathbb{N}}\subset \mathcal{A}_{\ti{u}_{0}} \ \mbox{and} \ \ti{w}_{i}\to \mathcal{w} \ \mbox{strongly in} \ L^{1}(2Q)\right\},
$$
cf. \cite[Sections 1.2 and 3.2.2]{gkpq}. Basic density results \cite[Lemma 3.2]{gkpq} and truncation arguments \cite[Section 6]{jw} guarantee that the class of competitors in \eqref{def1} is nonempty for bounded functions whose $L^{\infty}$-norm is controlled by $\nr{\ti{u}_{0}}_{L^{\infty}(2Q)}$. To obtain a functional solely defined on $w\in BV(Q)$, we simply set
\eqn{def2}
$$
\bar{\mathcal{H}}^{*}(w;Q):=\bar{\mathcal{H}}(\mathcal{w};Q,2Q)-\mathcal{H}(\ti{u}_{0};2Q\setminus \bar{Q}),
$$
where $\mathcal{w}$ is defined as in \eqref{w0w0}. By relaxed minimizer of problem \eqref{count.1} we mean $u\in BV(Q)$ such that 
\eqn{relmin}
$$
\bar{\mathcal{H}}^{*}(u;Q)<\infty\quad \mbox{and}\quad \bar{\mathcal{H}}^{*}(u;Q)\le \bar{\mathcal{H}}^{*}(w;Q) \ \ \mbox{for all} \ \ w\in BV(Q).
$$
The main properties of extension \eqref{def1}-\eqref{def2} are in the next proposition.
\begin{proposition}\label{p71}
    Within the setting described above, the following holds.
    \begin{itemize}
         \item[(\emph{i.})] $\bar{\mathcal{H}}^{*}(v;Q)=\mathcal{H}(v;Q)$ for all $v\in \mathcal{A}_{\ti{u}_{0}}$;
        \item[(\emph{ii.})] There exists a minimizer $u\in BV(Q)\cap L^{\infty}(Q)$ in the sense of \eqref{relmin} such that $\nr{u}_{L^{\infty}(Q)}\le \nr{\ti{u}_{0}}_{L^{\infty}(2Q)}$;
        \item[(\emph{iii.})] If $v\in BV(Q)$ satisfies $\bar{\mathcal{H}}^{*}(v;Q)<\infty$, then $\nr{v}_{L^{\infty}(Q)}\le \nr{\ti{u}_{0}}_{L^{\infty}(2Q)}$.
        \item[(\emph{iv.})] If $v\in W^{1,1}(Q)$ satisfies $\bar{\mathcal{H}}^{*}(v;Q)<\infty$, then $\tx{H}(\cdot,\nabla v)\in L^{1}(Q)$ and
        \eqn{iv}
       $$
      \  \mathcal{G}(v;Q)\le \Lambda \bar{\mathcal{H}}^{*}(v;Q)+\Lambda \mathcal{H}(\ti{u}_{0};2Q\setminus \bar{Q})+2^{4n}\Lambda^{2}.
       $$
    \end{itemize}
\end{proposition}
\begin{proof}
Claim ($\emph{i}.$) is a direct consequence of definitions \eqref{def1}-\eqref{def2}. The proof of the existence part of ($\emph{ii}.$) follows exactly as in \cite[Lemma 6.7 and Proposition 6.8]{gkpq} - actually in a much more elementary way as integrand $\tx{H}$ is strictly convex and coercive in $W^{1,1}$. We then focus on ($\emph{iii}.$). Let $\{\ti{v}_{i}\}_{i\in \mathbb{N}}\subset \mathcal{A}_{\ti{u}_{0}}$ be any sequence such that $\ti{v}_{i}\to \mathcal{v}$ in $L^{1}(2Q)$, and introduce the truncation operator $\mathbb{R}\ni s\mapsto\tx{T}(s):=\min\{\nr{\ti{u}_{0}}_{L^{\infty}(2Q)},\max\{s,-\nr{\ti{u}_{0}}_{L^{\infty}(2Q)}\}\}.$ By \eqref{a0a0} it is $\tx{T}(\ti{v}_{i})=\ti{v}_{i}$ for all $i\in \mathbb{N}$, thus, by the 1-Lipschitz character of truncations it is $\tx{T}(v)\in BV(Q)$ and $\nr{\tx{T}(\mathcal{v})-\mathcal{v}}_{L^{1}(2Q)}\le \nr{\tx{T}(\mathcal{v})-\tx{T}(\ti{v}_{i})}_{L^{1}(2Q)}+\nr{\ti{v}_{i}-\mathcal{v}}_{L^{1}(2Q)}\le 2\nr{\ti{v}_{i}-\mathcal{v}}_{L^{1}(2Q)}\to 0$. This implies also the $L^{\infty}$-bound in (\emph{ii}.) via $\eqref{relmin}_{1}$. Concerning (\emph{iv}.), let $\{\ti{v}_{i}\}_{i\in \mathbb{N}}\subset \mathcal{A}_{\ti{u}_{0}}$ be any sequence such that $\ti{v}_{i}\to \mathcal{v}$ in $L^{1}(2Q)$. Keeping in mind that $\mathcal{v}=v$ on $Q$, cf. \eqref{w0w0} and that $v\in W^{1,1}(Q)$, we can apply Serrin's semicontinuity result \cite[Section 4.5]{giu} to derive
\begin{eqnarray*}
\liminf_{i\to \infty}\int_{2Q}\tx{H}(x,\nabla \ti{v}_{i})\dx&\ge& \liminf_{i\to \infty}\int_{2Q}\tx{H}(x,\nabla \ti{v}_{i})+\Lambda\dx-2^{2n}\Lambda\nonumber \\
&\stackrel{\eqref{ass.1}_{1}}{\ge}&\liminf_{i\to \infty}\int_{Q}\tx{H}(x,\nabla \ti{v}_{i})+\Lambda\dx-2^{2n}\Lambda\nonumber \\
&\ge&\int_{Q}\tx{H}(x,\nabla v)\dx-2^{2n}\Lambda\stackrel{\eqref{ass.1}_{1}}{\ge} \frac{1}{\Lambda}\int_{Q}\tx{G}(x,\snr{\nabla v})\dx-2^{4n}\Lambda.
\end{eqnarray*}
The arbitrariness of sequence $\{\ti{v}_{i}\}_{i\in \mathbb{N}}$ and $\eqref{ass.1}_{1}$ yield that $\tx{H}(\cdot,\nabla v)\in L^{1}(Q)$ and \eqref{iv} holds after subtracting $\mathcal{H}(\ti{u}_{0};2Q\setminus \bar{Q})$ on both sides of the above inequality.
\end{proof}
\begin{remark}
    \emph{Functional \eqref{def1}-\eqref{def2} coincides with the usual LSM extension with solid boundary conditions \cite[Section 1.2]{gkpq} on all functions $w\in BV(Q)\cap L^{\infty}(Q)$ such that $\nr{w}_{L^{\infty}(Q)}\le \nr{\ti{u}_{0}}_{L^{\infty}(2Q)}$. In fact, after subtracting an affine function in $z$ (a null Lagrangian), one may assume that $\tx{H}(x,z)\ge \tx{H}(x,0)$ for all $(x,z)\in 2Q\times \mathbb{R}^{n}$, so truncations decrease the energy. It follows that every minimizer admits a bounded representative whose $L^{\infty}$-norm is controlled in terms of $\nr{\ti{u}_{0}}_{L^{\infty}(2Q)}$, \cite[Theorem D.2 and Corollary D.3]{BS1}; if the extension is strictly convex, minima coincide up to an additive constant. Adding/subtracting a null Lagrangian merely shifts $\bar{\mathcal{H}}^{*}$ by a constant, hence does not affect minimizers or lower semicontinuity. In this sense, the relaxation \eqref{def1}-\eqref{def2} works as a selection principle for minima bounded in terms of $\nr{\ti{u}_{0}}_{L^{\infty}(2Q)}$.
    }
\end{remark}

\noindent Now we are ready to complete the proof of Theorem \ref{count.3}.\\\\
\emph{Irregular minima of relaxed area-double phase problems.}
Let $\bar{\mathcal{H}}^{*}(\cdot;Q)$ be the LSM extension of integral $\mathcal{H}$ in \eqref{ll1}. By Proposition \ref{p71} ($\emph{ii}$.), there exists a relaxed minimizer $u\in BV(Q)\cap L^{\infty}(Q)$ of problem \eqref{count.1}. In particular, it is
\eqn{71.8}
$$
\nr{u}_{L^{\infty}(Q)}\le \nr{\ti{u}_{0}}_{L^{\infty}(2Q)}\stackrel{\eqref{u0u0}}{\le}\frac{m_{*}}{2}.
$$
If $u\in BV(Q)\setminus W^{1,1}_{\loc}(Q)$ there is nothing to prove, and we can assume that $u\in BV(Q)\cap W^{1,1}_{\loc}(Q)$ so that, by inner regularity, it is $u\in W^{1,1}(Q)$. This last information together with the finiteness $\bar{\mathcal{H}}^{*}(u;Q)<\infty$ granted by minimality allows applying Proposition \ref{p71} (\emph{iv}.) to deduce that $\tx{H}(\cdot,\nabla u)\in L^{1}(Q)$ and estimate \eqref{iv} holds with $v=u$. As the attainment of traces may fail, we need a slightly more refined argument than the one in Section \ref{nlg} to achieve key estimates \eqref{el.11}-\eqref{el.111}. The minimality of $u$ and Proposition \ref{p71} ($\emph{iv}.$) give 
\begin{eqnarray}\label{71.9}
\mathcal{G}(u;Q)&\stackrel{\eqref{iv}}{\le}& \Lambda\bar{\mathcal{H}}^{*}(u;Q)+\Lambda\mathcal{H}(\ti{u}_{0};2Q\setminus \bar{Q})+2^{4n}\Lambda^{2}\le \Lambda\bar{\mathcal{H}}^{*}(u_{0};Q)+\Lambda \mathcal{H}(\ti{u}_{0};2Q\setminus \bar{Q})+2^{4n}\Lambda^{2}\nonumber \\
&=&\Lambda\mathcal{H}(u_{0};Q)+\Lambda \mathcal{H}(\ti{u}_{0};2Q\setminus \bar{Q})+2^{4n}\Lambda^{2}\nonumber \\
&\stackrel{\eqref{trace}}{=}&\Lambda\mathcal{H}(u_{0};2Q)+2^{4n}\Lambda^{2}\stackrel{\eqref{ass.1}_{1}}{\le}2^{8n+q+\alpha}\Lambda^{2}\left(\mathcal{G}(u_{0};2Q)+\snr{2Q}\right).
\end{eqnarray}
Next, fix $j\ge 1$. By a diagonal argument, \eqref{a0a0}, \eqref{def1} and \eqref{71.8} there is $\mathcal{u}_{j}\in \mathcal{A}_{\ti{u}_{0}}$ such that 
\eqn{8.1}
$$
\begin{array}{c}
\displaystyle
\nr{\mathcal{u}_{j}-\mathcal{u}}_{L^{1}(2Q)}\le j^{-1},\qquad\nr{\mathcal{u}_{j}}_{L^{\infty}(2Q)}\le \nr{\ti{u}_{0}}_{L^{\infty}(2Q)}\\ [8pt]\displaystyle
\mathcal{H}(\mathcal{u}_{j};2Q)\le \bar{\mathcal{H}}^{*}(u;Q)+\mathcal{H}(\ti{u}_{0};2Q\setminus \bar{Q})+j^{-1},
\end{array}
$$
keep in mind \eqref{w0w0}.
Next, let $\phi_{0}\in C^{1}_{c}(Q)$ be a cut-off function such that $\mathds{1}_{Q''}\le \phi_{0}\le \mathds{1}_{Q'}$, $\nr{\nabla \phi_{0}}_{L^{\infty}(Q)}\le 2^{10}n$. For reasons that will be clear in a few lines, let $\kappa>0$ be any number, $\sigma_{*}\equiv \sigma_{*}(n,\Lambda,q,\alpha,\varepsilon,\kappa)$ be as in \eqref{6.5.1}-\eqref{6.8}, and via Young inequality bound
\begin{eqnarray}\label{8.2}
\tx{B}_{j}&:=&\int_{Q}\tx{b}\left(\snr{\nabla \phi_{0}}\snr{u-\mathcal{u}_{j}}+\phi_{0}\snr{\nabla u-\nabla \mathcal{u}_{j}}+\snr{\nabla \mathcal{u}_{j}}\right)\dx\nonumber \\
&\le&\frac{2^{20q}n^{q}}{\sigma_{*}}\int_{Q}[\tx{G}(x,\snr{\nabla u})+\tx{G}^{*}(x,\sigma_{*}\tx{b})]\dx+\frac{2^{20q}n^{q}}{\sigma_{*}}\int_{Q}\tx{G}(x,\snr{\mathcal{u}-\mathcal{u}_{j}})+1\dx\nonumber \\
&&+\frac{2^{20q}n^{q}}{\sigma_{*}}\int_{Q}\tx{G}(x,\snr{\nabla \mathcal{u}_{j}})\dx=:\mbox{(I)}+\mbox{(II)}_{j}+\mbox{(III)}_{j}\stackrel{\eqref{a0a0},\eqref{iv}}{<}\infty.
\end{eqnarray}
Concerning $\mbox{(I)}$, we bound
\begin{flalign*}
\mbox{(I)}\stackrel{\eqref{71.9}}{\le}\frac{2^{30nq+\alpha}\Lambda^{2}n^{q}}{\sigma_{*}}\left(\mathcal{G}(u_{0};2Q)+\mathcal{G}^{*}(\sigma_{*}\tx{b};Q)+\snr{2Q}\right).
\end{flalign*}
Moreover, recalling \eqref{71.8} and \eqref{8.1} it is $\tx{G}(\cdot,\snr{\mathcal{u}-\mathcal{u}_{j}})\in L^{\infty}(Q)$, so we obtain 
\begin{eqnarray*}
\mbox{(II)}_{j}&\le&  \frac{2^{20q}n^{q}}{\sigma_{*}} \left(2^{4nq+\alpha}m_{*}^{q-1}\nr{\mathcal{u}-\mathcal{u}_{j}}_{L^{1}(Q)}+\snr{Q}\right)\nonumber \\
&\le& \frac{2^{24nq+\alpha}n^{q}}{\sigma_{*}}\left(\frac{m_{*}^{q-1}}{j}+1\right).
\end{eqnarray*}
Finally, for term $\mbox{(III)}_{j}$ we have
\begin{eqnarray*}
\mbox{(III)}_{j}&\stackrel{\eqref{ass.1}_{1}}{\le}&\frac{2^{20q}\Lambda n^{q}}{\sigma_{*}}\mathcal{H}(\mathcal{u}_{j};Q)+\frac{2^{20q}\Lambda^{2} n^{q}\snr{Q}}{\sigma_{*}}\nonumber \\
&\stackrel{\eqref{8.1}}{\le}&\frac{2^{20q}\Lambda n^{q}}{\sigma_{*}}\left(\bar{\mathcal{H}}^{*}(u;Q)+\mathcal{H}(\ti{u}_{0};2Q\setminus \bar{Q})+\Lambda\snr{2Q}+j^{-1}\right)+\frac{2^{20q}\Lambda^{2} n^{q} }{\sigma_{*}}\nonumber \\
&\stackrel{\eqref{ass.1}_{1},\eqref{trace}}{\le}&\frac{2^{28nq+\alpha}\Lambda^{2} n^{q}}{\sigma_{*}}\left(\mathcal{G}(u_{0};2Q)+\snr{2Q}\right)+\frac{2^{20q}\Lambda^{2} n^{q} (1+j^{-1})}{\sigma_{*}}.
\end{eqnarray*}
Merging the content of the three above displays, we end up with
\eqn{8.3}
$$
\tx{B}_{j}\le  \frac{\ti{\tx{c}}}{\sigma_{*}c_{n;\varepsilon}}\left(\mathcal{G}(u_{0};2Q)+\mathcal{G}^{*}(\sigma_{*}\tx{b};2Q)+\snr{2Q}+ m^{q-1}_{*}j^{-1}\right),
$$
where $\ti{\tx{c}}\equiv \ti{\tx{c}}(n,\Lambda,q,\alpha,\varepsilon)$ is the same constant immediately below \eqref{elel} and $j\ge 1$ is arbitrary. The finiteness of terms $\mbox{(I)}$-$\mbox{(III)}_{j}$ in \eqref{8.2} in turn grants the existence of upper and lower traces of $u$ and $\mathcal{u}_{j}$ on $Q''\cap \{x_{n}=0\}$, cf. \cite[Lemma 20 and Corollary 24]{balci2}, so to conclude, we only need to derive a "localized" counterpart of the oscillation estimates in \cite[Section 4.4]{balci2}. In this respect, from \eqref{6.3} and \eqref{u0u0} we see that in $\{\snr{\bar{x}}_{\infty}<\snr{x_{n}},\  \pm x_{n}>1\}$ it is $\ti{u}_{0}(x)=2^{-1}\textnormal{sgn}\{x_{n}\}m_{*}$, so we extend $\mathcal{u}_{j}=\ti{u}_{0}$ in $\{\snr{\bar{x}}_{\infty}<\snr{x_{n}}, \ \pm x_{n}>1\}$ and, recalling that $\phi_{0}=0$ in $\mathbb{R}^{n}\setminus Q'$, we extend $\phi_{0}(u-\mathcal{u}_{j})=0$ in $\mathbb{R}^{n}\setminus Q'$. For $\gamma$-a.e. $(\bar{x},0)\in \mathcal{C}\Subset Q''$ we then control, using the traces estimates in \cite[Section 4.3]{balci2} in terms of restricted Riesz type potentials $\textnormal{I}^{\pm}_{1}(\ \cdot \ \mathds{1}_{Q})(\bar{x},0)$ defined in \cite[Section 4.1]{balci2},\footnote{Here, every expression containing $\pm$ is summed over both signs.}
\begin{flalign}\label{8.5}
&\snr{(\mathcal{u}_{j})_{\pm}(\bar{x})\mp m_{*}/2}+\snr{(\phi_{0}u)_{\pm}(\bar{x})-(\phi_{0}\mathcal{u}_{j})_{\pm}(\bar{x})}\nonumber \\
&\qquad \qquad \quad \le c(n)\textnormal{I}^{\pm}_{1}(\snr{\nabla \mathcal{u}_{j}}\mathds{1}_{Q})(\bar{x},0)+c(n)\textnormal{I}^{\pm}_{1}(\snr{\nabla(\phi_{0}(\mathcal{u}- \mathcal{u}_{j}))}\mathds{1}_{Q})(\bar{x},0),
\end{flalign}
therefore, by \cite[Lemma 20]{balci2} and \eqref{8.2} we obtain
\begin{flalign}\label{8.4}
   & \int_{\mathcal{C}_{\varepsilon}^{n-1}}\left[\snr{(\mathcal{u}_{j})_{\pm}(\bar{x})\mp m_{*}/2}+\snr{(\phi_{0}u)_{\pm}(\bar{x})-(\phi_{0}\mathcal{u}_{j})_{\pm}(\bar{x})}\right]\d\gamma_{\varepsilon}^{n-1}(\bar{x})\nonumber \\
   &\qquad  \quad \le c\int_{Q}\left[\textnormal{I}^{\pm}_{1}(\snr{\nabla \mathcal{u}_{j}}\mathds{1}_{Q})(\bar{x},0)+\textnormal{I}^{\pm}_{1}(\snr{\nabla(\phi_{0}(\mathcal{u}- \mathcal{u}_{j}))}\mathds{1}_{Q})(\bar{x},0)\right]\d\gamma(x)\le c_{n;\varepsilon}\tx{B}_{j},
\end{flalign}
where $c_{n;\varepsilon}\equiv c_{n;\varepsilon}(n,\varepsilon)$ is the product between constant $c(n)$ from \eqref{8.5} and the one appearing in \cite[Lemma 20]{balci2}, see also below \eqref{elel}. Keeping in mind that $\mathcal{C}\Subset Q''$ and $\phi_{0}\ge \mathds{1}_{Q''}$, by triangular inequality we bound
\begin{eqnarray}\label{8.6}
m_{*}&=&\left|\frac{m_{*}}{2}-\left(-\frac{m_{*}}{2}\right)\right|\nonumber \\
&\le& \snr{(\mathcal{u}_{j})_{\pm}(\bar{x})\mp m_{*}/2}+\snr{(u)_{\pm}(\bar{x})-(\mathcal{u}_{j})_{\pm}(\bar{x})}+\snr{(u)_{+}(\bar{x})-(u)_{-}(\bar{x})}\nonumber \\
&=&\snr{(\mathcal{u}_{j})_{\pm}(\bar{x})\mp m_{*}/2}+\snr{(\phi_{0}u)_{\pm}(\bar{x})-(\phi_{0}\mathcal{u}_{j})_{\pm}(\bar{x})}+\snr{(u)_{+}(\bar{x})-(u)_{-}(\bar{x})}\nonumber \\
&\stackrel{\eqref{8.5}}{\le}&\snr{(u)_{+}(\bar{x})-(u)_{-}(\bar{x})}+c\textnormal{I}^{\pm}_{1}(\snr{\nabla \mathcal{u}_{j}}\mathds{1}_{Q})(\bar{x},0)+c\textnormal{I}^{\pm}_{1}(\snr{\nabla(\phi_{0}(\mathcal{u}- \mathcal{u}_{j}))}\mathds{1}_{Q})(\bar{x},0),
\end{eqnarray}
for $c\equiv c(n)$. Integrating both sides of \eqref{8.6} we obtain 
\begin{eqnarray*}
    m_{*}&\le& \int_{\mathcal{C}_{\varepsilon}^{n-1}}\snr{(u)_{+}(\bar{x})-(u)_{-}(\bar{x})}\d\gamma_{\varepsilon}^{n-1}(\bar{x})\nonumber \\
    &&+c\int_{\mathcal{C}_{\varepsilon}^{n-1}}\textnormal{I}^{\pm}_{1}(\snr{\nabla \mathcal{u}_{j}}\mathds{1}_{Q})(\bar{x},0)+\textnormal{I}^{\pm}_{1}(\snr{\nabla(\phi_{0}(\mathcal{u}- \mathcal{u}_{j}))}\mathds{1}_{Q})(\bar{x},0)\d\gamma_{\varepsilon}^{n-1}(\bar{x})\nonumber \\
    &\le &\int_{\mathcal{C}_{\varepsilon}^{n-1}}\snr{(u)_{+}(\bar{x})-(u)_{-}(\bar{x})}\d\gamma_{\varepsilon}^{n-1}(\bar{x})\nonumber \\
    &&+c\int_{Q}\textnormal{I}^{\pm}_{1}(\snr{\nabla \mathcal{u}_{j}}\mathds{1}_{Q})(\bar{x},0)+\textnormal{I}^{\pm}_{1}(\snr{\nabla(\phi_{0}(\mathcal{u}- \mathcal{u}_{j}))}\mathds{1}_{Q})(\bar{x},0)\d\gamma(x)\nonumber \\
    &\stackrel{\eqref{8.4}}{\le}&\int_{\mathcal{C}_{\varepsilon}^{n-1}}\snr{(u)_{+}(\bar{x})-(u)_{-}(\bar{x})}\d\gamma_{\varepsilon}^{n-1}(\bar{x})+c_{n;\varepsilon}\tx{B}_{j}\nonumber \\
    &\stackrel{\eqref{8.3}}{\le}&\int_{\mathcal{C}_{\varepsilon}^{n-1}}\snr{(u)_{+}(\bar{x})-(u)_{-}(\bar{x})}\d\gamma_{\varepsilon}^{n-1}(\bar{x})\nonumber \\
    &&+\frac{\ti{\tx{c}}}{\sigma_{*}}\left(\mathcal{G}(u_{0};2Q)+\mathcal{G}^{*}(\sigma_{*}\tx{b};2Q)+\snr{2Q}+ m^{q-1}_{*}j^{-1}\right)\nonumber \\
    &\stackrel{\eqref{6.5.1},\eqref{6.8}}{\le}&\int_{\mathcal{C}_{\varepsilon}^{n-1}}\snr{(u)_{+}(\bar{x})-(u)_{-}(\bar{x})}\d\gamma_{\varepsilon}^{n-1}(\bar{x})+\frac{3\kappa m_{*}}{16\ti{\tx{c}}}+\ti{\tx{c}} m_{*}^{q-1-\alpha}j^{-1}.
\end{eqnarray*}
Letting $j\to \infty$, we obtain
$$
m_{*}\le \int_{\mathcal{C}_{\varepsilon}^{n-1}}\snr{(u)_{+}(\bar{x})-(u)_{-}(\bar{x})}\d\gamma_{\varepsilon}^{n-1}(\bar{x})+\frac{3\kappa m_{*}}{16}.
$$
Fix an arbitrary $N>3$ and set $\kk=(2N^{2})^{-1}$ to get
\eqn{8.7}
$$
\int_{\mathcal{C}_{\varepsilon}^{n-1}}\snr{(u)_{+}(\bar{x})-(u)_{-}(\bar{x})}\d\mathcal{\gamma_{\varepsilon}^{n-1}}(\bar{x})\ge m_{*}\left(1-\frac{1}{2N^{2}}\right)> m_{*}(1-N^{-1}),
$$
that is $\eqref{el.111}_{1}$. Moreover, by \eqref{71.8} it is $\snr{(u)_{+}(\bar{x})-(u)_{-}(\bar{x})}\le m_{*}$ so let $\tau_{*}:=1-(2N)^{-1}$ and bound
\begin{eqnarray}\label{8.7.1}
m_{*}\left(1-\frac{1}{2N^{2}}\right)&\stackrel{\eqref{8.7}}{\le}&\int_{\mathcal{C}_{\varepsilon}^{n-1}}\snr{(u)_{+}(\bar{x})-(u)_{-}(\bar{x})}\d\gamma_{\varepsilon}^{n-1}(\bar{x})\nonumber \\
&\le& m_{*}\gamma_{\varepsilon}^{n-1}\left(\left\{\bar{x}\in \mathcal{C}_{\varepsilon}^{n-1}\colon \snr{(u)_{+}(\bar{x})-(u)_{-}(\bar{x})}>m_{*}\tau_{*}\right\}\right)\nonumber \\
&&+m_{*}\tau_{*}\left(1-\gamma_{\varepsilon}^{n-1}\left(\left\{\bar{x}\in \mathcal{C}_{\varepsilon}^{n-1}\colon \snr{(u)_{+}(\bar{x})-(u)_{-}(\bar{x})}> m_{*}\tau_{*}\right\}\right)\right)\nonumber \\
&\le&m_{*}(1-\tau_{*})\gamma_{\varepsilon}^{n-1}\left(\left\{\bar{x}\in \mathcal{C}_{\varepsilon}^{n-1}\colon \snr{(u)_{+}(\bar{x})-(u)_{-}(\bar{x})}> m_{*}\tau_{*}\right\}\right)+m_{*}\tau_{*}.
\end{eqnarray}
As $\tau_{*}>1-N^{-1}$, estimate \eqref{8.7.1} yields
$$
1-N^{-1}\le \gamma_{\varepsilon}^{n-1}\left(\left\{\bar{x}\in \mathcal{C}_{\varepsilon}^{n-1}\colon \snr{(u)_{+}(\bar{x})-(u)_{-}(\bar{x})}>m_{*}(1-N^{-1}) \right\}\right),
$$
for all $N>3$, thus also $\eqref{el.111}$ is secured and we can proceed as in Section \ref{nlg}. To conclude, we notice that the above construction does not rely on the explicit identification of a specific minimizer, but on energy estimates forcing an energy reduction on minima by comparison with the irregular (yet admissible, \eqref{6.4.1}) competitor $u_{0}$ in \eqref{u0u0}. By minimality, $\eqref{relmin}_{1}$ and Proposition \ref{p71} (\emph{iii}.) this procedure is applicable to all minimizers of $\bar{\mathcal{H}}^{*}$, and the proof is complete.
\begin{remark}\label{rem6} \emph{The counterexamples in Theorems \ref{count}-\ref{count.3} cover the range $q>1+\alpha$, while Theorems \ref{mt1}-\ref{mt2} hold whenever $q<1+\alpha$. In the limiting case $q=1+\alpha$ the nonoccurrence of Lavrentiev phenomenon and the density of smooth maps in modular convergence can still be secured, cf. Lemma \ref{alav}, Corollary \ref{alav.2} and \cite[Theorem 2.3]{buli}, however to prove regularity in such a borderline configuration some 0-order information such as H\"older continuity à la De Giorgi-Nash-Moser is needed, see \cite{CM2,BCM,HO3,bb90}, that is not available here due to the strong rate of nonuniformity of integrals \eqref{ll}-\eqref{ll1}.}
\end{remark}

\end{document}